\theoremstyle{definition}
\newtheorem{con}{Conjecture}[section]
\newtheorem{lem}{Lemma}[section]
\newtheorem{proposition}{Proposition}[section]
\newtheorem{col}{Corollary}[section]
\newtheorem{theo}{Theorem}[section]
\newtheorem*{ske}{Proof}
\DeclareMathOperator{\interior}{Int}
\author{Joel Foisy, Luis Ángel Topete Galván, Evan Knowles, Uriel Alejandro Nolasco,
\and Yuanyuan Shen, and Lucy Wickham}
\title{Links in projective planar graphs}
\begin{document}
\maketitle

\begin{abstract}
A graph $G$ is \textit{nonseparating projective planar} if $G$ has a projective planar embedding without a nonsplit link. Nonseparating projective planar graphs are closed under taking minors and are a superclass of projective outerplanar graphs. We partially characterize the minor-minimal separating projective planar graphs by proving that given a minor-minimal nonouter-projective-planar graph $G$, either $G$ is minor-minimal separating projective planar or $G \dot\cup K_{1}$ is minor-minimal weakly separating projective planar, a necessary condition for $G$ to be separating projective planar. 

One way to generalize separating projective planar graphs is to consider \textit{type I 3-links} consisting of two cycles and a pair of vertices. A graph is \textit{intrinsically projective planar type I 3-linked (IPPI3L)} if its every projective planar embedding contains a nonsplit type I 3-link. We partially characterize minor-minimal IPPI3L graphs by classifying all minor-minimal IPPI3L graphs with three or more components, and finding many others with fewer components.\\

\end{abstract}

\section{Introduction}

We say a projective planar graph $G$ is \textit{separating projective planar} if for every embedding of $G$ into $\mathbb{R}P^2$, there is a disk bounding cycle $C$ in $G$ such that one vertex of $G$ is in one connected component of ${\mathbb{R}P}^2\setminus C$ and another vertex of $G$ is in the other connected component of $\mathbb{R}P^2\setminus C$. If there exists an embedding of $G$ that does not have this property, $G$ is \textit{nonseparating projective planar}. We say a projective planar graph $G$ is \textit{strongly nonseparating} if there exists an embedding of $G$ into $\mathbb{R}P^2$, such that for every pair $(u, v)$ of vertices of $G$, there is a path in $\mathbb{R}P^2$ from $u$ to $v$ that intersects $G$ only at its endpoints. If a graph is not strongly nonseparating projective planar, this means it is \textit{weakly separating}. If a graph $G$ is separating, this implies it is weakly separating.

Dehkordi and Farr characterized the set of nonseparating planar graphs, identifying the forbidden minors for nonseparating planar graphs. They proved that a graph $G$ is a nonseparating planar graph if and only if it does not contain any of $K_1 \dot\cup K_4$ or $K_1 \dot\cup K_{2,3}$ or $K_{1,1,3}$ as a minor \cite{Dehkordi}. Their work showed that separating graphs are built from nonouterplanar graphs. Our work extends these concepts from the plane to the projective plane. There are 32 minor-minimal nonouter-projective-planar graphs \cite{Archdeacon}. One of the main results of this paper is the classification of these graphs as separating or nonseparating. We show every such graph is either separating, or its disjoint union with $K_1$ is weakly separating, which is a necessary condition for separating projective planar graphs. A nonplanar and nonouter-projective-planar graph disjoint union with $K_1$ is also separating projective planar. The set of minor-minimal separating projective planar graphs must be finite by Robertson and Seymour's result on graph minors \cite{robertson}. Though we have not yet characterized the complete set of minor-minimal separating projective planar graphs, we have identified many members. These graphs can be used to relate separating projective planar graphs and links in graphs embedded in projective space \cite{REU2021IPL}.\\

Define $S^k$ to be the $k$-sphere. A \textit{projective planar 3-link} is a disjoint collection of $3-m$ 1-spheres and $m$  0-spheres, embedded into the projective plane, where $m\in\{1,2\}$. If there are two $S^1$'s, this is a \textit{type I 3-link}. A graph $G$ is \textit{intrinsically projective planar type I 3-linked (IPPI3L)} if every embedding of $G$ in the projective plane has a nonsplit type I 3-link. If there are two $S^0$'s, this is a \textit{type II 3-link}. A graph $G$ is \textit{intrinsically projective planar type II 3-linked (IPPII3L)} if every embedding of $G$ in the projective plane has a nonsplit type II 3-link.

Burkhart et al examined 3-links in graphs embedded in $S^2$. They proved that the graphs $K_4 \dot\cup K_4$, $K_4 \dot\cup K_{3,2}$, and $K_{3,2} \dot\cup K_{3,2}$ are  minor-minimal with respect to being intrinsically type I 3-linked, and conjectured it forms a complete set \cite{Burkhart} of such minor-minimal graphs. Our research built on this foundation, identifying many types of projective planar graphs that are minor-minimal IPPI3L. We have proven there are no minor-minimal IPPI3L graphs that have four or more components, but there is a set with three components that is the disjoint union of $K_4$ and $K_{3,2}$ components. If a graph $G$ is \textit{closed nonseparating}, that means any nonseparating embedding of $G$ in the projective plane is a \textit{closed cell embedding}. If a drawing of graph $G$ in the projective plane is a \textit{closed cell embedding}, that means every face of the graph can be bounded by a 0-homologous cycle. We proved the disjoint union of two closed nonseparating graphs is also minor-minimal IPPI3L. Additionally, we prove separating or closed nonseparating graphs that are glued at a vertex can be IPPI3L under specified conditions. Finally, we prove graphs that are the disjoint union of two components, where one component is a graph that is nonplanar and nonouter-projective-planar, and the second component is a graph that is nonouterplanar, are IPPI3L, and can be minor-minimal in that regard under certain conditions.\\

\section{Definitions and notation}
Let $\mathbb{R}P^2$ denote the real projective plane. We represent $\mathbb{R}P^2$ as the unit disk in $\mathbb{R}^2$ with antipodal points identified. Let $\mathbb{R}P^3$ denote real projective space, which can be defined as the unit ball in $\mathbb{R}^3$ with antipodal points identified.

All of our graphs will be embedded piecewise linearly. Thus, for every graph embedded in ${\mathbb R}P^2$, we may assume every cycle intersects the boundary at a finite number of points. A cycle embedded in $\mathbb{R}P^2$ or $\mathbb{R}P^3$ is \textit{0-homologous} if and only if it bounds a disk. This is also called a \textit{null cycle} \cite{Glover}. A cycle embedded in $\mathbb{R}P^2$ or $\mathbb{R}P^3$ is \textit{1-homologous} if and only if it does not bound a disk. This is also called an \textit{essential cycle} \cite{Glover}. An embedding of graph in $\mathbb{R}P^2$ or $\mathbb{R}P^3$ is an \textit{affine embedding} if the graph embedding does not intersect the boundary of the ball used to define the projective plane.

A graph that can be obtained from a graph $G$ by a series of edge deletions, vertex deletions and edge contractions is called a \textit{minor} of $G$. The graph $G$ is \textit{minor-minimal} if, whenever $G$ has property $P$ and $H$ is a minor of $G$, then $H$ does not have property $P$. A property $P$ is \textit{minor closed} if, whenever a graph $G$ has property $P$ and $H$ is a minor of $G$, then $H$ also has property $P$. If $P$ is a minor closed property and the graph $G$ does not have property $P$, then $G$ is a \textit{forbidden graph} for $P$. Robertson and Seymour's Minor Theorem states if $P$ is a minor-closed graph property, then the minor-minimal forbidden graphs for $P$ form a finite set \cite{robertson}. 

An \textit{outer-projective-planar graph} is one that can be embedded in the projective plane with all vertices in the same face - this is a minor closed property. Other minor closed properties include having a nonseparating projective planar planar embedding and having a type I 3-linkless embedding in the projective plane.

A \textit{complete graph} is a graph with an edge between all possible pairs of vertices in the graph. We represent a complete graph as $K_m$, where $m$ is the order of the graph. A \textit{complete bipartite graph} is a graph whose vertices can be divided into two disjoint sets, where no two vertices in the same set are adjacent and every vertex of the first set is adjacent to every vertex of the second set. We represent a complete bipartite graph as $K_{n,m}$, where the cardinalities of the two sets are $n$ and $m$. A \textit{complete $k$-partite graph} is a graph whose vertices can be divided into $k$ disjoint sets, where no two vertices in the same set are adjacent and every vertex of a given set is adjacent to every vertex in every other set. We represent a complete $k$-partite graph as $K_{n_1, \dots, n_k}$, where the cardinalities of the sets are $n_1, \dots, n_k$.

\section{Results on cycles in the projective plane}

In this section, we prove some results about the characteristics of 0-homologous and 1-homologous cycles in the projective plane. These theorems are utilized in our proofs in the following sections. They lay the foundation for understanding how cycles function and interact in the projective plane.\\

The following result is well-know in graph theory. See for example, Diestel's text \cite{Diestel}.

\begin{lem}\label{lemma1}
A graph is 2-connected if and only if it can be constructed from a cycle by successively adding H-paths to graphs H already constructed.
\end{lem}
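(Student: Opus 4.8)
The plan is to prove the two implications separately, via the standard ear-decomposition argument. For the easy direction, I would argue by induction on the number of added $H$-paths that every graph obtainable from a cycle by successively adding $H$-paths is $2$-connected. The base case is that a cycle is $2$-connected. For the inductive step, let $H$ be $2$-connected and let $H' = H \cup P$, where $P$ is an $H$-path meeting $H$ exactly in its two distinct endpoints $x,y$. Then $H'$ has at least three vertices, and it remains to check that $H'$ has no cutvertex. Removing an internal vertex of $P$ leaves $H$ with (at most) two paths attached, hence connected; removing a vertex $v \in V(H)$ leaves $H - v$ connected (as $H$ is $2$-connected) together with $P$ minus at most one endpoint, which is still attached to $H - v$ at a vertex of $H$. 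So $H'$ is $2$-connected.

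For the harder direction, suppose $G$ is $2$-connected, so that $|V(G)| \geq 3$ and $G$ contains a cycle. Then the family of subgraphs of $G$ constructible from a cycle by successively adding $H$-paths is nonempty, and I would let $H$ be a maximal such subgraph and claim $H = G$. First, $H$ is an induced subgraph of $G$: any edge of $G$ with both ends in $V(H)$ but not in $E(H)$ would be a (one-edge) $H$-path that could be added, contradicting maximality. Now suppose $H \neq G$. Since $G$ is connected there is an edge $xy \in E(G)$ with $x \in V(H)$ and $y \notin V(H)$. Since $G$ is $2$-connected, $x$ is not a cutvertex, so $G - x$ is connected; as $|V(H) \setminus \{x\}| \geq 2$, there is a path in $G - x$ from $y$ to $V(H)$, and its initial segment up to its first vertex $z$ in $V(H)$ is a path from $y$ to $z$ that is internally disjoint from $H$ and avoids $x$. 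Then $xy$ followed by this path is an $H$-path in $G$ from $x$ to $z$ (with $x \neq z$, since the path avoids $x$), which may be added to $H$, contradicting maximality. Hence $H = G$.

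I expect the only real obstacle to be bookkeeping rather than ideas: one must be careful about the precise definition of an $H$-path (that a single edge between two vertices of $H$ qualifies, and that its two ends are required to be distinct), and about the degenerate subcases in the ``no cutvertex'' verification in the easy direction. The whole argument uses $2$-connectedness essentially once, in producing the new $H$-path that extends $H$ in the maximality step.
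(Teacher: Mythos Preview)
Your argument is correct and is exactly the standard ear-decomposition proof. Note, however, that the paper does not actually prove this lemma: it simply records it as well known and cites Diestel's text, where essentially the same argument you wrote appears.
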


\begin{theo}\label{Theorem1}
Let G be a 2-connected planar graph embedded in the projective plane with all cycles 0-homologous. Then the embedding of $G$ can be isotoped to an affine embedding.
\end{theo}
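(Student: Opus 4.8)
The plan is to use the structural characterization from Lemma~\ref{lemma1}: build $G$ as a cycle $C_0$ followed by a sequence of $H$-path additions $C_0 = H_0 \subset H_1 \subset \cdots \subset H_n = G$, and prove by induction that each $H_i$ can be isotoped to an affine embedding, i.e., one missing some open disk neighborhood of a point on the boundary circle of the disk model of $\mathbb{R}P^2$. For the base case, a single cycle that is $0$-homologous bounds a disk by definition; since it bounds a disk, it can be isotoped into a small disk region and hence made affine. The inductive step is where the real work lies: assuming $H_i$ is embedded affinely, I want to show that adjoining the next $H$-path $P$ (a path meeting $H_i$ exactly in its two endpoints $u,v$) keeps the whole embedding affine after a further isotopy, using the hypothesis that \emph{every} cycle of $G$ — in particular every cycle of $H_{i+1}$ — is $0$-homologous.

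The key geometric fact I would isolate and prove first is a local lemma: if $H_i$ is affinely embedded and $P$ is an arc added so that $H_{i+1} = H_i \cup P$ still has all cycles $0$-homologous, then $P$ is isotopic rel endpoints to an arc disjoint from the boundary circle. The argument: $P$ together with any $u$--$v$ path in $H_i$ forms a cycle in $H_{i+1}$, which by hypothesis is $0$-homologous; a $0$-homologous (contractible) loop in $\mathbb{R}P^2$ lifts to a loop in the universal cover $S^2$, and the local picture forces $P$ to cross the boundary circle an even number of times. Then I can cancel crossings in pairs by an innermost-arc / bigon argument: take two consecutive crossing points of $P$ with the boundary, look at the sub-arc between them together with the boundary arc it cuts off, and push $P$ across, reducing the number of crossings while keeping the embedding an embedding and not creating any new cycles. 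Since $H_i$ itself is already off the boundary, no crossing of $H_i$ is introduced. Iterating clears all crossings of $P$.

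There is a subtlety I would address carefully: after isotoping $P$ off the boundary, the whole of $H_{i+1}$ lies in the disk model but possibly wrapping around, so I still need the disk-model picture to be genuinely affine, meaning there is a single boundary point (equivalently a single ``line at infinity'' arc) avoided by the entire graph. Here I would use that $H_{i+1}$ is a finite graph embedded in $\mathbb{R}P^2$ not hitting the boundary circle at all after the isotopies on every piece; a graph embedded in $\mathbb{R}P^2 \setminus (\text{boundary circle})$ is by definition affine, since $\mathbb{R}P^2$ minus that circle is an open disk. So the bookkeeping is: maintain as the inductive invariant ``$H_i$ is isotopic to an embedding disjoint from the boundary circle of the disk model,'' and the crossing-cancellation lemma above upgrades this from $H_i$ to $H_{i+1}$.

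The main obstacle I anticipate is making the bigon/crossing-removal step rigorous while respecting that $P$ is attached to $H_i$: when I push a sub-arc of $P$ across the boundary circle, I must ensure the region I push through is actually free of the rest of the graph, or else argue that any obstruction yields a cycle that is $1$-homologous, contradicting the hypothesis. Concretely, an innermost crossing-bigon of $P$ with the boundary might contain edges of $H_i$; I would need to show that such an enclosed configuration produces an essential cycle (for instance, because an arc of $H_i$ trapped against the boundary circle, together with part of $P$, would have to be non-contractible), contradicting that all cycles of $G$ are $0$-homologous. Handling this trapping analysis — essentially a careful homological argument about arcs and the mod-$2$ intersection number with the boundary circle — is the technical heart of the proof; the rest is standard induction via Lemma~\ref{lemma1}.
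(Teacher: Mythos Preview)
Your plan is correct and matches the paper's: both use the ear decomposition of Lemma~\ref{lemma1}, isotope the initial $0$-homologous cycle to an affine position, and then inductively absorb each new $H$-path while dragging the rest of $G$ along. The only tactical difference is in the inductive step: rather than cancelling crossings of $P$ with the boundary circle one bigon at a time, the paper observes directly that $P$ together with one of the two arcs of the outer cycle of $H_i$ bounds a disk $D_2$ (both new cycles being $0$-homologous forces $P$ to be inessential in the M\"obius-band outer face) and then shrinks $D_2$ toward the already-affine disk in a single ambient isotopy. The disk-shrinking and the iterated bigon moves are equivalent ways of realising the same isotopy.

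Your anticipated obstacle---edges of $H_i$ trapped inside a crossing bigon---does not actually arise, so the ``homological trapping analysis'' you flag as the technical heart is unnecessary. The interior of $P$ lies in a single face of the embedded $H_i$; if $P$ meets the boundary circle at all, that face must be the unbounded (M\"obius-band) face, whose interior is disjoint from $H_i$. Any bigon bounded by a sub-arc of $P$ and a sub-arc of the boundary circle therefore sits entirely inside that outer face and is automatically free of $H_i$. Whatever pieces of $G\setminus H_{i+1}$ happen to lie in the bigon are simply carried along by the ambient isotopy, exactly as in the paper's ``along with the rest of $G$'' clause. Once you note this, your crossing-cancellation step goes through cleanly with no further casework.
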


\begin{proof}
Consider an arbitrary embedding of $G$ in the projective plane, with all its cycles 0-homologous. Pick the `first' cycle as in Lemma \ref{lemma1}. Since this cycle is 0-homologous and bounds a disk $D$, it can be isotoped (along with the rest of $G$, which may not all be affine yet) to an affine embedding with no point arbitrarily close to the boundary. 

Consider adding a H-path $H_{1}$ to the cycle. It intersects the cycle at two vertices $v_{1}$ and $v_{2}$, which divide the cycle into two paths $P_{1}$ and $P_{2}$. Let $L_{1}$ be the cycle consisting of $H_{1}$ and $P_{1}$, and $L_{2}$ of $H_{1}$ and $P_{2}$. By assumption both $L_{1}$, $L_{2}$ are 0-homologous and let $L_{1}$ bound $D_{1}$, $L_{2}$ bound $D_{2}$. Then $D$ is contained in exactly one of $D_{1}$, $D_{2}$. Without loss of generality assume $D \subseteq D_{1}$, $\interior D \cap \interior D_{2} = \varnothing$, $D \cup D_{2} = D_{1}$.

\begin{figure}[H]
\begin{center}
    \includegraphics[scale=0.5]{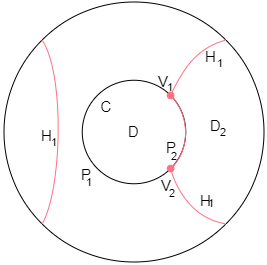}
    \caption{Adding a H-path $H_{1}$ to the cycle}
\end{center}
\end{figure}
 
Isotope $L_{2}$ (and the rest of $G$) to an affine embedding by deforming $D_{2}$ towards $D$, then every cycle in this embedding is affine. Isotope $L_{1}$ so that no point is arbitrary close to the boundary. Inductively add H-paths until the original 2-connected graph G is obtained in an affine embedding, equivalent to the original embedding. 
\end{proof}

Consider the following two definitions: the \textit{standard affine cycle} is the affine 0-homologous cycle $\{(x,y) | x^2 + y^2 = 1/2\}$ and the \textit{standard vertical cycle} is the 1-homologous cycle $\{(x,y) | x=0\}$ intersecting the boundary at exactly one point.

In order to prove the second theorem, that every 0-homologous cycle is isotopic to the standard affine cycle and every 1-homologous cycle is isotopic to the standard vertical cycle in the projective plane, the following lemma is needed. 

\begin{lem}
Given a cycle in $\mathbb{R}P^2$ crossing the boundary transversely at exactly $p_{1}$, $p_{2}$, ... $p_{2k}$ labeled clockwise, where $p_{i}$, $p_{j}$ represent the same point if and only if $i \equiv j \mod k$, then there exists an $i$ where $1\leq i \leq 2k-1$ such that there is a path that is a subgraph of the cycle, connecting $p_{i}$ and $p_{i+1}$ and that intersects the boundary only at $p_{i}$ and $p_{i+1}$.
\end{lem}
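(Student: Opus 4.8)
The plan is to pass to the disk model of the projective plane and reduce the statement to an elementary extremal fact about non-crossing chord diagrams. Write $\mathbb{R}P^2=\overline D/{\sim}$, where $\overline D$ is the closed unit disk and $\sim$ identifies antipodal points of $\partial D$, and let $C$ be the given cycle, drawn in $\overline D$ and meeting $\partial D$ transversely exactly at $p_1,\dots,p_{2k}$ in clockwise order, with $p_t$ and $p_{t+k}$ identified in $\mathbb{R}P^2$. Since $C$ is connected and meets $\partial D$ (so $k\ge 1$), removing its $k$ intersection points with $\partial D$ cuts $C$ into exactly $k$ open arcs, each contained in the open disk $\interior D$; taking closures, $C$ is a union of $k$ arcs $a_1,\dots,a_k$, each with its two endpoints on $\partial D$ and its interior in $\interior D$. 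I will call these arcs \emph{chords}.

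First I would record two facts about this decomposition. Transversality of the crossings forces each $p_t$ to be an endpoint of exactly one chord, so the $2k$ endpoints of $a_1,\dots,a_k$ are precisely $p_1,\dots,p_{2k}$, each used once; thus the chords form a perfect matching on $\{p_1,\dots,p_{2k}\}$. Moreover, since $C$ is embedded, the chords $a_1,\dots,a_k$ are pairwise disjoint, so no two of them cross inside $\overline D$. It is the embeddedness of $C$ as a whole, not just of each arc, that gives this, and this disjointness is the hypothesis that makes the rest work.

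The combinatorial core would then run as follows. Among all chords, pick one, say $a$, with endpoints $p_i,p_j$ where $1\le i<j\le 2k$ and $j-i$ is minimal; I would show $j=i+1$. If not, $j-i\ge 2$ and $p_{i+1}$ lies strictly between $p_i$ and $p_j$ along $\partial D$; let $a'$ be the chord incident to $p_{i+1}$, with other endpoint $p_m$. Since $a$ is an arc with endpoints on $\partial D$ and interior in $\interior D$, it splits $\overline D$ into two closed disks; letting $\Delta$ be the one meeting $\partial D$ in the boundary arc from $p_i$ to $p_j$ through $p_{i+1},\dots,p_{j-1}$, the disjointness of $a'$ from $a$ together with the fact that $a'$ has its interior in $\interior D$ forces $a'\subseteq\Delta$, hence $p_m$ lies on that boundary arc. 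As $p_m\notin\{p_i,p_j,p_{i+1}\}$ this gives $i+1<m\le j-1$, which is impossible when $j=i+2$ and otherwise produces a chord $a'$ whose endpoint indices differ by $m-(i+1)<j-i$, contradicting minimality. So $j=i+1$, and since $i<j\le 2k$ we get $1\le i\le 2k-1$. The chord $a$ is then a connected sub-arc of $C$ joining $p_i$ and $p_{i+1}$ with interior in $\interior D$ — a path, a subgraph of $C$, meeting $\partial D$ only at $p_i$ and $p_{i+1}$, as required.

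The step I expect to be the main obstacle is the chord decomposition in the first two paragraphs: one must verify carefully that transversality makes each boundary point the endpoint of a unique chord, and that it is the global embeddedness of $C$ — rather than mere planarity of the individual pieces — that rules out two chords crossing. Once that is in place, the innermost-chord argument is routine, and the bound $i\le 2k-1$ falls out for free from having ordered $i<j$.
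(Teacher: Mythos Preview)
Your proof is correct and follows essentially the same approach as the paper: both pass to the disk model, decompose the cycle into pairwise disjoint interior arcs (chords) forming a perfect matching on $p_1,\dots,p_{2k}$, and then locate an innermost chord joining two consecutive boundary points. The paper phrases the innermost-chord search as an iterative descent (the gaps $m_i=n_i-i$ strictly decrease), whereas you use the equivalent extremal formulation (minimize $j-i$ directly); your explicit justification of the chord decomposition via transversality and embeddedness is more detailed than the paper's, but the underlying argument is the same.
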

\begin{proof}
Consider the path connecting $p_{1}$ to some $p_{n_{1}}$, $2 \leq n_{1} \leq 2k$, which intersects the boundary at only $p_{1}$ and $p_{n_{1}}$. We call such a path an \textit{interior path}. If $n=2$, this is the desired path. Else, $p_{2}$ must connect to some $p_{n_{2}}$, $3 \leq n_{2} \leq n_{1}-1$ via an interior path that intersects the boundary only at $p_{2}$, $p_{n_{2}}$. Inductively consider paths connecting $p_{i}$ with $p_{n_{i}}$, $1 \leq i \leq k$. 

Set $m_{i}=n_{i}-i$, then $m_{i}$ is positive and strictly decreasing. Then $m_{a} = 1$ or $m_{a} = 2$ for some $a$: if $m_{b} \geq 3$ for all $b$, then for some $b$ there is an interior path connecting $p_{b+1}$ with $p_{n_{b+1}}$ such that $m_{b+1} < m_{b}$.

If $m_{a} = 1$, then $p_{a}$ with $p_{n_{a}}$ is the desired interior path. If $m_{a} = 2$, then the path connecting $p_{a+1}$ to another point must intersect the path connecting $p_{a}$ with $p_{n_{a}}$, contradicting the graph is projective planar.

\end{proof}
\begin{figure}[H]
\begin{center}
    \includegraphics[scale=0.52]{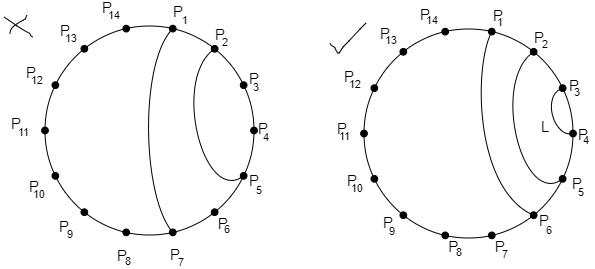}
    \caption{Existence of a path connecting $p_{i}$, $p_{i+1}$ intersecting the boundary only at $p_{i}$, $p_{i+1}$}
\end{center}
\end{figure}

A path connecting $p_{i}$, $p_{i+1}$ that intersects the boundary only at points $p_{i}$, $p_{i+1}$ is called a \textit{half-moon path}.

\begin{theo}\label{Theorem2}
Every 0-homologous cycle in the projective plane is isotopic to the standard affine cycle. Every 1-homologous cycle in the projective plane is isotopic to the standard vertical cycle.
\end{theo}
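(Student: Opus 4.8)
The plan is to treat the two statements separately, handling the $0$-homologous case first via Theorem~\ref{Theorem1} and then bootstrapping to the $1$-homologous case using the half-moon path lemma. For a $0$-homologous cycle $C$, note that $C$ is itself a $2$-connected planar graph all of whose cycles (just $C$ itself) are $0$-homologous, so Theorem~\ref{Theorem1} applies and $C$ can be isotoped to an affine embedding, i.e.\ a simple closed curve in the open disk. By the Jordan curve theorem (or the Schoenflies theorem in the plane), any such curve is isotopic in the open disk to the standard affine cycle $\{x^2+y^2 = 1/2\}$; I would cite Schoenflies and observe that the isotopy of the disk extends by the identity near the boundary to an isotopy of $\mathbb{R}P^2$. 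This disposes of the first claim.

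For the $1$-homologous case, let $C$ be a $1$-homologous cycle. After a small isotopy I may assume $C$ meets the boundary circle of the defining disk transversely in finitely many points; since traversing $C$ once lifts to a path in $S^2$ from a point to its antipode, $C$ crosses the boundary an odd number of times in $\mathbb{R}P^2$, i.e.\ an even number $2k$ of points on the disk boundary, identified in antipodal pairs, with $k$ odd. The strategy is to reduce $k$ by one through an isotopy. If $k \geq 3$ (equivalently $2k \geq 4$), apply the preceding lemma to obtain a half-moon path: a subarc $\gamma$ of $C$ joining two boundary-consecutive crossing points $p_i, p_{i+1}$ and meeting the boundary only at its endpoints. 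Then $\gamma$ together with the short boundary arc between $p_i$ and $p_{i+1}$ bounds a disk region $\Delta$ disjoint from the rest of $C$; I can isotope $\gamma$ across $\Delta$ and slightly past the boundary — using the antipodal identification this slides the two crossing points $p_i, p_{i+1}$ (and their antipodal copies) so that $\gamma$ no longer crosses the boundary, reducing the crossing count by $4$ in the disk picture, hence $k$ by $2$. Wait — since $k$ must stay odd, I should instead set up the reduction to go from $2k$ crossings to $2(k-2)$... actually the cleaner bookkeeping is: each half-moon elimination removes one antipodal pair of boundary points, taking $2k \mapsto 2(k-1)$, and I iterate until $k = 1$. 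Once $k = 1$, the cycle crosses the boundary at a single point of $\mathbb{R}P^2$, so it is an arc in the closed disk with endpoints an antipodal pair; such an arc is isotopic rel nothing (sliding its endpoints around the boundary and straightening the interior by Schoenflies-type reasoning in the disk) to the standard vertical cycle $\{x = 0\}$.

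The steps in order: (1) $0$-homologous case via Theorem~\ref{Theorem1} plus planar Schoenflies; (2) set up the $1$-homologous case, establishing transversality and that the number of boundary crossings is $2k$; (3) the inductive reduction step using the half-moon lemma to decrease $k$, carefully tracking how the isotopy acts under antipodal identification and verifying the half-moon disk region is innermost/embedded so the isotopy is supported away from the rest of $C$; (4) the base case $k=1$, showing any essential arc with antipodal endpoints is isotopic to the standard vertical cycle.

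The main obstacle is step (3): I must verify that the half-moon path $\gamma$ actually cobounds an \emph{embedded} disk with a boundary arc and that this disk meets $C$ only in $\gamma$, so that pushing $\gamma$ across it is a genuine ambient isotopy of $\mathbb{R}P^2$ not disturbing the other crossings; this requires choosing $\gamma$ to be innermost among half-moon paths (or arguing that nestedness of the interior paths, already exploited in the lemma's proof, gives an innermost choice), and carefully checking the effect of the identification so that the count really drops by exactly one antipodal pair and no new crossings are created. A secondary subtlety is making the final "straighten the arc" argument rigorous — this is again a Schoenflies/Alexander-trick statement for arcs in a disk with prescribed boundary behavior, which I would state and cite rather than prove in detail.
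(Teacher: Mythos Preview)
Your treatment of the $0$-homologous case is a genuine alternative to the paper's: you invoke Theorem~\ref{Theorem1} on the cycle itself (viewed as a $2$-connected planar graph whose only cycle is $0$-homologous) to get an affine embedding, then finish with Schoenflies. The paper instead runs the half-moon reduction uniformly for \emph{both} homology classes, decreasing the boundary-crossing count by $2$ at each step until it reaches $n=0$ or $n=1$, and only then splits into cases. Your route is economical since Theorem~\ref{Theorem1} is already in hand; the paper's unified argument has the minor advantage that the parity corollary (odd crossings for $1$-homologous, even for $0$-homologous) drops out immediately.

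For the $1$-homologous case you follow the paper's method, but your final bookkeeping is wrong and your first instinct was right. Pushing the half-moon arc across the boundary eliminates the crossings at \emph{both} of its endpoints $p_i$ and $p_{i+1}$ simultaneously; in the disk model this removes the two antipodal pairs $\{p_i,p_{i+k}\}$ and $\{p_{i+1},p_{i+1+k}\}$, so the disk count goes $2k\mapsto 2(k-2)$, not $2(k-1)$. This is exactly what the parity constraint you noted forces: the mod-$2$ intersection number of $C$ with the boundary curve is a homology invariant, so $k$ must remain odd throughout, which a drop of $1$ would violate. With the correct count the induction runs cleanly from odd $k$ down to $k=1$. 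Your worry about step~(3) --- that the half-moon cobounds an embedded disk disjoint from the rest of $C$ --- is legitimate, and the paper addresses it at the same level of rigor you propose, simply asserting that a sufficiently small neighborhood works.
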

\begin{proof}
Suppose a cycle $C$ crosses the boundary at $n$ points. Without loss of generality, apply ambient isotopy such that $C$ intersects the boundary only transversely. If $n=0$, $C$ bounds a disk and is isotopic to the standard affine cycle. If $n=1$, $C$ does not bound a disk and is isotopic to the standard vertical cycle.

Otherwise, consider a half-moon path $L$ which exist by Lemma 3.2. Without loss of generality, apply ambient isotopy so that $L$ does not touch other paths. Then take a sufficiently small neighbourhood $U$ that contains only $p_{i}$, $p_{i+1}$, $L$, and the connected pieces of the paths through $p_{i}$, $p_{i+1}$. Isotope the small neighbourhood over the boundary. Note the isotopy preserves whether the cycle bounds a disk, and the cycle after isotopy crosses the boundary at $n-2$ points.

Inductively applying this argument, and the resulting cycle $C'$ crosses the boundary at either 0 points or 1 point. If $C'$ crosses the boundary at 0 points, it bounds a disk and is isotopic to the standard affine cycle. Since $C$ is isotopic to $C'$, every 0-homologous cycle is isotopic to the standard affine cycle. If $C'$ crosses the boundary at 1 point, it does not bound a disk and is isotopic to the standard vertical cycle. Since $C$ is isotopic to $C'$, every 1-homologous cycle is isotopic to the standard vertical cycle.
\begin{figure}[H]
\begin{center}
    \includegraphics[scale=0.4]{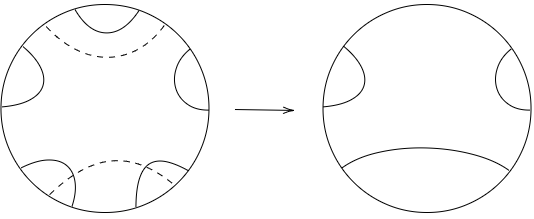}
    \caption{Reduced number of crossing with the boundary}
\end{center}
\end{figure}
\end{proof}
\begin{col}
A 1-homologous cycle crosses the boundary an odd number of times. A 0-homologous cycle crosses the boundary an even number of times.
\end{col}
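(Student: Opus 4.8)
The plan is to derive this corollary directly from Theorem \ref{Theorem2} together with the fact that isotopies in the projective plane can be taken to preserve transversality and the parity of the number of intersections with the boundary circle. First I would observe that the standard affine cycle crosses the boundary zero times and the standard vertical cycle crosses the boundary exactly once, so these two model cycles have crossing parity even and odd respectively, matching the claim. The remaining work is to show that an arbitrary cycle, when brought into transverse position with the boundary, has the same crossing parity as the model cycle it is isotopic to.

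The key step is a parity invariant. I would argue that for a cycle $C$ in $\mathbb{R}P^2$ in transverse position with the boundary circle $\partial$, the number of intersection points $|C \cap \partial| \bmod 2$ is an isotopy invariant. To see this, recall that $\mathbb{R}P^2$ is obtained from the disk by antipodal identification, so $\partial$ descends to an embedded circle (a $1$-homologous curve) in $\mathbb{R}P^2$, and the mod-$2$ intersection number of two transverse closed curves in a surface is a homological invariant, unchanged under homotopy of either curve. Since a $0$-homologous cycle has trivial $\mathbb{Z}/2$ homology class, its mod-$2$ intersection with $\partial$ is $0$; since a $1$-homologous cycle represents the nontrivial class and $\partial$ also represents the nontrivial class, and the $\mathbb{Z}/2$ intersection form on $H_1(\mathbb{R}P^2;\mathbb{Z}/2)$ is nondegenerate with the nontrivial class having self-intersection $1$, a $1$-homologous cycle meets $\partial$ an odd number of times. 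Alternatively, to stay closer to the elementary style of the excerpt, I would re-examine the reduction in the proof of Theorem \ref{Theorem2}: each application of the half-moon move pushes a neighborhood over the boundary and changes the crossing number by exactly $2$ (the two points $p_i$, $p_{i+1}$ are removed, and no new crossings are created since $U$ is chosen small), so the crossing count of $C$ and of the terminal cycle $C'$ differ by an even number. Since $C'$ is either the standard affine cycle (crossing number $0$) or the standard vertical cycle (crossing number $1$) according to whether $C$ is $0$- or $1$-homologous, the parity of $|C\cap\partial|$ is even in the first case and odd in the second.

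The main obstacle, such as it is, is purely bookkeeping: one must be careful that the half-moon isotopy genuinely changes the crossing number by exactly $2$ and never introduces spurious new intersections with the boundary. This is handled by the choice of a sufficiently small neighborhood $U$ containing only $p_i$, $p_{i+1}$, the half-moon path $L$, and short arcs of $C$ through those points, exactly as set up in the proof of Theorem \ref{Theorem2}; pushing $U$ over the boundary slides $p_i$ and $p_{i+1}$ off while the rest of $C$ is untouched. A secondary point to state carefully is that we may always put $C$ into transverse position with the boundary by a preliminary small ambient isotopy, which the piecewise-linear hypothesis guarantees; transversality is what makes "number of crossings" well-defined. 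With these points noted, the corollary follows immediately by reading off the parity from the two model cycles, so I would present it as a short remark-style proof rather than a long argument.
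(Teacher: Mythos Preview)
Your proposal is correct. The paper states this corollary without proof, the intended justification being exactly your second (elementary) argument: the half-moon reduction in the proof of Theorem~\ref{Theorem2} decreases the crossing count by precisely $2$ at each step, so the parity of the crossing number is preserved all the way down to the terminal cycle $C'$, which has $0$ crossings in the $0$-homologous case and $1$ crossing in the $1$-homologous case.

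Your first argument, via the mod-$2$ intersection pairing on $H_1(\mathbb{R}P^2;\mathbb{Z}/2)$, is a genuinely different route. It is cleaner conceptually---it identifies the parity directly as a homological invariant rather than tracking it through a sequence of moves---and it generalizes immediately (e.g., to non-embedded curves or to other surfaces). The paper's approach, by contrast, stays entirely within the elementary piecewise-linear framework already set up and requires no outside input about intersection forms, which fits the self-contained style of Section~3. Either argument is fine here; for a corollary of this size the elementary one is the natural choice given the surrounding material.
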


Next we generalize the following result:

\begin{lem}[Glover et al \cite{Glover}] \label{gloverlemma}
Any two 1-homologous cycles $C_1$ and $C_2$ in the projective plane intersect each other.
\end{lem}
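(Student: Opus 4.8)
The plan is to argue by contradiction: assume $C_1$ and $C_2$ are disjoint $1$-homologous cycles and derive that one of them must in fact bound a disk. The main tool is Theorem \ref{Theorem2}, which lets me normalize $C_1$. By that theorem there is an ambient isotopy of $\mathbb{R}P^2$ carrying $C_1$ to the standard vertical cycle $V=\{(x,y)\mid x=0\}$; applying this isotopy to the whole picture, I may assume $C_1=V$ and that $C_2$ is a cycle disjoint from $V$. Since ``bounding a disk'' is a topological property it is preserved by the isotopy, so the moved copy of $C_2$ is still $1$-homologous.

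Next I would identify the complement $\mathbb{R}P^2\setminus V$. In the disk model (the unit disk with antipodal boundary points identified) $V$ is the vertical diameter, whose two endpoints are identified to a single point; removing it leaves the open right half-disk and the open left half-disk together with their circular boundary arcs minus two points, and the antipodal identification glues the right circular arc to the left one. Thus these two half-disks are joined along an arc in their boundaries and the result is an open disk $D$ (homeomorphic to $\mathbb{R}^2$). Equivalently: cutting $\mathbb{R}P^2$ along an essential simple closed curve yields a disk.

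Finally, since $C_2$ is disjoint from $V$, it is a simple closed curve contained in $D$. By the Jordan--Schoenflies theorem $C_2$ bounds a disk inside $D$, and that disk sits inside $\mathbb{R}P^2$; hence $C_2$ is $0$-homologous, contradicting the assumption. This contradiction shows $C_1\cap C_2\neq\varnothing$.

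The only points I expect to need care are the computation that $\mathbb{R}P^2\setminus V$ is a disk, and (relatedly) that the isotopy supplied by Theorem \ref{Theorem2} can be taken ambient so that it actually transports $C_2$; both are routine but should be stated explicitly. As an alternative that sidesteps the disk-model computation, one can pass to the double cover $S^2\to\mathbb{R}P^2$: a $1$-homologous cycle lifts to a single antipodally invariant simple closed curve, which by the Jordan curve theorem separates $S^2$ into two disks interchanged by the antipodal map; if $C_1\cap C_2=\varnothing$ then the lift of $C_2$ lies entirely in one of these two disks while being invariant under the antipodal map, which is impossible since that map swaps the disks. I would carry out the first argument in detail, since it reuses the machinery already developed in this section, and mention the covering-space argument only as a remark.
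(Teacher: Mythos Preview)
Your argument is correct. The paper does not actually supply a proof of this lemma; it is quoted as a known result from Glover et al.\ \cite{Glover}, so there is no ``paper's own proof'' to compare against directly. That said, the paper immediately afterwards proves the stronger Theorem~\ref{Theorem3} (two transverse $1$-homologous cycles cross an odd number of times), from which the lemma follows at once; the method there is the same normalization of $C_1$ via Theorem~\ref{Theorem2} that you use, followed by an induction on the number of boundary crossings of $C_2$ rather than your Jordan--Schoenflies step.

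Your route is slightly more direct for the bare intersection statement: once $C_1$ is the standard vertical cycle $V$, the identification $\mathbb{R}P^2\setminus V\cong$ open disk (which you sketch correctly in the disk model) kills the problem in one stroke. There is no circularity, since Theorem~\ref{Theorem2} does not rely on Lemma~\ref{gloverlemma}. The two caveats you flag are the right ones: the isotopies in the proof of Theorem~\ref{Theorem2} are indeed ambient (they push small neighbourhoods across the boundary), so $C_2$ really is carried along; and your disk-model computation that $\mathbb{R}P^2\setminus V$ is a disk is accurate. The covering-space alternative you mention is also valid and standard; it would be fine as a remark, though it sits outside the toolkit the paper has built.
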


\begin{theo}\label{Theorem3}
Given two different 1-homologous cycles $C_{1}$ and $C_{2}$ in the projective plane that intersect only transversely, the number of crossings must be odd.
\end{theo}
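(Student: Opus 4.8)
The plan is to use Theorem~\ref{Theorem2} to put $C_2$ in standard position and then count the crossings of $C_1$ directly in the disk model, exploiting the corollary that a $1$-homologous cycle meets the boundary an odd number of times. First I would apply an ambient isotopy of $\mathbb{R}P^2$ carrying $C_2$ to the standard vertical cycle $V=\{x=0\}$; this exists by Theorem~\ref{Theorem2}, and since an ambient isotopy is a homeomorphism of $\mathbb{R}P^2$ it preserves the homology class of $C_1$ and the transverse crossing number $|C_1\cap C_2|$. So I may assume $C_2=V$ is the vertical diameter of $D^2$ and $C_1$ is a $1$-homologous cycle meeting $V$ transversely. A routine general-position perturbation of $C_1$ — which can only create or destroy crossings with $V$ in pairs, hence does not change $|C_1\cap V|$ modulo $2$ — lets me further assume that $C_1$ meets $\partial D^2$ transversely and misses the single point $[(0,1)]=V\cap\mathbb{R}P^1$; in particular no crossing of $C_1\cap V$ lies on $\partial D^2$.

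Next I would cut the picture open along $\partial D^2$. The diameter $V$ separates $\interior D^2$ into its left half ($x<0$) and its right half ($x>0$). Let $2N$ be the number of points of $C_1\cap\partial D^2$ in $D^2$; they occur in antipodal pairs, and $N$ equals the number of times $C_1$ crosses $\mathbb{R}P^1$, which is \emph{odd} because $C_1$ is $1$-homologous (the corollary to Theorem~\ref{Theorem2}). Traversing $C_1$, its intersection with $\interior D^2$ is a union of $N$ disjoint open arcs; each arc ends at a boundary point $b$ and the next one resumes at the antipode $-b$, so after choosing $b_k$ to be the endpoint of the $k$-th arc (in traversal order) the arcs can be labelled $A_1,\dots,A_N$ with $A_k$ running from $-b_{k-1}$ to $b_k$ (indices mod $N$, $b_0:=b_N$), and the $b_k$ form a section of the $N$ antipodal pairs. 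For a boundary point $b\neq(0,\pm1)$ set $\sigma(b)=+$ if $b$ lies on the right half of $\partial D^2$ and $-$ otherwise, so $\sigma(-b)=-\sigma(b)$; and note that an arc with interior in $\interior D^2$ and endpoints $e,e'$ on $\partial D^2$ crosses the separating diameter $V$ an even number of times if and only if $\sigma(e)=\sigma(e')$. Summing over the arcs,
\begin{align*}
|C_1\cap V| \;&\equiv\; \sum_{k=1}^{N}\bigl[\sigma(-b_{k-1})\neq\sigma(b_k)\bigr]
\;=\; \sum_{k=1}^{N}\bigl[\sigma(b_{k-1})=\sigma(b_k)\bigr]\\
\;&=\; N-\#\{\text{sign changes in the cyclic word }\sigma(b_1)\cdots\sigma(b_N)\}\pmod{2},
\end{align*}
and since a cyclic binary word has an even number of sign changes, $|C_1\cap V|\equiv N\pmod{2}$. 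As $N$ is odd, $|C_1\cap C_2|=|C_1\cap V|$ is odd, which is the claim. (This also re-proves Lemma~\ref{gloverlemma} over $\mathbb{R}P^2$, the count being at least $1$.)

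The step I expect to cause the most trouble is the bookkeeping in the cutting-open argument: one must verify that after removing the $2N$ boundary points each of the $N$ pieces of $C_1$ is a single arc with both endpoints on $\partial D^2$, that the rule ``end at $b$, resume at $-b$'' links the arcs into exactly the cyclic pattern $A_k\colon -b_{k-1}\to b_k$, and that the $b_k$ genuinely form a section of the antipodal pairs (no two arcs share an endpoint, and no $b_i$ equals $-b_j$). Making the general-position reductions of the first paragraph precise — while checking at each small modification that the parity of $|C_1\cap V|$ is unaffected — is the other point requiring care; everything else reduces to Theorem~\ref{Theorem2}, its corollary, and the elementary fact that a diameter separates the disk.
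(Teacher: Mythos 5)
Your proposal is correct, but it takes a genuinely different route from the paper's proof. The paper also begins by standardizing one of the cycles via Theorem \ref{Theorem2}, but then argues by induction on the number of boundary crossings of the other cycle: at each step a half-moon path (Lemma 3.2) is isotoped over the boundary, removing two boundary crossings while preserving the parity of intersections with the standardized cycle, until the second cycle meets the boundary only once, at which point the odd count is asserted as the base case. You instead standardize $C_2$ and compute the parity in one shot: cut $C_1$ along the boundary circle into $N$ arcs, with $N$ odd by the corollary to Theorem \ref{Theorem2}, record on which side of the diameter each exit point lies, and note that the number of arcs forced to cross an odd number of times equals $N$ minus the number of sign changes in a cyclic $\pm$-word, which is even. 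What your route buys: no iterated isotopies, and no dangling base case --- the paper's $k=0$ step (two curves each meeting the boundary once must cross an odd number of times) is exactly the kind of claim your sign count settles, and as you observe your argument also recovers Lemma \ref{gloverlemma}. What the paper's route buys: it reuses the half-moon machinery already developed for Theorem \ref{Theorem2}, so it is short to state in context. The bookkeeping you flag is sound because $C_1$ is embedded and crosses the boundary transversely at $N$ distinct points of the boundary circle, so each of the $2N$ disk-boundary points is the endpoint of exactly one arc and the $b_k$ automatically select one point from each antipodal pair; and your general-position adjustments only modify crossings inside small disks with endpoints fixed, which cannot change the parity.
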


\begin{proof}
Suppose we have two different 1-homologous cycles $C_{1}$ and $C_{2}$ that intersect only transversely. By Theorem \ref{Theorem2}, we may isotope so that one of the cycles, $C_{1}$, is the standard vertical cycle. Without loss of generality, isotope the cycles so that they do not intersect on the boundary. We may assume that $C_{2}$ intersects the boundary at points $p_{1}$, ..., $p_{4k+2}$ where $p_{i}$, $p_{j}$ represent the same point if and only if $ i \equiv j \mod 2k+1$.

\begin{figure}[H]
\begin{center}
    \includegraphics[scale=0.39]{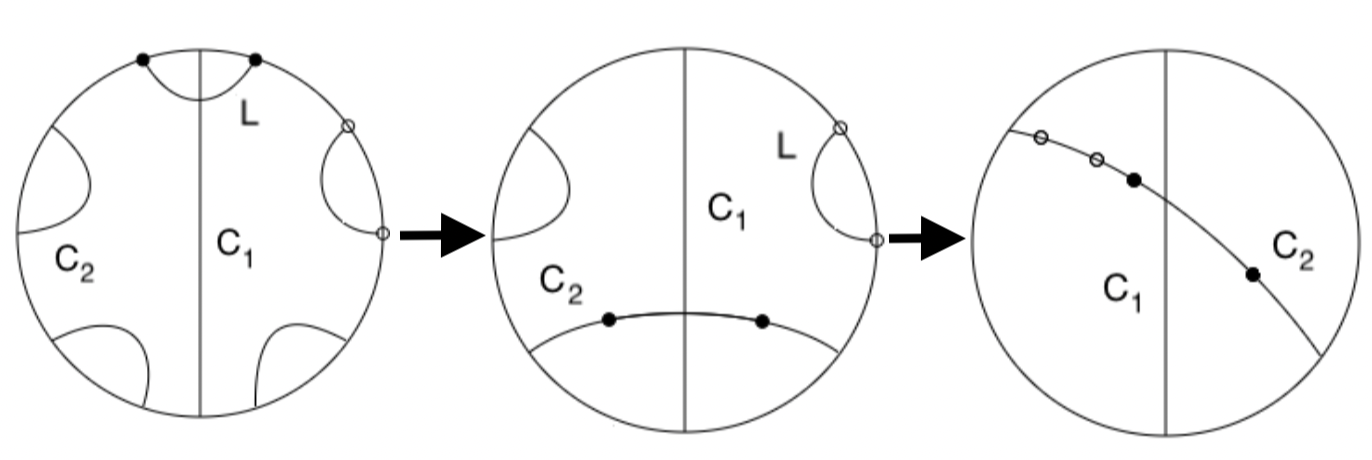}
    \caption{Reduced number of crossings of $C_{2}$ with the boundary by applying isotopy to $L$}
\end{center}
\end{figure}

If $k=0$, since $C_{1}$ and $C_{2}$ intersect only transversely, the number of crossings is odd. Otherwise, consider a half-moon interior path $L$ connecting $p_{i}$, $p_{i+1}$ which exists by Lemma 3.2. Take a sufficiently small neighbourhood $U$ that contains only $p_{i}$, $p_{i+1}$, $L$, and the connected pieces of the paths through $p_{i}$, $p_{i+1}$. Isotope the small neighbourhood over the boundary.

Note the number of crossing of $C_{1}$ with $C_{2}$ is preserved mod 2 while the resulting curve intersects the boundary at $4k-2 = 4(k-1)+2$ points. By induction on $k$, $C_{1}$ and $C_{2}$ can be isotoped to two cycles intersecting the boundary only once. Thus $C_1$ and $C_2$ intersect an odd number of times.  
\end{proof}

\section{Separating projective planar graphs}

The property of being outer-projective-planar is a minor closed property. By Robertson and Seymour's result \cite{robertson}, the set of minor-minimal nonouter-projective-planar graphs is finite. All such 32 graphs were characterized and divided into nine families \cite{Archdeacon}. The family members are related to each other by $\Delta-Y$ exchanges. 

Because outer-projective-planar graphs are nonseparating, nonouter-projective-planar graphs are of interest to us because they are good candidates to produce the beginning of the set of minor-minimal separating projective planar graphs. For each of the 32 graphs, we examined if they were separating. If the graph was nonseparating, we examined the given graph with a vertex or edge added, or a vertex splitting, to find a separating graph. In this way, we have hopefully characterized much of the set of minor-minimal separating projective planar graphs, but we have not yet characterized the complete set.

\subsection{$\alpha$ family}
The $\alpha$ family of minor-minimal nonouter-projective-planar graphs consists of $\alpha_1$, $\alpha_2$, and $\alpha_3$. The graph $\alpha_1$ is two disjoint copies of $K_4$. The graph $\alpha_2$ is two disjoint components: $K_4$ and $K_{3,2}$. The graph $\alpha_3$ is two disjoint copies of $K_{3,2}$.

\begin{proposition}
Every member of the $\alpha$ family is minor-minimal separating projective planar.
\end{proposition}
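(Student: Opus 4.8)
The plan is to establish two things for each of $\alpha_1 = K_4 \dot\cup K_4$, $\alpha_2 = K_4 \dot\cup K_{3,2}$, and $\alpha_3 = K_{3,2} \dot\cup K_{3,2}$: first, that the graph is separating projective planar, and second, that no proper minor has this property (minor-minimality). For the first part, I would fix an arbitrary embedding of the two-component graph in $\mathbb{R}P^2$. The key topological input is that $K_4$ and $K_{3,2}$ are each nonplanar-in-the-disk in the sense that neither is outerplanar; more precisely, each is nonouterplanar, so in any embedding of the component into a disk (i.e. an affine embedding of that component), some cycle must separate two vertices. By Theorem~\ref{Theorem2}, every cycle of the first component is isotopic either to the standard affine cycle or to the standard vertical cycle. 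Now I argue by cases on how the two components sit relative to the boundary circle of the projective plane. If one component, say $C_1$, has all its cycles $0$-homologous, then by Theorem~\ref{Theorem1} that component can be isotoped to an affine embedding; being nonouterplanar, it then contains a disk-bounding cycle $C$ separating two of its own vertices into the two complementary regions of $\mathbb{R}P^2 \setminus C$ — giving the separating conclusion. If instead $C_1$ contains a $1$-homologous cycle, then by Lemma~\ref{gloverlemma} every $1$-homologous cycle of the \emph{other} component $C_2$ must intersect it; but the two components are disjoint, so $C_2$ can contain no $1$-homologous cycle, and we apply the previous argument to $C_2$ instead. Either way the embedding has a disk-bounding separating cycle, so the graph is separating projective planar.

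For minor-minimality, I would delete or contract one edge and exhibit a nonseparating projective planar embedding of the resulting minor. It suffices to handle a single edge in each component (by symmetry of $K_4$ and the near-symmetry of $K_{3,2}$, plus the fact that deleting rather than contracting gives the "harder" — i.e. larger — minor, so checking edge-deletions suffices after also checking that contracting an edge of $K_4$ yields a planar, hence nonseparating, graph, and similarly for $K_{3,2}$). Deleting an edge from $K_4$ leaves an outerplanar graph; deleting an edge from $K_{3,2}$ likewise leaves an outerplanar graph. The crucial observation is then: if $H_1$ is outerplanar and $H_2$ is outer-projective-planar, then $H_1 \dot\cup H_2$ is nonseparating projective planar — embed $H_2$ with all vertices on a vertical $1$-homologous curve's complementary face and $H_1$ affinely in a small disk, arranging that no $0$-homologous cycle separates a vertex of one piece from a vertex of the other. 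Since $K_4$ minus an edge and $K_{3,2}$ minus an edge are both outerplanar (in particular outer-projective-planar), and any contraction of either is planar, every proper minor of $\alpha_i$ is a disjoint union of an outerplanar graph with an outer-projective-planar graph (or smaller), hence nonseparating.

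The main obstacle I anticipate is the minor-minimality half, specifically proving cleanly that "outerplanar $\dot\cup$ outer-projective-planar $\Rightarrow$ nonseparating" — one must produce an explicit embedding and verify that \emph{no} $0$-homologous cycle separates a vertex of one component from a vertex of the other, which requires controlling all cycles of the outerplanar piece (they all live in a small disk near the center) as well as all $0$-homologous cycles of the outer-projective-planar piece (by Theorem~\ref{Theorem2} these are isotopic to the standard affine cycle and can be pushed to one side, away from both components' vertices). A secondary bookkeeping obstacle is reducing the "check all edges and all contractions" task to a manageable list using the vertex- and edge-transitivity of $K_4$ and the automorphism structure of $K_{3,2}$; I would state explicitly that $K_4$ is edge-transitive and $K_{3,2}$ has two edge-orbits (all edges are actually equivalent here since every edge joins the size-$2$ side to the size-$3$ side), so only one edge per component need be examined, and contractions are subsumed because contracting any edge of $K_4$ gives a planar graph and contracting any edge of $K_{3,2}$ gives a planar graph, and the disjoint union of a planar graph with an outer-projective-planar graph is plainly nonseparating.
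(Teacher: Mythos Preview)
Your overall strategy matches the paper's: use Lemma~\ref{gloverlemma} to force one component to have all $0$-homologous cycles, then invoke Theorem~\ref{Theorem1} and nonouterplanarity; for minor-minimality, use that $K_4$ and $K_{3,2}$ are minor-minimal nonouterplanar (Theorem~\ref{halin}) so any proper minor has an outerplanar component, then exhibit a nonseparating embedding.

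There are, however, two genuine slips. First, your sentence ``being nonouterplanar, it then contains a disk-bounding cycle $C$ separating two of \emph{its own} vertices'' is false for both $K_4$ and $K_{3,2}$: every cycle of $K_4$ is a $3$-cycle or a Hamiltonian $4$-cycle, and every cycle of $K_{3,2}$ is a $4$-cycle, so in each case at most one vertex lies off any given cycle. What actually makes the embedding separating is that the \emph{other} component supplies the vertex on the far side of $C$ (or, if the other component sits inside an inner face, one takes that face's bounding cycle instead). The paper handles this correctly by simply saying the affine component ``contains a separating cycle'' in the full graph. Second, your closing claim that ``the disjoint union of a planar graph with an outer-projective-planar graph is plainly nonseparating'' is false --- $\alpha_1 = K_4 \dot\cup K_4$ itself is such a disjoint union and is separating. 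What you need (and what you correctly argue earlier) is \emph{outerplanar} $\dot\cup$ outer-projective-planar; contracting an edge of $K_4$ or $K_{3,2}$ yields an outerplanar graph, not merely a planar one, so your earlier embedding argument already covers contractions without the faulty shortcut.

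Finally, your minor-minimality discussion is more laborious than necessary: once you cite Theorem~\ref{halin}, \emph{every} proper minor of $K_4$ or $K_{3,2}$ (deletion or contraction) is automatically outerplanar, so there is no need to separately track edge-orbits or argue that contractions are ``subsumed'' by deletions.
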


\noindent The proof of this proposition relies on the following theorem:

\begin{theo}[Halin \cite{Halin}, Chartrand and Harary \cite{CH}]\label{halin}
A graph $G$ is outerplanar if and only if $G$ does not contain $K_4$ nor $K_{3,2}$ as a minor.
\end{theo}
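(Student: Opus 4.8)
The plan is to prove the two halves of minor-minimality separately, treating all three members uniformly by writing $G = A \dot\cup B$ with each of $A,B$ equal to $K_4$ or $K_{3,2}$. First I would verify $G$ is separating, and then that every proper minor is nonseparating. Throughout I use three facts: $K_4$ and $K_{3,2}$ are $2$-connected and planar; by Theorem \ref{halin} they are precisely the minor-minimal nonouterplanar graphs, so every proper minor of either is outerplanar; and a single $K_4$ or $K_{3,2}$ is outer-projective-planar (each arises from some $\alpha_i$ by deleting one whole component, and the $\alpha_i$ are minor-minimal nonouter-projective-planar). In particular $G$ is projective planar, since the two planar components embed in disjoint disks of $\mathbb{R}P^2$.

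For the separating direction I would fix an arbitrary embedding of $G$ in $\mathbb{R}P^2$. Because $A$ and $B$ are disjoint, no cycle of $A$ meets a cycle of $B$, so two such cycles cannot both be $1$-homologous by Lemma \ref{gloverlemma}; hence at most one component carries a $1$-homologous cycle, and I may assume all cycles of $B$ are $0$-homologous. By Theorem \ref{Theorem1} an ambient isotopy (carrying $A$ along) places $B$ affinely in a disk $D_B$, and $A$, being connected, lies in a single face of $B$. If $A$ lies in the outer face, I take $C$ to be the outer boundary cycle of $B$, bounding a disk $D$: since $B$ is nonouterplanar some vertex $v$ of $B$ lies in $\interior D$ while all of $A$ lies outside $D$, so the $0$-homologous cycle $C$ separates $v$ from any vertex of $A$. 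If instead $A$ lies in a bounded face of $B$, then $A$ is itself affine, and by nonouterplanarity of $A$ its outer boundary cycle $C' \subseteq A$ bounds a disk $D'$ with a vertex $u$ of $A$ in $\interior D'$ and no vertex of $B$ in $\interior D'$, so $C'$ separates $u$ from any vertex of $B$. Either way a separating $0$-homologous cycle exists, so $G$ is separating.

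For minor-minimality, since having a nonseparating embedding is a minor-closed property, it suffices to check the single-step minors (deleting a vertex, deleting an edge, contracting an edge): every proper minor is a minor of one of these and hence also nonseparating. Any single operation lies in one component, producing (say) $A' \dot\cup B$ where $A'$ is a proper minor of $A \in \{K_4, K_{3,2}\}$, so $A'$ is outerplanar by Theorem \ref{halin}. I would then exhibit a nonseparating embedding by showing $A' \dot\cup B$ is outer-projective-planar: embed $B$ with all its vertices on a single face $F_B$, then place the outerplanar graph $A'$ as a blob inside $F_B$ with all of its vertices on its own outer boundary; the region of $F_B$ outside this blob is one face whose closure contains every vertex of both $A'$ and $B$. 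As outer-projective-planar graphs are nonseparating, $A' \dot\cup B$ is nonseparating, completing the argument.

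The main obstacle I anticipate is rigor in the second separating case: confirming that when $A$ sits in a bounded face of the affine $B$ the cycle $C'$ really bounds a disk whose interior meets $A$ but contains no vertex of $B$, and, relatedly, that the isotopy from Theorem \ref{Theorem1} can be taken ambient so it makes $B$ affine without changing which face of $B$ contains $A$. Once these positional facts are pinned down, the rest follows cleanly from the dichotomy supplied by Lemma \ref{gloverlemma} and the minor-minimal nonouterplanarity of $K_4$ and $K_{3,2}$ from Theorem \ref{halin}.
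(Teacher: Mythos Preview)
Your proposal does not address the stated theorem at all. The statement you were asked to prove is Theorem~\ref{halin}: a graph is outerplanar if and only if it has no $K_4$ or $K_{3,2}$ minor. This is a classical result of Halin and of Chartrand--Harary which the paper simply cites without proof. What you have written is instead a proof of Proposition~4.1, that each member of the $\alpha$ family is minor-minimal separating projective planar; indeed, you \emph{invoke} Theorem~\ref{halin} as an ingredient in your argument, so your proposal is circular as a proof of the stated theorem.

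If your intent was actually Proposition~4.1, then your argument is essentially the paper's, with slightly more case analysis. The paper argues just as you do that by Lemma~\ref{gloverlemma} at most one component can carry a $1$-homologous cycle, so one component $B$ has all cycles $0$-homologous and by Theorem~\ref{Theorem1} is equivalent to an affine embedding; being nonouterplanar it then contains a separating cycle. The paper does not split into your two subcases (whether $A$ lies in the outer or a bounded face of $B$), and your worry about the ambient nature of the isotopy is unnecessary: once $B$ is affine and nonouterplanar, its outer cycle already separates an interior vertex of $B$ from \emph{any} vertex of $A$ regardless of which face $A$ occupies, because $A$ is disjoint from $B$. For minor-minimality the paper embeds the surviving full component with a $1$-homologous cycle and the outerplanar reduced component affinely, whereas you show the minor is outer-projective-planar; both routes work and are close in spirit.
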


\begin{proof}
Each of the members of the $\alpha$ family have two components. Both of these components contain separating cycles if they are affine embedded with another component. If both components were embedded in the projective plane with 1-homologous cycles, they would intersect, by Lemma \ref{gloverlemma}. This means, in a projective planar drawing, that at least one component has all 0-homologous cycles.

Since the embedding of the component with all 0-homologous cycles is equivalent to an affine embedding of the component by Theorem \ref{Theorem1}, the embedding of the component with all 0-homologous cycles will also contain a separating cycle. Since at least one component of every embedding of the $\alpha$ family has all 0-homologous cycles, every embedding will contain a separating cycle. So, the $\alpha$ family is separating projective planar. 

The graphs $K_4$ and $K_{3,2}$ are minor-minimal nonouterplanar graphs. So, for a minor of a graph in the $\alpha$ family, one component can be affine embedded without a separating cycle. The other component can be embedded with a 1-homologous cycle, with both components together nonseparating. Therefore, the members of the $\alpha$ family are minor-minimally separating projective planar.
\end{proof}

\subsection{$\beta$ family}
The $\beta$ family of minor-minimal nonouter-projective-planar graphs are pictured in Figure \ref{fig:minipage1}-\ref{fig:minipage6}. The $\beta$ family consists of $K_4$ and $K_{3,2}$ copies glued at a vertex.

\begin{proposition}
Every member of the $\beta$ family is nonseparating projective planar.
\end{proposition}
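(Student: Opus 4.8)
The plan is to exhibit, for each member $G$ of the $\beta$ family, an explicit projective planar embedding with no disk-bounding (0-homologous) cycle that separates two vertices of $G$. Since each $\beta$-family graph is a copy of $K_4$ or $K_{3,2}$ glued to another such copy at a single cut vertex $v$, the key structural feature is that $v$ is a cut vertex, so every cycle of $G$ lies entirely in one of the two blocks. The idea is to embed one block $B_1$ so that it uses the nonorientability of $\mathbb{R}P^2$ — placing $B_1$ so that it has an essential (1-homologous) cycle through $v$ and all of $B_1$'s vertices are accessible "from the other side" of the cross-cap — and then embed the second block $B_2$ affinely in a small disk near $v$, in the face of $B_1$ that touches $v$. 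Concretely: both $K_4$ and $K_{3,2}$ have outer-projective-planar embeddings (indeed each is minor-minimally nonouterplanar but is outer-projective-planar), and moreover each admits a projective planar embedding in which a chosen vertex lies on the boundary of a face incident to an essential curve; I would use such an embedding for $B_1$.

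The main steps, in order, are: (1) record that $v$ is a cut vertex, so every cycle of $G$ is a cycle of $B_1$ or of $B_2$; (2) for each of the (up to symmetry, few) combinations $\{B_1, B_2\} \subseteq \{K_4, K_{3,2}\}$, draw the explicit embedding described above, using the figures referenced in the statement; (3) check that in this embedding no 0-homologous cycle separates a vertex of $B_1$ from a vertex of $B_2$ — this follows because $B_2$ sits inside a single face of $B_1$ abutting $v$, so a 0-homologous cycle of $B_1$ either bounds a disk disjoint from $B_2$ or bounds a disk containing $v$ (hence all of $B_2$) on one side, and in neither case is a vertex of $B_1$ separated from a vertex of $B_2$ by the relevant disk boundary being in $G$; and (4) check that no 0-homologous cycle of $B_1$ separates two vertices of $B_1$, and likewise no 0-homologous cycle of $B_2$ separates two vertices of $B_2$ — using that both $K_4$ and $K_{3,2}$ have embeddings in the projective plane with a vertex (our cut vertex) on an essential curve and with all other vertices "on one side," so that every disk-bounding cycle has all remaining vertices in a single complementary region. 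One can phrase (4) cleanly by noting each block, suitably embedded, has all its vertices on the boundary of one face (outer-projective-planarity), and a disk bounding cycle in an outer-projective-planar embedding cannot separate two vertices lying on that common face.

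The step I expect to be the main obstacle is (3)–(4): producing a single embedding of the glued graph in which simultaneously (a) $B_1$ is drawn essentially, (b) $B_2$ fits affinely near $v$, and (c) no disk-bounding cycle anywhere separates vertices — in particular verifying that the essential embedding of $B_1$ leaves its vertices unseparated by its own 0-homologous cycles, and that inserting $B_2$ at $v$ does not create a new separating configuration. The cleanest way around this is to lean on outer-projective-planarity: embed $B_1$ outer-projective-planarly with all of $B_1$'s vertices (including $v$) on a common face $F$, embed $B_2$ outer-projective-planarly with all its vertices on a common face, and glue the two along $v$ inside a neighborhood contained in $F$ and in $B_2$'s outer face. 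Then every vertex of $G$ lies on a single face of the combined embedding, so $G$ itself is outer-projective-planar, and since outer-projective-planar graphs are nonseparating (as noted in the text), $G$ is nonseparating projective planar. The remaining work is then purely the routine check that such a glued outer-projective-planar embedding exists for each $\beta$-family member, which the figures already supply.
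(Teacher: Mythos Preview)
There is a genuine gap. Your ``cleanest'' approach concludes that each $\beta$-graph $G$ is outer-projective-planar, but the $\beta$ family is by definition a family of minor-minimal \emph{non}outer-projective-planar graphs, so this is an immediate contradiction. The gluing argument fails for a concrete reason: for $K_4$ or $K_{3,2}$ to be embedded with all vertices on one face in $\mathbb{R}P^2$, that embedding must use an essential cycle (these graphs are nonouterplanar). When you place $B_2$ ``in a small disk near $v$'' inside a face of $B_1$, you are giving $B_2$ an affine (planar) embedding, which cannot be outerplanar; so the vertices of $B_2$ are not all on one face of the combined picture.

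This same issue breaks your earlier steps (1)--(4). With $B_2$ embedded affinely, $B_2$ (being $K_4$ or $K_{3,2}$) has a $0$-homologous cycle $C$ with a vertex of $B_2$ strictly inside the disk it bounds; every vertex of $B_1$ other than $v$ lies outside that disk, so $C$ is a separating cycle of $G$. Your enumeration in (3)--(4) checks $B_1$-cycles across blocks and each block's cycles within that block, but omits exactly this case: a $B_2$-cycle separating a $B_2$-vertex from a $B_1$-vertex. The paper's proof simply exhibits explicit nonseparating embeddings (the figures); in those embeddings \emph{both} blocks carry essential cycles through the shared cut vertex $v$, which is allowed because two essential cycles may intersect (necessarily an odd number of times) precisely at $v$. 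Any correct argument must use such an embedding rather than an affine $B_2$.
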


\begin{proof}
Every member in the $\beta$ family has a nonseparating embedding in the projective plane as illustrated in Figure \ref{fig:minipage1} to Figure \ref{fig:minipage6}.
\end{proof}

\begin{figure}[H]
\centering
\begin{minipage}[b]{0.45\linewidth}
    \includegraphics[scale=0.4]{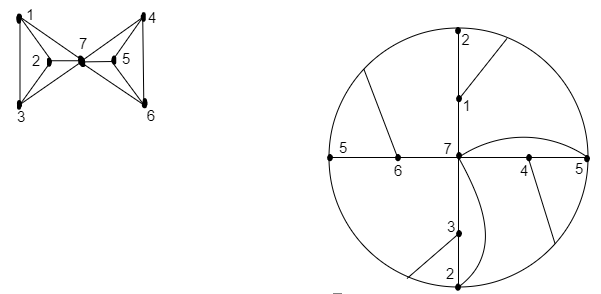}
    \caption{$\beta_1$}
    \label{fig:minipage1}
\end{minipage}
\quad
\begin{minipage}[b]{0.45\linewidth}
    \includegraphics[scale=0.4]{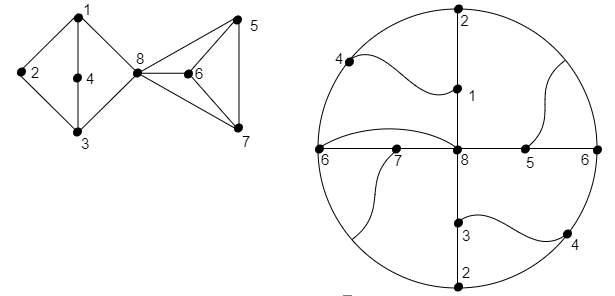}
    \caption{$\beta_2$}
    \label{fig:minipage2}
\end{minipage}
\end{figure}

\begin{figure}[H]
\centering
\begin{minipage}[b]{0.45\linewidth}
    \includegraphics[scale=0.4]{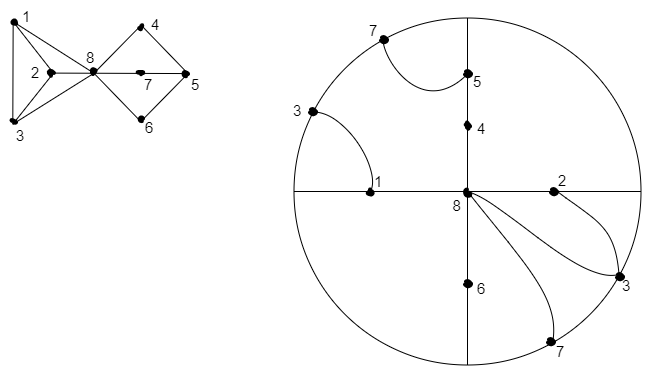}
    \caption{$\beta_3$}
    \label{fig:minipage3}
\end{minipage}
\quad
\begin{minipage}[b]{0.45\linewidth}
    \includegraphics[scale=0.4]{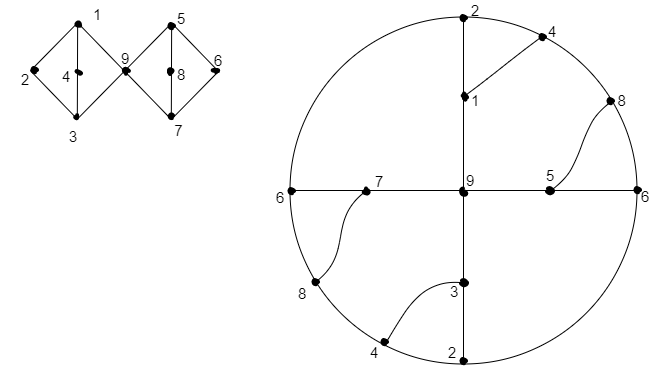}
    \caption{$\beta_4$}
    \label{fig:minipage4}
\end{minipage}
\end{figure}

\begin{figure}[H]
\centering
\begin{minipage}[b]{0.45\linewidth}
    \includegraphics[scale=0.4]{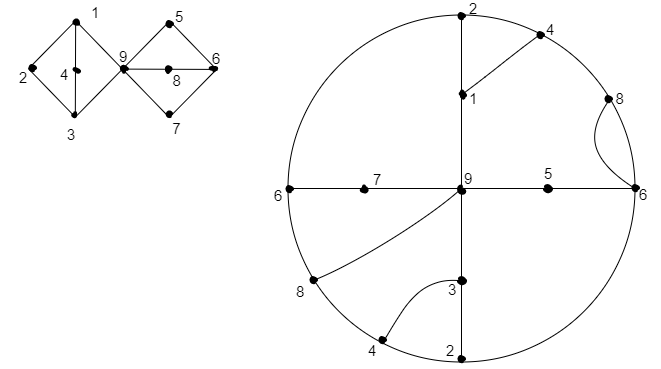}
    \caption{$\beta_5$}
    \label{fig:minipage5}
\end{minipage}
\quad
\begin{minipage}[b]{0.45\linewidth}
    \includegraphics[scale=0.4]{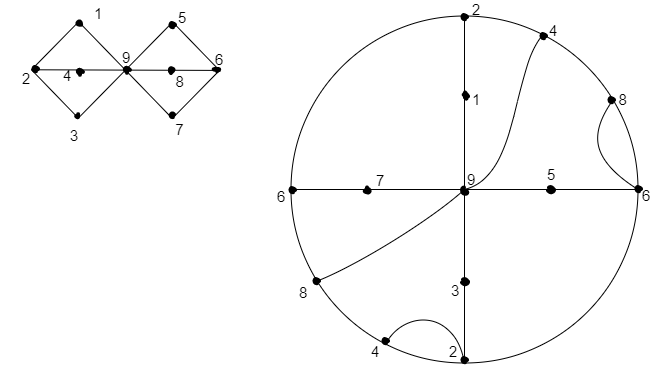}
    \caption{$\beta_6$}
    \label{fig:minipage6}
\end{minipage}
\end{figure}

Note that each embedding in Figure \ref{fig:minipage1} to Figure \ref{fig:minipage6} is weakly separating. In Figure \ref{fig:minipage1}, no topological path connects vertices 2 and 5 without its interior intersecting the graph; in Figure \ref{fig:minipage2}, no path connects vertices 2 and 6; in Figure \ref{fig:minipage3}, no path connects vertices 2 and 7; in Figure \ref{fig:minipage4}, no path connects vertices 2 and 8; in Figure \ref{fig:minipage5}, no path connects vertices 2 and 8; in Figure \ref{fig:minipage6}, no path connects vertices 1 and 8. We conjecture that every member in the $\beta$ family is weakly separating.

\subsection{$\epsilon$ family}
The $\epsilon$ family of minor-minimal nonouter-projective-planar graphs are pictured in Figure \ref{fig:FE1} and \ref{fig:FE2}.
\begin{proposition}
Every member of the $\epsilon$ family is strongly nonseparating, and hence nonseparating projective planar.
\end{proposition}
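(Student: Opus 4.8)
The plan is to exhibit, for each of the two graphs in the $\epsilon$ family, an explicit projective planar embedding and verify directly that the strong nonseparating condition holds: for every ordered pair of vertices $(u,v)$, there is a topological path in $\mathbb{R}P^2$ from $u$ to $v$ meeting the graph only at $u$ and $v$. First I would present the embedding pictures (analogous to Figures~\ref{fig:minipage1}--\ref{fig:minipage6} for the $\beta$ family), choosing in each case an embedding that has a $1$-homologous cycle, since an embedding in which every cycle is $0$-homologous is essentially affine by Theorem~\ref{Theorem1} and hence behaves like a planar embedding, which cannot be strongly nonseparating once the graph is nonouterplanar. So the key structural input is that each $\epsilon$-family graph, being projective planar but nonouter-projective-planar, nonetheless admits an embedding whose complement of the graph is connected in a strong sense.

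The verification itself I would organize around the face structure of the chosen embedding. Once the embedding is fixed, list its faces; two vertices lying on a common face are trivially connected by a path through the interior of that face. So it suffices to check pairs $(u,v)$ whose vertices do not share a face, and for those to build a path that passes from one face to an adjacent face through the relative interior of a shared bounding edge — i.e., crossing an edge at a non-vertex point is not allowed, so instead the path must route around, threading through a sequence of faces that share vertices. Concretely, I would argue that the ``vertex-face incidence'' structure of the embedding is such that any two faces are joined by a chain of faces consecutively sharing a vertex, and then a path can be concatenated face-by-face through those shared vertices; but a path is not allowed to pass through a third vertex, so the cleaner formulation is: the union of the open faces together with the vertices (but not the open edges) is path-connected, and moreover remains so after deleting all vertices except the two chosen ones. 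This last point is where the $1$-homologous cycle is doing real work — cutting along the essential cycle opens the projective plane into a disk, and in that disk the chosen embedding places all vertices on the outer boundary in a suitable cyclic order, making the ``outside region'' a single face touching every vertex.

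The main obstacle I expect is precisely making the preceding connectivity claim airtight rather than picture-dependent: one must certify that the exhibited embedding really does have a face (or a controlled union of faces) incident to every vertex, and that removing the other vertices does not disconnect the relevant region. I would handle this by reducing to outer-projective-planarity-like data: show that each $\epsilon$-family graph minus one well-chosen vertex (or edge) is outer-projective-planar, embed that piece with all remaining vertices on one face, then add the deleted vertex/edge back inside that face and track how the face splits, checking that the new faces still collectively touch all vertices. A secondary subtlety is consistency of the path-choices across all pairs $(u,v)$ — but since we only need existence of a path for each pair separately (not a common system of disjoint paths), this does not actually arise, and a pair-by-pair check over the finitely many vertices of the two graphs suffices. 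Finally, ``strongly nonseparating implies nonseparating projective planar'' is immediate from the definitions given in the introduction, so no extra argument is needed for the second clause of the proposition.
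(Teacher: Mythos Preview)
Your overall approach --- exhibit an explicit projective planar embedding for each graph and check the strong nonseparating condition directly --- is exactly what the paper does (by picture). Two points, however, need correction.

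First, a factual slip: the $\epsilon$ family has six members $\epsilon_1,\dots,\epsilon_6$, not two. The paper handles $\epsilon_1,\epsilon_2,\epsilon_3,\epsilon_5$ with one set of drawings and $\epsilon_4,\epsilon_6$ with another.

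Second, and more seriously, your proposed verification mechanism contains a gap. A path in $\mathbb{R}P^2$ meeting $G$ only at its endpoints $u,v$ lies entirely in $(\mathbb{R}P^2\setminus G)\cup\{u,v\}$; since the open faces are pairwise disjoint and the path cannot cross an edge or a third vertex, the path is confined to a \emph{single} face. Hence strongly nonseparating is equivalent to: every pair of vertices lies on the boundary of a common face. Your ``chain of faces sharing a vertex'' idea therefore does not buy anything beyond the pairwise condition, and your shortcut --- a single face incident to every vertex --- is precisely outer-projective-planarity, which these graphs, being minor-minimal nonouter-projective-planar, cannot satisfy. So the embeddings you seek must have the subtler property that any \emph{two} vertices share \emph{some} face, while no single face sees them all. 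Your reduction ``delete a vertex to get outer-projective-planar, then reinsert'' is a sensible way to construct such embeddings, but you then need to check the pairwise face-sharing condition directly for the six graphs; the paper does exactly this via the figures.
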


\begin{proof}
For $\epsilon_1$, $\epsilon_2$, $\epsilon_3$ and $\epsilon_5$ consider the the embeddings in Figure \ref{fig:FE1}. Since all vertices can be connected by paths that intersect the graph only at endpoints, they are strongly nonseparating. For $\epsilon_4$ and $\epsilon_6$, consider the the embeddings in Figure \ref{fig:FE2}. They are also strongly nonseparating projective planar. Therefore all members of the $\epsilon$ family are nonseparating projective planar.
\end{proof}

\begin{figure}[H]
    \centering
    \includegraphics[scale=0.23]{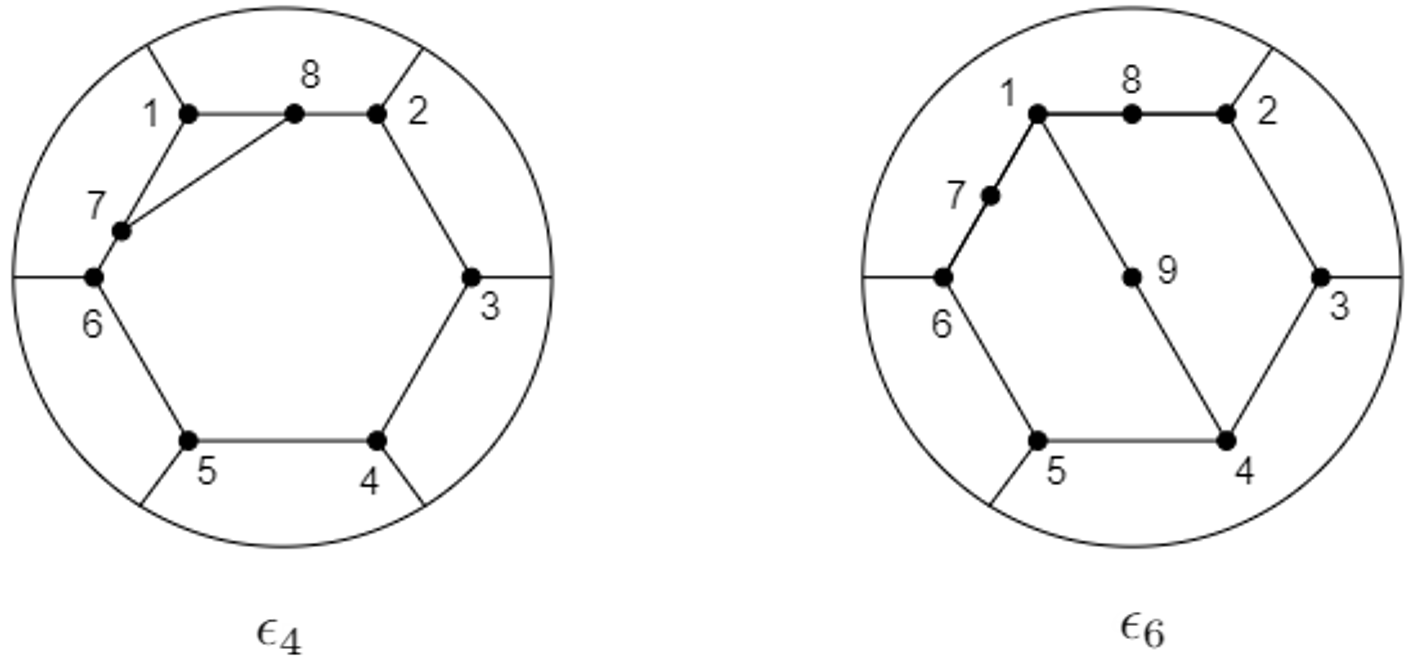}
    \caption{Strongly nonseparating projective planar drawings of $\epsilon_4$ and $\epsilon_6$}
    \label{fig:FE2}
\end{figure}

\begin{figure}[H]
    \centering
    \includegraphics[scale=0.35]{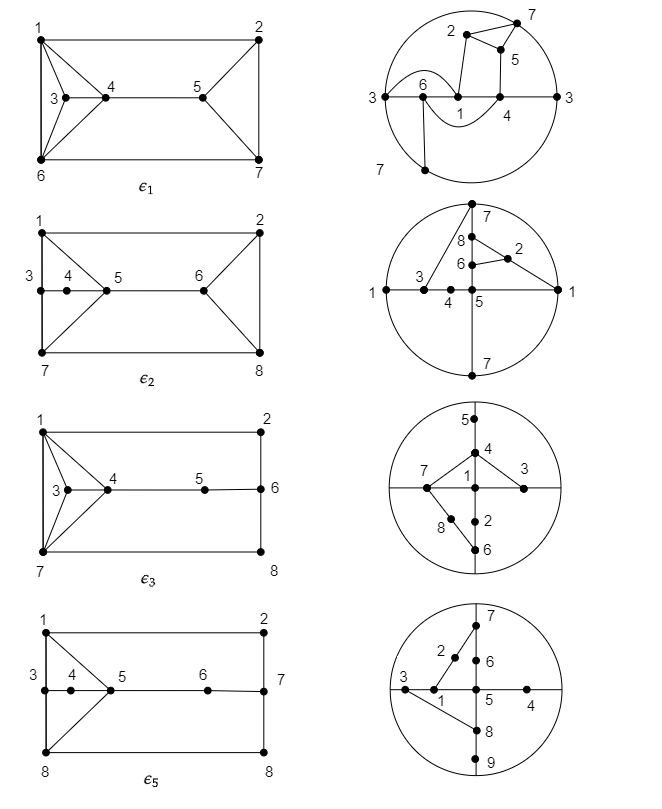}
    \caption{Strongly nonseparating projective planar drawings of $\epsilon_1$, $\epsilon_2$, $\epsilon_3$ and $\epsilon_5$}
    \label{fig:FE1}
\end{figure}

\subsection{$\delta$ family}
The $\delta$ family of minor-minimal nonouter-projective-planar graphs has two graphs shown in Figure \ref{D1D2}.
\begin{center}

\begin{figure}[H]
\begin{center}
\includegraphics[scale=0.18]{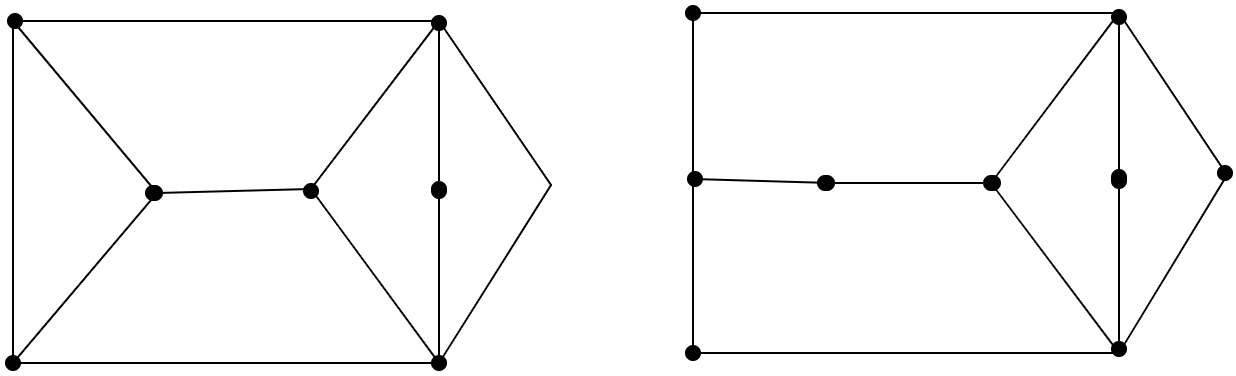}
\caption{The $\delta$ Family includes $\delta_1$ (left) and $\delta_2$ (right)}
\label{D1D2}
\end{center}
\end{figure}

\end{center}

\begin{lem}\label{CycleAddion}
For any two cycles $C_1$ and $C_2$ intersecting along an arc, $D$, define the sum of $C_1$ and $C_2$ to be $(C_1\cup C_2)\setminus D$. The sum of two 0-homologous cycles and the sum of two 1-homologous cycles are 0-homologous cycles, and the sum of a 1-homologous cycle and a 0-homologous cycle is 1-homologous 

\end{lem}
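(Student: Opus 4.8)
The plan is to reduce the statement to a parity count of intersections with the boundary of the defining disk, using the Corollary to Theorem~\ref{Theorem2} (a cycle is $0$-homologous exactly when it crosses the boundary an even number of times, and $1$-homologous exactly when it crosses an odd number of times; these cases are exhaustive since every cycle is one or the other). Write $x,y$ for the two endpoints of the common arc $D$, and set $A_1=\overline{C_1\setminus D}$ and $A_2=\overline{C_2\setminus D}$, so that $C_1=D\cup A_1$ and $C_2=D\cup A_2$. Because the two cycles meet exactly along $D$, the arcs $A_1$ and $A_2$ meet only at $x$ and $y$, so the sum $C:=(C_1\cup C_2)\setminus\interior D=A_1\cup A_2$ is again a cycle. (If one of $A_1,A_2$ is empty the statement is vacuous, so assume both are genuine arcs.)

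First I would put everything in general position with respect to the boundary circle: apply a small ambient isotopy so that $C_1\cup C_2$ meets the boundary transversely in finitely many points, none of which is $x$ or $y$, and no boundary point lies simultaneously on two of $D$, $A_1$, $A_2$. Let $d$, $a_1$, $a_2$ denote the numbers of boundary crossings of $D$, $A_1$, $A_2$ respectively. Then $C_1$ crosses the boundary $d+a_1$ times, $C_2$ crosses it $d+a_2$ times, and $C$ crosses it $a_1+a_2$ times.

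The key arithmetic observation is $a_1+a_2\equiv(d+a_1)+(d+a_2)\pmod 2$. Combining this with the Corollary gives the three cases directly: if $C_1$ and $C_2$ are both $0$-homologous, then $d+a_1$ and $d+a_2$ are even, so $a_1+a_2$ is even and $C$ is $0$-homologous; if both are $1$-homologous, then $d+a_1$ and $d+a_2$ are odd, so $a_1+a_2$ is even and $C$ is again $0$-homologous; and if exactly one is $1$-homologous, then $d+a_1$ and $d+a_2$ have opposite parity, so $a_1+a_2$ is odd and $C$ is $1$-homologous. This is exactly the claimed statement.

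The main point requiring care is the general-position reduction: one must check that the small isotopy used to make the boundary crossings transverse and to separate the crossing points of $D$, $A_1$, $A_2$ changes no homology class (it does not, being an isotopy) and preserves the decompositions $C_1=D\cup A_1$, $C_2=D\cup A_2$ with $A_1\cap A_2=\{x,y\}$; everything else is the bookkeeping above. (Conceptually this lemma just records that $[C_1+C_2]=[C_1]+[C_2]$ in $H_1(\mathbb{R}P^2;\mathbb{Z}/2)\cong\mathbb{Z}/2$, but the boundary-crossing argument keeps the proof inside the elementary framework already developed here.)
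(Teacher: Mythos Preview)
Your argument is correct. The paper actually states this lemma without proof, so there is nothing to compare against directly; your boundary-crossing parity count via the Corollary to Theorem~\ref{Theorem2} is exactly the kind of elementary verification the paper's framework supports, and the congruence $a_1+a_2\equiv(d+a_1)+(d+a_2)\pmod 2$ disposes of all three cases at once. The only cosmetic point is that the statement writes the sum as $(C_1\cup C_2)\setminus D$ while you (correctly) work with $(C_1\cup C_2)\setminus\interior D=A_1\cup A_2$ so that the endpoints $x,y$ remain and the result is genuinely a cycle; this is the intended reading, but it is worth flagging the discrepancy explicitly rather than silently fixing it.
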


\begin{proposition}
Both graphs in the $\delta$ family are separating.
\end{proposition}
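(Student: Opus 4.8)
The plan is to show that \emph{every} embedding of $G\in\{\delta_1,\delta_2\}$ in $\mathbb{R}P^2$ contains a $0$-homologous cycle with a vertex of $G$ in each component of its complement. Both graphs are $2$-connected (see Figure~\ref{D1D2}), so Lemma~\ref{lemma1}, Theorems~\ref{Theorem1}, \ref{Theorem2} and \ref{Theorem3}, Lemma~\ref{gloverlemma}, and Lemma~\ref{CycleAddion} are all available. Fix an embedding of $G$; every cycle in it is $0$-homologous or $1$-homologous, so I split into two cases.

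\emph{Case A: no cycle of $G$ is $1$-homologous.} Then Theorem~\ref{Theorem1} (whose proof uses planarity only in the conclusion, not in the argument, and so in fact shows that a $2$-connected graph embedded with all cycles $0$-homologous is affinely embedded) tells us the embedding is isotopic to an affine one, so $G$ lies in the interior of some disk $B_0\subseteq\mathbb{R}P^2$ whose complement $\mathbb{R}P^2\setminus B_0$ (a M\"obius band) contains no part of $G$; in particular this case does not arise when $G$ is nonplanar. Since $G$ is nonouter-projective-planar it is nonouterplanar, so by Theorem~\ref{halin} it has a $K_4$ or $K_{3,2}$ minor; a direct look at Figure~\ref{D1D2}---equivalently, exhibiting a $K_{1,1,3}$ minor and invoking the Dehkordi--Farr characterization of nonseparating planar graphs \cite{Dehkordi}---yields a cycle $C$ of $G$ with one vertex of $G$ strictly inside $C$ and another strictly outside $C$ in the planar picture. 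Then $C$ is $0$-homologous: the disk it bounds is the inside region, while the outside region together with $\mathbb{R}P^2\setminus B_0$ forms the M\"obius-band side of $C$, and these carry the two chosen vertices, so $C$ separates.

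\emph{Case B: some cycle $C_1$ of $G$ is $1$-homologous.} Cut $\mathbb{R}P^2$ open along $C_1$; by Theorem~\ref{Theorem2} the surface becomes a closed disk $\Delta$ whose boundary double-covers $C_1$, and the embedding of $G$ becomes a plane graph $\widehat G$ in $\Delta$ with the vertices and edges of $C_1$ on $\partial\Delta$ (each occurring twice) and every other vertex in the interior. As $G$ is $2$-connected it is not merely the cycle $C_1$ (a cycle is outerplanar, hence not nonouter-projective-planar), so some cycle $C_2$ shares a nontrivial arc with $C_1$, and by Lemma~\ref{gloverlemma} no cycle can be $1$-homologous while avoiding $C_1$. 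If $C_2$ can be chosen $1$-homologous then, letting $D$ be a maximal arc of $C_1\cap C_2$, Lemma~\ref{CycleAddion} gives that $C:=(C_1\cup C_2)\setminus D$ is a $0$-homologous cycle; otherwise one argues with a $0$-homologous $C_2$ directly. Either way the remaining task is to check that the resulting $0$-homologous cycle $C$ separates, and here we use that $G$ is nonouter-projective-planar: no face of $G$ contains all its vertices, so in the cut-open picture $\widehat G$ cannot be pushed entirely onto one side of $C$, which forces a vertex of $G$ into the disk bounded by $C$ and a vertex into its M\"obius-band side. Finally, $\delta_2$ should require almost no extra work, since the $\Delta-Y$ exchange relating it to $\delta_1$ carries cycles to cycles and preserves homology class, so the cycles built for $\delta_1$ transport across the exchange.

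The step I expect to be the real obstacle is this last one in Case~B: confirming that the $0$-homologous cycle produced genuinely has a vertex on each side, rather than bounding a vertex-free disk or band. This seems to require a finite case analysis on how a $1$-homologous cycle can lie inside $\delta_1$ and $\delta_2$, and it is exactly there that the explicit drawings in Figure~\ref{D1D2} become indispensable; the bookkeeping of that analysis, not any single clever idea, is where the difficulty concentrates.
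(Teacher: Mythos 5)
Your Case~A is essentially sound (all cycles $0$-homologous $\Rightarrow$ affine by Theorem~\ref{Theorem1}, then Dehkordi--Farr applies since $\delta_1,\delta_2$ contain $K_4\dot\cup K_1$ or $K_{3,2}\dot\cup K_1$ minors), but Case~B contains a genuine gap, and it is exactly the part that constitutes the paper's proof. Your justification that the constructed $0$-homologous cycle $C$ separates --- ``$G$ is nonouter-projective-planar, so no face contains all its vertices, which forces a vertex into the disk bounded by $C$ and a vertex onto the other side'' --- is a non sequitur. Nonouter-projective-planarity only says that no single face has every vertex on its boundary; it says nothing about your particular cycle $C$, which need not bound a face and may well bound a vertex-free disk with all remaining vertices on the M\"obius-band side (that configuration is perfectly compatible with being nonouter-projective-planar, and is precisely what happens in nonseparating embeddings of other nonouter-projective-planar graphs such as the $\beta$, $\epsilon$, and most $\gamma$ and $\zeta$ graphs). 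You acknowledge that a finite case analysis over how a $1$-homologous cycle can sit inside $\delta_1$, $\delta_2$ is still needed, but that analysis is the actual content of the result: the paper carries it out by observing that the face-boundary cycles sum to zero (so an even number of them are $1$-homologous), that $1$-homologous cycles pairwise intersect (Lemma~\ref{gloverlemma}), and then, using the reflection symmetry, enumerating all embeddings up to symmetry and exhibiting in each one a $2$-connected subgraph with all cycles $0$-homologous containing $K_4\dot\cup K_1$ or $K_{3,2}\dot\cup K_1$ as a minor, which is separating by Theorem~\ref{Theorem1} plus Dehkordi--Farr.

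A second unsupported step is your claim that $\delta_2$ ``transports'' from $\delta_1$ because the $\Delta$--$Y$ exchange preserves homology classes of cycles. The separating property is not preserved under $\Delta$--$Y$ exchanges in general: within the very families studied here, $\gamma_6$ is separating while the other $\gamma$ graphs are strongly nonseparating, and $\zeta_3$ is separating while the rest of the $\zeta$ family is not, even though all family members are related by such exchanges. The paper accordingly treats $\delta_2$ by its own enumeration of embeddings rather than by appeal to the exchange. Also, minor but worth noting: Lemma~\ref{CycleAddion} requires $C_1$ and $C_2$ to meet along an arc, so ``some cycle shares a nontrivial arc with $C_1$'' needs the short $2$-connectedness argument (take an edge of $C_1$ and a second cycle through it) rather than just Lemma~\ref{gloverlemma}, which only gives intersection in points.
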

\begin{proof}
By Theorem 3.1, a 2-connected planar graph embedded in the projective plane with all cycles 0-homologous can be isotoped to an affine embedding. Namely, if a 2-connected subgraph $H$ of $\delta_1$ or $\delta_2$ contains either $K_4\dot\cup K_1$ or $K_{3,2}\dot\cup K_1$ as a minor, then any embedding of $H$ with all cycles 0-homologous is separating because such an embedding can be isotoped to an affine embedding containing either $K_4\dot\cup K_1$ or $K_{3,2}\dot\cup K_1$ \cite{Dehkordi}. 
Since every cycle bounding a face $F$ in $\delta_1$ or $\delta_2$ can be viewed as the sum of the cycles that bound every face except for $F$, every embedding of $\delta_1$ and $\delta_2$ has an even number of 1-homologous cycles. Thus, since all 1-homologous cycles intersect each other and both $\delta_1$ and $\delta_2$ have reflection symmetry, the number of cases we must consider is considerably small. 
All projective planar embeddings of $\delta_1$ and $\delta_2$, up to symmetry and the homology class of each cycle, are represented in Figure \ref{AllEmbeddings}, where a face is shaded if and only if its bounding cycle is 1-homologous cycle in the corresponding embedding. The highlighted subgraphs are 2-connected with all 0-homologous cycles that contain either $K_4\dot\cup K_1$ or $K_{3,2}\dot\cup K_1$ as minors, and since one exists for every embedding of $\delta_1$ and $\delta_2$, both graphs are separating.
\end{proof}
\begin{figure}[H]
\centering
\includegraphics[scale=0.4]{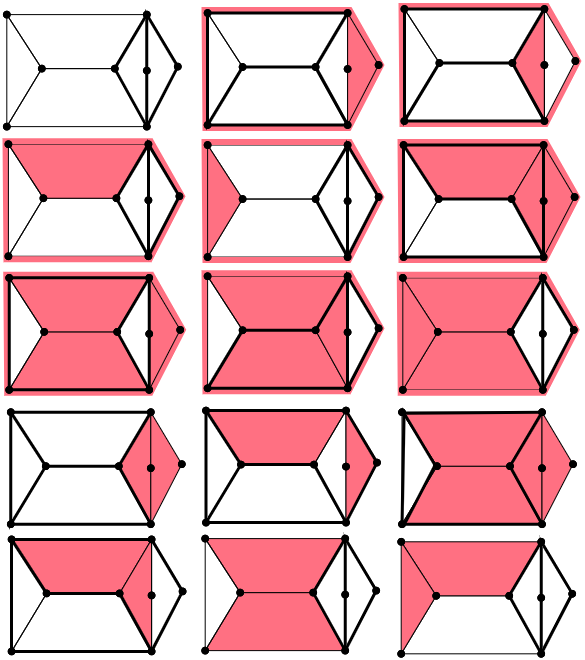}
\includegraphics[scale=0.4]{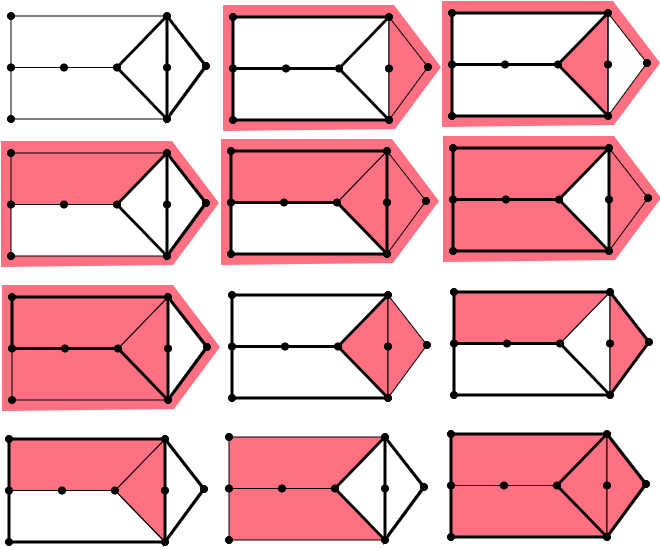}
\caption{Projective planar embeddings of $\delta_1$ and $\delta_2$}
\label{AllEmbeddings}
\end{figure}

\subsection{$\gamma$ family}
The $\gamma$ family of minor-minimal nonouter-projective-planar graphs are pictured in Figure \ref{gammafamily}.

\begin{proposition}
Every graph in the $\gamma$ family, excluding $\gamma_{6}$, is strongly nonseparating and thus nonseparating
\end{proposition}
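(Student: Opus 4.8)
The plan is to exhibit, for each graph $\gamma_i$ with $i \neq 6$, an explicit projective planar embedding together with a witnessing set of connecting paths, exactly as was done for the $\epsilon$ family. That is, for each such $\gamma_i$ I would draw the graph in $\mathbb{R}P^2$ (presented as the disk with antipodal identification) and then, for every pair $(u,v)$ of vertices, indicate a topological path in $\mathbb{R}P^2$ from $u$ to $v$ meeting $\gamma_i$ only at its endpoints. By the definition of strongly nonseparating, producing one embedding with this property for each graph finishes the argument, and strongly nonseparating immediately implies nonseparating. So the proof would reduce to a figure (analogous to Figures \ref{fig:FE1} and \ref{fig:FE2}) containing the relevant drawings, plus a sentence asserting that in each the required paths exist.

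The key steps, in order: (1) identify the members of the $\gamma$ family other than $\gamma_6$ from Figure \ref{gammafamily} and their vertex sets; (2) for each, find a projective planar embedding — since these are nonouter-projective-planar, no embedding has all vertices on one face, so the embedding must genuinely use the crosscap, and I would look for an embedding with a single essential (1-homologous) cycle carrying as many vertices as possible near the boundary; (3) verify strong nonseparation by checking that the faces of the embedding, together with the possibility of routing a path across the crosscap, allow every pair of vertices to be joined off the graph — concretely, it suffices that the vertices not on a common face can each be reached from the "boundary region" by a path through a face, since two points near the identified boundary can always be connected by an arc crossing it once. (4) Assemble these into the two referenced figures and state the conclusion.

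The main obstacle is step (2)–(3): finding an embedding that is simultaneously projective planar \emph{and} strongly nonseparating, and being confident the path-connectivity claim really holds for all vertex pairs rather than just the "obvious" ones. Because these graphs are nonouter-projective-planar, the embedding is forced to be somewhat intricate, and it is easy to draw one in which some pair of vertices ends up separated by a $0$-homologous cycle; the work is in choosing the rotation system carefully. A useful sanity check at each stage is Theorem \ref{Theorem1}: if the drawing I produce happened to have all cycles $0$-homologous it would be affine and hence (since each $\gamma_i$ contains $K_4$ or $K_{3,2}$) separating, so I must ensure at least one essential cycle is present and positioned so that it does not, together with a $0$-homologous cycle, enclose a vertex away from the rest. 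I expect that, as with the $\epsilon$ family, a single well-chosen drawing per graph works and the verification is routine once the picture is right; the genuine content is entirely in producing the correct pictures, which is why the proof is essentially "see Figure \ref{gammafamily}-adjacent figures." (The graph $\gamma_6$ is excluded precisely because no such embedding exists for it — it will presumably be handled separately, e.g.\ shown to be separating or weakly separating, in the paragraph following this proposition.)
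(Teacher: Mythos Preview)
Your proposal is correct and matches the paper's approach exactly: the paper's proof consists of a single sentence pointing to Figure~\ref{gammafamily}, which displays a strongly nonseparating embedding of each $\gamma_i$ for $i\neq 6$, and your plan is precisely to produce such a figure and assert that the required connecting paths exist. Your additional commentary on \emph{how} to search for such embeddings (use an essential cycle, avoid trapping vertices inside $0$-homologous cycles, invoke Theorem~\ref{Theorem1} as a sanity check) goes beyond what the paper says but is all sound and would be useful in actually constructing the pictures.
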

\begin{proof}
Every graph in the $\gamma$ family, excluding $\gamma_{6}$, has a strongly nonseparating embedding in the projective plane as illustrated in Figure \ref{gammafamily}.
\end{proof}

\begin{figure}[H]
\begin{center}
    \includegraphics[scale=0.62]{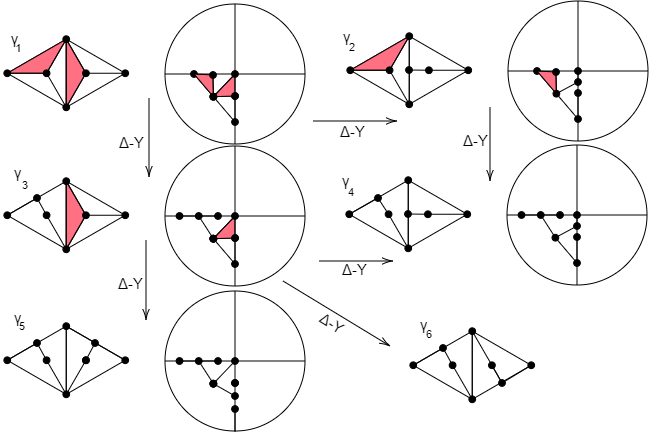}
\caption{Every graph in the $\gamma$ family, excluding $\gamma_{6}$, is strongly nonseparating and thus nonseparating}
\label{gammafamily}
\end{center}
\end{figure}

\subsubsection{$\gamma_{6}$}
The graph $\gamma_{6}$ is obtained from $\gamma_{1}$ via two $\Delta - Y$ exchanges. 

\begin{figure}[H]
\begin{center}
    \includegraphics[scale=0.45]{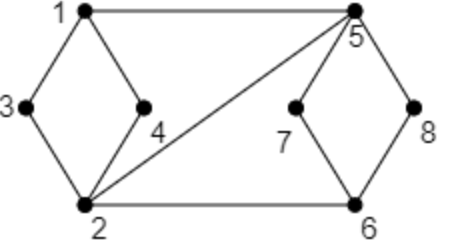}
\caption{The graph $\gamma_{6}$ with vertices labelled}\label{Gamma}
\end{center}
\end{figure}
In a given embedding, a \textit{0-homologous path} is a path that crosses the boundary an even number of times. Similarly, in a given embedding, a \textit{1-homologous path} is a path that crosses the boundary an odd number of times.

\begin{proposition}
The graph $\gamma_6$ is separating projective planar.
\end{proposition}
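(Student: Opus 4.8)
The plan is to show that $\gamma_6$, like the $\delta$ family, has the property that every projective planar embedding contains a $2$-connected subgraph which carries all $0$-homologous cycles and which has $K_4 \dot\cup K_1$ or $K_{3,2} \dot\cup K_1$ as a minor; by Theorem~\ref{Theorem1} such a subgraph can be isotoped to an affine embedding, and by the Dehkordi--Farr result \cite{Dehkordi} an affine embedding of a graph with $K_4$ or $K_{3,2}$ as a minor together with an isolated vertex contains a separating cycle. Concretely, I would first identify, as in the $\delta$ proof, the handful of $2$-connected subgraphs $H_1, \dots, H_r$ of $\gamma_6$ each of which contains $K_4 \dot\cup K_1$ or $K_{3,2} \dot\cup K_1$ as a minor (using Theorem~\ref{halin}, i.e. each $H_i$ is nonouterplanar together with an extra vertex or branch to host the isolated-vertex minor). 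The goal reduces to: in \emph{every} embedding of $\gamma_6$, at least one $H_i$ has all its cycles $0$-homologous.

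Next I would cut down the case analysis using homology constraints. Since $\gamma_6$ is obtained from $\gamma_1$ by two $\Delta$--$Y$ exchanges, it is planar (every member of the nonouter-projective-planar families here is planar), so in a planar embedding it has a unique combinatorial face structure; as in the $\delta$ argument, the bounding cycle of any face is the sum (in the sense of Lemma~\ref{CycleAddion}) of the bounding cycles of all other faces, so the number of $1$-homologous face-boundary cycles in any projective planar embedding is even. Combined with Lemma~\ref{gloverlemma} (any two $1$-homologous cycles intersect) and the reflection symmetry of $\gamma_6$, this leaves only a small list of embeddings to examine, indexed by which faces are $1$-homologous, exactly as in Figure~\ref{AllEmbeddings} for $\delta_1, \delta_2$. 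For each such embedding I would exhibit one of the $H_i$ all of whose cycles are $0$-homologous — this is where the notion of $1$-homologous \emph{path} introduced just above the proposition is used: a cycle of $H_i$ is $1$-homologous only if it crosses the boundary an odd number of times, and one checks that the chosen $H_i$ avoids every shaded (essential) face, so all its cycles stay $0$-homologous.

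The main obstacle I expect is the bookkeeping in the case analysis: $\gamma_6$ has more vertices and faces than $\delta_1$ or $\delta_2$ (it arises from two $\Delta$--$Y$ moves, which add degree-$3$ vertices and subdivide the face lattice), so there are more candidate sets of $1$-homologous faces to rule out, and one must verify in each that some fixed nonouterplanar-plus-vertex subgraph is entirely $0$-homologous. The key technical point making this tractable is that any collection of pairwise-intersecting $1$-homologous face boundaries must localize the "essential" region of the embedding, so most faces — and in particular the faces forming a suitable $H_i$ — are forced to be $0$-homologous; the even-parity constraint on the number of essential faces then eliminates the remaining symmetric possibilities. Once every embedding is shown to contain such an $H_i$, Theorem~\ref{Theorem1} plus \cite{Dehkordi} immediately give a separating cycle, so $\gamma_6$ is separating projective planar.
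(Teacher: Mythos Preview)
Your proposal follows the $\delta$-family template (enumerate embeddings by the homology classes of the planar face boundaries, then in each case exhibit a $2$-connected subgraph with all cycles $0$-homologous containing $K_{3,2}\dot\cup K_1$ or $K_4\dot\cup K_1$ as a minor). That strategy is sound and would go through for $\gamma_6$ with enough bookkeeping, but it is not the route the paper takes. The paper instead exploits the specific structure of $\gamma_6$: vertices $1$ and $2$ are joined by five paths $P_1,\dots,P_5$, and pigeonhole on their boundary-crossing parities forces at least three of them to share the same parity; after pushing a small neighbourhood of vertex $1$ across the boundary if needed, one may assume at least three $P_i$ are $0$-homologous. Those three paths already carry a $K_{3,2}$ with all cycles $0$-homologous, and a short case split on whether zero, one, or two of the remaining paths are $1$-homologous (the two-path case being constrained by Lemma~\ref{gloverlemma}) locates a separating cycle directly. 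This is precisely why the notion of $0$/$1$-homologous \emph{path} is introduced just above the proposition: it is the invariant that drives the pigeonhole step, not, as you suggest, a tool for verifying that cycles of some $H_i$ avoid shaded faces --- your face-enumeration approach never actually uses path homology. What your approach buys is uniformity with the $\delta$ argument at the cost of a longer case list; what the paper's approach buys is an almost case-free argument tailored to the theta-graph shape of $\gamma_6$. (A small correction to your side remark: not every minor-minimal nonouter-projective-planar graph is planar --- $\kappa_1$, $\epsilon_4$, $\epsilon_6$ are not --- though $\gamma_6$ indeed is.)
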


\begin{proof}
Label the vertices of $\gamma_6$ as in Figure~\ref{Gamma}. Note that the planar embedding of $\gamma_6$ is separating and that there are 5 paths connecting vertex 1 with vertex 2, including $P_{1} = (1, 3, 2)$, $P_{2} = (1,4,2)$, $P_{3} = (1,5,2)$, $P_{4} = (1,5,7,6,2)$, and $P_{5} = (1,5,8,6,2)$. Embed $\gamma_6$ in the projective plane. By the pigeonhole principle, either at least three of these paths are 0-homologous or at least three paths are 1-homologous. If at least three paths are 1-homologous, isotope a sufficiently small neighbourhood containing vertex $1$ and no other vertex over the boundary. Then every 0-homologous path becomes a 1-homologous path and every 1-homologous path become a 0-homologous path. In the resulting embedding, there are at least three 0-homologous paths.

Suppose at least three paths are 0-homologous, then the other two paths may be both 0-homologous, one 0-homologous and one 1-homologous, or both 1-homologous.

\begin{itemize}
    \item If both of the two other paths are 0-homologous, then the paths $(1,5)$, $(5,2)$, $(5,7,6,2)$, $(5,8,6,2)$ must either be all 0-homologous or all 1-homologous. Therefore every cycle in the embedding is 0-homologous. The embedding can be isotoped into an affine embedding and is therefore separating.
    
    \item If both of the two other paths are 1-homologous, separately consider cases according to where these two paths lie. Note that not every pair of paths can be the two 1-homologous paths: suppose $P_{1}$, $P_{5}$ are the two 1-homologous paths, then the cycles $(1,4,3,2)$ and $(5,8,6,7)$ are two 1-homologous cycles with no intersection, contradicting Lemma \ref{gloverlemma}. 
    
    By symmetry there are only two cases, with the two 1-homologous paths being $P_{1}$, $P_{2}$ or $P_{1}$, $P_{3}$. In each of the two cases, the three 0-homologous paths together contains a minor equivalent to $K_{3,2}$ which is nonouterplanar. Therefore a vertex must be on one side of the 0-homologous cycle formed by the other two paths, and the two 1-homologous paths must lie on the other side. In particular, at least one of the two paths has a vertex on it. Therefore the cycle is separating.
    
    \item If exactly one of the two other paths is 1-homologous, separately consider cases according to where the 1-homologous path lies. If the 1-homologous path is $P_{1}$ or $P_{2}$, consider the minor obtained by deleting $(1,3,2)$ or $(1,4,2)$. If the 1-homologous path is $P_{3}$, consider the minor obtained by deleting $(5,2)$. If the 1-homologous path is $P_{4}$ or $P_{5}$, consider the minor obtained by deleting $(5,7,6)$ or $(5,8,6)$. In each case, the resulting minor has only 0-homologous cycles and can be isotped to an affine embeddings. In addition, in each case the resulting minor has a subgraph of $K_1 \dot\cup K_{3,2}$. Therefore the graph has a separating cycle.

    \begin{figure}[ht]
\centering
\begin{minipage}[b]{0.4\linewidth}
			  \centering
    \includegraphics[scale=0.6]{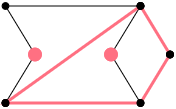}
    \caption{$C_{1}$ is the 1-homologous path}
    \label{fig:C1is1homologous}
\end{minipage}
\quad
\begin{minipage}[b]{0.4\linewidth}
			  \centering
    \includegraphics[scale=0.6]{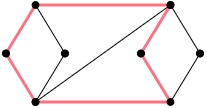}
    \caption{$C_{3}$ is the 1-homologous path}
    \label{fig:C3is1homologous}
\end{minipage}
\end{figure}

\end{itemize}

Therefore the graph $\gamma_{6}$ is separating.
\end{proof}

\subsection{$\zeta$ family}
The members of the $\zeta$ family of minor-minimal nonouter-projective-planar graphs are shown in Figures \ref{ZetaNot3} and \ref{Zeta3}.

\begin{theo}
Besides $\zeta_3$, every member of the $\zeta$ family is strongly nonseparating.
\end{theo}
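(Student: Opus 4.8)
The plan is to follow the template already used for the $\beta$, $\epsilon$, and $\gamma$ families: for each $\zeta_i$ with $i \neq 3$ I would exhibit an explicit projective planar drawing, collected in Figure~\ref{ZetaNot3}, and verify that it is strongly nonseparating. The verification reduces to a finite combinatorial check once one observes the following. If $\Gamma$ is a path in $\mathbb{R}P^2$ meeting an embedded graph $G$ only at its two endpoints, then the relative interior of $\Gamma$ is a connected subset of $\mathbb{R}P^2 \setminus G$, hence lies inside a single face $F$ of the embedding, and both endpoints of $\Gamma$ then lie in $\overline{F}\cap G = \partial F$; conversely any two vertices on the boundary of a common face can be joined by a path through the interior of that face. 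Thus an embedding of $G$ is strongly nonseparating exactly when every pair of vertices lies on the boundary of a common face. Since adjacent vertices are automatically co-facial (the edge between them lies on the boundary of a face), for each drawing in Figure~\ref{ZetaNot3} it suffices to list the faces and confirm that every non-adjacent vertex pair is covered.

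To construct the drawings and shorten the bookkeeping I would use the internal structure of the family. As with all nine families of \cite{Archdeacon}, the members of the $\zeta$ family are related by $\Delta - Y$ exchanges, so once a suitable drawing has been found for one member, a drawing of a $\Delta - Y$ descendant is obtained by deleting the three edges of a triangular face and inserting a new vertex $w$, adjacent to the three triangle vertices, into that face. Replacing an edge by a two-edge path keeps every already co-facial pair co-facial, so the only new thing to check is $w$ itself; if in the original drawing every vertex is co-facial with at least one of the three triangle vertices, then $w$ is co-facial with everything and the strong-nonseparating property is inherited. For the \emph{seed} members I would build a drawing by hand: place as many vertices as possible on a single face and route the few edges that obstruct outer-projective-planarity through the crosscap, i.e. across the boundary circle of the disk model in an antipodally identified pair of arcs. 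The essential arc so created typically subdivides the remaining faces in a way that makes the otherwise inaccessible vertices co-facial with the rest, which is precisely the phenomenon that fails for $\zeta_3$.

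The main obstacle is exactly these delicate vertices: in a candidate drawing one must be sure that no vertex is trapped inside a 0-homologous cycle separating it from some other vertex, and whether this occurs depends on the chosen embedding, not on the graph alone. Consequently the bulk of the work is the face-by-face verification for the largest members of the family, where the number of non-adjacent pairs to certify is greatest; organizing this around the reflection (and, where present, rotational) symmetry of the chosen drawings should keep it manageable. For each figure one should also confirm the two routine points that the drawn adjacencies are exactly those of $\zeta_i$ and that the points where the drawing meets the boundary circle of the disk come in antipodal pairs, so that the picture is a genuine embedding in $\mathbb{R}P^2$. The distinct behaviour of $\zeta_3$ is not part of this statement; it is treated separately, and one expects $\zeta_3$ to be not merely non-strongly-nonseparating but in fact separating, so the content here is only the positive assertion for the remaining members.
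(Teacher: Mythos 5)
Your proposal takes essentially the same approach as the paper: the paper's proof of the positive assertion consists exactly of exhibiting one strongly nonseparating drawing of each of $\zeta_1,\zeta_2,\zeta_4,\zeta_5,\zeta_6$ (Figure~\ref{ZetaNot3}) and reading off that every pair of vertices is co-facial, which is the finite check you describe. Your reduction of strong nonseparation to co-faciality of vertex pairs and the $\Delta$-$Y$ bookkeeping are just organizational refinements of that same verification.
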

\begin{proof}
To show that $\zeta_1, \zeta_2, \zeta_4, \zeta_5,$ and $\zeta_6$ are strongly nonseparating, it suffices to show a single strongly nonseparating embedding of each graph. See Figure \ref{ZetaNot3}.

\begin{figure}[H]
    \centering
    \subfigure[]{\includegraphics[width=0.3\textwidth]{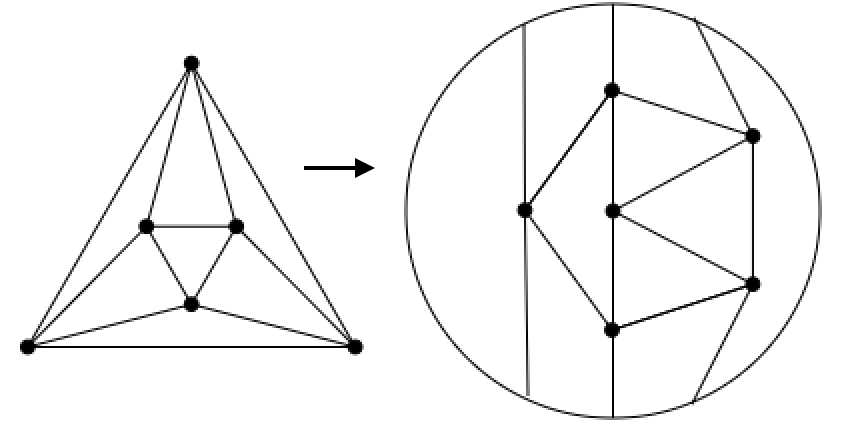}} 
    \subfigure[]{\includegraphics[width=0.3\textwidth]{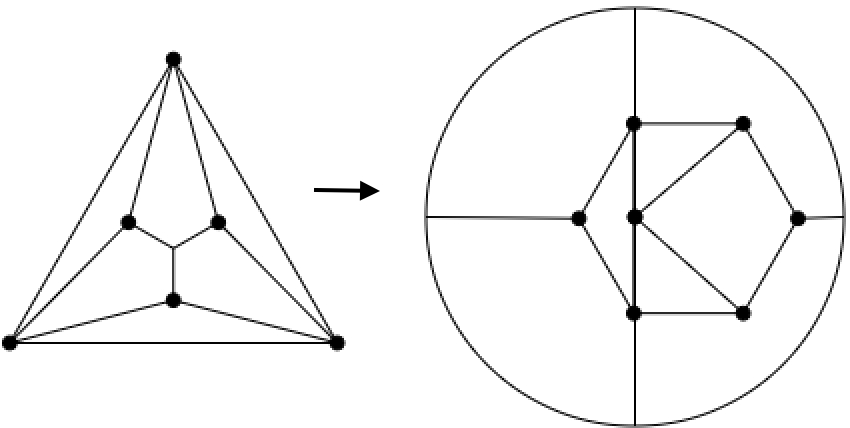}} 
    \subfigure[]{\includegraphics[width=0.3\textwidth]{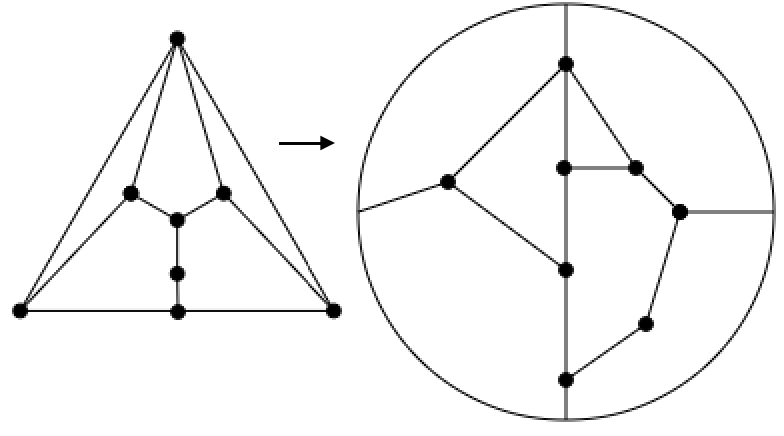}}
    \subfigure[]{\includegraphics[width=0.3\textwidth]{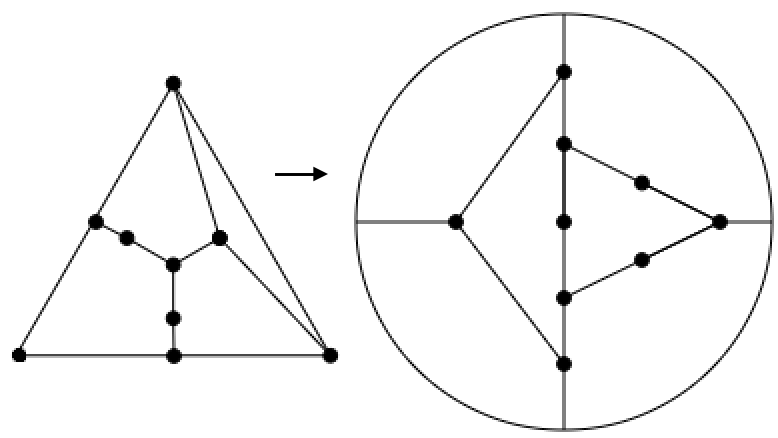}}
    \subfigure[]{\includegraphics[width=0.3\textwidth]{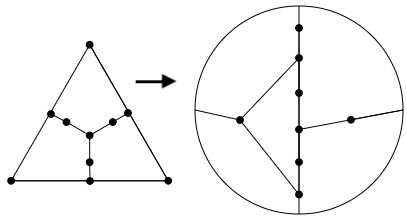}}
    \caption{strongly nonseparating embeddings of $\zeta_1, \zeta_2, \zeta_4, \zeta_5$, and $\zeta_6$}
    \label{ZetaNot3}
\end{figure}


We now show that $\zeta_3$, the vertices and edges of a cube, is weakly separating, and indeed separating.

Label the faces in the planar embedding of $\zeta_3$ as in Figure \ref{Zeta3}.
\begin{center}
\begin{figure}[H]
\begin{center}
\includegraphics[scale=0.17]{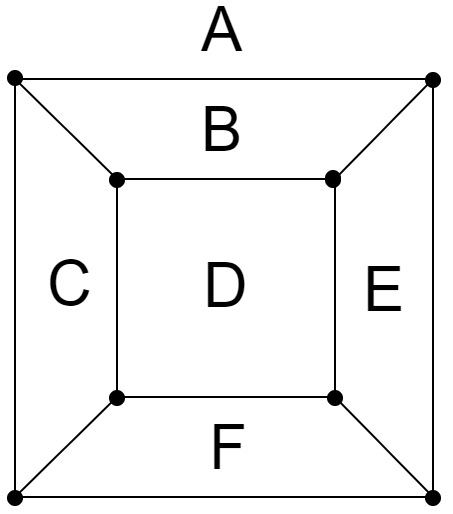}

\caption{A planar embedding of $\zeta_3$ with faces labelled}
\label{Zeta3}
\end{center}
\end{figure}
\end{center}

It is well known that for any cycle $X$ in any drawing of $\zeta_3$ in ${\mathbb{R}P}^2$, the homology class of $X$ is equal to the modulo 2 sum of the homology classes of the boundaries of the faces on one side of $X$ in the above drawing. Let $D^*$ be a drawing of $\zeta_3$. 

The cube has $24$ symmetries, and in particular we may first choose any of the $6$ faces to be in the place of $A$ and then choose any of the $4$ faces adjacent to the first to be in the place of $B$. There are two case to consider: 

\begin{enumerate}
    \item Suppose every cycle in $D^*$ is homologous to zero. Then, by Theorem \ref{Theorem1}, $D^*$ may be isotoped to an affine embedding. Since $\zeta_3$ contains $K_4\dot\cup K_1$ as a minor, any affine embedding of $\zeta_3$ is separating. Thus, $D^*$ is separating.
    \item  Let $f$ map the planar embedding of $\zeta_3$ to $\mathbb{R}P^2$. Without loss of generality, suppose $f(Bd(A))$ is homologous to one. Then $f(Bd(D))$ may not be homologous to one since all one homologous cycles in $D^*$ intersect, so as $Bd(A)$ bounds the rest of the graph in the above embedding, an odd number of the boundaries of $B$, $C$, $E$, or $F$ is also homologous to one in $\mathbb{R}P^2$. Without loss of generality, suppose $f(Bd(B))$ is homologous to one. Then $f(Bd(F))$ is not homologous to one because $f(Bd(F))$ does not intersect $f(Bd(B))$. Thus, since $f(Bd(C))$ and $f(Bd(E))$ can not both be homologous to one, neither may be. This means that $f(Bd(A))$ and $f(Bd(B))$ are the only one homologous face bounding cycles. The drawing of $f(Bd(A))$ and $f(Bd(B))$ as one homologous cycles separates $\mathbb{R}P^2$ into two connected regions and contains six of the eight vertices of $\zeta_1$. Since the remaining two vertices are connected by an edge, they must be in the same region. This leaves one option for $D^*$ as in Figure \ref{Zeta3Sep}, where the boundary of $C,D,F$ is a separating cycle.
\end{enumerate}

\begin{figure}[H]
\centering
\includegraphics[scale=.35]{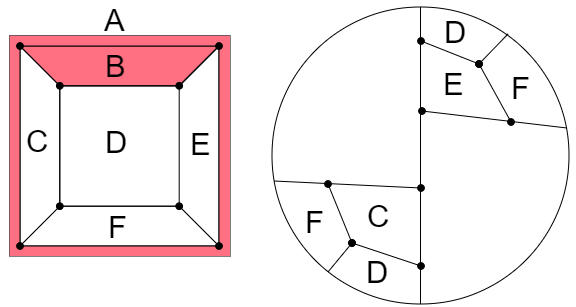}
\caption{Up to symmetry, the only projective planar embedding of $\zeta_1$ with a one-homologous cycle}
\label{Zeta3Sep}
\end{figure}

\end{proof}

\subsection{$\eta$ family}

The $\eta$ family of minor-minimal nonouter-projective-planar graphs contains only one member, pictured in Figure \ref{etaaffine}.

\begin{figure}[H]`
\centering
\includegraphics[scale=0.27]{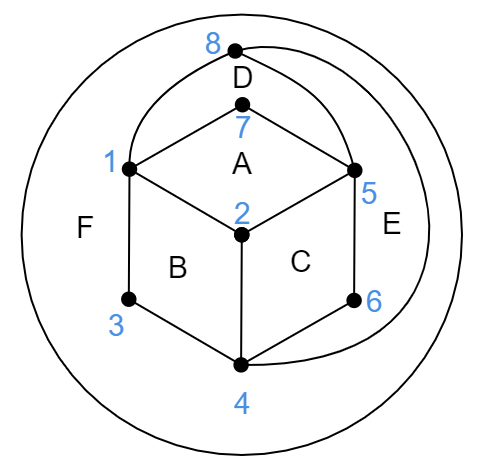}
\caption{An affine embedding of $\eta_1$}
\label{etaaffine}
\end{figure}

A \textit{subdivision} of a graph $G$ is a graph resulting from the subdivision of the edges of $G$. If an edge $e$ with endpoints $v_1$ and $v_2$ has a subdivision, this creates a graph identical to $G$ except with a new vertex $v_3$, where the edge $e$ is replaced by the two edges $(v_1,v_3)$ and $(v_3, v_2)$. A \textit{0-homologous region} is a face of an embedded graph that is bounded by a 0-homologous cycle.

\begin{proposition}
The graph $\eta_1$ is separating projective planar.
\end{proposition}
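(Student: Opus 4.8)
The plan is to imitate the arguments already given for the $\delta$ family and for $\zeta_3$. The first step is to record the purely planar fact: $\eta_1$ is a \emph{separating planar} graph, i.e.\ it contains one of Dehkordi and Farr's forbidden minors $K_1\dot\cup K_4$, $K_1\dot\cup K_{2,3}$, $K_{1,1,3}$ \cite{Dehkordi}. Concretely I would exhibit a $2$-connected planar subgraph $H\subseteq\eta_1$ and a vertex $w$ such that, after deleting and contracting, $H\dot\cup\{w\}$ has $K_4\dot\cup K_1$ or $K_{3,2}\dot\cup K_1$ as a minor (using Theorem \ref{halin} to check $H$ is nonouterplanar). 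This settles the ``all cycles $0$-homologous'' case in one stroke: if a projective planar embedding $D$ of $\eta_1$ has every cycle $0$-homologous, then so does the induced embedding of $H$, which by Theorem \ref{Theorem1} isotopes to an affine embedding; the vertex $w$ then lies in one component of the complement of a face-bounding cycle of $H$ and a vertex of $H$ in the other, so $D$ is separating.

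Next I would handle the embeddings of $\eta_1$ possessing a $1$-homologous cycle. Fixing the planar embedding of Figure \ref{etaaffine} (unique up to Whitney's theorem, after restricting to a $2$-connected block if necessary), I would use Lemma \ref{CycleAddion} to say that the homology class of any cycle of $D$ is the mod-$2$ sum of the homology classes of the face-bounding cycles on one side of it; since every edge lies on exactly two faces, the mod-$2$ sum over all faces vanishes, so an \emph{even} (hence at least two) number of face boundaries are $1$-homologous in $D$. Glover's Lemma \ref{gloverlemma} then forbids two vertex-disjoint faces from both being $1$-homologous, and Theorem \ref{Theorem3} adds a parity constraint on how $1$-homologous face boundaries must cross; together with the symmetries of $\eta_1$ this reduces the possible configurations of $1$-homologous faces to a short list.

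For each surviving configuration I would either derive a contradiction -- two of the forced $1$-homologous cycles turn out to be disjoint, violating Lemma \ref{gloverlemma} -- or draw the corresponding projective planar embedding explicitly and point to a $2$-connected subgraph in it all of whose cycles are $0$-homologous and which, together with a leftover vertex, contains $K_4\dot\cup K_1$ or $K_{3,2}\dot\cup K_1$ as a minor; the Theorem \ref{Theorem1} argument of the first paragraph then produces a separating cycle. An alternative to the face bookkeeping, paralleling the $\gamma_6$ proof, is to pick two vertices $u,v$ of $\eta_1$ joined by several internally disjoint paths and pigeonhole on their homology classes; I would use whichever approach yields fewer cases for the actual graph $\eta_1$. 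Combining the cases shows every embedding of $\eta_1$ has a disk-bounding cycle separating two of its vertices.

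The main obstacle is the case analysis in the last two paragraphs: enumerating, up to $\mathrm{Aut}(\eta_1)$ and the homology data on faces, all projective planar embeddings of $\eta_1$, and verifying in each that a $0$-homologous separating-planar subgraph is present -- this is precisely the picture-heavy step that forced Figure \ref{AllEmbeddings} in the $\delta$ proof and Figure \ref{Zeta3Sep} in the $\zeta_3$ proof. A subsidiary point needing care is the ``pass to a minor / isolated vertex'' manoeuvre: one must ensure that the vertex $w$ genuinely survives as a vertex of $\eta_1$ lying in the region complementary to the named cycle, so that the cycle we call separating really does separate two vertices of $\eta_1$ itself.
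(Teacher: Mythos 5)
Your strategy is the same as the paper's: use the fact (via Theorem \ref{Theorem1}) that an affine-equivalent embedding of a separating planar subgraph already yields a separating cycle, use the mod-2 homology of face boundaries of the unique planar embedding plus Lemma \ref{gloverlemma} to restrict which region boundaries can be $1$-homologous, and then dispose of the few remaining configurations by explicit drawings. The paper's proof is exactly this: it first notes certain subgraphs of $\eta_1$ force a separating cycle whenever they are embedded with all cycles $0$-homologous, rules out the all-$1$-homologous configuration directly, rules out an odd number of $0$-homologous region boundaries by the homology parity you describe, observes that four or more $0$-homologous regions recreate a separating subgraph, and is left (up to the symmetry of $\eta_1$) with exactly two configurations of $0$-homologous regions, $\{A,B\}$ and $\{A,E\}$, each of which it shows admits a unique embedding (Figures \ref{AB affine} and \ref{AE affine}) in which a pair of vertices ($3,7$, respectively $2,6$) is visibly separated.

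The genuine gap is that your proposal stops precisely where the proof begins: the enumeration of admissible homology configurations on the regions $A,\dots,F$, the uniqueness arguments for the surviving embeddings, and the verification that each is separating are all deferred as ``the main obstacle,'' yet they are the entire mathematical content of the statement. In particular, your parity argument alone does not exclude the configuration in which every region boundary is $1$-homologous (an even count), so that case needs either a disjoint-pair contradiction via Lemma \ref{gloverlemma} or the paper's direct construction argument; and in the two surviving two-region cases the paper does not find an all-$0$-homologous $2$-connected subgraph with a $K_{3,2}\dot\cup K_1$ minor (your proposed finishing move, which is not guaranteed to exist there), but instead proves the embedding is unique and reads off the separated vertex pair from the drawing. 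Until you carry out this case analysis concretely for $\eta_1$ -- listing the configurations up to symmetry, proving uniqueness of the corresponding embeddings, and exhibiting the separated vertices -- you have a correct plan but not a proof.
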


\begin{proof}
Consider the affine embedding of $\eta_1$ shown in Figure 
\ref{etaaffine}, with regions labelled $A$ through $F$. It is unique, as $\eta_1$ is a subdivision of a 3-connected graph \cite{Whitney}.

The affine embedding has a separating cycle, as evident from the drawing in Figure \ref{etaaffine}. Observe that in an embedding of $\eta_1$, if the subgraphs of $\eta_1$ in Figure 23 are affine embedded, there is already a separating cycle in the embedding, since there is an affine embedded $K_{3,2} \dot\cup K_{1}$, or subdivision.

\begin{figure}[H]
\centering
\includegraphics[scale=0.28]{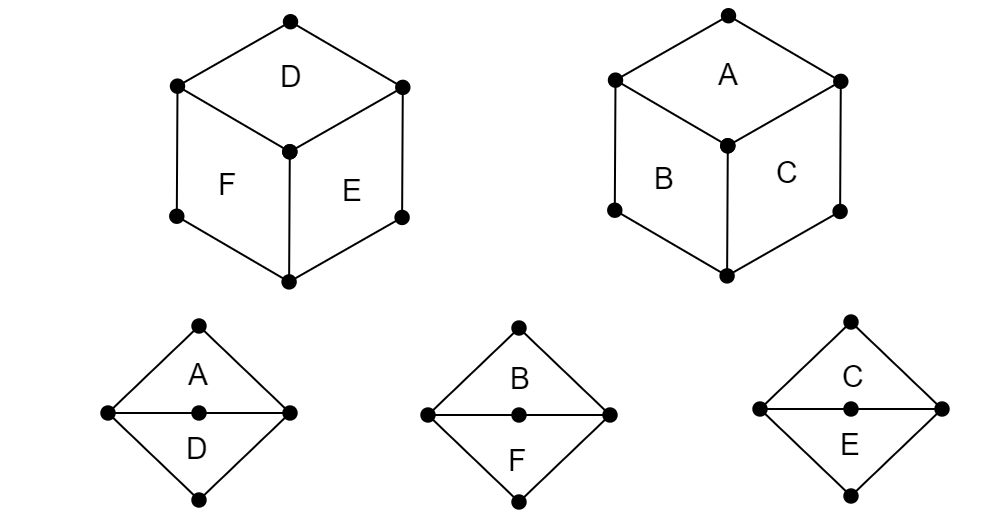}
\caption{Separating subgraphs of $\eta_1$}
\label{sepsubgraph}
\end{figure}

We know from Theorem \ref{Theorem1} that embeddings in which every cycle is 0-homologous are equivalent to affine embeddings. This means we should consider the possible combinations of regions that can be embedded with a 0-homologous cycle boundary, without creating these subgraphs.

We will show there is no projective planar embedding with no regions bounded by 0-homologous curves. We will begin by supposing the region boundaries of $A$, $B$, $C$, and $D$ are embedded as 1-homologous cycles, as seen in the far left graph of Figure \ref{fig:eta}. Thus far, there are no regions bounded by 0-homologous curves. However, to complete the graph, we must connect vertex 8 to vertex 4. In doing this, $E$ and $F$ are embedded with boundaries that are 0-homologous cycles. Thus, there is no way to embed all region boundaries as 1-homologous cycles. 

\begin{figure}[H]
    \centering
    \includegraphics[scale=0.28]{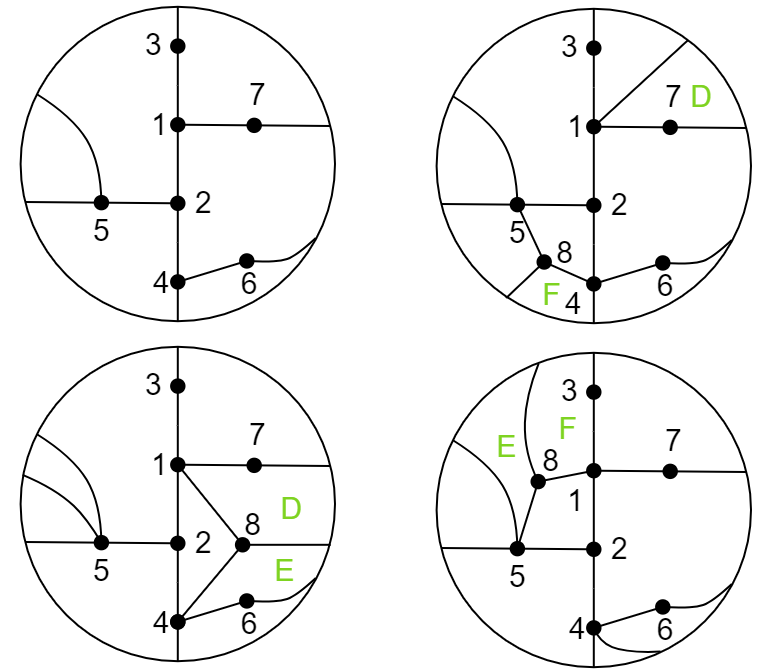}
    \caption{As seen in the top left graph, if we embed region boundaries of $A$, $B$, $C$, and $D$ as 1-homologous cycles, three regions remain for the vertex 8 and its adjacent edges to be embedded into. The three other graphs show the three cases of when vertex 8 and its adjacent edges are embedded in these three regions. This will produce the regions $E$ and $F$, $D$ and $F$, or $D$ and $E$ with region boundaries that are 0-homologous cycles.}
    \label{fig:eta}
\end{figure}

Every region is equivalent to all the others. Without loss of generality, choose region $A$ to be embedded with a region boundary that is 0-homologous. This means we cannot also have the region boundary of $D$ be embedded as a 0-homologous cycle, because that creates one of the separating subgraphs. This also means we cannot have both the region boundaries of $B$ and $F$ also be 0-homologous, nor $C$ and $E$, because this would create the other separating subgraphs. We also cannot have $A$, $B$, and $C$ all have 0-homologous region boundaries - since we have supposed the region boundary of $A$ is 0-homologous, we cannot have both the region boundaries of $A$ and $B$ also be 0-homologous.

There are no embeddings that have just the region boundary of $A$ as a 0-homologous cycle. If so, we have in homology $[A+B+C]=[D+E+F]$ results in $0=1$. For the same reason, there are no embeddings with precisely three or five region boundaries as 0-homologous cycles. If the region boundaries of $A$ and $B$ are 0-homologous, with a 1-homologous region boundary for $C$, then two or zero of the boundaries of regions $D$, $E$, and $F$ are 0-homologous. There cannot be four regions with boundaries that are 0-homologous, because that would create a separating subgraph, so the only remaining case is two chosen regions to have 0-homologous boundaries.

The possible combinations of regions to have 0-homologous boundaries are $AB$, $AC$, $AE$, and $AF$. The combinations $AB$ and $AC$ are equivalent, as are $AE$ and $AF$. Without loss of generality, we embed $AB$ with both 0-homologous region boundaries. We will show there is only one such embedding, pictured in Figure \ref{AB affine}. The embedding is separating.

Now, we will show this embedding is unique up to equivalence. First, affine embed regions $A$ and $B$. Now embed vertex 6. We must connect this vertex to vertices 4 and 5. If neither of these edges intersects the boundary of the projective plane, this will either create region $C$ as a 0-homologous cycle or a region bounded by the cycle $\{5,6,4,3,1,7\}$ as a 0-homologous cycle. We cannot have region $C$ as a 0-homologous cycle, and if we have the cycle $\{5,6,4,3,1,7\}$ as a 0-homologous cycle, we cannot connect vertex 8 to its neighbours without an edge crossing. Thus, the cycle $\{2,4,6,5\}$ must be embedded as a 1-homologous cycle. We must connect vertex 8 to vertices 1, 4, and 5, and there is only one way to do that. Thus, we can conclude the embedding is unique.

\begin{figure}[H]
\centering
\includegraphics[scale=0.28]{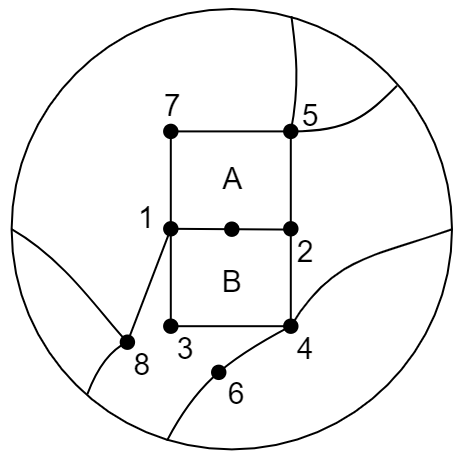}
\caption{Only regions $A $ and $B$ are 0-homologous. The vertices 3 and 7 are separated.}
\label{AB affine}
\end{figure}

The only embedding of the graph that contains $A$ and $E$ as 0-homologous cycles is shown in Figure \ref{AE affine}. The embedding is separating.

As in Figure 26, consider the embedding where the region boundary of $E$ is an affine embedded cycle, and the region boundary of $A$ is a 0-homologous cycle created using two 1-homologous paths. To do this, embed the cycle $\{5, 8, 1, 2\}$ as a standard vertical cycle, and then embed the cycle $7,1,8,5$ as a 1-homologous cycle as well. Now, connect vertices 2, 4 and vertices 4, 8 with an edge - there is only one way to do this. This creates a face bounded by the cycle $\{8,4,2,1\}$. Now we must embed vertex 3. Assume we embed it inside the face bounded by the cycle $\{8,4,2,1\}$. Now we must connect vertex 3 to vertices 1 and 4. This creates the 0-homologous regions $B$ and $F$. Thus, we must embed vertex 3 outside of that face, as well as outside of region $A$ and $E$. Now connect vertex 3 to vertices 1 and 4 in that face. This is the only possible embedding.

\begin{figure}[H]
\centering
\includegraphics[scale=0.26]{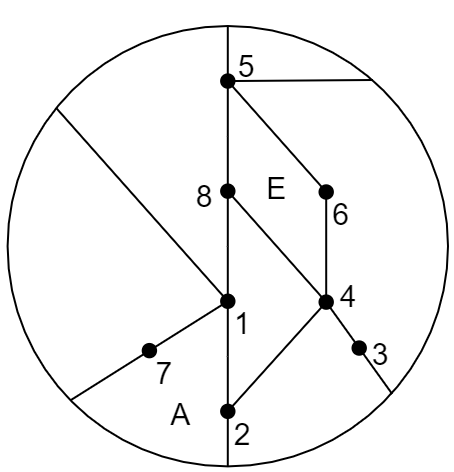}
\caption{Only regions $A$ and $E$ are 0-homologous. Region $A$ is formed by two 1-homologous paths. The vertices 2 and 6 are separated.}
\label{AE affine}
\end{figure}

We can conclude that every embedding of $\eta_1$ is separating.
\end{proof}

\subsection{$\theta$ family}

The $\theta$ family of minor-minimal nonouter-projective-planar graphs has only one member, $\theta_{1}$, which is $K_{5,2}$. Take the vertex set of $\theta_{1}$, or $K_{5,2}$, to be $V = \{ v_{1}, v_{2} \}$ union $U = \{ u_{1}, u_{2}, u_{3}, u_{4}, u_{5} \}$. 

\begin{figure}[H]
\begin{center}
    \includegraphics[scale=0.45]{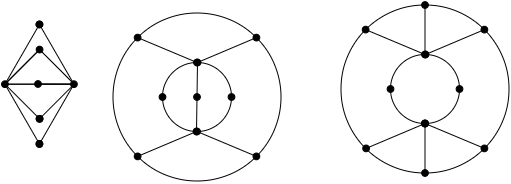}
    \caption{$\theta_{1}$ and two embeddings in the projective plane}
\end{center}
\end{figure}



\begin{proposition}
The graph $\theta_{1}$ is separating projective planar.
\end{proposition}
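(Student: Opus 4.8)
The plan is to treat $\theta_1=K_{5,2}$ as a generalized theta graph. Write $P_i$ for the path $v_1u_iv_2$, so $K_{5,2}=P_1\cup\dots\cup P_5$ is the union of five internally disjoint $v_1$–$v_2$ paths, and its only cycles are the ten $4$-cycles $C_{ij}=P_i\cup P_j$. Fix a projective planar embedding, and call $P_i$ \emph{even} or \emph{odd} according to the parity of the number of points at which it crosses the boundary of the disk used to model $\mathbb{R}P^2$. By the corollary to Theorem~\ref{Theorem2}, a cycle is $0$-homologous exactly when it crosses the boundary an even number of times, so $C_{ij}$ is $0$-homologous precisely when $P_i$ and $P_j$ have the same parity. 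As in the treatment of $\gamma_6$, isotoping a small neighbourhood of $v_1$ containing no other vertex across the boundary reverses the parity of all five paths at once; I will use this to normalize which paths are even.

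\emph{Case 1: all five $P_i$ have the same parity.} Then every cycle of $K_{5,2}$ is $0$-homologous, so by Theorem~\ref{Theorem1} the embedding is isotopic to an affine one, that is, a planar embedding. Deleting $u_5$ together with the two edges incident to $u_4$ exhibits $K_{3,2}\,\dot\cup\,K_1$ as a minor of $K_{5,2}$, so by Dehkordi and Farr's characterization of nonseparating planar graphs \cite{Dehkordi} this planar embedding contains a separating cycle; transported back, it is a $0$-homologous separating cycle of the projective planar embedding.

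\emph{Case 2: the $P_i$ do not all have the same parity.} By pigeonhole at least three of them share a parity; after the neighbourhood-of-$v_1$ flip if necessary, assume $P_1,\dots,P_m$ are even and $P_{m+1},\dots,P_5$ are odd, where $3\le m\le 4$. The subgraph $H=P_1\cup\dots\cup P_m=K_{m,2}$ has all of its cycles $0$-homologous (each is a $C_{ij}$ with $i,j\le m$), so by Theorem~\ref{Theorem1} its embedding is isotopic to an affine one. In that embedding exactly one face $F_\infty$ of $H$ meets the boundary of $\mathbb{R}P^2$, and it is the only face that is not a disk; its boundary $\partial F_\infty$ is one of the $4$-cycles $v_1u_iv_2u_j$ with $i,j\le m$. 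Since $\partial F_\infty$ is $0$-homologous it bounds a disk $D$ in $\mathbb{R}P^2$, and $F_\infty$ lies in the complementary component; the remaining $m-2\ge 1$ vertices among $u_1,\dots,u_m$ lie in disk faces of $H$ and hence in $D^\circ$. On the other hand $u_5$ cannot lie in a disk face of $H$: both edges $v_1u_5$ and $u_5v_2$ would then be drawn inside the closure of that disk face, forcing $P_5$ to cross the boundary an even number of times, contrary to $P_5$ being odd. Hence $u_5\in F_\infty$, so it lies in the component of $\mathbb{R}P^2\setminus\partial F_\infty$ other than $D^\circ$. Thus $\partial F_\infty$ is a $0$-homologous cycle of $K_{5,2}$ with some $u_k$ ($k\le m$) on one side and $u_5$ on the other; adding back the unused odd-path vertices does not change which side $u_k$ or $u_5$ is on, so $\partial F_\infty$ is a separating cycle.

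The pigeonhole step is exactly where five paths are needed — with only four, as in $K_{4,2}$, the parities could split $2$–$2$ — which is why $\theta_1=K_{5,2}$ is the borderline graph. I expect Case 2 to be the delicate part: one must be sure that an affinely embedded $K_{m,2}$ has its unique non-disk face bounded by a $4$-cycle with a leftover $u$-vertex genuinely in the interior of the disk it bounds, and that an odd path is forced to run through that non-disk face. Both are visually obvious from the standard drawing of $K_{m,2}$ but rely on Theorem~\ref{Theorem1} and on the crossing-parity/homology correspondence coming from Theorem~\ref{Theorem2} and its corollary.
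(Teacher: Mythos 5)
Your proof is correct and takes essentially the same route as the paper's: classify the five $v_1$--$v_2$ paths by boundary-crossing parity, use pigeonhole together with flipping a small neighbourhood of $v_1$ across the boundary to obtain at least three $0$-homologous paths, apply Theorem~\ref{Theorem1} to the union of those paths, and conclude that a $0$-homologous $4$-cycle has a $u$-vertex of an even path on one side while the $u$-vertex of a remaining $1$-homologous path is forced to the other side. Your extra bookkeeping (taking all even paths, identifying the outer-face $4$-cycle, and citing Dehkordi--Farr in Case~1 instead of the affine $K_{3,2}\,\dot\cup\,K_1$ subgraph) is just a more explicit rendering of the same argument.
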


\begin{proof}
Embed $\theta_{1}$ in the projective plane. Consider the five paths connecting $v_{1}$, $v_{2}$ and note that every cycle consists of exactly two of the five paths. If all five paths are all 0-homologous or 1-homologous, then every cycle is 0-homologous and the embedding can be isotoped into an affine embedding. Since $K_{5,2}$ contains $K_{3,2} \dot\cup K_{1}$ as a subgraph, the affine embedding has a separating cycle.

Otherwise by pigeonhole, either at least three paths connecting $v_{1}$, $v_{2}$ are 0-homologous or at least three paths connecting $v_{1}$, $v_{2}$ are 1-homologous. If at least three paths are 1-homologous, isotope a sufficiently small disk containing $v_{1}$ and no other vertex over the boundary. Then every 0-homologous path becomes a 1-homologous path and every 1-homologous path become a 0-homologous path. In the resulting embedding, there are at least three 0-homologous paths.

The graph formed by the three 0-homologous paths is equivalent to $K_{3,2}$, where one vertex $w_{0}$ lies in the disk bounded by the cycle $C$ formed by two other paths. Consider that there is at least one 1-homologous path connecting $v_{1}$, $v_{2}$ and that a vertex $w_{1}$ lies on this 1-homologous path. Since this 1-homologous path only intersects the three 0-homologous paths at $v_{1}$ and $v_{2}$, $w_{1}$ does not lie in the affine disk bounded $C$. Therefore $C$ separates $w_{0}$, $w_{1}$.
\end{proof}

\begin{proposition}
The graph $\theta_{1}$ is minor-minimal separating projective planar.
\end{proposition}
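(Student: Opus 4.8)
The plan is to combine the previous proposition with the fact that $\theta_1 = K_{5,2}$ is a minor-minimal nonouter-projective-planar graph. By the previous proposition, $\theta_1$ is separating projective planar, so it remains only to show that no proper minor of $\theta_1$ is separating projective planar.

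First I would recall that $\theta_1$ is the unique member of the $\theta$ family of minor-minimal nonouter-projective-planar graphs; in particular, every proper minor $H$ of $\theta_1$ is outer-projective-planar (and projective planar, since projective planarity is minor-closed). Next I would invoke the observation used repeatedly above that an outer-projective-planar graph is nonseparating projective planar: fixing an embedding of $H$ with all vertices in a common face $F$, any cycle $C$ of $H$ leaves all of $F$, and hence all the vertices, in a single component of $\mathbb{R}P^2 \setminus C$, so no disk-bounding cycle separates two vertices. Thus $H$ admits a nonseparating embedding, which is exactly to say $H$ is not separating projective planar.

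Combining these two observations, $\theta_1$ is separating projective planar while every one of its proper minors fails to be, so $\theta_1$ is minor-minimal separating projective planar.

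I expect the only real subtlety to be bookkeeping about the definitions: ``separating projective planar'' is a statement quantified over \emph{all} embeddings, so exhibiting a single nonseparating embedding of each proper minor suffices to rule it out, and it is enough to range over proper minors. Since being outer-projective-planar is minor-closed and $\theta_1$ is one of its forbidden minors, there is nothing further to check; every proper minor is outer-projective-planar, hence nonseparating. One could alternatively verify directly that the three one-step minors of $K_{5,2}$ (a single edge deletion, a single vertex deletion, and a single edge contraction) are each outer-projective-planar, using that ``nonseparating projective planar'' is itself minor-closed, but the forbidden-minor characterization of outer-projective-planarity makes this redundant.
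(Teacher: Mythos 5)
Your proof is correct, but it argues differently from the paper. The paper's proof of minor-minimality is by explicit construction: it exhibits (in a figure) nonseparating projective planar embeddings of the key one-step minors $\theta_1-e$, $\theta_1$ with an edge contracted, $\theta_1-u_i$, and $\theta_1-v_i$, implicitly using the symmetry of $K_{5,2}$ and the fact that having a nonseparating embedding is minor-closed to cover all proper minors. You instead piggyback on the Archdeacon--Hartsfield--Little--Mohar classification: since $\theta_1$ is a minor-minimal nonouter-projective-planar graph, every proper minor is outer-projective-planar, and an outer-projective-planar embedding is nonseparating, so a single such embedding rules each proper minor out. Your route is shorter, needs no case analysis or pictures, and is more general (the same two sentences establish minor-minimality for any separating graph on the list of 32, e.g.\ $\gamma_6$, $\eta_1$, $\zeta_3$, the $\delta$ family), at the cost of leaning on the external classification and on the ``outer-projective-planar implies nonseparating'' fact, which the paper states in Section 4 but does not prove; the paper's embeddings, by contrast, are self-contained and verifiable by inspection. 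One small wording point: in an outer-projective-planar embedding the vertices lie on the \emph{boundary} of the common face $F$, not in $F$ itself, so the clean statement is that any vertex not lying on a disk-bounding cycle $C$ is in the same component of $\mathbb{R}P^2\setminus C$ as $F$ (and a vertex on $C$ is in neither component), whence no two vertices are separated; this is exactly what your argument needs and is easily supplied.
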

\begin{proof}
The diagram below illustrates embeddings of key minors of $\theta_1$, which are all nonseparating. These embeddings are $\theta_{1}-e$, $\theta_{1}$ with an edge contraction, $\theta_{1}-u_{i}$ and $\theta_{1}-v_{i}$ from left to right. 

\begin{figure}[H]
\begin{center}
    \includegraphics[scale=0.5]{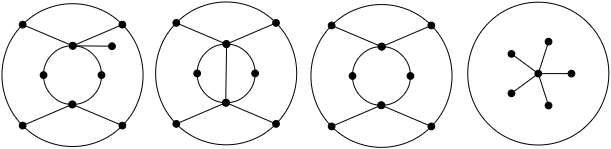}
    \caption{Key minors of $\theta_{1}$}
\end{center}
\end{figure}

\end{proof}

\subsection{$\kappa$ family}
The $\kappa$ family of minor-minimal nonouter-projective-planar graphs has only one member, $\kappa_{1}$.

\begin{proposition}
The graph $\kappa_{1}$ is strongly nonseparating. 
\end{proposition}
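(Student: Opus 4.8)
The plan is to exhibit a single embedding of $\kappa_1$ in $\mathbb{R}P^2$ witnessing the strongly nonseparating property, exactly as was done for the other strongly nonseparating families ($\gamma$, $\epsilon$, and most of $\zeta$). Recall that a graph is strongly nonseparating projective planar precisely when there is an embedding into $\mathbb{R}P^2$ such that every pair of vertices $u, v$ can be joined by a path in $\mathbb{R}P^2$ meeting $G$ only at $u$ and $v$. So the first step is to pin down the graph $\kappa_1$ from Archdeacon's list of the $32$ minor-minimal nonouter-projective-planar graphs and fix a drawing of it; since $\kappa_1$ is the lone member of its family, there is no ambiguity in which graph is meant.

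Next I would produce an explicit projective planar drawing of $\kappa_1$ and label its vertices. The key property I want is that the complement $\mathbb{R}P^2 \setminus \kappa_1$, together with the possibility of routing a connecting path through a single face (or around the cross-cap), leaves every vertex "visible" to every other. Concretely, I would try to arrange the embedding so that there is one face incident to all (or almost all) of the vertices, and handle any remaining vertex pairs by a path that threads through a 1-homologous curve, using the fact that on $\mathbb{R}P^2$ a vertex on the boundary circle can be reached from two antipodal directions. In practice one draws the figure, checks each pair, and cites the picture; I would present this as a figure with a short sentence asserting that the indicated arcs connect every pair of vertices and meet the graph only at their endpoints, mirroring the phrasing already used for the $\gamma$ and $\epsilon$ propositions.

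The main obstacle is purely a drawing/bookkeeping matter: finding the one embedding of $\kappa_1$ in which no vertex is "buried" inside a 0-homologous cycle that also encloses other vertices — because any such burial would obstruct some connecting path and (by Theorem~\ref{Theorem1}, if all relevant cycles were 0-homologous) could even force separation. Since $\kappa_1$ is nonouter-projective-planar, no embedding has all vertices on a common face, so the argument genuinely must use the cross-cap: the right picture will route the "problematic" connecting paths through a 1-homologous arc. Once the correct embedding is drawn, verification is a finite check over vertex pairs, so I would simply state that the embedding in the accompanying figure is strongly nonseparating, leaving the per-pair verification to inspection of the figure, consistent with the style of the surrounding propositions.

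\begin{proof}
The graph $\kappa_1$ has a strongly nonseparating embedding in the projective plane, as illustrated in the figure below. Every pair of vertices is joined by a path in $\mathbb{R}P^2$ meeting the graph only at its endpoints, so $\kappa_1$ is strongly nonseparating.
\end{proof}
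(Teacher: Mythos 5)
Your proposal takes essentially the same route as the paper: the paper's proof consists solely of exhibiting a single projective planar drawing of $\kappa_1$ (its Figure of $\kappa_1$, taken from Archdeacon et al.) and asserting that it is strongly nonseparating, leaving the pairwise verification to inspection, exactly as you propose. The only caveat is that both your argument and the paper's ultimately rest on actually producing and checking the figure, which you have correctly identified as the substantive content of the proof.
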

\begin{proof}
Figure \ref{Kappa} illustrates a strongly nonseparating embedding of $\kappa_{1}$ in the projective plane, where antipodal points are identified. 
\end{proof}

\begin{figure}[H]
\begin{center}
    \includegraphics[scale=0.25]{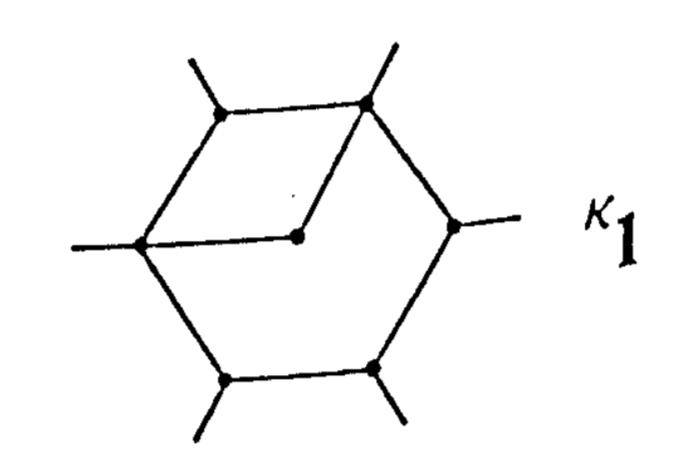}
    \caption{A nonseparating embedding of $\kappa_1$ \cite{Archdeacon}.}
    \label{Kappa}
\end{center}
\end{figure}

\subsection{Weakly separating graphs}

\begin{theo}\label{notvertexconnected}
Let $G$ be a nonouter-projective-planar graph. Then $G\dot\cup K_1$ is weakly separating.
\end{theo}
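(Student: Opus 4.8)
The plan is to argue directly from the definition of \emph{strongly nonseparating}, deriving a contradiction: I will show that \emph{every} embedding of $G\dot\cup K_1$ into $\mathbb{R}P^2$ admits a pair of vertices $(u,v)$ for which no path from $u$ to $v$ meets the graph only at its endpoints. By definition this says $G\dot\cup K_1$ is not strongly nonseparating, i.e.\ it is weakly separating. (If $G\dot\cup K_1$ has no projective planar embedding at all, the conclusion holds vacuously, so there is nothing to check in that case.)

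Fix an arbitrary piecewise-linear embedding of $G\dot\cup K_1$ and let $w$ be the image of the isolated vertex. First I would observe that deleting $w$ leaves an embedding of $G$ alone in $\mathbb{R}P^2$; since $G$ is nonouter-projective-planar, the vertices of $G$ do not all lie on the boundary of any single face of this embedding. Next, $w$ is a point of $\mathbb{R}P^2$ disjoint from the image of $G$, so it lies in the interior of some face $F$ of the $G$-embedding (adding the isolated vertex changes neither the faces of $G$ nor which vertices are incident to each face). Because not every vertex of $G$ lies on $\partial F$, I can choose a vertex $u$ of $G$ with $u\notin\overline{F}$.

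The final step is the geometric core: any path $P$ in $\mathbb{R}P^2$ from $w$ to $u$ starts in the open region $F$ and ends at a point of $\mathbb{R}P^2\setminus\overline{F}$, and the topological boundary $\partial F$ is contained in the image of $G$; following $P$ from $w$, the first parameter value at which $P$ leaves $F$ lands on $\partial F$ and is an interior point of $P$. Hence $P$ meets $G\dot\cup K_1$ at an interior point, so no path from $w$ to $u$ meets the graph only at its endpoints. Thus the embedding is not strongly nonseparating, and since the embedding was arbitrary, $G\dot\cup K_1$ is weakly separating.

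I expect the only delicate point — and it is minor — to be the bookkeeping around faces: one should invoke the piecewise-linear hypothesis so that each face is an open region whose frontier is a union of vertices and edges of $G$, and note that the characterization ``nonouter-projective-planar $\Leftrightarrow$ in every embedding the vertices fail to lie on a common face'' is just the contrapositive of the definition of outer-projective-planar. It is also worth remarking that the argument never uses connectivity of $G$ or the number of its components, so the same proof works verbatim for $G\dot\cup K_1$ whenever $G$ is nonouter-projective-planar.
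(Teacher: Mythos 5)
Your proof is correct and follows essentially the same route as the paper's: fix an arbitrary embedding, place the isolated vertex $w$ in a face $F$, use nonouter-projective-planarity to find a vertex $u$ not incident to $F$, and conclude that every path from $w$ to $u$ must meet the graph at an interior point. The only difference is cosmetic — the paper phrases the last step via disjoint (path) components of $\mathbb{R}P^2\setminus D$, while you argue via a first-exit point on $\partial F$ — so there is nothing to add.
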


\begin{proof} Suppose $K_1 = \{w\}$.
Let $G$ be a nonouter-projective-planar graph and let $D$ be a projective planar drawing of $G\dot\cup w$. Then $w\in F$ for some face $F$ of $D$. Since $G$ is nonouter-projective-planar, there is a vertex $v$ of $D$ such that $v\not\in Bd(F)$ and $v\not\in F$. Thus, the component of $v$ in $[\mathbb{R}P^2\setminus D]\cup\{w, v\}$ is a subset of $\mathbb{R}P^2\setminus F$. Since $F$ is a face, $F$ is the component of $w$. Since the path component of any point is a subset of its component, the path components of $w$ and $v$ are disjoint in $\mathbb{R}P^2\setminus D$. Thus, every path from $w$ to $v$ intersects $D$, not just at its endpoints. Hence, $D$ is weakly separating. Since $D$ was arbitrary, $G\dot\cup w$ is weakly separating.
\end{proof}

Here we have an example of a weakly separating projective planar graph that is a nonseparating projective planar graph. For example, $\beta_4$ is a nonouter-projective-planar graph, so, if we add a vertex to $\beta_4$, this new graph is a weakly separating projective planar graph, and the following embedding does not have separating cycles:

\begin{figure}[H]
\begin{center}
    \includegraphics[scale=0.4]{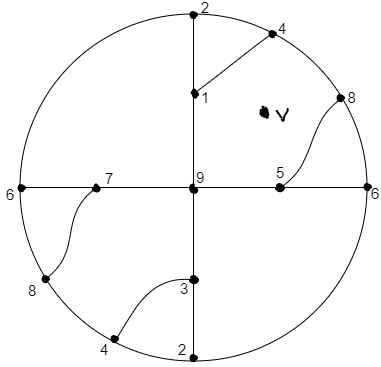}
    \caption{$\beta_4 \dot{\cup} K_1$}
\end{center}
\end{figure}

\begin{theo}\label{stronglynonsep&nonouterpp}
If $G$ is a strongly nonseparating and minor-minimally nonouter-projective-planar graph, then $G\dot\cup K_1$ is minor-minimally weakly separating.
\end{theo}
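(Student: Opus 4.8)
The plan is to verify both halves of the definition of ``minor-minimal weakly separating'': first that $G \dot\cup K_1$ is weakly separating, and second that every proper minor of $G \dot\cup K_1$ has a strongly nonseparating embedding. The first half is immediate: since $G$ is nonouter-projective-planar, Theorem~\ref{notvertexconnected} applies directly and gives that $G \dot\cup K_1$ is weakly separating. So all the work is in establishing minor-minimality.

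For minor-minimality, let $H$ be a proper minor of $G \dot\cup K_1$; I must exhibit a strongly nonseparating embedding of $H$. The natural case split is on how the minor operation interacts with the isolated vertex $w$ of the $K_1$. If the operation that makes $H$ proper involves $w$ (deleting $w$, or contracting an edge incident to $w$ --- but $w$ is isolated, so only deletion is possible), then $H$ is a minor of $G$ itself, indeed $H \preceq G$. Otherwise $H = G' \dot\cup K_1$ where $G'$ is a proper minor of $G$. In the first case, $H \preceq G$: since $G$ is minor-minimal nonouter-projective-planar, every proper minor of $G$ is outer-projective-planar, hence has an embedding with all vertices on one face, which is strongly nonseparating (any two vertices on the outer face can be joined by an arc through that face). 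Since strong nonseparability is inherited by... one should be slightly careful here --- strong nonseparability is not obviously minor-closed, but having a strongly nonseparating embedding of $G$ does pass to subgraphs and we can get it for $H$ either because $H$ equals $G$ (impossible, $H$ proper) or $H$ is a proper minor of $G$ hence outer-projective-planar hence strongly nonseparating as above. In the second case, $G' \preceq G$ properly, so $G'$ is outer-projective-planar; embed $G'$ with all its vertices on a single face $F$ and place $w$ inside $F$ as well. Then in this embedding of $G' \dot\cup K_1$ every pair of vertices, including the pairs involving $w$, can be joined by an arc running through $F$ that meets the graph only at its endpoints, so the embedding is strongly nonseparating.

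I would then assemble these observations: every proper minor of $G \dot\cup K_1$ is strongly nonseparating, and $G \dot\cup K_1$ itself is weakly separating (i.e. not strongly nonseparating) by Theorem~\ref{notvertexconnected}, which is exactly the statement that $G \dot\cup K_1$ is minor-minimally weakly separating. Note that the hypothesis ``$G$ is strongly nonseparating'' is what guarantees $G$ itself is not vacuously excluded --- it is not used to prove minor-minimality per se, but it ensures $G \dot\cup K_1$ is not itself outer-projective-planar in a way that would collapse the statement; more precisely, it is consistent with $G$ being the minor-minimal example. Actually the cleaner reading: we need $G$ to have the property (weakly separating when unioned with $K_1$) and all proper minors to lack it; the ``strongly nonseparating'' hypothesis on $G$ is not strictly needed for the argument above, but it is harmless and is presumably included to match the family of graphs under discussion.

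The main obstacle, and the one deserving the most care, is the claim that an outer-projective-planar graph disjoint-union $K_1$ admits a strongly nonseparating embedding --- one must check that placing $w$ in the common face and routing arcs through that face genuinely avoids the rest of the graph, and that ``all vertices on one face'' in the projective plane behaves as one expects (the face is an open disk, so arcs can be drawn freely inside it). A secondary subtlety is confirming that the only way to take a proper minor of $G \dot\cup K_1$ without touching $w$ is to take a proper minor of the $G$ part, and that deleting $w$ lands us inside minors of $G$; both are routine once the case analysis on the minor operations is laid out explicitly.
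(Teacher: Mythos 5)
There is a genuine gap, and it sits exactly where you wave it away. A proper minor of $G\dot\cup K_1$ need not be a \emph{proper} minor of $G$ together with (possibly) the extra vertex: deleting only the isolated vertex $w$ yields $H=G$ itself, which is a perfectly legitimate proper minor of $G\dot\cup K_1$. In your first case you write that ``$H$ equals $G$ (impossible, $H$ proper)'' --- but properness is relative to $G\dot\cup K_1$, not to $G$, so this subcase is not impossible; it is the one subcase your argument does not cover, since $G$ is nonouter-projective-planar and therefore cannot be handled by the ``all vertices on one face'' argument you use for proper minors of $G$. The paper's proof handles precisely this subcase (its Case 2, $K'$ the empty graph, $H=G'$ a minor of $G$ possibly equal to $G$) by invoking the hypothesis that $G$ is strongly nonseparating together with the minor-closedness of that property.

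This also makes your closing remark --- that the hypothesis ``$G$ is strongly nonseparating'' is ``not strictly needed'' --- false, and the counterexamples are in the paper itself. Take $G=\alpha_1=K_4\dot\cup K_4$ or $G=\gamma_6$: these are minor-minimal nonouter-projective-planar but separating, hence weakly separating, so $G\dot\cup K_1$ is weakly separating yet \emph{not} minor-minimally so, because its proper minor $G$ is already weakly separating. So the theorem genuinely fails without that hypothesis, and your proof cannot be complete as written. The fix is exactly the step you started and abandoned: for $H\preceq G$ (including $H=G$), use that $G$ has a strongly nonseparating embedding and that this property passes to minors, rather than routing everything through outer-projective-planarity of proper minors. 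Your second case (embedding the proper minor $G'$ with all vertices on one face and placing $w$ in that face) coincides with the paper's Case 1 and is fine.
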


\begin{proof} Let $G$ be a minor-minimally nonouter-projective-planar strongly nonseparating graph. By Theorem \ref{notvertexconnected}, $G\dot\cup K_1$ is weakly separating. Let $H$ be a proper minor of $G\dot\cup K_1$. Then $H=G'\dot\cup K'$, where $G'$ is a minor of $G$ and $K'$ is a minor of $K_1$. Since $H$ is a proper minor, either $G'$ or $K'$ is a proper minor.

\begin{enumerate}
    \item If $G'$ is a proper minor of $G$, then since $G$ is minor-minimally nonouter-projective-planar, there is a drawing of $G'$ with every vertex on the boundary of the same face. Thus, there is a drawing $D$ of $H$ with $K'$ on the interior of one such face of $G'$. Hence, there is a face $F$ of $H$  with every vertex of $H$ on its boundary. Hence, $H$ is strongly nonseparating. 
    \item If $K'$ is a proper minor of $K_1$, then $K'$ is the empty graph. Thus, $H=G'$. Since $G'$ is a minor of $G$ and strongly nonseparating is a minor-closed property, $H$ is strongly nonseparating.
\end{enumerate}

In both cases, $H$ is strongly nonseparating. Since $H$ was arbitrary, every proper minor of $G\dot\cup K_1$ is strongly nonseparating. Therefore, $G\dot\cup K_1$ is minor-minimally weakly separating. 

\end{proof}

An example of Theorem \ref{stronglynonsep&nonouterpp} would be the if $G$ was the graph $\kappa$ from Section 4.9. We know $\kappa \dot\cup K_1$ is minor-minimally weakly separating.

\subsection{Nonplanar and nonouter-projective-planar graphs}
In this section, we show that if a graph is nonplanar projective planbar and nonouter-projective-planar, then that graph disjoint union a vertex must be separating. We also show the conditions for that graph to be a minor-minimal graph in regard to being both nonplanar and nonounter-projective-planar.

Recall that a drawing of graph $G$ in the projective plane is a \textit{closed cell embedding} if every face of the graph is bounded by a 0-homologous cycle.

\begin{proposition}\label{addavertex}
Let $G$ be a graph that is nonplanar projective planar and nonouter-projective-planar, then $G \dot\cup K_1$ is a separating projective planar graph.
\end{proposition}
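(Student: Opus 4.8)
The plan is to fix an arbitrary embedding of $G\dot\cup K_1$ and exploit the two hypotheses separately: nonplanarity will force a $1$-homologous cycle to occur, pinning down the topology of the face that contains the added vertex, while nonouter-projective-planarity will force a vertex of $G$ to be ``hidden'' from that face.

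First I would let $D$ be an arbitrary embedding of $G\dot\cup K_1$ in $\mathbb{R}P^2$, write $w$ for the isolated vertex, and $D_G$ for the induced drawing of $G$. Since $G$ is nonplanar it has no affine embedding, so by Theorem \ref{Theorem1} (which tells us that an embedding in which every cycle is $0$-homologous is equivalent to an affine one) the drawing $D_G$ contains a $1$-homologous cycle $C_1$. By Theorem \ref{Theorem2}, $\mathbb{R}P^2\setminus C_1$ is an open disk $\Delta$, and every point of $D_G$ off $C_1$ lies in $\Delta$; in particular $w$, and the face $F$ of $D_G$ with $w\in F$, lie in $\Delta$, so $F$ is a planar region.

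Next I would invoke nonouter-projective-planarity exactly as in the proof of Theorem \ref{notvertexconnected}: since $G$ is not outer-projective-planar there is a vertex $v$ of $G$ with $v\notin Bd(F)$ and $v\notin F$, hence $v\notin\overline F$. To build the separating cycle, let $Y$ be $\overline F$ together with every bounded complementary component of $\overline F$; because the one complementary component that meets $C_1$ is the unique non-planar one, every other complementary component lies in $\Delta$, so $Y=\mathbb{R}P^2\setminus(\text{open M\"obius band})$ is a closed disk. Its frontier is a simple closed curve contained in $Bd(F)\subseteq D_G$, hence is a $0$-homologous cycle $C$ of $G$, and $w\in\interior Y$. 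If $v\notin Y$, then $C$ separates $w$ from $v$ and $D$ has a separating cycle. If $v\in Y$, then $v$ lies in one of the filled-in complementary components $H$ of $\overline F$, and running the same disk-filling construction inside the disk $H$ produces a $0$-homologous cycle $C'$ of $G$ enclosing $v$ but not $w$ (which lies in $F$, disjoint from $H$); again $D$ has a separating cycle. Since $D$ was arbitrary, $G\dot\cup K_1$ is separating projective planar.

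The step I expect to be the main obstacle is the last one --- converting the topological picture into an honest cycle of the graph. Face boundary walks need not be simple cycles; $G$ may be disconnected or may have bridges and pendant vertices, so a vertex off $Bd(F)$ need not be enclosed by any cycle of $G$ at all; and the recursion that ``digs down'' to $v$ must be shown to terminate. Handling this cleanly will require choosing $v$ inside the $2$-connected, nonplanar ``core'' of $G$ (or arguing that if no such choice is possible then $G$ is already planar or outer-projective-planar, contradicting the hypotheses). A smaller technical point is the degenerate case in which the only vertices outside $\overline F$ lie on $C_1$ itself; there one should pick the $1$-homologous cycle $C_1$ to avoid such a vertex, or first pass to a suitable minor.
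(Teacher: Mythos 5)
Your route is genuinely different from the paper's, but it has a gap at its decisive step. The paper's proof is short because it leans on its closed-cell machinery (Lemma \ref{k33k5}, Lemma \ref{lemma4lemma}, Lemma \ref{closed cell - majors}, Corollary \ref{closed cell}): since $G$ is nonplanar, every projective planar embedding of $G$ is a closed cell embedding, so the face $F$ containing the added vertex $w$ is bounded by a $0$-homologous cycle of $G$, and nonouter-projective-planarity supplies a vertex of $G$ outside $\overline{F}$, i.e.\ on the other side of that cycle. You instead try to manufacture the separating cycle directly from the topology of the embedding, by filling in the bounded complementary components of $\overline{F}$ and taking the frontier of the resulting region $Y$. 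The crucial assertion --- that this frontier is a simple closed curve contained in $Bd(F)$ and hence a $0$-homologous \emph{cycle of the graph} $G$ --- is exactly the closed-cell property the paper proves separately, and you do not prove it; you yourself flag it as the main obstacle. It can genuinely fail for an arbitrary nonplanar $G$ with cut vertices, bridges or pendant pieces: if $Bd(F)$ passes through a cut vertex or retraverses a bridge, the frontier of $Y$ can be a wedge of circles or fail to be a cycle of $G$ at all, and the same problem recurs for the frontier of the component $H$ in your second case. Even the preliminary claim that $Y$ is a closed disk (rather than, say, all of $\mathbb{R}P^2$ when $C_1\subseteq Bd(F)$) quietly uses structure you have not established.

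The repairs you sketch are not routine. The vertex $v$ given by nonouter-projective-planarity cannot simply be ``chosen inside the $2$-connected nonplanar core'': that hypothesis only guarantees some vertex off $\overline{F}$ for a face $F$ of the whole graph $G$, and if you pass to a face of a $K_5$- or $K_{3,3}$-subdivision (where face boundaries are honest $0$-homologous cycles, by Lemma \ref{k33k5}) you lose the guarantee that any vertex of $G$ lies outside the closure of that larger face. The recursive ``digging down'' to $v$ and its termination are likewise unargued. To finish along your lines you would essentially have to re-prove that every projective planar embedding of a nonplanar graph (of the relevant form) is a closed cell embedding --- Lemmas \ref{lemma4lemma} and \ref{closed cell - majors} and Corollary \ref{closed cell} --- after which the argument collapses to the paper's two-line proof; that closed-cell statement is precisely the missing ingredient.
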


\begin{proof}
All the projective planar embeddings of $G$ are closed cell embeddings, by Corollary \ref{closed cell}. This means all the faces of $G$ are bounded by a 0-homologous cycle. If you add a vertex to one of the faces, there will be a vertex inside and outside of the bounding 0-homologous cycle, because the graph is nonouter-projective-planar.
\end{proof}

In Section 5.5 of this paper, we have identified some members of the set of minor-minimal nonplanar and nonouter-projective-planar graphs. From this, we can conclude that $(K_6-2e) \dot\cup K_1$, $\epsilon_4 \dot\cup K_1$, $\epsilon_6 \dot\cup K_1$, $\kappa_1 \dot\cup K_1$, $LU_1 \dot\cup K_1$, $LU_2 \dot\cup K_1$, and $LU_3 \dot\cup K_1$ are separating projective planar graphs. The graphs $LU_i$ will be defined later, in Section 5.5

\begin{proposition}
The graphs $\kappa_1 \dot\cup K_1$, $\epsilon_4 \dot\cup K_1$, and $\epsilon_6 \dot\cup K_1$ are minor-minimal separating projective planar graphs.
\end{proposition}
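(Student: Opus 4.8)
The plan is to show that each of $\kappa_1 \dot\cup K_1$, $\epsilon_4 \dot\cup K_1$, and $\epsilon_6 \dot\cup K_1$ is separating projective planar (already done by Proposition \ref{addavertex}, since each of $\kappa_1$, $\epsilon_4$, $\epsilon_6$ is nonplanar and nonouter-projective-planar), and then verify minor-minimality by exhibiting, for each proper minor, a nonseparating projective planar embedding. Since separating projective planar is a minor-closed property, it suffices to check the \emph{maximal} proper minors of each graph $G \dot\cup K_1$: namely $G \dot\cup K_1$ with one edge deleted, $G \dot\cup K_1$ with one edge contracted, $G$ itself (delete the isolated vertex), and $(G - v) \dot\cup K_1$ for each vertex $v$ of $G$. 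Up to the automorphism group of $G$ this is a small finite list in each of the three cases.

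First I would dispose of the minors that delete the $K_1$ component: these are just $\kappa_1$, $\epsilon_4$, $\epsilon_6$ themselves, which are strongly nonseparating (hence nonseparating) by the propositions in Sections 4.5 and 4.9 — the required embeddings are already in Figures \ref{fig:FE2} and \ref{Kappa}. Next, for the minors obtained by deleting or contracting an edge of $G$, or deleting a vertex of $G$: since $\kappa_1$, $\epsilon_4$, $\epsilon_6$ are (by the results of Section 5.5, assumed here) \emph{minor-minimal} nonplanar-and-nonouter-projective-planar graphs, every proper minor $G'$ of $G$ is either planar or outer-projective-planar. If $G'$ is outer-projective-planar, then $G'$ has an embedding with all vertices on one face, so placing the leftover $K_1$ in that face gives a strongly nonseparating embedding of $G' \dot\cup K_1$. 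If instead $G'$ is planar but not outer-projective-planar, I would invoke the Dehkordi–Farr characterization: it suffices to check that such a $G'$, together with an isolated vertex, contains none of $K_1 \dot\cup K_4$, $K_1 \dot\cup K_{2,3}$, $K_{1,1,3}$ as a minor — equivalently that $G'$ itself (planar, and here small) has a nonseparating \emph{planar} embedding, which then is a nonseparating projective planar embedding of $G' \dot\cup K_1$. For the handful of $G'$ that arise I would simply display the embeddings, as is done elsewhere in the paper.

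The main obstacle is bookkeeping rather than mathematics: one must enumerate the orbits of edges and vertices under $\mathrm{Aut}(\kappa_1)$, $\mathrm{Aut}(\epsilon_4)$, $\mathrm{Aut}(\epsilon_6)$, and for each orbit representative confirm that the resulting proper minor of $G$ drops out of the "nonplanar and nonouter-projective-planar" class — so that one of the two easy cases above applies — and then produce the witnessing embedding. The one subtlety to watch is the contraction case: contracting an edge of $G$ can create multi-edges, which should be simplified, and one must make sure the simplified graph is genuinely a proper minor in the relevant class; but since $G$ is assumed minor-minimal for being nonplanar-and-nonouter-projective-planar, this is automatic. I would organize the proof as three short paragraphs, one per graph, each referencing a single figure that collects the nonseparating embeddings of all maximal proper minors, mirroring the style of the $\theta_1$ minor-minimality proof above.
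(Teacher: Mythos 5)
Your proposal reaches the right conclusion, and its first half coincides with the paper: separating-ness comes from Proposition \ref{addavertex}, and the minor that deletes the $K_1$ is handled by the strongly nonseparating embeddings of $\kappa_1$, $\epsilon_4$, $\epsilon_6$ already in the figures. Where you diverge is in handling proper minors of $G$ itself. You only invoke the Section 5.5 fact that these graphs are minor-minimal \emph{nonplanar-and-nonouter-projective-planar}, which leaves you a ``planar but not outer-projective-planar'' case to be settled by Dehkordi--Farr plus explicit embeddings and an orbit-by-orbit enumeration of edges and vertices. The paper instead uses the stronger fact that $\kappa_1$, $\epsilon_4$, $\epsilon_6$ are among the 32 minor-minimal \emph{nonouter-projective-planar} graphs (the $\kappa$ and $\epsilon$ families), so \emph{every} proper minor $G'$ of $G$ is outer-projective-planar outright; one then embeds $G'$ with all vertices on a single face, places the $K_1$ in that face, and obtains a nonseparating drawing of $G'\dot\cup K_1$ uniformly, with no enumeration at all. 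Your extra case is therefore vacuous for these three graphs --- which is fortunate, because as written your fallback could not succeed: if $G'$ were planar but not outer-projective-planar, it would in particular not be outerplanar, so $G'\dot\cup K_1$ would always contain $K_1\dot\cup K_4$ or $K_1\dot\cup K_{2,3}$ as a minor, the Dehkordi--Farr test would certify that it is separating \emph{planar}, and you would learn nothing about the existence of a nonseparating \emph{projective planar} embedding (which would then have to be built by hand using essential cycles, a step your plan does not supply). One further small slip: it is \emph{nonseparating} projective planar, not separating, that is the minor-closed property; that is what justifies restricting attention to maximal proper minors.
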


\begin{proof}
By Proposition \ref{addavertex}, these graphs must be separating projective planar. Now, we will show they are minor-minimal in this regard. Without loss of generality, consider $\kappa_1 \dot\cup K_1$. Call a minor of this graph $H$.
There are two cases to consider: if $H$ results from deleting the $K_1$ or if $H$ results from taking a minor of $\kappa_1$. First, suppose we have deleted $K_1$. We know $\kappa_1$ is strongly nonseparating, so this minor is not separating. The graph $\kappa_1$ is minor-minimal nonouter-projective-planar, so any minor of it will be outer-projective-planar. Thus, if we embed this minor as its outer-projective-planar drawing, and we embed $K_1$ in the face where all the vertices are in the boundary, this will create a nonseparating drawing. Thus, we can conclude that $\kappa_1 \dot\cup K_1$ is minor-minimal separating projective planar, and by extension, so are $\epsilon_4 \dot\cup K_1$ and $\epsilon_6 \dot\cup K_1$.
\end{proof}

\subsection{Multipartite graphs}
In this section, we explore which complete multipartite graphs are nonseparating.

\begin{proposition}
The nonseparating complete multipartite graphs are exactly $K_{2,2}$, $K_{2,3}$, $K_{2,4}$, $K_{3,3}$, $K_{3,4}$, and $K_{1,n}$ where $n$ is a positive integer.
\end{proposition}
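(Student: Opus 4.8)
The plan is to prove the proposition in two directions: first verify each listed graph is nonseparating by exhibiting an explicit projective planar embedding with no separating cycle, then show every other complete multipartite graph is separating by finding a forbidden minor. For the positive direction, the stars $K_{1,n}$ are trivially nonseparating (indeed acyclic, so no cycle at all, so vacuously no separating cycle in any embedding). The finite list $K_{2,2}, K_{2,3}, K_{2,4}, K_{3,3}, K_{3,4}$ can each be handled by drawing a single strongly nonseparating (or at least nonseparating) embedding in $\mathbb{R}P^2$; for instance $K_{3,3}$ and $K_{3,4}$ are nonplanar but do embed in the projective plane, and one checks directly that the displayed embedding has all the combinatorial structure needed — I would present these as a figure with the claim that in each drawing, no disk-bounding cycle separates two vertices (equivalently, exhibit paths connecting all pairs of vertices through the complement, giving the stronger "strongly nonseparating" property where possible).

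For the negative direction, the key observation is a minor-monotonicity argument: nonseparating projective planar is minor-closed (stated in the excerpt), so it suffices to show that every complete multipartite graph \emph{not} on the list contains one of the known separating projective planar graphs as a minor. The natural candidates are the $\alpha$-family graphs $K_4 \dot\cup K_4$, $K_4 \dot\cup K_{3,2}$, $K_{3,2} \dot\cup K_{3,2}$ (shown minor-minimal separating in the $\alpha$-family proposition), the graph $\theta_1 = K_{5,2}$ (shown separating), and possibly $K_{1,1,3}$-type graphs. So the combinatorial heart is: enumerate complete multipartite graphs $K_{n_1,\dots,n_k}$ and show that unless $(n_1,\dots,n_k)$ is one of the listed tuples, the graph has $K_{5,2}$ or a disjoint $\alpha$-family member or some other separating graph as a minor. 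Concretely: if $k \geq 3$ and the graph is not a star, then $K_{1,1,3} \subseteq K_{n_1,\dots,n_k}$ when there are enough vertices, and one checks $K_{1,1,3}$ is separating (it is nonplanar-or-dense enough; actually $K_{1,1,3}$ appears in Dehkordi–Farr's planar list and one must verify its projective-planar status). For $k=2$: $K_{m,n}$ with $m \geq 2$; if $\min(m,n) \geq 2$ and $\max(m,n) \geq 5$ then $K_{5,2}$ is a subgraph; if $\min(m,n) \geq 3$ and $\max(m,n) \geq 5$ similarly; the remaining bipartite cases with $\min \geq 4, \max \geq 4$, i.e. $K_{4,4}$ and larger, need a separate forbidden minor (e.g. $K_{4,4}$ contains two disjoint $K_{3,2}$'s after a suitable edge deletion/contraction, giving $\alpha_3$).

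I expect the main obstacle to be the negative direction's edge cases, specifically pinning down exactly where the boundary lies — why $K_{3,4}$ is nonseparating but $K_{3,5}$ (which contains $K_{5,2}$? no — $K_{3,5}$ does not contain $K_{5,2}$ as the parts have sizes $3$ and $5$, and $K_{5,2}$ needs a part of size $\geq 5$ on one side and $\geq 2$ on the other, so $K_{3,5} \supseteq K_{5,2}$ fails) is separating. For $K_{3,5}$ one instead needs a different argument: perhaps contracting within the size-$5$ part is not allowed (it destroys completeness), so one must delete vertices/edges cleverly, or find a disjoint pair of $K_{3,2}$'s as a \emph{topological} minor after relabeling. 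Similarly distinguishing $K_{2,4}$ (nonseparating) from $K_{2,5} = K_{5,2}$ (separating) is the clean case but $K_{4,4}$ versus $K_{3,4}$ requires care. The cleanest route is likely: (1) show $K_{3,5}, K_{4,4}$, and anything larger all contain $\alpha_3 = K_{3,2}\dot\cup K_{3,2}$ as a minor by explicitly partitioning vertices and deleting the connecting edges (a complete bipartite graph on parts of size $\geq 3$ and $\geq 5$, or $\geq 4$ and $\geq 4$, splits into two vertex-disjoint $K_{3,2}$'s with room to spare); (2) for $k \geq 3$ non-star cases, show a $K_{1,1,3}$ or $\alpha$-family minor arises; and then (3) for $k=2$ with one part of size $\geq 5$ and the other of size $\geq 2$, directly use the $K_{5,2}$ subgraph. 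Assembling the precise case list and checking no listed graph accidentally contains a separating minor (it cannot, since they are all shown nonseparating, hence minor-closed-ly so) completes the proof.
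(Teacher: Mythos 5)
Your overall strategy --- exhibit nonseparating embeddings for the listed graphs and eliminate everything else via minor-closedness of the nonseparating property --- is the same strategy the paper uses (the paper gets $K_{2,2}$, $K_{2,3}$, $K_{2,4}$ for free from the minor-minimality of $\theta_1=K_{5,2}$ rather than from new figures, a cosmetic difference), but several of your concrete claims in the negative direction are wrong. First, $K_{3,5}$ \emph{does} contain $K_{5,2}=K_{2,5}$ as a subgraph: just delete one vertex of the size-$3$ part. Your parenthetical argument to the contrary misreads the part sizes, and the fallback it sends you to cannot work: neither $K_{3,5}$ nor $K_{4,4}$ can contain $\alpha_3=K_{3,2}\dot\cup K_{3,2}$ as a minor, since a minor isomorphic to a $10$-vertex graph needs $10$ disjoint branch sets and these graphs have only $8$ vertices (your proposed partition into two vertex-disjoint $K_{3,2}$'s leaves behind only a $K_{2,1}$). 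So your step (1) fails outright. Moreover, $K_{4,4}$ (and every $K_{m,n}$ with $m,n\ge 4$) cannot be handled by any separating-minor argument of the type you propose: it has no $K_{5,2}$ minor either, because five pairwise nonadjacent branch sets would all have to lie inside one side of size $4$. The paper instead simply observes that $K_{4,4}$ is not projective planar, a fact your plan never invokes and genuinely needs.

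Second, $K_{1,1,3}$ is not usable as a forbidden minor in this setting: it has only $5$ vertices, far too few to contain any of the $32$ minor-minimal nonouter-projective-planar graphs, so it is outer-projective-planar and hence nonseparating projective planar; it obstructs nonseparating \emph{planarity} (Dehkordi--Farr) but not the projective-planar property at issue. Relatedly, your plan to show every non-listed complete multipartite graph with three or more parts is separating would run into genuine counterexamples such as $K_3$, $K_4$ and $K_5=K_{1,1,1,1,1}$, which are outer-projective-planar and hence nonseparating; the paper's proof (like the statement, implicitly) confines itself to the bipartite cases. Your positive direction is fine, and once you restore $K_{5,2}\subseteq K_{3,n}$ for $n\ge 5$ and add the non-projective-planarity of $K_{4,4}$, your bipartite case analysis collapses to the paper's: $K_{2,n}$ and $K_{3,n}$ with $n\ge 5$ contain the separating graph $K_{5,2}$, and $K_{m,n}$ with $m,n\ge 4$ are not projective planar at all.
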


\begin{proof}
Consider $K_{2,n}$. By Proposition 4.9, it is nonseparating if and only if $n \leq 4$. The graphs $K_{3,3}$ and $K_{3,4}$ are nonseparating, as seen in Figures \ref{k33} and \ref{k34}. Consider $K_{4,n}$ and note that $K_{4,4}$ is not projective planar. Note also that every graph $K_{1,n}$ is nonseparating and that every graph $K_{m,n}$ where $m \geq 4$, $n \geq 2$ is separating since $K_{2,5} = K_{5,2}$ is separating.
\end{proof}


\begin{figure}[H]
\centering 
\begin{minipage}[b]{0.4\linewidth}
\centering
\includegraphics[scale=0.35]{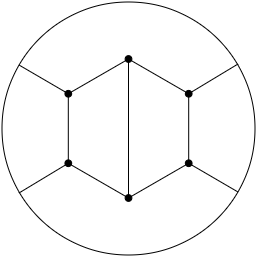}
    \caption{This graph is $K_{3,3}$}
    \label{k33}
\end{minipage}
\quad
\begin{minipage}[b]{0.33\linewidth}
\centering
\includegraphics[scale=0.15]{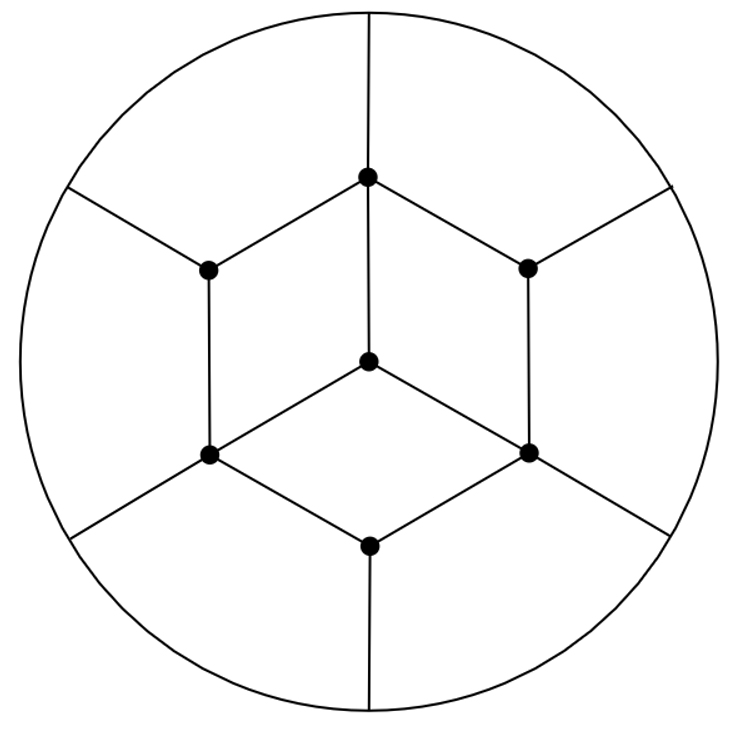}
    \caption{This graph is $K_{3,4}$}
    \label{k34}
\end{minipage}
\end{figure}

\subsection{Conclusion}

This table shows a summary of our results on the 32 minor-minimal nonouter-projective-planar graphs:
\begin{table}[H]
\centering
\resizebox{\columnwidth}{!}{
\begin{tabular}{|c|c|c|c|c|}
\hline
Family & SPP? & SNSPP? & If not SPP, what related graph is weakly SPP?\\
\hline
$\alpha$ & Yes & No & -  \\
\hline
$\beta$ & No & No  & $\beta \dot\cup K_1$ \\
\hline
$\epsilon$ & No & Yes  & $\epsilon \dot\cup K_1$ \\
\hline
$\delta$ &   Yes & No  & - \\
\hline
$\gamma$ & No, except $\gamma_6$ & Yes, except $\gamma_6$ & $\gamma \dot\cup K_1$\\
\hline
$\zeta$ & No, except $\zeta_3$ & Yes, except $\zeta_3$ & $\zeta \dot\cup K_1$\\
\hline
$\eta_1$ & Yes & No & - \\
\hline
$\theta_1$ & Yes & No & -\\
\hline
$\kappa_1$ & No & Yes & $\kappa \dot\cup K_1$\\

\hline
\end{tabular}
}
\caption{Status of minor-minimal nonouter-projective-planar graphs, with respect to being separating projective planar (SPP) or strongly nonseparating projective planar (SNSPP), as well as weakly SPP.}
\label{tab:my_label}
\end{table}

If a minor-minimal nonouter-projective-planar graph is nonseparating, there must be a graph that contains the graph as a minor that is minor-minimal separating projective planar. For example, $\kappa_1 \dot\cup K_1$, $\epsilon_4 \dot\cup K_1$, and $\epsilon_6 \dot\cup K_1$ are minor-minimal separating projective planar graphs, though we don't know if they are minor-minimal. By Proposition \ref{addavertex} and Section 5.5, we know $(K_6-2e) \dot\cup K_1$, $LU_1 \dot\cup K_1$, $LU_2 \dot\cup K_1$, and $LU_3 \dot\cup K_1$ are separating.

\begin{theo}
The following are minor-minimal separating projective planar graphs: graphs in the $\alpha$ family, graphs in the $\delta$ family, $\gamma_6$, $\zeta_3$, $\eta_1$, and $\theta_1$.
\end{theo}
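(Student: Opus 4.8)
The plan is to assemble the claimed list directly from the propositions already proved family by family, noting that for each listed graph we have shown two things: (1) it is separating projective planar, and (2) every proper minor fails to be separating projective planar. For the $\alpha$ family, this is precisely the statement of the proposition in Section 4.1, which was proved using Lemma~\ref{gloverlemma} (two $1$-homologous cycles intersect) together with Theorem~\ref{Theorem1} (a $2$-connected planar graph with all cycles $0$-homologous embeds affinely) and Halin's theorem that $K_4$ and $K_{3,2}$ are the minor-minimal nonouterplanar graphs. For the $\delta$ family, the proposition in Section 4.4 establishes that both $\delta_1$ and $\delta_2$ are separating; it remains only to add the minor-minimality argument, which follows the same template as in the $\alpha$ case: any proper minor of $\delta_i$ is a minor of a nonouter-projective-planar graph that is no longer nonouter-projective-planar, so it has a drawing realizing a witness vertex outside a face, and one checks the relevant $2$-connected piece can be drawn affinely so that no separating cycle is forced.

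Next I would handle $\gamma_6$, $\zeta_3$, $\eta_1$, and $\theta_1$. Separating-ness of each of these is exactly the content of the corresponding proposition (Sections 4.5.1, 4.6, 4.7, 4.8 respectively), so those can be cited verbatim. For minor-minimality, $\theta_1 = K_{5,2}$ already has its minor-minimality proved in Section 4.8 via explicit nonseparating embeddings of $\theta_1 - e$, an edge contraction, $\theta_1 - u_i$, and $\theta_1 - v_i$; since every proper minor of $K_{5,2}$ is a minor of one of these four graphs, and nonseparating projective planar is minor-closed, $\theta_1$ is minor-minimal. For $\gamma_6$, $\zeta_3$, and $\eta_1$, I would argue in each case that any proper minor is obtained by an edge deletion, vertex deletion, or edge contraction, and that the resulting graph is a proper minor of a minor-minimal nonouter-projective-planar graph, hence outer-projective-planar; an outer-projective-planar graph has an embedding with all vertices on one face and is therefore trivially nonseparating (no disk-bounding cycle can separate two vertices both lying on the same face). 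One must be slightly careful: a proper minor of a nonouter-projective-planar graph need not be a minor of a minor-minimal one, but since each of $\gamma_6$, $\zeta_3$, $\eta_1$, $\theta_1$ is \emph{itself} minor-minimal nonouter-projective-planar (each belongs to the list of $32$), every proper minor of it is outer-projective-planar, which is the clean fact we need.

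The remaining gap is to justify that outer-projective-planar $\Rightarrow$ nonseparating, or more precisely that a proper minor of any of these graphs fails property "separating projective planar." The cleanest route: if $H$ is outer-projective-planar, embed $H$ with all vertices on the boundary of a common face $F$; any cycle $C$ of $H$ that bounds a disk $D$ has the whole vertex set of $H$ in the closure of $F$, which lies in one of the two components of $\mathbb{R}P^2 \setminus C$ (since $F$ is connected and disjoint from $C$, except possibly meeting $C$ in its boundary), so $C$ separates no two vertices; hence this embedding is nonseparating and $H$ is not separating projective planar. This handles $\gamma_6$, $\zeta_3$, $\eta_1$, $\theta_1$ uniformly. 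The one step I expect to be the actual obstacle is making the $\delta$-family minor-minimality argument airtight, since $\delta_1$ and $\delta_2$ are the graphs whose separating proof itself relied on an exhaustive case check over homology classes of faces (Figure~\ref{AllEmbeddings}); verifying that deleting or contracting any single edge kills separating-ness requires either a symmetry reduction of the edge orbits of $\delta_1, \delta_2$ or, preferably, the same appeal to their being minor-minimal nonouter-projective-planar so that all proper minors are outer-projective-planar and hence nonseparating by the argument just given. I would therefore unify everything under the single lemma "if $G$ is minor-minimal nonouter-projective-planar and separating, then $G$ is minor-minimal separating," whose proof is the outer-projective-planar $\Rightarrow$ nonseparating observation, and then the theorem is immediate from the six propositions.
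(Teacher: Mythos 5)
Your proposal is correct, and it is essentially the paper's route: the theorem is a summary statement, justified by the separating-ness propositions for the $\alpha$ family, $\delta$ family, $\gamma_6$, $\zeta_3$, $\eta_1$, and $\theta_1$, which you cite exactly as the paper does. Where you differ is in how minor-minimality is handled: the paper proves it ad hoc only for the $\alpha$ family (via Halin's characterization of $K_4$ and $K_{3,2}$ as the minor-minimal nonouterplanar graphs) and for $\theta_1$ (via explicit nonseparating embeddings of its key minors), leaving the $\delta$ family, $\gamma_6$, $\zeta_3$, and $\eta_1$ to the implicit observation, stated in Section 4, that outer-projective-planar graphs are nonseparating. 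Your single lemma --- if $G$ is separating and minor-minimal nonouter-projective-planar, then every proper minor is outer-projective-planar and hence nonseparating, so $G$ is minor-minimal separating --- makes that implicit step explicit and uniform, and your justification of outer-projective-planar $\Rightarrow$ nonseparating (all vertices lie in the closure of a common face $F$, which is connected and disjoint from any disk-bounding cycle $C$, so every vertex off $C$ lies in the component of $\mathbb{R}P^2 \setminus C$ containing $F$) is sound. This buys a cleaner and shorter argument than the paper's family-by-family treatment; the only thing you lose is the extra information contained in the paper's explicit nonseparating drawings of the minors of $\theta_1$, which your lemma renders unnecessary but which serve as an independent check. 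Your worry about needing a case analysis for $\delta_1$, $\delta_2$ is indeed dissolved by the same lemma, since both are among the $32$ minor-minimal nonouter-projective-planar graphs.
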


\begin{theo}
The following are minor-minimal weakly separating graphs: graphs in the $\beta$ family, $\epsilon_1 \dot\cup K_1$, $\epsilon_2 \dot\cup K_1$, $\epsilon_3 \dot\cup K_1$, $\epsilon_4 \dot\cup K_1$, $\epsilon_5 \dot\cup K_1$, $\epsilon_6 \dot\cup K_1$, $\gamma_1 \dot\cup K_1$, $\gamma_2 \dot\cup K_1$, $\gamma_3 \dot\cup K_1$, $\gamma_4 \dot\cup K_1$, $\gamma_5 \dot\cup K_1$, $\zeta_1 \dot\cup K_1$, $\zeta_2 \dot\cup K_1$, $\zeta_4 \dot\cup K_1$, $\zeta_5 \dot\cup K_1$, $\zeta_6 \dot\cup K_1$, and $\kappa_1 \dot\cup K_1$
\end{theo}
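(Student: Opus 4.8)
The statement is an omnibus corollary assembling the results of Section~4, so the plan is to split its list into two groups and dispatch each by a uniform argument: the graphs of the form $G\dot\cup K_1$ on the one hand, and the members of the $\beta$ family on the other.

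For $\epsilon_i\dot\cup K_1$ $(1\le i\le 6)$, $\gamma_i\dot\cup K_1$ $(1\le i\le 5)$, $\zeta_i\dot\cup K_1$ for $i\in\{1,2,4,5,6\}$, and $\kappa_1\dot\cup K_1$, I would apply Theorem~\ref{stronglynonsep&nonouterpp} verbatim. Each base graph here is one of the $32$ minor-minimal nonouter-projective-planar graphs, hence minor-minimally nonouter-projective-planar, and each was exhibited to be strongly nonseparating in the corresponding statement of Section~4 (the $\epsilon$-, $\gamma$-, $\zeta$-, and $\kappa$-family propositions/theorem). Theorem~\ref{stronglynonsep&nonouterpp} then gives that the disjoint union of each with $K_1$ is minor-minimally weakly separating, which is exactly what the theorem asserts for those entries. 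The only thing to double-check is that the list of strongly nonseparating base graphs is precisely $\epsilon_1,\dots,\epsilon_6$, $\gamma_1,\dots,\gamma_5$, $\zeta_1,\zeta_2,\zeta_4,\zeta_5,\zeta_6$, $\kappa_1$ — that is, that $\gamma_6$, $\zeta_3$, and the $\alpha,\delta,\eta,\theta$ families, being separating rather than strongly nonseparating, correctly do not appear.

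For the $\beta$ family the argument has two halves. Minor-minimality is immediate: each $\beta_i$ is minor-minimally nonouter-projective-planar, so every proper minor $H$ of $\beta_i$ is outer-projective-planar; such an $H$ has an embedding with all of its vertices on the boundary of a single face $F$, and then any two of them are joined by an arc running through $F$ and meeting $H$ only at its endpoints, so $H$ is strongly nonseparating and hence not weakly separating. The remaining half is to show that each $\beta_i$ is itself weakly separating, i.e.\ that \emph{every} projective planar embedding of $\beta_i$ has two vertices lying on no common face. Here I would carry out a finite case analysis over the projective planar embeddings of each $\beta_i$, up to symmetry, using the same bookkeeping employed for $\delta_1,\delta_2$, $\zeta_3$, and $\eta_1$: since every cycle of $\beta_i=K_4\cup_x K_{3,2}$ lies entirely in one of the two blocks (the shared vertex $x$ is a cut vertex), one records the homology class of each face-boundary cycle, uses that two $1$-homologous cycles must cross (Lemma~\ref{gloverlemma}), that an embedding all of whose cycles are $0$-homologous is affine (Theorem~\ref{Theorem1}), and that the essential cycles of the two blocks can meet only at $x$, and then in each surviving embedding exhibits a separated vertex pair, exactly as displayed for the six embeddings in Figures~\ref{fig:minipage1}--\ref{fig:minipage6}.

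The main obstacle is this last step: the six displayed embeddings of the $\beta_i$ were already checked to be weakly separating in Section~4.2, but the theorem needs \emph{all} embeddings, which is precisely the statement flagged there as a conjecture. Two routes present themselves: (i) complete the finite enumeration above, which is guaranteed to terminate but is tedious across the six graphs; or (ii) prove the structural lemma that a graph with a cut vertex $x$ whose two blocks are each nonouterplanar has no embedding in $\mathbb{R}P^2$ in which every pair of vertices is co-facial. For (ii), the idea is that if $a\in K_4\setminus x$ and $b\in K_{3,2}\setminus x$ lie on a common face $F$, then the boundary walk of $F$ must pass through $x$; cutting the embedding at $x$ confines one block to a single face incident to $x$, and nonouterplanarity of that block, via Theorem~\ref{halin}, forces one of its vertices off every face that meets the other block, contradicting co-faciality of the corresponding pair. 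Making the ``cut at $x$'' operation rigorous in $\mathbb{R}P^2$ — in particular handling the case where the confining face is a M\"obius band rather than a disk — is the delicate point; absent such a lemma one falls back on the finite case check of route~(i).
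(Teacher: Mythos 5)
Your handling of the $G\dot\cup K_1$ entries is exactly the paper's (implicit) justification: the paper gives this theorem no separate proof, and the intended argument is precisely Theorem~\ref{stronglynonsep&nonouterpp} applied to the strongly nonseparating, minor-minimally nonouter-projective-planar base graphs established in the $\epsilon$, $\gamma$ (except $\gamma_6$), $\zeta$ (except $\zeta_3$), and $\kappa$ subsections; your list of admissible base graphs is the correct one. Your treatment of the $\beta$ family is also the right decomposition — minor-minimality is immediate from minor-minimal nonouter-projective-planarity, and the substantive claim is that \emph{every} embedding of each $\beta_i$ is weakly separating — and here the obstacle you flag is genuine, but it is a gap in the paper rather than in your reading: Section~4.2 only checks the six displayed embeddings and explicitly states ``we conjecture that every member in the $\beta$ family is weakly separating,'' so the theorem as written leans on an unproved conjecture for those entries. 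Your two completion routes are sensible; route (i), a finite enumeration of embeddings up to symmetry in the style of the $\delta_1,\delta_2$ and $\zeta_3$ analyses, is the safe one, while route (ii), the cut-vertex lemma for two nonouterplanar blocks, would be the cleaner general statement but needs the care you already identify (the face at the cut vertex need not be a disk, and co-faciality must be argued for a specific pair with one vertex in each block, not just any pair). In short: your proposal matches the paper where the paper has a proof, and correctly isolates, and proposes to repair, the one place where it does not.
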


Dehkordi and Farr \cite{Dehkordi} characterized the set of nonseparating planar graphs as graphs that are outerplanar or a subgraph of wheel graphs, or a subgraph of elongated triangular prism graphs \cite{Dehkordi}. We have extended this research to the projective plane. In the plane, if a graph is nonseparating, it is also strongly nonseparating, so Dehkordi and Farr only have one theorem on the topic. For the projective plane, we have two theorems, though we are only able to characterize some such graphs, at this point.

\begin{theo}
The set of nonseparating projective planar graphs includes the following:
\begin{itemize}
    \item outer-projective-planar graphs
    \item subgraphs of wheel graphs
    \item subgraphs of elongated prism graphs
    \item the $\beta$ family, the $\epsilon$ family, the $\gamma$ family except $\gamma_6$, the $\zeta$ family except $\zeta_3$, and $\kappa_1$
\end{itemize}
\end{theo}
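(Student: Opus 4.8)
The plan is to treat the statement as an assembly of facts, most of which are already in hand; since the theorem only asserts \emph{inclusion} (not a characterization), it suffices to verify each listed class separately. Two bullets — the named families and the outer-projective-planar graphs — will come from results proved earlier in Section~4 together with one line from the definitions; the other two bullets — subgraphs of wheels and of elongated prism graphs — will come from the planar classification of Dehkordi and Farr \cite{Dehkordi} once we establish that ``nonseparating planar'' implies ``nonseparating projective planar.'' So the only genuinely new content is one transfer observation.

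I would begin with the outer-projective-planar case, directly from the definition: if $G$ has an embedding with every vertex on the boundary of a single face $F$, and $C$ is any disk-bounding cycle of that embedding bounding a disk $D$, then $C\subseteq G$ is disjoint from the open face $F$, so $F$ lies in exactly one component of $\mathbb{R}P^2\setminus C$; since every vertex of $G$ lies on $\overline F$, and $\overline F$ meets only that one component and $C$, no vertex lies strictly in the other component, so $C$ is not separating. Next I would record the transfer principle: given a planar embedding of $G$ with no separating cycle, regard it as an affine embedding in $\mathbb{R}P^2$. Every cycle $C$ of this embedding is a Jordan curve contained in the affine disk, hence disk-bounding ($0$-homologous), with the bounded planar region $\mathrm{int}(D)$ on one side and an open M\"obius band $M$ (the complement of the closed disk $\overline D$) on the other; because the embedded graph avoids the boundary circle, a vertex lies strictly inside $C$ in the plane if and only if it lies in $\mathrm{int}(D)$, and strictly outside if and only if it lies in $M$. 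Thus $C$ is separating in $\mathbb{R}P^2$ if and only if it was separating in the plane, so the affine embedding has no separating cycle. Combined with the Dehkordi--Farr classification, which gives that subgraphs of wheels and of elongated (triangular) prisms are nonseparating planar, this proves those two bullets.

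It remains to quote the family results already established above: every member of the $\beta$ family, every member of the $\epsilon$ family, every member of the $\gamma$ family other than $\gamma_6$, every member of the $\zeta$ family other than $\zeta_3$, and $\kappa_1$ were each shown to possess a (strongly) nonseparating projective planar embedding in Section~4, and a strongly nonseparating graph is nonseparating because ``separating $\Rightarrow$ weakly separating'' gives ``not weakly separating $\Rightarrow$ not separating.'' Assembling the three blocks yields the theorem. I do not expect a real obstacle; the only step needing a careful word is the transfer observation, and specifically the two standard facts it rests on — that a Jordan curve embedded in the affine part of $\mathbb{R}P^2$ is disk-bounding (it crosses the boundary zero times, hence is $0$-homologous), and that the complement of a closed disk in $\mathbb{R}P^2$ is a M\"obius band, so that the two sides of such a cycle in $\mathbb{R}P^2$ restrict exactly to the interior and exterior of the planar Jordan curve.
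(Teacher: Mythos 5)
Your proposal is correct and takes essentially the same route as the paper, which states this theorem as a summary assembled from the earlier family propositions (nonseparating or strongly nonseparating embeddings of $\beta$, $\epsilon$, $\gamma\setminus\{\gamma_6\}$, $\zeta\setminus\{\zeta_3\}$, $\kappa_1$), the fact that outer-projective-planar graphs are nonseparating, and the Dehkordi--Farr classification of nonseparating planar graphs. The one piece you add explicitly --- that a nonseparating planar embedding, regarded as an affine embedding in $\mathbb{R}P^2$, stays nonseparating because each cycle bounds a disk equal to its planar interior while the other side is a M\"obius band --- is exactly the transfer step the paper leaves implicit, and your argument for it is sound.
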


\begin{theo}
The set of strongly nonseparating projective planar graphs includes the following:
\begin{itemize}
    \item outer-projective-planar graphs
    \item subgraphs of wheel graphs
    \item subgraphs of elongated prism graphs
    \item the $\epsilon$ family, the $\gamma$ family except $\gamma_6$, the $\zeta$ family except $\zeta_3$, and $\kappa_1$
\end{itemize}
\end{theo}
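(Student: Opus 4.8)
The plan is to split the four listed items into two groups and prove each by assembling facts that are already available. The first group is the three ``planar'' items --- outer-projective-planar graphs, subgraphs of wheel graphs, and subgraphs of elongated prism graphs --- which I would reduce to Dehkordi and Farr's description of the nonseparating planar graphs \cite{Dehkordi} together with a transfer principle that carries a planar embedding into $\mathbb{R}P^2$. The second group is the fourth item, which is just a re-collection of the strong-nonseparation statements proved family by family earlier in this section.

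The hinge for the first group is the reformulation: $G$ is strongly nonseparating projective planar exactly when it has a projective planar embedding in which every unordered pair of vertices lies on the boundary of a common face. Indeed, a path meeting $G$ only at its endpoints has its interior inside a single face $F$ and joins two points of $\mathrm{Bd}(F)$; conversely two vertices on a common face boundary are joined by such a path through that face. The \emph{transfer principle} I would then prove is: if $G$ has a \emph{planar} embedding with this ``common face boundary'' property, then placing that embedding inside an affine disk of $\mathbb{R}P^2$ disjoint from the boundary circle yields a projective planar embedding with the same property --- every bounded planar face is unchanged, and the one unbounded planar face becomes a single M\"obius-band face whose boundary in $G$ is exactly the boundary of the old unbounded face, so no faces merge and the incidence of vertices to face boundaries is untouched. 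Granting this, the three planar items are short: (i) an outer-projective-planar graph already has a projective planar embedding with a single face whose boundary carries all of its vertices, hence is strongly nonseparating; (ii) wheel graphs and elongated prism graphs are nonseparating planar by \cite{Dehkordi}, which in the plane is the same as strongly nonseparating planar, hence strongly nonseparating projective planar by the transfer principle; and since ``strongly nonseparating'' is minor-closed and a subgraph is a minor, all subgraphs of these graphs are strongly nonseparating projective planar as well.

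The second group requires no new argument: the $\epsilon$ family is strongly nonseparating by the embeddings of Figures \ref{fig:FE1} and \ref{fig:FE2}; every member of the $\gamma$ family except $\gamma_6$ is strongly nonseparating by Figure \ref{gammafamily}; every member of the $\zeta$ family except $\zeta_3$ is strongly nonseparating by Figure \ref{ZetaNot3}; and $\kappa_1$ is strongly nonseparating by Figure \ref{Kappa}. Putting the two groups together gives the theorem.

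The one step I expect to carry real content is the transfer principle --- precisely, verifying that the passage from $\mathbb{R}^2$ to an affine disk in $\mathbb{R}P^2$ neither identifies two distinct faces nor changes which vertices lie on the boundary of the (now M\"obius-band) outer face. A secondary point that must be stated carefully, since the first group leans on it, is the equivalence in the plane of ``nonseparating'' and ``strongly nonseparating'', which is what allows Dehkordi and Farr's characterization to be fed into the transfer principle; everything after that is citation and bookkeeping.
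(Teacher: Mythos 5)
Your proposal is correct and follows essentially the same route as the paper, which states this theorem without a separate proof as a summary: the fourth item collects the strongly nonseparating embeddings already exhibited for the $\epsilon$, $\gamma$ (except $\gamma_6$), $\zeta$ (except $\zeta_3$) families and $\kappa_1$, while the three planar items come from Dehkordi and Farr's characterization of nonseparating (equivalently, strongly nonseparating) planar graphs carried into an affine disk of $\mathbb{R}P^2$. Your explicit ``common face boundary'' reformulation and affine-disk transfer principle simply spell out details the paper leaves implicit, and they are sound.
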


\section{3-linked graphs}

A \textit{split 3-link} is a 3-link embedded in the plane with two pieces of the 3-link contained within an embedded $S^1$ with the third piece on the other side of the $S^1$. If there exists no such $S^1$, then the link is a \textit{nonsplit 3-link}. A graph, $G$, is \textit{intrinsically type I 3-linked (II3L)} if every embedding of $G$ in the plane contains a nonsplit type I 3-link. Burkhart et al found three minor-minimal graphs in this set.

\begin{proposition}[Burkhart et al \cite{Burkhart}]\label{burkhart}
The graphs $K_4 \dot\cup K_4$, $K_4 \dot\cup K_{3,2}$, and $K_{3,2} \dot\cup K_{3,2}$ are II3L.
\end{proposition}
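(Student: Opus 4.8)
The plan is to exhibit, in \emph{every} planar drawing $D$ of the graph in question, a nonsplit type I 3-link of a very specific shape, using one lemma of plane topology. Write the graph as $H_1\dot\cup H_2$ with $H_1,H_2\in\{K_4,K_{3,2}\}$ (recall $K_{3,2}=K_{2,3}$). The target configuration is: vertex-disjoint cycles $C_1\subseteq H_1$ and $C_2\subseteq H_2$, together with vertices $p\in V(H_1)$ and $q\in V(H_2)$ with $p,q\notin V(C_1)\cup V(C_2)$, such that $C_1$ separates $p$ from $q$ and $C_2$ separates $p$ from $q$ in $\mathbb{R}^2$. The key input is a \emph{Linking Lemma}: any such configuration is automatically a nonsplit type I 3-link. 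Granting this, the proposition follows at once, uniformly for all three graphs, as soon as the configuration is produced.

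To prove the Linking Lemma, suppose toward a contradiction that $C_1\sqcup C_2\sqcup\{p,q\}$ is split, so there is a circle $S$, disjoint from the link, bounding an open disk $B$ that contains exactly two of the three pieces, with the third piece lying in the complementary open region $B'$; both $B$ and $B'$ are connected. If $\{p,q\}\subseteq B'$, then $B'$ is a connected set containing $p$ and $q$ and disjoint from $C_1$, so $p$ and $q$ lie in the same component of $\mathbb{R}^2\setminus C_1$, contradicting that $C_1$ separates them. If $C_1\subseteq B'$, then $B$ is connected, contains $p$ and $q$, and is disjoint from $C_1$, giving the same contradiction. The case $C_2\subseteq B'$ is symmetric, using $C_2$. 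Hence no such $S$ exists.

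For the construction I use two facts about each $H_i$: it is $2$-connected, and it is nonouterplanar (indeed $K_4$ and $K_{3,2}$ are exactly the minor-minimal nonouterplanar graphs by Theorem~\ref{halin}). I also use the standard fact that in a $2$-connected plane graph every face $F$ is bounded by a cycle $\partial F$ of the graph and, moreover, the open component of $\mathbb{R}^2\setminus\partial F$ containing $F$ equals $F$, hence contains no vertex of the graph. Since $H_2$ is connected and vertex-disjoint from $H_1$, the drawing of $H_2$ lies inside a single face $F_1$ of the drawing of $H_1$; set $C_1=\partial F_1$, a cycle by $2$-connectedness. Because $H_1$ is nonouterplanar, not every vertex of $H_1$ lies on $C_1$, so choose $p\in V(H_1)\setminus V(C_1)$; by the face fact $p$ lies on the side of $C_1$ not containing $F_1$, and since $F_1$ contains all of $H_2$, the cycle $C_1$ separates $p$ from every vertex of $H_2$. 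Symmetrically $H_1$ lies inside a single face $F_2$ of the drawing of $H_2$; setting $C_2=\partial F_2$ and choosing $q\in V(H_2)\setminus V(C_2)$, the cycle $C_2$ separates $q$ from every vertex of $H_1$, in particular from $p$. Now $C_1$ and $C_2$ are vertex-disjoint (they lie in different components), $p\neq q$ are off both cycles, and each of $C_1,C_2$ separates $p$ from $q$; the Linking Lemma yields a nonsplit type I 3-link in $D$, and since $D$ was arbitrary, $H_1\dot\cup H_2$ is II3L.

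The main obstacle is keeping the plane-topology bookkeeping airtight rather than merely plausible: the Linking Lemma rests on the Jordan curve theorem (two complementary regions, each connected) applied to $S$, $C_1$, and $C_2$, and the ``face fact'' for $2$-connected plane graphs, while standard, must be invoked precisely enough to justify ``$p$ is on the far side of $C_1$''. A shortcut for the construction is to quote Dehkordi and Farr's result that $K_4\dot\cup K_1$ and $K_{3,2}\dot\cup K_1$ are separating planar: applied to $H_1$ together with one vertex $w$ of $H_2$, it gives in the induced drawing a disk-bounding cycle of $H_1$ separating two vertices, which one checks can be taken to be $\partial F_1$ with $w$ on the far side; the symmetric step and the Linking Lemma then finish. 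I would present the self-contained version above and mention this alternative.
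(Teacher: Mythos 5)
Your proposal is correct, but it is worth noting that the paper itself offers no proof of this statement: it is quoted as a known result of Burkhart et al \cite{Burkhart}, so your argument is a self-contained reproof rather than a variant of an argument in the text. Your route is sound: the ``Linking Lemma'' (two disjoint cycles, each separating the two $S^0$ points, force a nonsplit type I 3-link) is a clean Jordan-curve case analysis on which piece the splitting circle isolates, and it is exactly the mechanism this paper uses implicitly in its type Ia/Ib case analyses (e.g.\ Proposition \ref{separatingIPPI3L} and the three-component Proposition \ref{threecomponent}), there phrased as ``two disjoint separating cycles.'' The construction step is also correct: $H_2$, being connected and disjoint from $H_1$, lies in a single face $F_1$ of the drawing of $H_1$; by $2$-connectedness $\partial F_1$ is a cycle, and the face fact (a face of a $2$-connected plane graph fills an entire Jordan side of its boundary cycle) plus nonouterplanarity of $K_4$ and $K_{3,2}$ (Theorem \ref{halin}) yields the vertex $p$ strictly on the far side; the symmetric step gives $q$. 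Two small points you leave implicit but which are easy to supply: (i) ``not every vertex of $H_1$ lies on $\partial F_1$'' uses that an embedding with all vertices on one face boundary becomes an outerplanar embedding after a sphere homeomorphism taking $F_1$ to the outer face; (ii) the face fact itself follows from $2$-connectivity via a bridge/ear argument (any bridge of $\partial F_1$ on the $F_1$-side would have two attachments and cut $F_1$ off from part of its boundary). Compared with simply citing \cite{Burkhart}, your proof has the advantage of making explicit the reusable lemma that underlies several later arguments in the paper, and your suggested shortcut through Dehkordi and Farr \cite{Dehkordi} is a legitimate alternative for producing the separating cycles.
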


They conjectured that this is the complete minor-minimal set. We have used their research as a foundation to explore the set of minor-minimal 3-linked graphs in the projective plane.

A \textit{projective planar 3-link} is a disjoint collection of $3-m$ $S^1$'s and $m$  $S^0$'s, embedded into the projective plane, where $m\in\{1,2\}$. If $m=1$, this is a type I 3-link. If $m=2$, this is a type II 3-link. A \textit{split projective planar 3-link} is a 3-link embedded in the projective plane with two pieces of the 3-link contained within an embedded $S^1$ with the third piece on the other side of the $S^1$. A \textit{nonsplit projective planar 3-link} is when such an $S^1$ does exist. In the figures below, there are only two cases of type I nonsplit 3-links, which are labeled type Ia and type Ib.
    
\begin{figure}[H]
\centering
\includegraphics[scale=0.23]{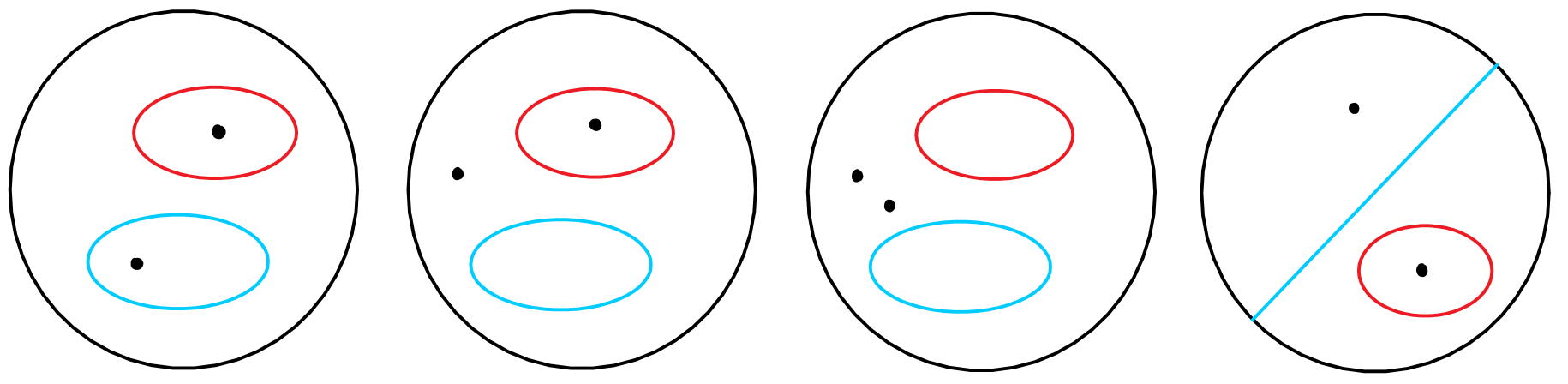}
\caption{A type I 3-link with two $S^1$ and an $S^0$. The embedding on the far left is nonsplit, and the rest are split. These embeddings are type Ia, as neither $S^1$ lies in a disk bounded by the other.}
\label{fig:typeIa}
\end{figure}

\begin{figure}[H]
\centering
\includegraphics[scale=0.31]{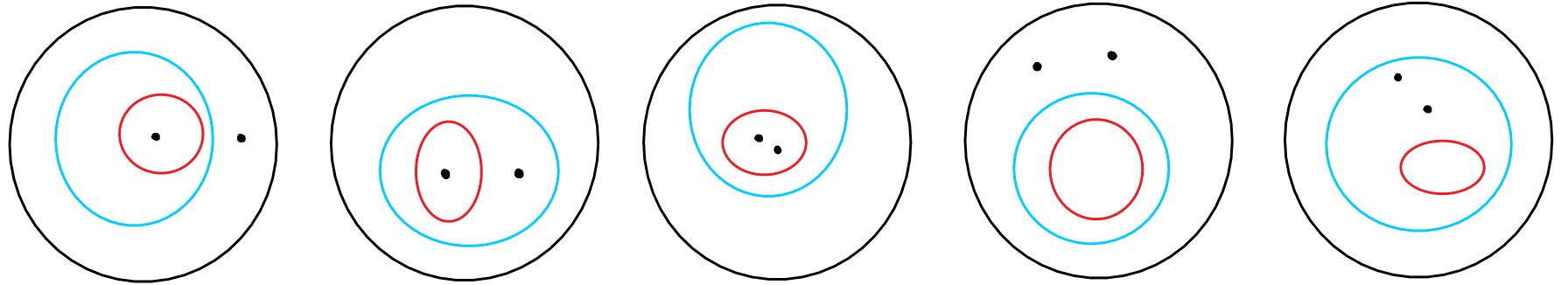}
\caption{A type I 3-link with two $S^1$ and an $S^0$. The embedding on the far left is nonsplit, and the rest are split. These embeddings are type Ib, as one $S^1$ lies in a disk bounded by the other.}
\label{fig:typeIb}
\end{figure}
    
\begin{figure}[H]
\centering
\includegraphics[scale=0.27]{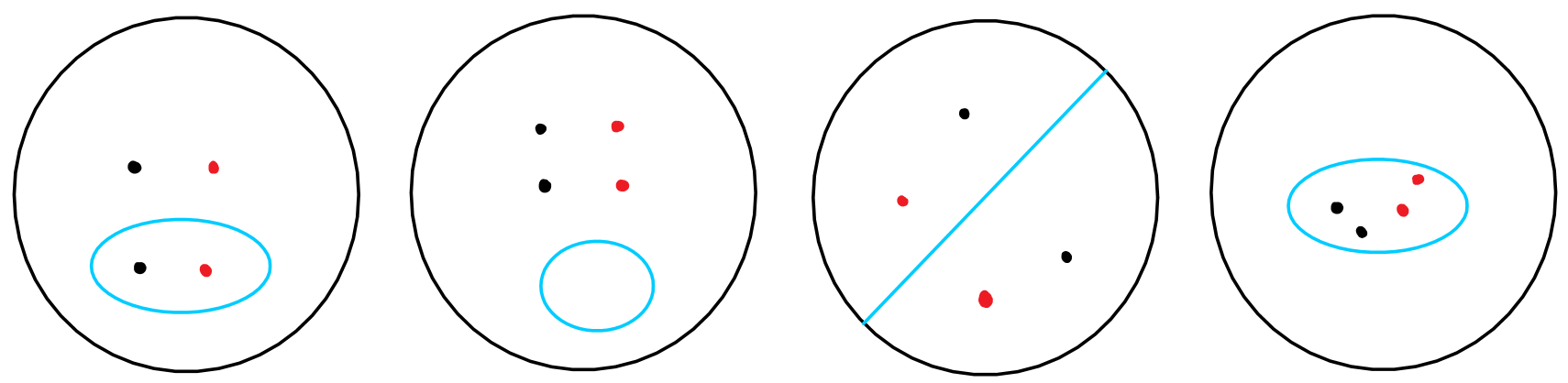}
\caption{A type II 3-link with two $S^0$ and an $S^1$. The embedding on the left is nonsplit, and the embedding in the middle and on the right are split.}
\label{fig:1S1-2S0}
\end{figure}

A projective planar graph, $G$, is \textit{intrinsically projective planar type I 3-linked (IPPI3L)} if every embedding of $G$ in the projective plane has a type I projective planar 3-link. A graph, $G$, is \textit{intrinsically projective planar type II 3-linked (IPPII3L)} if every embedding of $G$ in the projective plane has a type II projective planar 3-link. 

Suppose $G$ is an embedded graph with a 0-homologous cycle. That cycle is a \textit{weak separating cycle} if every vertex of the graph that is not in the cycle is in the interior of the cycle.

If a drawing of graph $G$ in the projective plane is a \textit{closed cell embedding}, that means every face of the graph can be bounded by a 0-homologous cycle. If a graph $G$ is \textit{closed nonseparating}, that means any nonseparating embedding of $G$ in the projective plane is a closed cell embedding. An immediate result of this definition is the following. 

\begin{proposition}
If $G$ closed nonseparating and nonouter-projective planar, then $G\dot\cup K_1$ is separating. 
\end{proposition}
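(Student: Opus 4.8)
The plan is to fix an arbitrary embedding $D$ of $G\dot\cup K_1$ in $\mathbb{R}P^2$, write $w$ for the vertex of the $K_1$ summand and $D'$ for the embedding of $G$ obtained from $D$ by deleting $w$, and exhibit in $D$ a disk-bounding cycle that separates two vertices. I would split on whether $D'$ is already a separating embedding of $G$. If $D'$ contains a disk-bounding cycle $C\subseteq G$ with two vertices of $G$ in the two different components of $\mathbb{R}P^2\setminus C$, then the same $C$ works for $D$, since $D$ differs from $D'$ only by the isolated vertex $w$ and whether a fixed cycle separates two fixed vertices is a purely topological fact independent of $w$.

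So assume $D'$ is a nonseparating embedding of $G$. Since $G$ is closed nonseparating, $D'$ is then a closed cell embedding, so every face of $D'$ is bounded by a $0$-homologous — hence disk-bounding — cycle. The vertex $w$ lies in the interior of some face $F$ of $D'$, because $w$ is not on any vertex or edge of $G$; let $C=Bd(F)$, which is a disk-bounding cycle, and let $F'$ be the other component of $\mathbb{R}P^2\setminus C$. The heart of the argument is the claim that some vertex $v$ of $G$ lies in $F'$. Indeed, if every vertex of $G$ lay in $\mathbb{R}P^2\setminus F'=\overline{F}=F\cup C$, then — since the open face $F$ contains no vertices of $G$ — every vertex of $G$ would lie on $C\subseteq Bd(F)$, so $G$ would have an embedding (namely $D'$) with all of its vertices on the boundary of a single face, contradicting that $G$ is nonouter-projective-planar. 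With the claim in hand, $w\in F$ and $v\in F'$ lie in the two components of $\mathbb{R}P^2\setminus C$, so $C$ is a disk-bounding cycle separating two vertices of $G\dot\cup K_1$. As $D$ was arbitrary, $G\dot\cup K_1$ is separating.

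I do not anticipate a substantial obstacle; the proposition is essentially unwound from the two definitions, and no appeal to Theorem \ref{Theorem1} or to the classification of minor-minimal nonouter-projective-planar graphs is needed. The only point requiring care is the case split: one must confirm that a nonseparating $D'$ is genuinely a closed cell embedding, so that $Bd(F)$ is a single $0$-homologous cycle and hence disk-bounding exactly as the definition of a separating graph demands, and one must note that the two separated vertices produced in the main case are $w$ and a vertex of $G$, which are legitimately both vertices of $G\dot\cup K_1$.
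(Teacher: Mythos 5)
Your proposal is correct and follows essentially the same route as the paper: split on whether the induced embedding of $G$ is separating, and otherwise use the closed cell property to get a disk-bounding face boundary containing $w$, with nonouter-projective-planarity supplying a vertex of $G$ outside that face's closure. Your write-up just spells out more carefully the step the paper states tersely, namely why some vertex must lie outside the bounding cycle.
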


\begin{proof}
Embed $G \dot\cup K_1$ in the projective plane. This drawing is called $D$. Remove the $K_1$, creating a of drawing of $G$, $D_1$. This drawing has two cases. If $D_1$ is separating, then $D$ is also separating. If $D_1$ is not separating, then the drawing is a closed cell embedding. Since it is nonouter-projective planar, no matter what face we embed $K_1$ into, there exists a cycle $C$ such that $C$ bounds the face and there exists a vertex $v$ outside of $C$. Thus, drawing $D$ must also be separating in this case.
\end{proof}

\subsection{IPPI3L graphs with three components}

\begin{proposition}\label{threecomponent}
There are four minor-minimal IPPI3L graphs made of three components. These graphs are $K_4\dot\cup K_4 \dot\cup K_4$, $K_4\dot\cup K_{4} \dot\cup K_{3,2}$, $K_4 \dot\cup K_{3,2} \dot\cup K_{3,2}$, and $K_{3,2} \dot\cup K_{3,2} \dot\cup K_{3,2}$.
\end{proposition}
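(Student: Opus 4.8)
The plan is to split the argument into three parts: (i) each of the four listed graphs is IPPI3L; (ii) each is minor‑minimal with this property; and (iii) every minor‑minimal IPPI3L graph with exactly three components is one of the four. The common starting point is that in a disjoint union every cycle lies in a single component, so by Lemma \ref{gloverlemma} at most one component of a three‑component graph can carry a $1$‑homologous cycle in a fixed embedding; hence at least two of the three components have all their cycles $0$‑homologous, and since each such component is $2$‑connected (it is $K_4$ or $K_{3,2}$ in parts (i)--(ii), and a nonouterplanar block suffices in part (iii)), it can be isotoped to an affine embedding by Theorem \ref{Theorem1}. Applying this to both such components, the subgraph they span can be isotoped into a single affine disk of $\mathbb{R}P^2$, i.e.\ drawn in the plane.

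For part (i), fix a projective‑planar embedding of $G=G_1\dot\cup G_2\dot\cup G_3$, say with $G_2,G_3$ all‑$0$‑homologous, and isotope $G_2\dot\cup G_3$ into an affine disk. Now $G_2\dot\cup G_3$ is one of $K_4\dot\cup K_4$, $K_4\dot\cup K_{3,2}$, $K_{3,2}\dot\cup K_{3,2}$, so by Proposition \ref{burkhart} this planar drawing contains a nonsplit planar type I $3$‑link $L$. It then remains to check that $L$, viewed as a family of curves in $\mathbb{R}P^2$, is still nonsplit there: any $0$‑homologous circle $\Sigma$ that split $L$ would bound a disk, and using the crossing/parity results of Section 3 (Theorems \ref{Theorem1}--\ref{Theorem3}) one isotopes $\Sigma$ off the line at infinity and reduces to the planar situation, where no such $\Sigma$ exists. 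This same argument, with "nonouterplanar block" replacing $K_4$/$K_{3,2}$ and using that having a planar type I $3$‑linkless embedding is minor‑closed (so that a graph with an II3L minor is II3L), shows more generally that the disjoint union of \emph{any} three nonouterplanar graphs is IPPI3L; this generalization is used in part (iii).

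For parts (ii) and (iii) the key input is the converse statement that the two‑component graphs $K_4\dot\cup K_4$, $K_4\dot\cup K_{3,2}$, $K_{3,2}\dot\cup K_{3,2}$ are \emph{not} IPPI3L, and more generally that the disjoint union of two nonouterplanar graphs with a third outerplanar component has a $3$‑linkless projective embedding. I would construct such an embedding by drawing one nontrivial component affinely, drawing the other with the crosscap so that its essential ($1$‑homologous) cycles cannot be enclosed by any $0$‑homologous circle, and drawing the outerplanar component with all its vertices on a tiny circle bounding a disk that meets nothing else; one then verifies case‑by‑case --- crucially using that the separating circle in the definition of "split" must be $0$‑homologous --- that every type I $3$‑link in this drawing is split. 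Granting this: for part (ii), any proper minor $H$ of one of the four graphs has the form $H_1\dot\cup H_2\dot\cup H_3$ with each $H_i$ a minor of the corresponding $K_4$ or $K_{3,2}$ and at least one $H_i$ a \emph{proper} minor, hence outerplanar by Theorem \ref{halin}, so the construction gives a $3$‑linkless embedding of $H$. For part (iii), if $G=G_1\dot\cup G_2\dot\cup G_3$ is minor‑minimal IPPI3L and some $G_i$ were outerplanar, the same construction would $3$‑linklessly embed $G$; hence each $G_i$ is nonouterplanar and so has $K_4$ or $K_{3,2}$ as a minor, and by the generalized part (i) the corresponding member of the list is already IPPI3L, so minor‑minimality forces $G$ to equal it. Together with the previously established fact that no minor‑minimal IPPI3L graph has four or more components, this yields the classification.

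The main obstacle is the explicit construction and verification of the $3$‑linkless projective embeddings in parts (ii) and (iii): because a splitting circle must be $0$‑homologous, one cannot simply mimic the sphere picture, and must instead track the homology class of every cycle of each component and rule out a nonsplit type I $3$‑link assembled from, for instance, a $0$‑homologous triangle of the affine component, an essential cycle of the crosscap component, and a well‑placed $S^0$. A secondary technical point, but one that must be done carefully, is the lemma that a planar‑nonsplit type I $3$‑link contained in an affine disk of $\mathbb{R}P^2$ remains nonsplit in $\mathbb{R}P^2$, which is exactly where the homology machinery of Section 3 is invoked.
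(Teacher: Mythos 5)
Your parts (i) and (ii) follow the paper's route almost exactly: at most one component can carry a $1$-homologous cycle (Lemma \ref{gloverlemma}), the remaining two are isotoped affine via Theorem \ref{Theorem1}, and Proposition \ref{burkhart} supplies the nonsplit link; minor-minimality is shown, as in the paper, by embedding one nontrivial component with the crosscap and the rest affine, using that the third component is outerplanar by Theorem \ref{halin}. Your two flagged technical points (a splitting circle must be $0$-homologous, and a planar-nonsplit link in an affine disk stays nonsplit in $\mathbb{R}P^2$) are real and are passed over silently in the paper, so flagging them is fair; note also that in the crosscap-plus-affine construction the affine components must be placed in the face whose boundary walk is not a $0$-homologous cycle of the crosscap component, otherwise that cycle together with a triangle of the affine $K_4$ and a suitable $S^0$ is a nonsplit link --- this is part of the case-check you defer, and the paper elides it too.

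The genuine gap is in part (iii), in the case where some component is outerplanar. Your claim that ``the disjoint union of two nonouterplanar graphs with a third outerplanar component has a $3$-linkless projective embedding'' is false in that generality: for example $\gamma_6\dot\cup\theta_1$ is IPPI3L by Proposition \ref{separatingIPPI3L} (both are planar and separating projective planar), so $\gamma_6\dot\cup\theta_1\dot\cup K_1$ is an IPPI3L graph with three components, one outerplanar, and it admits no $3$-linkless embedding; your proposed crosscap construction cannot work here because both nontrivial components are separating in every embedding. The claim is only true when the two nontrivial components are $K_4$ or $K_{3,2}$, which suffices for part (ii) but not for part (iii). The correct way to rule out an outerplanar component in a minor-minimal three-component example is not to embed $G$ linklessly but to argue, as the paper does for graphs with four or more components, that in a suitable extension of an arbitrary drawing of the other two components any nonsplit type I $3$-link can be taken to avoid the outerplanar piece, so that deleting it leaves an IPPI3L graph and contradicts minor-minimality. (For what it is worth, the paper's own proof of this proposition never addresses completeness at all --- it only verifies that the four listed graphs are minor-minimal IPPI3L --- so your part (iii) is an attempt to fill a real omission, but as written it needs this repair.)
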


\begin{proof}
First, consider the graph $K_4\dot\cup K_4 \dot\cup K_4$. We embed these components into the projective plane. If two $K_4$ components are embedded in the projective plane with 1-homologous cycles, they would intersect. So the two cases for the projective planar embedding are if all three $K_4$ components are embedded with all 0-homologous cycles, or two components are embedded with all 0-homologous cycles and the other is embedded with a 1-homologous cycle. When all three $K_4$ components are embedded with 0-homologous cycles, by Theorem \ref{Theorem1}, this is equivalent to if they were all affine embedded. Similarly, if one of the $K_4$ components is embedded with a 1-homologous cycle, the other two components can be deformed into affine embedded graphs. It has been proven that $K_4 \dot\cup K_4$ is II3L, by Proposition \ref{burkhart}. Thus, this embedding also has a nonsplit type I 3-link, which means $K_4\dot\cup K_4 \dot\cup K_4$ is intrinsically projective planar type I 3-linked.

Now, we will verify that $K_4\dot\cup K_4 \dot\cup K_4$ is minor-minimal with respect to being an intrinsically projective planar type I 3-linked graph. Without loss of generality, take a minor that consists of two $K_4$ components and a component that is a minor of $K_4$, which is outerplanar. Embed one $K_4$ with a 1-homologous cycle. Up to equivalence, there is only one such embedding. Now affine embed the other two components. The component that is a minor of $K_4$ is outerplanar. If a component is outerplanar, it does not contain a cycle with a vertex in its interior. This embedding does not contain a nonsplit type I 3-link. Therefore, $K_4\dot\cup K_4 \dot\cup K_4$ is minor-minimal with respect to being an intrinsically projective planar type I 3-linked graph.

Next, consider the graphs $K_4\dot\cup K_4 \dot\cup K_{3,2}$, $K_4 \dot\cup K_{3,2} \dot\cup K_{3,2}$, and $K_{3,2} \dot\cup K_{3,2} \dot\cup K_{3,2}$. These cases follow the same logic as the first case, because $K_{3,2}$ and $K_4$ are both minor-minimal nonouter-projective-planar graphs.
\end{proof}

\begin{proposition}
There is no minor-minimal IPPI3L graph with four or more components.
\end{proposition}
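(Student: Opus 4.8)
The plan is to show that every IPPI3L graph $G = G_1 \dot\cup \cdots \dot\cup G_k$ with $k \ge 4$ possesses a \emph{proper} minor that is still IPPI3L, so that $G$ cannot be minor-minimal. The workhorse is Proposition \ref{threecomponent}: each disjoint union of three graphs, every one of them $K_4$ or $K_{3,2}$, is IPPI3L. So the first reduction is this: if $G$ has a minor with at least three non-outerplanar components — for instance if at least three of the $G_i$ are non-outerplanar — then, by Theorem \ref{halin}, inside three such pieces one may take a $K_4$- or $K_{3,2}$-minor and delete everything else, realizing one of those four IPPI3L graphs as a minor of $G$; because $k \ge 4$ at least one component was deleted, so it is a proper minor and we are done.

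It then remains to treat $G$ with at most two non-outerplanar components (hence, since $k\ge 4$, at least two outerplanar ones). If a single component of $G$, or the disjoint union of two components of $G$, is already IPPI3L, then — again because $k\ge 4$ — that is a proper IPPI3L minor and we are done. Assume this too fails. The plan is now to construct a single projective-planar embedding of $G$ with no nonsplit type I 3-link, showing $G$ is not IPPI3L at all (hence vacuously not a minor-minimal one). The construction rests on two facts already in the paper: since two disjoint $1$-homologous cycles cross (Lemma \ref{gloverlemma}), at most one component of $G$ is non-planar; and by Theorem \ref{Theorem1} (applied block by block) any component all of whose cycles are $0$-homologous confines to a disk. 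Embed the at-most-one non-planar component, if present, carrying a $1$-homologous cycle, and all other components affinely inside pairwise-disjoint small disks contained in one common disk $D_0$ disjoint from the crosscap component; draw each outerplanar component with all its vertices on the boundary of its own disk, so that no cycle of such a component encloses a vertex of $G$. If $G$ has two non-outerplanar components, additionally pick one of them to be the component placed on the crosscap, drawn (since that component is not IPPI3L) with an embedding having no nonsplit type I 3-link.

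To check this embedding has no nonsplit type I 3-link $\{C_1,C_2,\{a,b\}\}$, one argues by cases on where its pieces lie. If the pieces meet two or more of the disk-confined components, one can enclose two of them — two cycles, or a single cycle together with \emph{both} endpoints $a$ and $b$ — inside a sub-disk of $D_0$ whose complement still contains the third piece, witnessing a split. If instead the whole link lies inside a single component, this contradicts the fact that that component's chosen embedding has no nonsplit type I 3-link. So no nonsplit link survives, and $G$ is not IPPI3L; either way $G$ is not a minor-minimal IPPI3L graph.

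The step I expect to be the real obstacle is this case verification, because of two features peculiar to $\mathbb{R}P^2$. First, ``split'' is asymmetric: the two-piece side of the separating $S^1$ must be the \emph{disk} it bounds, not the complementary M\"obius band, so one cannot split off a lone cycle by a tiny circle and must genuinely corral two pieces into a disk. Second, a configuration like a $0$-homologous cycle $C_1$ enclosing a vertex $a$ inside one disk-confined component together with a $0$-homologous cycle $C_2$ enclosing a vertex $b$ inside another is genuinely nonsplit; this is exactly why the hypotheses ``at most two non-outerplanar components'' and ``one of them moved onto the crosscap'' are needed, and why a component that itself contains $K_4\dot\cup K_4$, $K_4\dot\cup K_{3,2}$, or $K_{3,2}\dot\cup K_{3,2}$ as a minor must be routed back to the Proposition \ref{threecomponent} reduction (or recognized as already IPPI3L) rather than fed into the construction. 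Organizing the case division so that it is exhaustive — every such $G$ either has a three-non-outerplanar-component minor, or has an IPPI3L sub-union of at most two components, or admits a 3-linkless embedding — and then checking each split configuration against the asymmetric definition, is the technical heart of the proof.
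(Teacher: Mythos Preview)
Your overall architecture matches the paper's: reduce via Proposition~\ref{threecomponent} to the case where at most two components are non-outerplanar, and then show that the sub-union of those two components is already IPPI3L (equivalently, in your contrapositive phrasing, that if it is not then $G$ admits a linkless embedding). The paper does exactly this: it fixes an arbitrary drawing $D$ of $G_1\dot\cup G_2$, drops the outerplanar components into one face, and argues that any nonsplit link in the result already lives in $G_1\dot\cup G_2$.

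There is, however, a genuine gap in your construction. You embed the crosscap component $G_1$ using only the hypothesis that $G_1$ \emph{by itself} is not IPPI3L, and then place the second non-outerplanar component $G_2$ affinely in $D_0$. But ``no nonsplit link inside $G_1$'' does not prevent $G_1$ from having a single $0$-homologous cycle enclosing a vertex, and that is enough to link with $G_2$. Concretely, let $G_1$ be $K_4$ with one pendant vertex $v$, embedded on the crosscap so that the three face-bounding $4$-cycles are $0$-homologous and all four triangles are $1$-homologous; $v$ sits in a face $F$. This embedding of $G_1$ has no two disjoint cycles at all, hence certainly no nonsplit $3$-link. Now if $D_0$ is placed in a face $F'\neq F$ and $G_2=K_4$, then $\partial F$ encloses $v$, the outer triangle of $G_2$ encloses its centre, the two cycles are disjoint and non-nested, and you have a nonsplit type~Ia link. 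Your two stated cases (``pieces meet two or more disk-confined components'' and ``the whole link lies in a single component'') do not cover this crosscap--disk interaction.

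The repair is exactly the move the paper makes: do not build the embedding of $G_1$ and $G_2$ separately. You have already assumed that no two-component sub-union is IPPI3L, so $G_1\dot\cup G_2$ has a linkless projective-planar embedding $D$; start from \emph{that} $D$ and insert the outerplanar components $G_3,\ldots,G_k$ side by side in a single face of $D$, each drawn outerplanarly. Then both $S^1$'s of any putative nonsplit link must lie in $G_1\dot\cup G_2$, and at most one point of the $S^0$ can come from $G_3\cup\cdots\cup G_k$; the paper finishes by observing that this forces a nonsplit link already present in $D$, contradicting the choice of $D$. With this change your case analysis collapses to the one the paper sketches, and the difficulty you flag (the asymmetric split condition, and the need to corral two pieces into a disk) largely evaporates.
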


\begin{proof}
Suppose $G$ is a minor-minimal IPPI3L graph with exactly four components. Since Proposition \ref{threecomponent} describes minor-minimal graphs on three components, we know that at most two of the four components of $G$ have $K_4$ or $K_{3,2}$ as a minor. By Theorem \ref{halin}, we know that at least two of the components of $G$ will be outerplanar. Suppose the four components of $G$ are labelled $G_1$, $G_2$, $G_3$, and $G_4$. Without loss of generality, suppose $G_3$ and $G_4$ are outerplanar. 

Let $D$ be an arbitrary drawing of $G_1 \dot\cup G_2$. Embed $G_3$ and $G_4$ into any face of $D$ as affine outer-projective-planar drawings, so that neither $G_3$ lies in a face of $G_4$ or vice versa. Call this drawing $D_1$. Since $G$ is IPPI3L, there exists a projective planar type I 3-link in $D_1$. If every vertex or edge of the 3-link is not in $G_3$ and $G_4$, then $G_1 \dot\cup G_2$ is IPPI3L. Therefore, we can delete $G_3 \dot\cup G_4$, which means $G$ is not minor-minimal. 

If there is a vertex or edge of the 3-link in $G_3 \dot\cup G_4$, it is at most a vertex of the $S_0$ -  they are outerplanar graphs, and by the way they were embedded, there cannot be a cycle bounding a vertex. Those two components contain only one vertex of the $S^0$, and $G_1$ or $G_2$ contains for the second vertex.

Suppose we have a Type Ia nonsplit link, and $G_3$ or $G_4$ contains a vertex within an $S^1$. The graph can be re-embedded where the components $G_3$ and $G_4$ are within the cycle with the other $S^0$ vertex. 

Suppose we have a Type Ib link as in Case 2. Without loss of generality, suppose $G_3$ or $G_4$ contains the external vertex. Then, the graph can be re-embedded where the components $G_3$ and $G_4$ are within the disks bounded by the cycles of both $S^1$ pieces. 

Thus, in either case, the graph does not have a nonsplit type I 3-link. Thus, this is a contradiction, which means that $G_3$ and $G_4$ must be disjoint from the 3-link. Thus, $G_1 \dot\cup G_2$ is IPPI3L. Therefore, we can delete $G_3 \dot\cup G_4$, which means $G$ is not minor-minimal. Therefore, there is no minor-minimal IPPI3L graph with four components.

The argument is similar for every $n$ component graph with $n \geq 4$ has 4 components. We can conclude that there is no minor-minimal IPPI3L graph with four or more components.
\end{proof}

\subsection{Planar separating graphs}

The following generalizes part of Proposition \ref{threecomponent}.

\begin{proposition}\label{separatingIPPI3L}
If $G$ and $H$ are separating projective planar graphs, and $G$ is a planar graph, then $G\dot{\cup}H$ is IPPI3L.
\end{proposition}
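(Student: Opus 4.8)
The plan is to use the structure of the projective plane: if we embed $G \dot\cup H$ in $\mathbb{R}P^2$, then at most one component can carry a $1$-homologous cycle, since by Lemma \ref{gloverlemma} any two $1$-homologous cycles must intersect, but the components are disjoint. Since $G$ is planar, it is natural to case on whether $H$ or $G$ (or neither) contains a $1$-homologous cycle in the given embedding.

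First I would handle the case where $G$ (the planar component) has all $0$-homologous cycles. Then, if $G$ is $2$-connected, Theorem \ref{Theorem1} says the embedding of $G$ is isotopic to an affine embedding; more generally, I would argue block-by-block (or cite the planar case from \cite{Dehkordi}) that since $G$ is separating \emph{planar}, an affine drawing of $G$ contains a disk-bounding cycle $C_G$ separating two vertices of $G$ — i.e., with a vertex $u$ inside the disk and a vertex $v$ outside. Meanwhile $H$ is separating projective planar, so whatever its embedding, it contains a disk-bounding cycle $C_H$ with a vertex of $H$ inside and a vertex of $H$ outside. The two cycles $C_G$, $C_H$ together with, say, an isolated vertex (obtained by deleting everything but one extra vertex of one of the two pieces, or just another vertex already accounted for) should give a type I 3-link: two $S^1$'s (namely $C_G$ and $C_H$) plus one $S^0$. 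The key point is to check this 3-link is nonsplit: there is no embedded $S^1$ with two of the three pieces inside and the third outside. I would argue this by locating vertices that are "trapped" — e.g., $C_G$ has vertices on both sides (from separating-ness), so no circle can put $C_G$ and $C_H$ on one side with the $S^0$ on the other without that circle crossing $G$ or $H$; one must be careful here and probably invoke the homology/parity results (Corollary after Theorem \ref{Theorem2}, Lemma \ref{CycleAddion}) to rule out a separating $S^1$ that is essential.

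Next I would handle the case where $H$ contains a $1$-homologous cycle. Then $G$ has all $0$-homologous cycles (disjointness plus Lemma \ref{gloverlemma}), so by Theorem \ref{Theorem1}/the block argument, $G$ is isotopic to an affine embedding, and since $G$ is separating planar it contains a disk-bounding separating cycle $C_G$ with vertices of $G$ on both sides. Now within the affine region I can find the "$K_4$-like" or "$K_{3,2}$-like" behavior, but really what matters is: $C_G$ is one $S^1$, a vertex of $G$ inside $C_G$ is one $S^0$ point, a vertex of $G$ outside $C_G$ is the other $S^0$ point — but that's only a type II configuration. To get type I, I need a \emph{second} $S^1$; since $H$ is separating projective planar it still has a disk-bounding separating cycle $C_H$ in this embedding, giving the second $S^1$, and then one isolated vertex from either component serves as the $S^0$. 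So the real content is again the nonsplit verification, now using that $C_G$ separates two vertices of $G$ and $C_H$ separates two vertices of $H$, so neither cycle can be pushed to one side of any putative splitting $S^1$ without crossing its own component. I would also need the remaining case where \emph{neither} $G$ nor $H$ has a $1$-homologous cycle, which reduces to the planar situation: the whole of $G\dot\cup H$ is (up to isotopy) affine, $G$ is separating planar so it contains $C_G$ separating $u_G, v_G$, and $H$ being separating planar-or-not still yields $C_H$; then $C_G \dot\cup C_H \dot\cup \{\text{pt}\}$ is a nonsplit type I 3-link by the planar theory of \cite{Burkhart}.

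The main obstacle I expect is the nonsplit verification — turning "each of $C_G$ and $C_H$ has vertices of its own component on both of its sides" into "no embedded $S^1$ separates two pieces of the 3-link from the third." In the plane this is essentially the argument behind Proposition \ref{burkhart}, but in $\mathbb{R}P^2$ a putative splitting $S^1$ could itself be $1$-homologous, and then it doesn't bound a disk, so "two pieces on one side, one on the other" needs the projective definition of split/nonsplit (the pieces-inside-a-disk-bounded-by-$S^1$ formulation given just before this subsection). I would dispatch this by noting a splitting $S^1$ must by definition bound a disk containing two of the pieces; since $C_G$ (resp. $C_H$) has vertices of $G$ (resp. $H$) strictly outside every disk it bounds, that disk cannot contain all of $C_G$ unless it also contains those outside vertices — and then the third piece, sitting among those same vertices, can't be outside. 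Making this fully rigorous, including the interaction of the two cases and the isolated-vertex choice, is where the care is needed; everything else is bookkeeping via the homology lemmas already established.
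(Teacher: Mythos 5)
Your overall target configuration (a disk-bounding separating cycle from each component plus two vertices forming the $S^0$) is the same as the paper's, but the route is unnecessarily heavy and the crux is left with a genuine gap. First, the case analysis on which component carries a $1$-homologous cycle, and the isotopy of $G$ to an affine drawing via Theorem \ref{Theorem1} or the planar results of Dehkordi--Farr, is not needed at all: ``separating projective planar'' is by definition a property of \emph{every} embedding, so the restriction of the given drawing of $G\dot\cup H$ to $G$ (respectively $H$) already contains a $0$-homologous cycle $C_G$ (respectively $C_H$) with a vertex of its own component inside the disk it bounds and another vertex of that component on the other side; this is exactly how the paper starts, with no homology case split. (As written, your cases are also not exhaustive: ``$G$ has all $0$-homologous cycles,'' ``$H$ has a $1$-homologous cycle,'' and ``neither has one'' never cover the case in which $G$ carries the $1$-homologous cycle.) The planarity of $G$ plays no role in the link-finding argument; it only guarantees that $G\dot\cup H$ is projective planar at all.

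The genuine gap is in the nonsplit verification, which you yourself flag as the hard part but do not carry out correctly. The $S^0$ of a type I $3$-link is a \emph{pair} of points (you repeatedly speak of ``an isolated vertex'' serving as the $S^0$), and which pair works depends on how $C_G$ and $C_H$ sit relative to one another --- this is precisely the two-case analysis in the paper's proof. If neither cycle lies in the disk bounded by the other, one must take the two vertices lying inside the two respective disks; if instead $C_G$ lies in the disk $D_H$ bounded by $C_H$, one must take a vertex inside the disk bounded by $C_G$ together with a vertex of $H$ \emph{outside} $D_H$ (which exists since $C_H$ separates $H$). The choice matters: with $C_G$ nested in $D_H$, taking as second point a vertex lying between the two cycles gives a \emph{split} link, since a circle inside $D_H$ enclosing the disk bounded by $C_G$ and that vertex puts two pieces in a disk with $C_H$ on the other side. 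Your sketch never pins this choice down, and the one argument you give is flawed: a splitting disk containing $C_G$ need \emph{not} contain the vertices of $G$ outside the disk bounded by $C_G$; what it must contain is the disk bounded by $C_G$ itself (the other side of a $0$-homologous cycle is a M\"obius band, so the disk inside the splitting disk is the disk bounded by $C_G$), hence the vertex inside it. The nonsplit check then has to be run for each of the three candidate ``outside'' pieces in each of the nested and unnested configurations, with the $S^0$ chosen as above; without that, the proof is not complete.
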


\begin{proof}
Consider an arbitrary embedding of $G\dot{\cup}H$, creating drawing $D_1$. Since $G$ and $H$ are separating projective planar graphs, there exist two disjoint 0-homologous cycles $C_1$ and $C_2$ that contains two vertices within them, $v_1$ and $v_2$ respectively. There are 2 cases: one cycle is in the interior of the other, or neither cycle contains the other. Consider the first case. Without loss of generality, $C_1$ is in the interior of $C_2$. Because $C_2$ is separating, we know there is a vertex in the exterior of $C_2$. This creates a Type $1a$ 3-link graph. Consider the second case. Since neither is in the interior of the other, we have two cycles with a vertex inside of them. This would be a Type $1b$ 3-link graph. Since we chose an arbitrary embedding of $G\dot{\cup}H$, this means we can conclude $G\dot{\cup}H$ is IPPI3L.
\end{proof}

\begin{con}\label{IPPI3l-planars}
Suppose $G$ and $H$ are minor-minimal separating projective planar graphs that have planar embeddings. Also, suppose every minor of $G$ and every minor of $H$ is not closed nonseparating. Then $G\dot{\cup}H$ is minor-minimal IPPI3L if both $G$ and $H$ are not II3L. 
\end{con}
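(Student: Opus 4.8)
The plan is to establish the two directions separately. That $G\dot\cup H$ is IPPI3L is immediate from Proposition~\ref{separatingIPPI3L}: both $G$ and $H$ are separating projective planar and $G$ is planar, which is precisely that proposition's hypothesis. All the work is in minor-minimality.

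For minor-minimality I would take an arbitrary proper minor of $G\dot\cup H$; it has the form $G'\dot\cup H'$ with $G'$ a minor of $G$, $H'$ a minor of $H$, and at least one of them proper. The hypotheses on $G$ and $H$ are symmetric (each is planar, separating, not II3L, and has every minor not closed nonseparating), so I may assume $G'$ is a proper minor of $G$. Then $G'$ is nonseparating (since $G$ is minor-minimal separating) and not closed nonseparating (as a minor of $G$), hence $G'$ has a nonseparating embedding that is not a closed cell embedding; fix such an embedding and a face $F$ of it not bounded by a $0$-homologous cycle. The feature of $F$ I intend to exploit is that $F$ is not contained in the open disk bounded by any $0$-homologous cycle of $G'$ --- otherwise the boundary walk of $F$ would reduce to a $0$-homologous cycle bounding $F$. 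Since $H'$ is a minor of the planar graph $H$, it is planar; and since $H$ is not II3L and having a type I 3-linkless planar embedding is a minor closed property, $H'$ is also not II3L, so $H'$ has a planar embedding $D_{H'}$ with no nonsplit type I 3-link. I would then place a scaled copy of $D_{H'}$ in a small open disk $\Delta\subset F$ disjoint from $G'$, obtaining an embedding $D$ of $G'\dot\cup H'$, and argue that $D$ has no nonsplit type I 3-link.

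For that argument I would first reduce the possibilities. Using Theorem~\ref{Theorem2} (so that $\mathbb{R}P^2$ cut along a $1$-homologous cycle is a disk), one sees that any type I 3-link one of whose $1$-spheres is $1$-homologous is split; hence both cycles $C_1,C_2$ of a putative nonsplit link are $0$-homologous, and each lies entirely in $G'$ or entirely in $H'$. Moreover, as described in the proof of Proposition~\ref{separatingIPPI3L}, a nonsplit type I 3-link has one of two shapes: two disjoint cycles side by side, each strictly enclosing one vertex of the $0$-sphere; or two nested cycles, the inner strictly enclosing one vertex of the $0$-sphere and the other vertex of the $0$-sphere lying outside the outer cycle. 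I would then handle the cases in turn. If $C_1,C_2\subset G'$, the cycle of the link that strictly encloses a link vertex is a $0$-homologous cycle of $G'$ with a vertex strictly inside; nonseparability of the embedding of $G'$ forces all of $G'$ to that side, pushing $F$ (hence $\Delta$, hence $H'$) to the other side, where it is a face bounded by that single $0$-homologous cycle, contradicting the choice of $F$; if the second link vertex also lies in $G'$, that cycle is in fact separating, again a contradiction. If $C_1\subset G'$ and $C_2\subset H'$, the same two ingredients (nonseparability, and $F$ lying outside every disk bounded by a $0$-homologous cycle of $G'$) force a contradiction. If $C_1,C_2\subset H'$ and the $0$-sphere lies in $H'$ too, the whole link sits inside $\Delta$, where splitting curves may also be taken, so it is a nonsplit type I 3-link of $D_{H'}$, contradicting the choice of $D_{H'}$.

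The remaining case is the one I expect to be the main obstacle: $C_1,C_2\subset H'$ with the $0$-sphere straddling, that is, a nested pair $C_1\subsetneq C_2$ in $D_{H'}$ with a vertex of $H'$ inside $C_1$ and a vertex of $G'$ outside $C_2$ (automatically outside, since $C_2$ bounds a small disk inside $\Delta$ while $G'$ sits outside $\Delta$). Such a configuration genuinely is nonsplit, so the plan has to be to show it cannot arise. The approach I would take is to prove that $H'$ admits a planar embedding in which no cycle that strictly encloses a vertex is itself strictly enclosed by a disjoint cycle; the only obstruction to this is a graph with ``surplus'' vertices trapped in the annulus between two nested cycles, which I would rule out for the graphs in play by a vertex count --- a nested pair of disjoint cycles with an interior vertex uses at least $2g+1$ vertices, where $g$ is the girth, and every admissible $H$, and hence every minor of it, is too sparse for this: $\theta_1=K_{5,2}$ has no two disjoint cycles at all (every cycle uses both vertices of the size-$2$ part), $\zeta_3$ and $\gamma_6$ have girth $4$ on eight vertices so two disjoint cycles already exhaust the vertex set, and the same bookkeeping (with a little care for contraction minors that create triangles) covers $\delta_1,\delta_2,\eta_1$ and their minors. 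Promoting this ad hoc count to a uniform structural statement --- ideally, that for every admissible pair $(G,H)$ at least one of $G',H'$ always has such a ``no trapped cycle'' embedding --- is the real crux of a complete proof, and is presumably why the assertion is for now only a conjecture.
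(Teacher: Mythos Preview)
Your overall strategy matches the paper's own (incomplete, commented-out) attempt: invoke Proposition~\ref{separatingIPPI3L} for the IPPI3L direction, then for minor-minimality choose a nonseparating, non-closed-cell embedding of the proper minor $G'$, drop a planar drawing of $H'$ into a non-disk face $F$, and argue that no nonsplit type I 3-link survives. The paper stops at exactly the same point and records the missing step as a separate conjecture.

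The real gap, however, is not where you locate it. Your case $C_1\subset G'$, $C_2\subset H'$ is waved through with ``the same two ingredients force a contradiction,'' but they do not. Suppose $C_1$ is a $0$-homologous cycle of $G'$ with a vertex of $G'$ strictly inside. Nonseparability of the embedding only says that no vertex of $G'$ lies in the \emph{exterior} of $C_1$; it does not forbid such a $C_1$---this is precisely what the paper calls a \emph{weak separating cycle}. In that situation the M\"obius-band complement of $C_1$ is a face of $G'$ that is not a closed $2$-cell, so it is a perfectly legitimate choice for $F$; your ingredient~(b) (``$F$ not contained in the open disk bounded by any $0$-homologous cycle of $G'$'') is satisfied, not violated. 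Placing $H'$ in that $F$, any cycle $C_2\subset H'$ enclosing a vertex of $H'$ then yields a nonsplit type~Ia link with $C_1$, and nothing in your two ingredients prevents it. (The affine drawing of $K_4$---outer triangle with the fourth vertex inside, M\"obius exterior as $F$---already exhibits this.) This is exactly the obstruction the paper isolates: one would need $G'$ to admit a nonseparating embedding that is simultaneously not closed-cell \emph{and} free of weak separating cycles, and the paper records this as an open conjecture. Your declared ``main obstacle'' ($C_1,C_2\subset H'$ with a straddling $S^0$) is a genuine second issue---essentially the same weak-separating-cycle problem on the $H'$ side---and your girth/vertex-count idea is plausible for the specific graphs in play, but it does not address the $G'$-side obstruction above, which you dismissed too quickly.
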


An example of this is seen in Figure \ref{fig:thetagamma}.

\begin{figure}[H]
    \centering
    \includegraphics[scale=0.27]{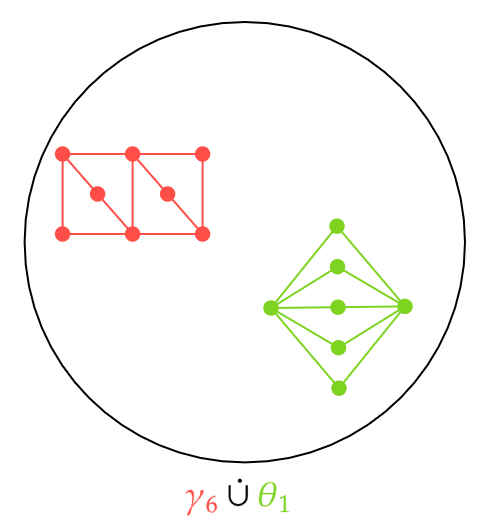}
    \caption{Graph $\gamma_6 \dot\cup \theta_1$ is IPPI3L, as both components are separating projective planar graphs.}
    \label{fig:thetagamma}
\end{figure}

\subsection{Closed nonseparating graphs}

Throughout this section, we assume $G$ and $H$ are projective planar graphs.

\begin{proposition}\label{closed-nonseparating}
Suppose $G$ and $H$ are nonouter-projective-planar. If $G$ and $H$ are closed nonseparating, then $G\dot\cup H$ is IPPI3L.
\end{proposition}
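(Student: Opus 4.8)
The plan is to show that every projective planar embedding $D$ of $G\dot\cup H$ contains a nonsplit type I 3-link, its two circles being a cycle of $G$ and a cycle of $H$ and its $S^0$ a suitably chosen pair of vertices of $G\dot\cup H$. The engine is the proposition immediately preceding this one: since $G$ and $H$ are each closed nonseparating and nonouter-projective-planar, both $G\dot\cup K_1$ and $H\dot\cup K_1$ are separating projective planar.

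First I would extract the two circles. Fix any vertex $h$ of $H$; deleting from $D$ all of $H$ except $h$ leaves a drawing of $G\dot\cup K_1$, which by the preceding proposition has a disk-bounding (hence $0$-homologous) cycle $C_G\subseteq G$, bounding a disk $\Delta_G$, together with a vertex $a\in\interior\Delta_G$ and a vertex $b$ in the complementary open Möbius band $M_G$ (here $a,b$ are vertices of $G$, or possibly $h$, and they are distinct). Symmetrically, fixing a vertex $g$ of $G$ and deleting the rest of $G$ produces a $0$-homologous cycle $C_H\subseteq H$ bounding a disk $\Delta_H$, with a vertex $c\in\interior\Delta_H$ and a vertex $d$ in the complementary open Möbius band $M_H$. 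Since $G\cap H=\varnothing$, the cycles $C_G$ and $C_H$ are disjoint, and both are $0$-homologous, so the "bad" type Ia configuration with an essential circle never arises.

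Next I would use the elementary fact that two disjoint $0$-homologous simple closed curves in $\mathbb{R}P^2$ are, after possibly interchanging their names, either \emph{nested} (one of $\Delta_G,\Delta_H$ contained in the open disk of the other) or \emph{unnested} ($\Delta_G\cap\Delta_H=\varnothing$); a nested pair is a type Ib configuration and an unnested pair a type Ia configuration, in the sense of Figures~\ref{fig:typeIb} and \ref{fig:typeIa}. In each case I read off the $S^0$ from the four vertices $a,b,c,d$. If $\Delta_H\subseteq\interior\Delta_G$, take $S^0=\{c,b\}$: as $c\in\interior\Delta_H\subseteq\interior\Delta_G$ and $b\in M_G$, the pair $S^0$ has one point inside and one point outside the outer circle $C_G$, which is exactly the nonsplit type Ib configuration. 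If $\Delta_G\subseteq\interior\Delta_H$, take $S^0=\{a,d\}$ by the mirror-image argument. If $\Delta_G\cap\Delta_H=\varnothing$, take $S^0=\{a,c\}$, one point in each of the two disjoint disks, which is the nonsplit type Ia configuration. A routine check — using that in each of the three cases the region containing a chosen vertex avoids both $C_G$ and $C_H$, and that the two chosen regions are disjoint — shows that each $S^0$ really is a pair of distinct vertices disjoint from $C_G\cup C_H$, so $\{C_G,C_H,S^0\}$ is genuinely a nonsplit type I 3-link in $D$. Since $D$ was arbitrary, $G\dot\cup H$ is IPPI3L.

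The step I expect to demand the most care is the nonsplit assertion invoked above: that a type Ia 3-link with one $S^0$-vertex in each of its two disks, and a type Ib 3-link whose $S^0$ has one point inside and one point outside the outer circle, cannot be split — i.e.\ that no $0$-homologous curve disjoint from the link encloses two of its three pieces with the third on the Möbius side. This is morally the content of Figures~\ref{fig:typeIa} and \ref{fig:typeIb}, and I would isolate it, together with the trichotomy for disjoint null-homologous curves, as a short preliminary lemma, proved by observing that any such enclosing disk would have to contain a disk bounded by one of the circles and hence could not separate the threaded $S^0$. Granting that lemma, the remainder is just the two applications of the preceding proposition and the three-case bookkeeping described above; the disjointness and distinctness checks are the only other routine detail.
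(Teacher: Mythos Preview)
Your argument is correct, but it takes a genuinely different route from the paper's. The paper argues directly from the definition of closed nonseparating: it splits into cases according to whether the induced drawing of $G$ is affine, separating, or nonseparating (hence closed cell), and in each case locates a face of $G$ containing $H$ whose bounding cycle, together with a cycle of $H$ and a vertex of $G$ outside that face, gives the nonsplit 3-link. Your approach instead bootstraps from the preceding proposition (closed nonseparating and nonouter-projective-planar $\Rightarrow$ $G\dot\cup K_1$ separating) to extract separating $0$-homologous cycles $C_G\subseteq G$ and $C_H\subseteq H$, and then runs the nested/unnested dichotomy for two disjoint null cycles in $\mathbb{R}P^2$. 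This is essentially the same mechanism the paper uses to prove Proposition~\ref{separatingIPPI3L}, and your proof could be summarized as ``reduce to the argument of Proposition~\ref{separatingIPPI3L} via the preceding proposition.'' The payoff of your route is symmetry in $G$ and $H$ and reuse of already-established machinery; the paper's route is more self-contained and avoids the mild bookkeeping about which of $a,b,c,d$ might coincide with the auxiliary vertices $g,h$. Your flagged lemma (the trichotomy for disjoint null curves and the nonsplitness of the two resulting configurations) is indeed the only substantive step, and it is exactly what the paper invokes implicitly in both Proposition~\ref{separatingIPPI3L} and Proposition~\ref{closed-nonseparating}.
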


\begin{proof}
If we embed both $G$ and $H$ with affine embeddings, this will create a nonsplit type I 3-link, as both are nonouterplanar. So, without loss of generality, consider a drawing $D_1$ of $G$ with at least one 1-homologous cycle. If $D_1$ is a separating drawing, it has a separating cycle $C$. If we embed $H$ outside of $C$, this creates two disjoint separating cycles, which is a Type $1a$ 3-link. If we embed $H$ inside the cycle $C$, this will create a Type $1b$ 3-link. Now, suppose we have a drawing $D_2$ of $G$ that is nonseparating. This means that it is a closed cell embedding. No matter which face we embed $H$ into, it will produce a Type $1b$ 3-link, as $G$ is nonouterplanar so there will be a vertex outside of the face that $H$ is embedded into.
\end{proof}

\begin{proposition}\label{minorminimalclosed}
Suppose $G$ and $H$ are minor-minimal nonouter-projective-planar, where both of them are planar. Also, $G$ and $H$ are not II3L. If $G$ and $H$ are closed nonseparating, then $G\dot\cup H$ is minor-minimally IPPI3L.
\end{proposition}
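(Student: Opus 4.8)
## Proof Proposal

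The plan is to combine Proposition \ref{closed-nonseparating} (which already gives that $G \dot\cup H$ is IPPI3L) with a minor-minimality argument that mirrors the one used in Proposition \ref{threecomponent}. By Proposition \ref{closed-nonseparating}, $G \dot\cup H$ is IPPI3L, so it remains only to show that every proper minor fails to be IPPI3L. A proper minor of $G \dot\cup H$ has the form $G' \dot\cup H'$ where $G'$ is a minor of $G$, $H'$ is a minor of $H$, and at least one of the two containments is proper; without loss of generality say $H'$ is a proper minor of $H$. Since $H$ is minor-minimal nonouter-projective-planar, $H'$ is outer-projective-planar; moreover, since $H$ is planar, $H'$ is planar as well, so in fact $H'$ is outerplanar. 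The goal is to exhibit a single embedding of $G' \dot\cup H'$ in $\mathbb{R}P^2$ containing no nonsplit type I 3-link.

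First I would dispose of $H'$: embed $H'$ as an affine outerplanar drawing placed inside some face, so that no cycle of $H'$ bounds any vertex in its interior. Thus $H'$ can contribute at most a single $S^0$ vertex (and no $S^1$) to any type I 3-link. Next I would handle $G'$, splitting into two cases according to whether $G' = G$ or $G'$ is a proper minor. If $G'$ is a proper minor of $G$, then $G'$ is outerplanar as well, and embedding both $G'$ and $H'$ as disjoint affine outerplanar drawings (neither inside a face of the other) yields a drawing with no cycle bounding any vertex at all, hence certainly no nonsplit type I 3-link. If $G' = G$, then since $G$ is closed nonseparating and not II3L, I would use the hypothesis ``not II3L'' to produce an affine embedding of $G$ containing at most one cycle with a vertex in its interior (and at most one further $S^0$), then place $H'$ in an appropriate face. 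One must then verify that the resulting configuration — at most one $S^1$ bounding a vertex from $G$, plus at most two $S^0$-vertices — cannot be rearranged into a nonsplit type I 3-link; since a type I 3-link needs two disjoint $S^1$'s, and we have only one, this is immediate.

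The main obstacle will be the case $G' = G$, specifically extracting from ``$G$ is not II3L'' a planar drawing of $G$ in which \emph{at most one} cycle bounds a vertex, while simultaneously having enough room to insert $H'$ (which itself is nonouterplanar only when $H$ is — but here $H'$ is a proper minor, hence outerplanar, so that subtlety does not arise). The definition of II3L says every planar embedding of $G$ contains a nonsplit type I 3-link \emph{of the whole graph}, whereas here $G$ is a single component, so ``$G$ not II3L'' on its own does not directly say anything; the real content is that $G \dot\cup (\text{outerplanar piece})$ should avoid a nonsplit 3-link, and one needs $G$ to be embeddable with a single ``nested'' structure. I would argue: since $G$ is separating projective planar and planar, its planar embedding has a separating cycle, but being not II3L (as a component, i.e. $G$ alone does not force two disjoint vertex-bounding cycles in any single planar drawing) one can choose a planar drawing of $G$ with exactly one vertex-bounding $0$-homologous cycle; then embedding the outerplanar $H'$ in the outer face contributes no new $S^1$, and the drawing of $G' \dot\cup H'$ has only one $S^1$ available, so no type I 3-link exists. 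I would then remark that the symmetric argument handles the case where $G'$ is the proper minor, completing the proof that $G \dot\cup H$ is minor-minimal IPPI3L.
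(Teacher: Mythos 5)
Your overall skeleton (invoke Proposition \ref{closed-nonseparating} for the IPPI3L part, then show every proper minor has a 3-linkless embedding, reducing WLOG to a proper minor of one component) matches the paper. But there is a genuine gap in the key step: you claim that since $H'$ is a proper minor of the minor-minimal nonouter-projective-planar graph $H$ and $H$ is planar, $H'$ is outer-projective-planar \emph{and planar, hence outerplanar}. That implication is false: outer-projective-planarity together with planarity does not give outerplanarity ($K_4$ and $K_{3,2}$ are planar and outer-projective-planar but not outerplanar, and proper minors of graphs such as $\epsilon_1$ can certainly still contain $K_4$ or $K_{3,2}$ minors). Your entire embedding strategy — place $H'$ as an affine outerplanar drawing so it contributes no vertex-bounding cycle, and likewise for a proper minor $G'$ of $G$ — collapses at this point, since such an affine drawing of a nonouterplanar $H'$ necessarily has a cycle enclosing a vertex. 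The paper instead embeds the proper minor $H'$ in its \emph{outer-projective-planar} drawing (which may use essential cycles), so that all vertices of $H'$ lie on the boundary of a single face $F$, and then places the intact planar graph $G$ inside $F$ using a type~I 3-linkless planar drawing, which exists precisely because $G$ is not II3L; that is the move your argument is missing.

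A secondary problem: in your case $G'=G$ you assert that ``$G$ not II3L'' lets you choose a planar drawing of $G$ with \emph{exactly one} vertex-bounding cycle. Not being II3L only guarantees a planar embedding with no nonsplit type~I 3-link; such an embedding can still contain several vertex-bounding cycles (for instance nested ones), so your stronger claim is unjustified — and it is also unnecessary, since the linkless planar drawing itself is all you need, provided the proper-minor component is handled as above rather than via the false outerplanarity claim.
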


\begin{proof}
Suppose $G$ and $H$ are minor-minimal nonouter-projective-planar and closed nonseparating. Also, suppose $G$ and $H$ are not II3L. We have proven in Proposition \ref{closed-nonseparating} that $G\dot{\cup} H$ is IPPI3L. Now, we will prove that $G\dot{\cup} H$ is minor-minimal under this property. Without loss of generality, suppose $L$ is a minor of $H$. This means $L$ is outer-projective-planar. Embed $L$ as an outer-projective drawing, called $D_1$. All vertices of $L$ are contained in the boundary of one face, $F$. If we embed $G$ into $F$, there is no 3-link, since $G$ is not II3L.
\end{proof}

\begin{con}\label{ep,zeta}
The graphs $\epsilon_1$ and all members of the $\zeta$ family except $\zeta_3$ are closed nonseparating.
\end{con}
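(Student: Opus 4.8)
The plan is to show that for each graph $G \in \{\epsilon_1\} \cup \{\zeta_1, \zeta_2, \zeta_4, \zeta_5, \zeta_6\}$, every nonseparating projective planar embedding is a closed cell embedding, i.e., every face is bounded by a $0$-homologous cycle. The natural strategy is to reduce this to a finite check. First I would observe that each of these graphs is $2$-connected (indeed, $\epsilon_1$ and the relevant $\zeta$ graphs are $2$-connected, and in several cases $3$-connected), so by Whitney's theorem the planar ones among them have an essentially unique planar (hence affine) embedding; and more usefully, in any embedding in $\mathbb{R}P^2$, the homology class of any face-bounding cycle is the mod-$2$ sum of the homology classes of the other face-bounding cycles — the same principle already invoked for $\zeta_3$ and $\delta_1, \delta_2$ in the excerpt. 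This forces the number of $1$-homologous faces in any embedding to have fixed parity, and combined with Lemma \ref{gloverlemma} (two $1$-homologous cycles must intersect, so two disjoint face boundaries cannot both be $1$-homologous) this cuts the case analysis down to a small list, exactly as was done for $\zeta_3$.

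The key steps, in order, would be: (1) fix a reference planar/affine embedding of each graph and label its faces; (2) use the mod-$2$ homology relation among face boundaries to determine which subsets of faces can simultaneously be $1$-homologous in some embedding, pruning with Lemma \ref{gloverlemma} (disjoint faces cannot both be $1$-homologous) and with the counting/parity constraint; (3) for each surviving candidate, either produce the explicit embedding and verify directly that every face boundary is $0$-homologous — which happens precisely when we show the candidate with a nonzero number of $1$-homologous faces is actually \emph{separating}, contradicting the nonseparating hypothesis — or show no such embedding exists; (4) conclude that the only nonseparating embeddings are those with all faces $0$-homologous, i.e., closed cell embeddings. Step (3) is where one leverages the results already proved in Section 4: for $\zeta_1, \zeta_2, \zeta_4, \zeta_5, \zeta_6$ we know (Theorem in the $\zeta$ subsection) they are strongly nonseparating, so a nonseparating embedding certainly exists; the content is that \emph{every} such embedding with a $1$-homologous cycle must contain a weak separating cycle (one gets a $K_4 \dot\cup K_1$ or $K_{3,2} \dot\cup K_1$ affine minor on the $0$-homologous side, as in the $\delta$ and $\eta$ arguments). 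For $\epsilon_1$, the same template applies, using its explicit nonseparating embedding from Figure \ref{fig:FE1} as the reference and checking that introducing any $1$-homologous face boundary creates a separating cycle.

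I expect the main obstacle to be Step (2)–(3) for the graphs with the most faces — realistically $\zeta_1, \zeta_2$, and $\epsilon_1$, which have enough faces that the raw number of sign patterns is large even after parity pruning. The saving grace is the same one used throughout Section 4: most patterns are killed immediately because two face boundaries that would both need to be $1$-homologous are vertex-disjoint (Lemma \ref{gloverlemma}), and among the few that survive, the $0$-homologous part of the graph is $2$-connected and contains $K_4$ or $K_{3,2}$ as a minor, so by Theorem \ref{Theorem1} it isotopes to an affine embedding and by Dehkordi--Farr's result that affine piece is separating — contradicting nonseparating-ness. So effectively the only nonseparating embeddings left standing are the all-$0$-homologous ones, which are closed cell by definition. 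The genuinely new labor is drawing and bookkeeping the face structure of each of the six graphs; conceptually it is a direct reprise of the $\delta$, $\zeta_3$, and $\eta$ proofs. A secondary subtlety is making sure the face-homology parity argument is stated correctly for graphs that are not $3$-connected (so that "the" planar embedding and its face set are well-defined up to the relevant switching moves); for the $2$-connected but not $3$-connected cases one should either note the embedding is still unique enough, or run the argument over all planar embeddings, of which there are finitely many.
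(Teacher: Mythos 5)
First, note that the paper does not prove this statement at all: it is stated as a conjecture (Conjecture \ref{ep,zeta}), so there is no proof of record to compare against. Your proposal must therefore stand on its own, and as written it contains a genuine logical flaw. Your steps (3)--(4) conclude that ``the only nonseparating embeddings left standing are the all-$0$-homologous ones,'' i.e.\ that any embedding in which some face boundary of the reference planar embedding becomes $1$-homologous is separating. That cannot be right for these graphs. Each of $\epsilon_1,\zeta_1,\zeta_2,\zeta_4,\zeta_5,\zeta_6$ is nonouter-projective-planar, hence nonouterplanar and not a subgraph of a wheel or elongated prism; by Dehkordi--Farr it is therefore \emph{not} a nonseparating planar graph, so every affine (all-cycles-$0$-homologous) embedding of it is separating. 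Since the paper exhibits strongly nonseparating projective embeddings of all of these graphs (Figures \ref{fig:FE1} and \ref{ZetaNot3}), those embeddings necessarily contain $1$-homologous cycles, and hence some reference-face boundaries that are $1$-homologous. So the claim underlying your step (3) is false, and if it were true the graphs would be separating projective planar, contradicting the paper's own theorems; your argument would ``prove'' too much.

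The deeper issue is that you have misidentified what can break the closed-cell property. A closed cell embedding of one of these (planar) graphs in $\mathbb{R}P^2$ \emph{must} contain $1$-homologous cycles in the graph; having essential cycles is perfectly compatible with every face being an open disk bounded by a simple $0$-homologous cycle. The failure modes the conjecture is really about are (i) the affine embeddings, whose Möbius-band face is not a closed cell (these are harmless here because, as above, they are separating), and (ii) cellular embeddings containing a face whose boundary walk is not a simple cycle --- for instance a face whose boundary walk traverses an edge twice, or doubles around an essential cycle. Your mod-$2$ face-homology bookkeeping over the reference planar embedding (as in the $\zeta_3$ and $\delta$ arguments) is a sensible way to enumerate the non-affine embeddings, but for each nonseparating embedding so produced you must then examine the faces of the \emph{projective} embedding and verify that every boundary walk is a simple $0$-homologous cycle, and show that every embedding with a ``bad'' face is separating. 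That verification is the actual content of the conjecture, and the proposal never engages with it.
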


By Proposition \ref{minorminimalclosed} and Conjecture \ref{ep,zeta}, there are many graphs that we conjecture to be minor-minimal IPPI3L. An example of this would be $\epsilon_1 \dot\cup \epsilon_1$, as shown in Figure \ref{fig:epsilon}.

\begin{figure}[H]
    \centering
    \includegraphics[scale=0.28]{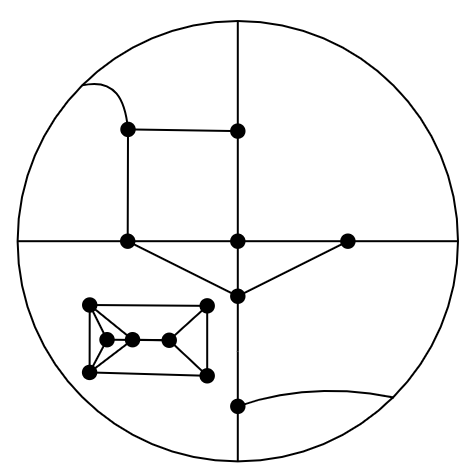}
    \caption{This graph is $\epsilon_1 \dot\cup \epsilon_1$}
    \label{fig:epsilon}
\end{figure}

\subsection{Graphs glued at a vertex}
Suppose $D$ is a planar drawing of a graph $G$. A vertex of $G$ is a \textit{separating planar vertex in $D$} if it is contained in all separating cycles of $D$ and we will say that a vertex of $G$ is a \textit{separating planar vertex} if it is contained in all separating cycles for all planar drawings of $G$. The graphs $\eta_1$ and $\zeta_3$ have no separating planar vertices. In the graph $\theta_1$, the two separating planar vertices are the degree 5 vertices. For $\epsilon_1$, $\gamma_6$, $\delta_1$, and $\delta_2$ the separating planar vertices are the degree 4 vertices.

\begin{proposition}\label{gluedatzeta}
Suppose $G$ is a separating projective planar graph that is planar and is not II3L. Then $G$ glued at any vertex called $v$ to $\zeta_3$ is IPPI3L.
\end{proposition}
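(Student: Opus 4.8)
The plan is to show that every projective-planar embedding of $G$ glued to $\zeta_3$ at $v$ contains two \emph{disjoint} $0$-homologous cycles $C_1\subseteq\zeta_3$ and $C_2\subseteq G$ with the property that each bounds a disk having a vertex of its own subgraph strictly inside and a vertex strictly outside; a pair like this immediately yields a nonsplit type~I $3$-link by the argument of Proposition~\ref{separatingIPPI3L}. Indeed, the disks $D_1,D_2$ bounded by $C_1,C_2$ are either nested, say $D_2\subseteq D_1$, and then a vertex strictly inside $D_2$ together with a vertex outside $D_1$ is an $S^0$ producing a type~Ib link, or else $D_1\cap D_2=\varnothing$, and then one vertex strictly inside each of $D_1,D_2$ is an $S^0$ producing a type~Ia link; in both cases, since $C_1\cap C_2=\varnothing$ and no $1$-homologous curve is contained in a disk, a short check rules out every embedded $S^1$ that could split the link.

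To find such a pair, fix an embedding. Because $G$ is separating projective planar its induced sub-embedding has a separating cycle $C_G\subseteq G$, and because $\zeta_3$ is separating projective planar its induced sub-embedding has a separating cycle $C_\zeta\subseteq\zeta_3$; since $G$ and $\zeta_3$ meet only at $v$, if either $C_G$ or $C_\zeta$ avoids $v$ then the two cycles are already disjoint and we are done. So assume every separating cycle of the $G$-part and every separating cycle of the $\zeta_3$-part passes through $v$. Now invoke the analysis of the projective-planar embeddings of $\zeta_3$ used to prove that $\zeta_3$ is separating: either every cycle of the $\zeta_3$-part is $0$-homologous, so by Theorem~\ref{Theorem1} that part is isotopic to the unique planar embedding of the cube and, since $\zeta_3$ has no separating planar vertex, it has a separating cycle missing $v$ --- contradiction; or the $\zeta_3$-part is, up to a symmetry of the cube, the embedding of Figure~\ref{Zeta3Sep}, in which there are exactly two $1$-homologous face boundaries, they share an edge, and the only separating cycles are the two ``vertex links'' around the two endpoints of that edge (each of which avoids one endpoint and its cube-antipode). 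Consequently $v$ is a vertex of $\zeta_3$ incident to exactly one of the two $1$-homologous faces, say $A$, and --- again because $G$ and $\zeta_3$ meet only at $v$ --- all of $G-v$ is drawn inside a single face $\Phi$ of $\zeta_3$ with $v\in\partial\Phi$.

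This last configuration is the crux. One shows first that every cycle of $G$ through $v$ uses two $G$-edges at $v$ entering the same corner of $\zeta_3$, so in fact every cycle of $G$ lies in $\overline\Phi$; thus $C_G$ bounds a disk $D_G\subseteq\overline\Phi$ with a $G$-vertex strictly inside it and a $G$-vertex outside it. The task is then to exhibit a second cycle disjoint from $C_G$ that completes a nonsplit type~I $3$-link, and this is where the hypotheses on $G$ are meant to be used: $G$ is planar, so its sub-embedding inside the disk $\overline\Phi$ is a genuine planar embedding of a separating planar graph; $G$ is not II3L, so $G$ by itself cannot supply two nested or side-by-side separating cycles, forcing the second cycle to come (at least partly) from $\zeta_3$; and the $1$-homologous face of $\zeta_3$ not incident to $v$ is a cycle disjoint from everything inside $\overline\Phi$ whose complement is a disk carrying the rest of the picture, which controls how $D_G$ sits inside the disk bounded by one of the link-hexagons $C_\zeta$. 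Pushing this case through --- in particular ruling out that $v$ could lie on every separating cycle of both sides simultaneously in a way that blocks all candidate second cycles --- is where essentially all of the difficulty lies; everything before it is bookkeeping.
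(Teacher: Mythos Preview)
Your overall plan matches the paper's: exhibit, in every embedding, two disjoint $0$-homologous separating cycles --- one from $G$, one from $\zeta_3$ --- and read off a nonsplit type~I $3$-link as in Proposition~\ref{separatingIPPI3L}. The divergence is in how you treat the possibility that every separating cycle of the $\zeta_3$-part passes through the glue vertex $v$. You correctly eliminate the all-$0$-homologous (affine) embedding of $\zeta_3$ via the observation that the cube has no separating planar vertex, but in the non-affine embedding you arrive at a residual configuration ($v$ incident to exactly one of the two $1$-homologous faces) and then explicitly leave the argument unfinished. That is a genuine gap: as written, this is a plan, not a proof. A secondary loose end is the claim that $G-v$ lies in a single face $\Phi$ of $\zeta_3$; nothing prevents different $G$-edges at $v$ from entering different face-corners, so this needs justification or a workaround.

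The paper never enters your residual case at all. Its key step is the blanket assertion that in every projective planar embedding of $\zeta_3$ there is a separating cycle that does not pass through $v$; using the vertex-transitivity of the cube to fix $v$, one then just inspects the two embeddings. With this in hand, that $\zeta_3$-cycle is automatically disjoint from any separating cycle of $G$ (the two pieces meet only at $v$), and the link follows at once. So the ``missing idea'' in your write-up is exactly this statement about $\zeta_3$ --- once it is in place, your hard case evaporates. You should, however, reconcile it with your own enumeration of separating cycles in the Figure~\ref{Zeta3Sep} embedding: if, as you say, the only separating cycles there are the two hexagonal ``vertex links'' around the endpoints of the shared edge, then any vertex lying on exactly one $1$-homologous face sits on both of those hexagons and is avoided by neither. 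One of the two claims therefore needs to be revisited before the argument can close.
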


\begin{proof}
The graph $\zeta_3$ has only two unique projective planar embeddings, as seen in Figures \ref{Zeta3Sep}. No matter how $\zeta_3$ is embedded, there is a separating cycle that does not go through $v$. Consider an arbitrary embedding of $\zeta_3$. Since every vertex is equivalent, without loss of generality, we pick vertex $v$ to glue to $G$. Now we must embed $G$ into the projective plane as well. Since $G$ is separating, it will contain a separating cycle, no matter how it is embedded. Even if the separating cycle of $G$ goes through vertex $v$, there exists a disjoint cycle in $\zeta_3$ that does not go through vertex $v$. Thus, it has a type 1a 3-link. If $\zeta_3$ is affine and embedded within a separating cycle of $G$, this creates a type 1b 3-link. We can conclude that every embedding with $G$ and $\zeta_3$ glued at a vertex will be IPPI3L.
\end{proof}

\begin{proposition} \label{minorminimalzetaglued}
Suppose $G$ is a minor-minimal nonouter-projective-planar graph. Also suppose $G$ is connected, separating projective planar, and planar. The graph $G$ glued at any vertex to $\zeta_3$ is minor-minimal IPPI3L.
\end{proposition}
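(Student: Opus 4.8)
The plan is to first observe that the conclusion ``$H:=G\cup_v\zeta_3$ is IPPI3L'' is already supplied by Proposition \ref{gluedatzeta}: each graph $G$ to which the hypotheses apply (a connected member of the $\delta$ family, or $\gamma_6$, $\zeta_3$, $\eta_1$, or $\theta_1$) is small enough that it contains none of $K_4\dot\cup K_4$, $K_4\dot\cup K_{3,2}$, $K_{3,2}\dot\cup K_{3,2}$ as a minor, hence is not II3L. So the work is to prove minimality: every proper minor $H'$ of $H$ has a type I $3$-linkless projective-planar embedding. Since having such an embedding is minor-closed, it suffices to produce one for each graph of which $H'$ can be a minor.

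Next I would classify proper minors of $H$ by where the reducing operations act. Because $v$ is a cut vertex of $H$, any proper minor of $H$ is a minor of one of (i) $G'\cup_v\zeta_3$ with $G'$ a proper minor of $G$ (operations in $G$ only; contracting an edge of $G$ at $v$ merely merges its other endpoint into $v$), (ii) $G\cup_v\zeta_3'$ with $\zeta_3'$ a proper minor of $\zeta_3$, or (iii) $(G-v)\dot\cup(\zeta_3-v)$ (when $v$ itself is deleted). In each of these three graphs one factor is a proper minor of a minor-minimal nonouter-projective-planar graph, hence outer-projective-planar; call it $A$ and the other factor $B$. I would embed $A$ with all of its vertices on the boundary of a single face $F$, with the branch vertex on $\partial F$, and embed $B$ inside a small sub-disk of $F$ attached at the branch vertex, choosing for $B$ an embedding with no nonsplit type I $3$-link of its own --- for $B=\zeta_3$ or $\zeta_3-v$ the standard planar cube drawing, and for $B=G$ a plane embedding witnessing that $G$ is not II3L.

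The claim is that no combined drawing so obtained contains a nonsplit type I $3$-link (which, recall, must be of type Ia or type Ib). Since the branch vertex is a cut vertex, the two $S^1$'s of any candidate link each lie entirely in $A$ or entirely in $B$. Two observations then force every candidate to be split. First, because all vertices of $A$ lie on $\partial F$, no cycle of $A$ encloses a vertex of $A$; and for a $0$-homologous cycle $C$ of $A$, either $F$ (hence the $B$-blob) lies outside the disk bounded by $C$, so $C$ encloses nothing, or $F$ lies inside it, so every vertex of the whole drawing lies in the closed disk of $C$ --- in the latter case $C$ can be neither the outer circle of a type Ia link nor a type Ib partner of anything it encloses. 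Second, the factor $B$ contains no two vertex-disjoint cycles one of which has a vertex in its interior: for $B=\zeta_3$ or $\zeta_3-v$ because it is bipartite of girth $4$ on at most eight vertices (so two disjoint cycles already use all of them), and for $B=G$ by a direct check over the finitely many admissible $G$ (most are triangle-free on eight vertices, and for $\theta_1=K_{5,2}$ every cycle meets both vertices of the part of size two). Together these leave only configurations in which every candidate $S^0$ point lies inside the outer $S^1$ of the link, and such a link is split by a circle hugging the inner $S^1$ together with the enclosed points.

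The main obstacle is exactly this last verification --- in particular, ruling out the ``mixed'' configuration in which the outer circle of a would-be link is a cycle of the blob factor $B$ while an $S^0$ point is a vertex of the host factor $A$ sitting outside that circle. That possibility is killed by the second observation above, whose part concerning $B=G$ is a finite, graph-by-graph check (using the structure of the six admissible graphs) rather than a soft argument, so this is where the real content lies; once it is in hand, the re-embedding/hugging steps parallel those used in Propositions \ref{separatingIPPI3L}, \ref{closed-nonseparating}, and \ref{gluedatzeta}. Case (iii) additionally uses that both $G-v$ and $\zeta_3-v$ are outer-projective-planar, so neither factor contributes a cycle enclosing one of its own vertices and the same two observations apply to whichever is taken as host.
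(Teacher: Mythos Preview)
Your approach matches the paper's: cite Proposition~\ref{gluedatzeta} for IPPI3L, then for minimality note that any proper minor makes one factor outer-projective-planar, embed that factor with all its vertices on the boundary of a single face, drop the other factor inside, and conclude that no nonsplit type~I $3$-link results. The paper's proof simply asserts this last conclusion without argument; your two observations supply the justification the paper omits, so your write-up is in fact more complete than the original.

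One point to correct: your deduction that $G$ is not II3L from ``contains none of $K_4\dot\cup K_4$, $K_4\dot\cup K_{3,2}$, $K_{3,2}\dot\cup K_{3,2}$ as a minor'' appeals to the Burkhart et al.\ \emph{conjecture} that these three form the complete minor-minimal II3L list, and the paper explicitly flags this as open. The fix is already in your hands: your second observation (that in the chosen plane drawing of each admissible $G$ there are no two disjoint cycles with one enclosing a vertex) directly exhibits a $3$-linkless plane embedding of $G$, so ``$G$ is not II3L'' follows without invoking the conjecture --- just reorder the logic so that this observation comes first.
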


\begin{proof}
In Proposition \ref{gluedatzeta}, we proved $G$ glued to $\zeta_3$ at any vertex is IPPI3L. Now, we will prove that it is minor-minimal in regards to this property.

Suppose $\zeta_3$ and $G$ are glued by a vertex $v$. Consider a minor of this resulting graph, called $H$.
 
First, consider the case where $H$ does not contain vertex $v$ because $v$ has been deleted. Graph $H$ has at least two components. Consider the part of $H$ that is a proper minor of either $G$ or $\zeta_3$ that is outer-projective-planar, and call it $H_1$. Let $D_1$ be an outer-projective-planar drawing of $H_1$.Then there exists a face $F_1$ of $D_1$ such that all the vertices of $H_1$ are in its boundary. Embed the other pieces of $H$ in $F_1$. This drawing is not projective-planar type I 3-linked.

Second, consider the case where $H$ does contain vertex $v$. Consider the part of $H$ that is a proper minor of either $G$ or $\zeta_3$ that is outer-projective-planar, and call it $H_2$. Let $D_2$ be an outer-projective-planar drawing of $H_2$. Then there exists a face $F_2$ of $D_2$ such that all the vertices of $H_2$ are in its boundary. Embed the other pieces of $H$ in $F_2$. This drawing is not projective-planar type I 3-linked.

We can conclude that $G$ glued to $\zeta_3$ at any vertex is minor-minimal IPPI3L.

\end{proof}

Graph $G$ in this proposition could be:  the $\delta$ family, $\gamma_6$, $\zeta_3$, $\eta_1$, and $\theta_1$

An example of this type of graph would be $\zeta_3$ glued with $\gamma_6$ at any vertex, as shown in Figure \ref{fig:zeta3gamma6}.

\begin{figure}[H]
    \centering
    \includegraphics[scale=0.22]{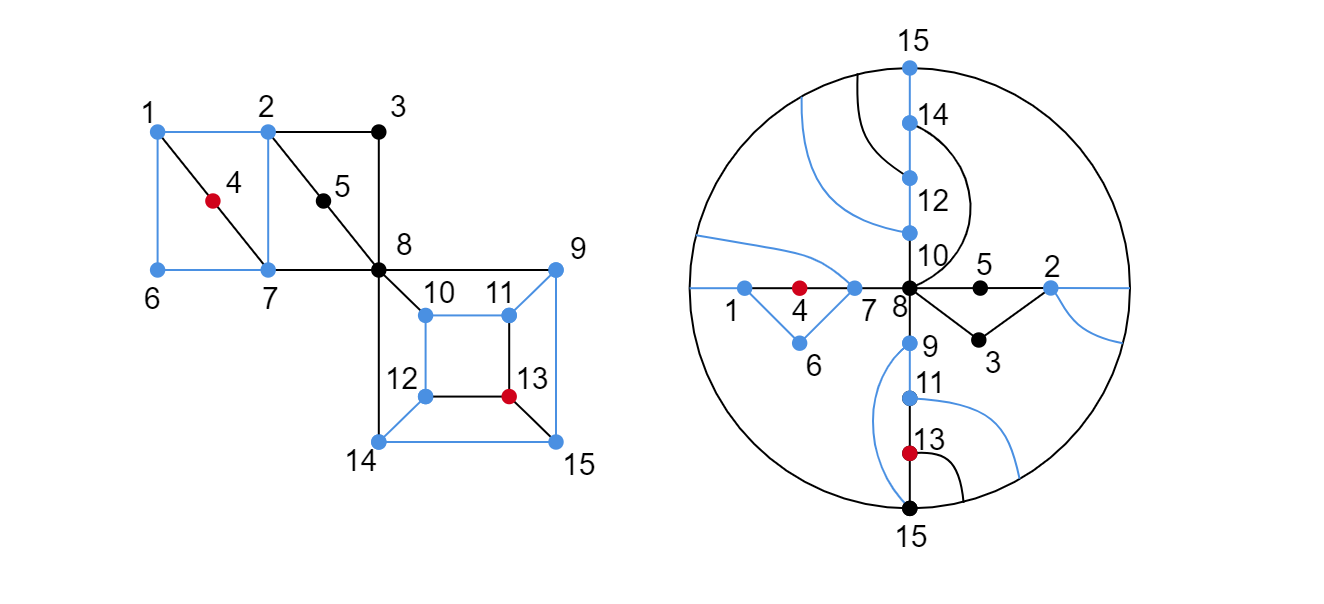}
    \caption{The graph $\zeta_3$ glued with $\gamma_6$ glued at vertex 8. The graph is type 1a linked - the two $S^1$'s are highlighted in blue, and the $S^0$ are highlighted in red.}
    \label{fig:zeta3gamma6}
\end{figure}

\begin{con}
Let the graph $G$ be two copies of $\epsilon_1$ be glued together, where the two vertices that were glued, $v_1$ and $v_2$, are not separating planar vertices. Then $G$ is minor-minimal IPPI3L.
\end{con}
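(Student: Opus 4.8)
The plan is to follow the two-step template of Propositions \ref{gluedatzeta} and \ref{minorminimalzetaglued}: first establish that $G = \epsilon_1^{(1)} \cup_v \epsilon_1^{(2)}$ is IPPI3L, and then show that every proper minor of $G$ has a projective planar drawing containing no nonsplit type I 3-link. Both halves lean on Conjecture \ref{ep,zeta} (that $\epsilon_1$ is closed nonseparating) and on a companion statement about $\epsilon_1$ that I regard as the crux, namely that \emph{every} separating projective planar drawing of $\epsilon_1$ contains a separating cycle missing any prescribed non-separating-planar vertex; I would also want two elementary, checkable facts about the single graph $\epsilon_1$: that it has an essentially unique planar embedding (so that ``separating cycle'' is a property of the cycle, not of the embedding) and that it is not II3L.

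For the IPPI3L direction, I would fix an arbitrary embedding $D$ of $G$ and let $D_i$ be its restriction to $\epsilon_1^{(i)}$. By Conjecture \ref{ep,zeta}, each $D_i$ is either separating or a closed cell embedding. If some $D_i$, say $D_1$, is a closed cell embedding, then the connected graph $\epsilon_1^{(2)}$, which meets $\epsilon_1^{(1)}$ only at $v$, lies in the closure of a single face $F$ of $D_1$; its boundary $\partial F$ is a $0$-homologous cycle of $\epsilon_1^{(1)}$, and since $\epsilon_1^{(1)}$ is nonouter-projective-planar some vertex of it lies outside $F$, so $\partial F$ is a separating cycle of $D$. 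Inside the disk $F$ the copy $\epsilon_1^{(2)}$ is drawn planarly, and being nonouterplanar with $v_2$ not a separating planar vertex it contains a separating cycle $C_2$ avoiding $v$; then $\partial F$ and $C_2$ are disjoint separating cycles of $D$, and reasoning as in the proof of Proposition \ref{separatingIPPI3L} yields a nonsplit type I 3-link. Otherwise both $D_1$ and $D_2$ are separating. If either restriction, say $D_2$, is planar, then $\epsilon_1^{(2)}$ has a separating cycle $C_2$ avoiding $v$, which together with any separating cycle $C_1$ of $D_1$ (necessarily disjoint from $C_2$, since the copies meet only at $v$) again gives a nonsplit type I 3-link via the argument of Proposition \ref{separatingIPPI3L}. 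If both $D_1$ and $D_2$ are non-planar, each carries a $1$-homologous cycle; by Lemma \ref{gloverlemma} these intersect, hence both pass through $v$, and here I would invoke the conjectural companion statement to extract a separating cycle of $D_1$ (or of $D_2$) avoiding $v$ and finish as before.

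For minor-minimality, let $H$ be a proper minor of $G$. Every minor operation is carried out inside one of the copies or deletes $v$, so in all cases at least one copy is reduced to a proper minor of $\epsilon_1$, hence to an outer-projective-planar graph $A$; the remaining part $B$ is a minor of $\epsilon_1$ and therefore, as a minor of a non-II3L graph, is not II3L. Following Proposition \ref{minorminimalzetaglued}, I would draw $A$ in an outer-projective-planar embedding with every vertex on the boundary of one face $F'$, and draw $B$ inside $F'$ using a planar embedding with no nonsplit type I 3-link (choosing, if $v$ survives in $H$, one with $v$ on the outer face, which is available since $\epsilon_1$ is planar and its 3-link status does not depend on the choice of outer face). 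In this drawing no cycle of $A$ bounds a disk containing a vertex, so any nonsplit type I 3-link would have both of its $S^1$ pieces inside $F'$, i.e.\ inside $B$; a splitting argument like those in the proofs of Propositions \ref{threecomponent} and \ref{minorminimalzetaglued} then shows any vertex of $A$ outside $F'$ can be separated off, contradicting non-splitness unless the entire 3-link already lies in $B$ --- impossible by the choice of embedding of $B$. Hence $H$ is not IPPI3L, so $G$ is minor-minimal IPPI3L.

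The hard part will be the last subcase of the IPPI3L direction, where both copies are drawn non-planarly and separatingly: in a drawing that uses a cross-cap the family of separating cycles is no longer governed by the planar notion of a separating planar vertex, so one genuinely has to enumerate the (few) projective planar drawings of $\epsilon_1$ that contain a $1$-homologous cycle --- there are not many, since $1$-homologous cycles pairwise intersect and $\epsilon_1$ has bounded size --- and check, for each candidate glued vertex $v$, that a fixed separating cycle avoiding $v$ survives. Completing this case check, and separately verifying that $\epsilon_1$ has a unique planar embedding and is not II3L, would finish the argument; as stated the result remains conditional on Conjecture \ref{ep,zeta} and on this companion statement about $\epsilon_1$.
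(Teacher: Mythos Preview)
The paper does not prove this statement: it is stated as a \emph{conjecture} and left open. So there is no proof in the paper to compare against, and your proposal is an attempt to settle what the authors themselves left unresolved.

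Your outline is coherent and tracks the paper's template for the $\zeta_3$ case (Propositions \ref{gluedatzeta} and \ref{minorminimalzetaglued}) quite closely. You correctly isolate the crux: the subcase where both copies of $\epsilon_1$ are drawn non-planarly and separatingly, forcing both $1$-homologous cycles through $v$, and you need a separating cycle in one copy that avoids $v$. This is exactly the obstruction the authors could not remove; your ``companion statement'' is the missing piece, and it is genuinely open, not a routine check. The case analysis you propose (enumerating projective drawings of $\epsilon_1$ with a $1$-homologous cycle through a given non-separating-planar vertex) is the natural approach, but until it is carried out the argument remains conditional, as you acknowledge. Together with Conjecture \ref{ep,zeta} this means two unproven ingredients stand between your outline and an actual proof.

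One technical point to tighten in the IPPI3L direction: when $D_1$ is a closed cell embedding and $\epsilon_1^{(2)}$ lands planarly in a face $F$, you assert that $\epsilon_1^{(2)}$ has a separating cycle avoiding $v$ because $v_2$ is not a separating planar vertex. By the paper's definition, ``not a separating planar vertex'' only guarantees \emph{some} planar drawing has a separating cycle missing $v_2$, whereas here the planar drawing of $\epsilon_1^{(2)}$ is forced on you by $D$. You patch this by assuming $\epsilon_1$ has an essentially unique planar embedding; that is the right move, but you should verify it (e.g.\ check that $\epsilon_1$ is $3$-connected, or a subdivision of a $3$-connected graph, and invoke Whitney). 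Similarly, ``$\epsilon_1$ is not II3L'' is plausible for a small connected graph but should be checked by exhibiting one planar drawing without two disjoint cycles each enclosing a vertex.
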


\begin{con}
Let the graphs $G$ and $H$ be closed nonseparating, nonouter-projective-planar, and planar. Suppose the vertex $v_1$ of $G$ and $v_2$ of $H$ are glued together at vertex $v$, where $v_1$ and $v_2$ are not separating planar vertices. The resulting graph will be IPPI3L.
\end{con}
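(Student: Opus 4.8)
Write $\Gamma = G\cup_v H$ for the graph obtained by identifying $v_1\in G$ with $v_2\in H$, and call the identified vertex $v$. The plan is to adapt the proof of Proposition~\ref{closed-nonseparating} (the disjoint-union analogue) and of Proposition~\ref{gluedatzeta} to the glued setting; the only genuinely new point is that the two cycles of the desired type~I $3$-link must be arranged so as not to both pass through $v$, and this is exactly what the hypothesis that $v_1$ and $v_2$ are not separating planar vertices is meant to supply. Fix an arbitrary drawing $D$ of $\Gamma$ in $\mathbb{R}P^2$; deleting everything lying only in $H$ gives a drawing $D_G$ of $G$, and symmetrically $D_H$. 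Since $G$ is closed nonseparating, $D_G$ is either a separating drawing (it contains a $0$-homologous cycle with a vertex of $G$ strictly on each side) or else a closed cell embedding, and likewise for $D_H$.

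The organizing observation is that if $D_G$ contains a $1$-homologous cycle then every face of $D_G$ is a disk, since a non-disk face would be a M\"{o}bius band whose complement is a disk containing that $1$-homologous cycle. Hence, as $H$ meets $G$ only at $v$, each component of $H$ is confined to a single (disk) face of $D_G$, so $D_H$ is drawn affinely and in particular contains no $1$-homologous cycle. Thus $D_G$ and $D_H$ cannot both contain $1$-homologous cycles, and --- using Lemma~\ref{gloverlemma} to rule out disjoint pairs of such cycles as well --- only two situations remain up to swapping the roles of $G$ and $H$: (a) neither $D_G$ nor $D_H$ has a $1$-homologous cycle, so by Theorem~\ref{Theorem1} both are equivalent to affine embeddings; and (b) exactly one of them, say $D_G$, has a $1$-homologous cycle while $D_H$ is affine.

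In either case I would build the $3$-link as follows. Whenever a subdrawing is affine, the relevant graph, being nonouter-projective-planar and hence nonouterplanar (Theorem~\ref{halin}), contains a cycle with one of its vertices in the interior; invoking that $v_1$ (resp.\ $v_2$) is not a separating planar vertex, I would arrange for such a cycle to miss $v$. In case (a), pick $C_G\subseteq G$ enclosing a vertex $a$ and $C_H\subseteq H$ enclosing a vertex $b$, both missing $v$; since $G\cap H=\{v\}$, the cycles $C_G$ and $C_H$ are vertex-disjoint and, with $\{a,b\}$ as the $S^0$, form a nonsplit type~Ib $3$-link exactly as in Proposition~\ref{closed-nonseparating}. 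In case (b), if $D_G$ is a separating drawing with separating cycle $C$ bounding a disk $\Delta$ (a vertex of $G$ inside $\Delta$ and a vertex $a'$ of $G$ outside it), then $H$ lies on one side of $C$; a cycle $C_H\subseteq H$ enclosing a vertex of $H$ and missing $v$, together with $C$ and whichever of $a'$ or the vertex inside $\Delta$ lies on the side of $C$ opposite $H$, is a nonsplit type~Ia $3$-link when $H$ lies outside $\Delta$ and a nonsplit type~Ib $3$-link when $H$ lies inside $\Delta$. If instead $D_G$ is a closed cell embedding, then $H$ lies in a face $F$ whose boundary $C_F$ is a $0$-homologous cycle, some vertex $g$ of $G$ lies outside $F$ (as $G$ is not outer-projective-planar, not all its vertices lie on $\partial F$), and $C_F$, a cycle of $H$ enclosing a vertex and missing $v$ drawn inside $F$, and $g$ form a nonsplit type~Ib $3$-link as in Proposition~\ref{closed-nonseparating}. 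In each instance one must still verify that the resulting triple is genuinely \emph{nonsplit}, a short check against the list of nonsplit type~I $3$-links in Figures~\ref{fig:typeIa} and~\ref{fig:typeIb}.

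The main obstacle --- and the reason this is only conjectured --- is the step turning ``$v_i$ is not a separating planar vertex'' into ``the affine subdrawing at hand contains a cycle enclosing a vertex and missing $v$''. The hypothesis only supplies \emph{some} planar drawing of $G$ (or $H$) that has a separating cycle avoiding $v$, whereas the subdrawing $D_G$ (or $D_H$) occurring inside $D$ need not be that drawing. The natural remedy is a lemma stating that for a planar nonouter-projective-planar graph $G$, if $v$ is not a separating planar vertex then $G\setminus v$ is still nonouterplanar (equivalently, still has a $K_4$ or $K_{3,2}$ minor); then any affine drawing of $G$, restricted to $G\setminus v$, already contains a cycle enclosing a vertex, and this cycle automatically misses $v$. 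Establishing that lemma, together with the fussy bookkeeping when $G$ or $H$ is disconnected (so that ``$H$ lies on one side of $C$'' must be read componentwise and the $3$-link extracted from whichever component gives the best configuration), is where the real work lies; the remaining case analysis is a direct expansion of that of Proposition~\ref{closed-nonseparating}.
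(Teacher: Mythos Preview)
The paper does not prove this statement; it is recorded as a conjecture with no accompanying argument, so there is no paper proof to compare against. Your proposal is an attempt at something the authors explicitly left open, and you are right to present it as incomplete.

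Beyond the obstacle you already flag, there is a genuine hole in your case split. The ``organizing observation'' that a $1$-homologous cycle in $D_G$ forces every face of $D_G$ to be a disk is false as stated: a $1$-homologous cycle rules out any face containing a M\"obius core, but annular faces remain possible (take a $1$-homologous cycle together with a small disjoint $0$-homologous cycle). More to the point, the conclusion you draw from it---that $D_G$ and $D_H$ cannot both carry $1$-homologous cycles---does not follow. If $C_G\subset G$ and $C_H\subset H$ are both $1$-homologous, Lemma~\ref{gloverlemma} and Theorem~\ref{Theorem3} force them to cross an odd number of times; since $G\cap H=\{v\}$, that crossing must be at $v$, and nothing forbids a single transverse crossing there (the two $C_H$-edges at $v$ separating the two $C_G$-edges in the rotation at $v$). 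So your cases (a) and (b) omit a third case, in which both $D_G$ and $D_H$ contain $1$-homologous cycles through $v$. In that case neither subdrawing is equivalent to an affine drawing, so neither is automatically separating in the planar sense; you cannot simply invoke ``not a separating planar vertex'' to produce a disjoint cycle, and the closed-nonseparating hypothesis must be exploited more directly (each subdrawing is either separating or closed cell, but now possibly with $v$ on every relevant cycle). This missing case is exactly where the difficulty that led the authors to leave the statement as a conjecture seems to concentrate.
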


\begin{con}
Let the graphs $G$ and $H$ be closed nonseparating, minor-minimal nonouter-projective-planar, and planar. Suppose the vertex $v_1$ of $G$ and $v_2$ of $H$ are glued together at vertex $v$, where $v_1$ and $v_2$ are not separating planar vertices. The resulting graph will be minor-minimal IPPI3L.
\end{con}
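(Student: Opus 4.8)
The plan is to imitate the two-part structure of Proposition~\ref{minorminimalclosed} and Proposition~\ref{minorminimalzetaglued}. Write $\Gamma = G\cup_v H$ for the graph obtained by identifying $v_1\in G$ with $v_2\in H$ at the single vertex $v$. By the preceding conjecture, $\Gamma$ is IPPI3L, so all that remains is to prove minor-minimality: for every proper minor $L$ of $\Gamma$ I want a projective planar embedding of $L$ with no nonsplit type I 3-link. Since $G$ and $H$ meet only at the cut vertex $v$, the reductions producing $L$ either (i) delete $v$, in which case $L$ is a minor of $(G-v)\dot\cup(H-v)$, or (ii) keep $v$, in which case $L = G'\cup_v H'$ where $G'$ is a minor of $G$, $H'$ is a minor of $H$, and at least one of $G',H'$ is a proper minor. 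Because $G$ and $H$ play symmetric roles, in case (ii) I may assume the proper reduction is on the $H$-side, so it suffices to treat: case (i); the sub-case of (ii) with both $G'$ and $H'$ proper; and the sub-case $G'=G$ with $H'$ a proper minor of $H$.

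In case (i) the graphs $G-v$ and $H-v$ are outer-projective-planar, since $G$ and $H$ are minor-minimal nonouter-projective-planar, and $L$ is a minor of their disjoint union; likewise in the both-proper sub-case of (ii), $G'$ and $H'$ are both outer-projective-planar. For such graphs the plan is the standard disjoint-union argument: embed one piece so that all of its vertices lie on the boundary of a common face $F$, then embed the other piece, together with any leftover components, inside a single sub-face of $F$ again as an outer-projective-planar drawing, chosen so that no piece lies in a disk bounded by a cycle of another. One then uses the elementary fact that an outer-projective-planar drawing has no cycle strictly enclosing a vertex, so any two vertex-disjoint cycles lie in a common disk with every remaining vertex outside it; hence no two-point $S^0$ can be split off and there is no nonsplit type I 3-link. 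This is exactly the argument used in the first half of the proof of Proposition~\ref{minorminimalzetaglued}.

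The substantive case is $L = G\cup_v H'$ with $H'$ a proper, hence outer-projective-planar, minor of $H$, while $G$ is still the full closed nonseparating, nonouter-projective-planar graph. Since $G$ is one of the small minor-minimal nonouter-projective-planar graphs it is not IPPI3L, so I fix a link-free projective planar embedding $D_G$ of $G$, and using that $G$ is closed nonseparating I arrange that the face $F$ of $D_G$ incident to $v$ into which $H'$ is placed is a disk bounded by a $0$-homologous cycle. I then embed $H'$ inside $F$ as an outer-projective-planar drawing with $v$ and every vertex of $H'$ on $\partial F$. Since no cycle of $\Gamma$ passes through $v$ twice, every cycle of $L$ lies entirely in $G$ or entirely in $H'$, and every cycle and vertex of $H'$ lies inside the closed disk bounded by $\partial F$. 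The key claim is that confining $H'$ to $F$ cannot create a nonsplit type I 3-link: for any two vertex-disjoint cycles and any two-point $S^0$, the cycle $\partial F$ (or a suitable $0$-homologous cycle of $G$) splits the $H'$-contributed pieces from the $G$-contributed pieces, and one checks the type Ia and type Ib configurations, together with the ways the $S^0$ can be distributed between $G$ and $H'$, separately — the same bookkeeping as in the minor-minimality half of Proposition~\ref{minorminimalclosed}. The hypothesis that $v=v_2$ is not a separating planar vertex of $H$ is what makes $\Gamma$ IPPI3L in the first place (through the preceding conjecture), while it is the fact that a proper minor $H'$ of $H$ is outer-projective-planar, hence confinable to a face, that lets this linking be destroyed.

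The main obstacle I expect is precisely this confinement lemma: a rigorous proof that a subgraph drawn inside a single face of a closed cell embedding of $G$ cannot combine with cycles of $G$ into a nonsplit type I 3-link, uniformly over the type Ia/Ib cases and over the placements of the $S^0$, together with the related point that $G$ admits a link-free embedding in which the relevant face is a disk. This is the same topological difficulty left open in the analyses behind Conjecture~\ref{ep,zeta} and the preceding glued-at-a-vertex conjectures, and the present statement is therefore conditional on those; once one grants that $G$ and $H$ are genuinely closed nonseparating and that $\Gamma$ is IPPI3L, the reduction to the confinement lemma is routine, but establishing that lemma in full generality is the part that still needs care.
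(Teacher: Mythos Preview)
The paper leaves this statement as an unproved conjecture, so there is no proof of the paper's own to compare against. Your outline is a reasonable attack and you are right to flag that IPPI3L-ness itself already rests on the preceding (unproved) conjecture. Your case split on whether the cut vertex $v$ survives, and the reduction to $L=G\cup_v H'$ with $H'$ a proper (hence outer-projective-planar) minor of $H$, matches the template of Propositions~\ref{minorminimalclosed} and~\ref{minorminimalzetaglued}.

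There is, however, a concrete gap in your ``substantive case''. You propose to take a closed-cell embedding $D_G$ of $G$, pick a disk face $F$ incident to $v$, and ``embed $H'$ inside $F$ as an outer-projective-planar drawing with $v$ and every vertex of $H'$ on $\partial F$''. Drawing a graph inside a disk with all vertices on the boundary circle is exactly an \emph{outerplanar} drawing, so this step requires $H'$ to be outerplanar. But $H'$ is only known to be outer-projective-planar and planar, and that does not imply outerplanar: since $H$ is nonouter-projective-planar it is nonouterplanar, so $H$ has $K_4$ or $K_{2,3}$ as a (proper) minor, and either could occur as $H'$. If, say, $H'=K_4$ is placed affinely inside $F$, its outer triangle encloses the fourth vertex; together with the cycle $\partial F\subset G$ and any vertex of $G$ outside $F$ (which exists because $G$ is nonouter-projective-planar), this already forms a nonsplit type~Ib $3$-link. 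So your confinement claim fails exactly when $H'$ is not outerplanar.

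The paper's own pattern in Proposition~\ref{minorminimalclosed} goes the other way: embed the outer-projective-planar piece $H'$ in $\mathbb{R}P^2$ (its OPP drawing may well use a $1$-homologous cycle), then place $G$ affinely in the face of $H'$ that sees all of $H'$'s vertices. That avoids the outerplanarity obstruction, but it needs a planar drawing of $G$ with no nonsplit type~I $3$-link, i.e.\ $G$ not II3L. Proposition~\ref{minorminimalclosed} assumes this explicitly; the present conjecture does not. So either that hypothesis is missing here, or you must argue that a connected minor-minimal nonouter-projective-planar graph cannot contain $K_4\dot\cup K_4$, $K_4\dot\cup K_{2,3}$, or $K_{2,3}\dot\cup K_{2,3}$ as a minor and then invoke Burkhart et al.'s conjecture that these are the only minor-minimal II3L graphs --- piling yet another conjecture onto the stack.
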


\subsection{Nonplanar and nonouter-projective-planar graphs}

Throughout this section, assume $G$ is a projective planar graph. In this section, we examine graphs that are the disjoint union of two components. One component is a graph that is nonplanar and nonouter-projective-planar, and the second component is a graph that is nonouterplanar. We prove that the union results in an IPPI3L graph. We also show that the property of being planar or outer-projective-planar is a minor closed property. Following that, we give conditions to make graphs that minor-minimal in regards to that property. Finally, we characterize graphs that are minor-minimal nonplanar and nonouter-projective-planar. 

\begin{proposition}
Suppose a graph $G$ has the property that it has a planar embedding or an outer-projective-planar embedding. This property is a minor closed property.
\end{proposition}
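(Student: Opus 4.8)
The plan is to recognize the stated property as the \emph{union} of two properties that are each already known to be minor closed, and then to use the elementary fact that a union of minor-closed properties is minor closed. Concretely, let $P_1$ be the property ``$G$ is planar'' and $P_2$ the property ``$G$ is outer-projective-planar,'' so that the property $P$ in the statement is ``$G$ has $P_1$ or $G$ has $P_2$.'' First I would recall that planarity is a minor-closed property: deleting a vertex or edge of a planar graph, or contracting an edge, visibly preserves the existence of a planar embedding (this is the standard Kuratowski/Wagner setting). Second, I would invoke the remark made in Section~2 of the paper that being outer-projective-planar is a minor-closed property, so $P_2$ is minor closed as well.

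The main step is then the following short argument. Suppose $G$ has property $P$ and let $H$ be a minor of $G$. By definition of $P$, either $G$ is planar or $G$ is outer-projective-planar. If $G$ is planar, then since planarity is minor closed, $H$ is planar, and hence $H$ has property $P$. If instead $G$ is outer-projective-planar, then since outer-projective-planarity is minor closed, $H$ is outer-projective-planar, and hence $H$ has property $P$. In both cases $H$ has property $P$; since $H$ was an arbitrary minor of $G$, the property $P$ is minor closed.

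I do not expect a genuine obstacle here: the content is entirely in the observation that one may ``split on cases'' according to which of the two embeddings $G$ possesses, because being a minor is transitive and each case-class is closed under taking minors. The only point that needs a word of justification is that the two building blocks $P_1$ and $P_2$ are themselves minor closed, and both of these are either classical (planarity) or explicitly stated earlier in the paper (outer-projective-planarity). If one wished to be fully self-contained about $P_2$, I would add one sentence: an outer-projective-planar embedding places all vertices on the boundary of a common face, and performing an edge deletion, vertex deletion, or edge contraction on such an embedding yields an embedding in which all remaining vertices still lie on the boundary of a common face, so $P_2$ is minor closed.
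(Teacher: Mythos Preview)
Your argument is correct and is exactly the approach the paper takes: the paper's proof simply notes that planarity and outer-projective-planarity are each minor closed, from which the claim follows. Your version is more detailed but the content is identical.
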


\begin{proof}
Follows, since being planar is a minor closed property, as is being outer-projective planar.
\end{proof}

 For a graph $G$ to be nonplanar and nonouter-projective-planar, it must have as minors one minor-minimal nonouter-projective planar graph and one minor-minimal nonplanar graph.

\begin{col}
The set of minor-minimal nonplanar and nonouter-projective-planar graphs is finite. This is because of Robertson and Seymour's Minor Theorem \cite{robertson}.
\end{col}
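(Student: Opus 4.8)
The plan is to recognize this corollary as an immediate consequence of the preceding Proposition together with the Robertson--Seymour Minor Theorem, and to spell out the (short) logical bridge between the two. Let $P$ be the graph property ``$G$ has a planar embedding or $G$ has an outer-projective-planar embedding.'' The preceding Proposition states that $P$ is minor closed. By the definition of a forbidden graph given in Section~2, a graph $G$ is a forbidden graph for $P$ precisely when $G$ does not have property $P$, i.e.\ precisely when $G$ is neither planar nor outer-projective-planar --- which is exactly the condition ``nonplanar and nonouter-projective-planar'' in the statement.

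Next I would observe that the minor-minimal graphs with respect to being nonplanar and nonouter-projective-planar are exactly the minor-minimal forbidden graphs for $P$. This needs one line: if $G$ lacks property $P$ and $G$ is a minor of $H$, then since $P$ is minor closed, $H$ also lacks property $P$ (if $H$ had $P$, so would its minor $G$); hence the class of graphs lacking $P$ is ``upward closed'' in the minor order, and its minor-minimal members are well-defined and coincide with the standard notion of forbidden minors for $P$. Robertson and Seymour's Minor Theorem (\cite{robertson}, as quoted in Section~2) then applies directly: for any minor-closed property, the set of minor-minimal forbidden graphs is finite. Applying this to $P$ yields that the set of minor-minimal nonplanar and nonouter-projective-planar graphs is finite.

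There is essentially no obstacle here; the only point requiring a modicum of care is making explicit that ``nonplanar and nonouter-projective-planar'' is the negation of a \emph{single} minor-closed property (the disjunction ``planar or outer-projective-planar''), rather than the conjunction of two negations treated separately --- one should not conflate this with the (false in general) claim that an intersection of two complements of minor-closed classes has finitely many minor-minimal elements only after verifying the disjunction is itself minor closed, which is precisely what the preceding Proposition supplies. Since that Proposition is already established, the corollary follows, and I would close the proof there without further computation. (The remark in the running text preceding the corollary, that such a graph must contain one minor-minimal nonouter-projective-planar graph and one minor-minimal nonplanar graph as minors, is a useful structural comment but is not needed for the finiteness conclusion itself.)
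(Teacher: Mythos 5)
Your proposal is correct and follows essentially the same reasoning the paper intends: the preceding Proposition makes ``planar or outer-projective-planar'' a single minor-closed property, and Robertson--Seymour then gives finiteness of its minor-minimal forbidden graphs, which are exactly the minor-minimal nonplanar and nonouter-projective-planar graphs. Your explicit remark that one must treat the disjunction as one minor-closed property (rather than two separate negations) is a helpful clarification of the same argument, not a departure from it.
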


A \textit{closed cell embedding} is an embedding where every face is bounded by a 0-homologous cycle.

\begin{lem}\label{k33k5}
All projective planar embeddings of $K_5$ and $K_{3,3}$ are closed cell embeddings.
\end{lem}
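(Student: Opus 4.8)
The plan is to show that every projective planar embedding of $K_5$ (and of $K_{3,3}$) has all of its faces bounded by $0$-homologous cycles, by using an Euler-characteristic count together with the structural results on cycles from Section 3. First I would recall that the projective plane has Euler characteristic $1$, so any cellular embedding of a connected graph $G$ with $V$ vertices and $E$ edges satisfies $V - E + F = 1$, where $F$ counts the faces. For $K_5$ we have $V = 5$, $E = 10$, so $F = 6$; for $K_{3,3}$ we have $V = 6$, $E = 9$, so $F = 4$. (If an embedding were not cellular, one would first argue via the standard fact that a graph meeting its minimum genus — here the projective plane, which is the minimum nonorientable genus for both $K_5$ and $K_{3,3}$ — embeds cellularly; alternatively restrict attention to cellular embeddings, which is the natural setting for ``closed cell embedding.'')

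Next I would set up a homology/parity argument. Fix an embedding and suppose for contradiction that some face $F_0$ is bounded by a $1$-homologous cycle. By Theorem \ref{Theorem3} (or Lemma \ref{gloverlemma}), any two $1$-homologous cycles in $\mathbb{R}P^2$ must intersect; since the boundary cycles of distinct faces meet only along shared edges and vertices (never transversally in their interiors), this severely limits how many face-boundaries can be $1$-homologous. In fact, using Lemma \ref{CycleAddion} — the sum of all face-boundary cycles is null, and the homology class of any cycle is the mod-$2$ sum of the classes of the faces on one side — the number of $1$-homologous face boundaries must be even. Combined with the pairwise-intersection constraint, one narrows to a bounded list of candidate configurations. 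The core of the proof is then to rule each one out by a direct face-size count: each face of a simple graph embedding is bounded by a closed walk of length at least $3$ (length at least $4$ for the bipartite graph $K_{3,3}$), and summing $\sum_{\text{faces}} \deg(F) = 2E$ forces the face sizes to be small (average face size $20/6$ for $K_5$, $18/4$ for $K_{3,3}$), so a $1$-homologous boundary cycle would be too short to be essential, or would force a face whose boundary cannot close up to a simple essential cycle in a simple graph on so few vertices.

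I would carry the steps out in this order: (1) establish cellularity and the Euler count, obtaining $F = 6$ and $F = 4$ respectively; (2) invoke Theorem \ref{Theorem3}/Lemma \ref{gloverlemma} and Lemma \ref{CycleAddion} to show the $1$-homologous face boundaries are even in number and pairwise intersecting; (3) enumerate the resulting short list of possible homology patterns on the faces; (4) for each pattern, use the face-degree sum $\sum \deg(F) = 2E$ together with the girth bound to derive a contradiction — either a face of size too small to carry an essential cycle, or a multiplicity/adjacency contradiction with simplicity of the graph; (5) conclude that all faces are $0$-homologous, i.e.\ the embedding is a closed cell embedding.

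**The main obstacle** I anticipate is step (3)–(4): cleanly enumerating the admissible homology patterns and dispatching each without an exhausting case split. The parity and intersection constraints cut things down a lot, but for $K_5$ the relatively large number of faces means one may need a short lemma of the form ``in a simple graph embedded in $\mathbb{R}P^2$, a face bounded by a $1$-homologous cycle has length at least $k$'' to make the counting airtight. A cleaner alternative, which I would try first, is to argue directly: if a face boundary $C$ is $1$-homologous, then cutting $\mathbb{R}P^2$ along $C$ yields a disk in which the rest of $G$ (now with $C$'s vertices appearing on the boundary) is embedded planarly; since both $K_5$ and $K_{3,3}$ are highly connected, deleting the edges/vertices of a short essential cycle still leaves a nonplanar-forcing or nonouterplanar-forcing subgraph, contradicting planarity of what remains inside the disk. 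That connectivity-based contradiction is likely the slickest route and avoids the heavy casework; the counting argument is the reliable fallback.
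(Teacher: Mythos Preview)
The paper's proof takes a completely different route: it cites the classification of projective embeddings due to Maharry, Robertson, Sivaraman, and Slilaty, notes that up to relabelling there are exactly two embeddings of $K_5$ and one of $K_{3,3}$, and observes from the accompanying figure that each is closed-cell. There is no argument beyond the citation and inspection.

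Your direct approach has a central confusion that makes steps (2)--(5) target a vacuous hypothesis. You correctly note that nonplanarity forces cellularity (a non-disk face would contain an essential simple closed curve, and cutting along it would place the whole graph in a disk). But once the embedding is cellular, every face is an open $2$-cell, and if its boundary walk is a simple cycle $C$, then $C$ literally bounds that disk---so $C$ is $0$-homologous by definition. The supposition ``some face $F_0$ is bounded by a $1$-homologous cycle'' is therefore impossible in a cellular embedding, and the parity/intersection machinery of Theorem~\ref{Theorem3} and Lemma~\ref{CycleAddion} that you assemble is aimed at nothing.

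What the lemma genuinely requires, beyond cellularity, is that every face boundary \emph{walk} is a simple cycle rather than a closed walk repeating a vertex or edge; this is what ``closed $2$-cell'' means, and it is not automatic (it fails, for instance, for $K_4$ on the torus despite $3$-connectivity). Your plan never isolates this issue. The face-degree identity $\sum\deg(F)=2E$ you invoke is the right tool, but it should be deployed to rule out repeated edges in boundary walks, not to bound the length of a nonexistent $1$-homologous face boundary. Reframed toward that goal your counting could yield a genuine alternative proof; as written, the proposal misidentifies the obstacle.
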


\begin{figure}[H]
    \centering
    \includegraphics[scale=0.18]{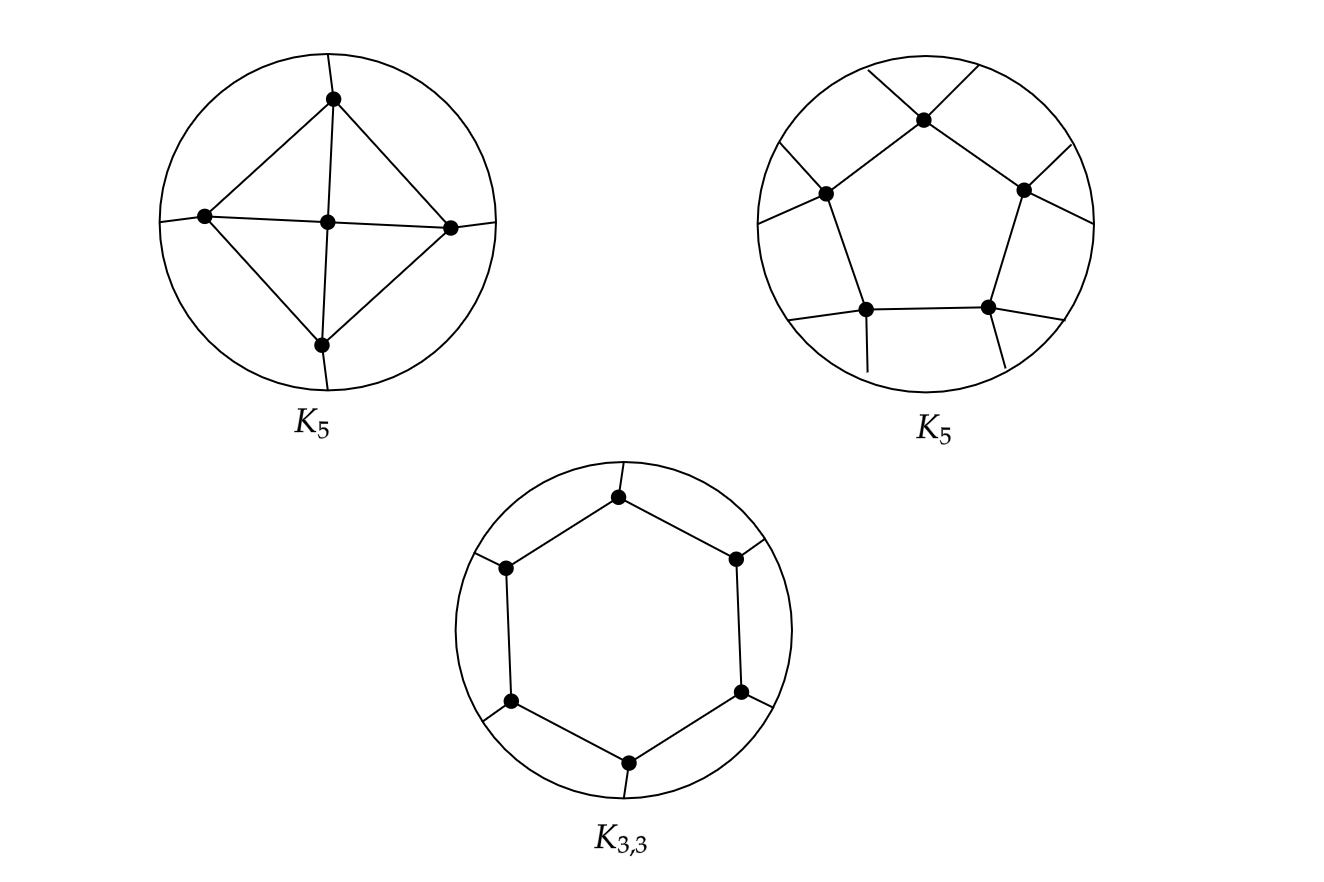}
    \caption{Up to symmetry,these are the only possible embeddings of $K_5$ and $K_{3,3}$ \cite{Maharry}.}
    \label{fig:my_label}
\end{figure}

\begin{proof}
\noindent First consider $K_5$. It follows from Maharry et al that there are exactly two classes of embeddings of (unlabelled) $K_5$ (see Figure 43 of \cite{Maharry}). Next, consider $K_{3,3}$. Maharry et al show every embedding of labelled $K_{3,3}$ in Figure 40 \cite{Maharry}. It appears as if there are many embeddings, but they are all equivalent, ignoring the labels. Thus, there is only embedding of $K_{3,3}$.
\end{proof}

Given a graph $G$ with vertex $v$ in $G$, define a \textit{vertex splitting} of $v$ to be the graph $G'$ obtained from $G$ with vertex set $V(G)-\{v\} \cup\{v_1,v_2\}$ and edge set including $(v_1,v_2)$, and $G$ is the result of contracting $G'$ along $(v_1,v_2)$.
\begin{lem} \label{lemma4lemma}
Let $D$ be a closed cell projective planar drawing of graph $G$. If we split a vertex in that drawing so that the new drawing $D_1$ remains projective planar, then $D_1$ is also closed cell. 
\end{lem}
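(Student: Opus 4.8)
The plan is to follow every face of $D$ through the split and verify it stays an open disk bounded by a $0$-homologous cycle. Write the split as $v\mapsto\{v_1,v_2\}$ with new edge $f=(v_1,v_2)$, so that $D_1$ has one more vertex and one more edge than $D$, and by hypothesis $D_1$ is a projective planar drawing. First I would dispose of the faces of $D$ whose closures avoid $v$: these are literally unchanged in $D_1$, so they remain disks bounded by $0$-homologous cycles. Everything interesting happens at the (at most $\deg(v)$) faces incident to $v$, so the rest of the argument is local to a small disk around $v$.

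Since $D$ is closed cell, every face $F$ incident to $v$ is an open disk whose boundary is a $0$-homologous cycle $C_F$, and — because a cycle is simple — $C_F$ passes through $v$ exactly once. From this I would draw three consequences. (1) Distinct faces meeting $v$ occupy distinct corners of the rotation at $v$; in particular the two corners through which $f$ is routed lie in two \emph{distinct} faces $F'$ and $F''$, so the split neither merges two faces nor subdivides one, and the faces of $D_1$ correspond bijectively to those of $D$. (2) For a face $F\notin\{F',F''\}$ incident to $v$, both edges of $C_F$ at $v$ lie in the same one of the two edge-arcs created by the split, so in $D_1$ the boundary of the corresponding face is just $C_F$ with $v$ relabelled to $v_1$ or $v_2$: still a $0$-homologous cycle bounding a disk. (3) For $F'$ (and likewise $F''$) the two edges of $C_{F'}$ at $v$ fall in different arcs, so in $D_1$ its boundary is $C_{F'}$ with the vertex $v$ replaced by the length-two path $v_1\,f\,v_2$, i.e. a subdivision of $C_{F'}$: still a $0$-homologous cycle, and the face is still a disk. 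Combining (1)–(3) with the first paragraph, every face of $D_1$ is an open disk bounded by a $0$-homologous cycle, so $D_1$ is closed cell; as a sanity check one also recovers the Euler relation $V-E+F=1=\chi(\mathbb{R}P^2)$, since the split added one vertex, one edge, and no face.

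The step I expect to be the real obstacle is consequence (1): ruling out that $f$ is routed through two corners of a \emph{single} face, which would split that face in two and could destroy the closed-cell property (for instance by turning a disk face into a Möbius-band face, whose boundary would become $1$-homologous). This is exactly where the closed-cell hypothesis on $D$ is essential — it forces each face boundary to be a genuine cycle visiting $v$ at most once, so no face can present two corners at $v$. I would also flag the one degenerate possibility, namely a split in which $v_1$ (say) inherits none of the original edges, so that $f$ becomes a pendant edge; then the affected face boundary is a closed walk that is null-homotopic (hence $0$-homologous) but not literally a cycle, and one should read "bounded by a $0$-homologous cycle" accordingly — in any case such a split does not arise in the vertex-splitting constructions used in Section 5.5.
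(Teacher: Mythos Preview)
Your proof is correct and follows essentially the same approach as the paper's: both argue locally at the split vertex, noting that faces away from $v$ are untouched while the faces at $v$ either get $v$ relabelled or acquire the new edge $f$ in their boundary, remaining $0$-homologous in either case. Your version is in fact more careful than the paper's, which simply asserts that ``the splitting only affected the face boundary cycles of two faces'' without justification; you correctly identify that this is exactly where the closed-cell hypothesis is doing the work (each face boundary is a genuine cycle, hence visits $v$ only once, hence each corner at $v$ belongs to a distinct face), and you also flag the pendant-edge degeneracy that the paper handles only implicitly via its degree-$0$ and degree-$1$ cases.
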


\begin{proof}
Arbitrarily choose a projective planar drawing $D_1$ of graph $G$. Let $v$ be the vertex in $D_1$ that we are splitting. 

\begin{enumerate}
    \item If $v$ is degree 0, this will create a disjoint edge. This will not effect the faces, so the same 0-homologous cycles that bound the faces of $D$ will also bound the faces of $D_1$. Thus, $D_1$ is a closed cell embedding, which means every projective planar drawing of $K_1$ is a closed cell embedding.
    \item If $v$ is degree 1, this will also not affect the faces, so the same 0-homologous cycles that bound the faces of $D$ will also bound the faces of $D_1$. Thus, $D_1$ is a closed cell embedding, which means every projective planar drawing of $K_1$ is a closed cell embedding.
    \item If $v$ is degree 2 or higher, we arbitrarily split the vertex. An example is seen in Figure \ref{Lemma4Lemmapic}. The splitting only affected the face boundary cycles of two faces of the graph - the rest remained the same. The two faces that were affected remain bounded by 0-homologous cycles, which are one vertex longer. 

\end{enumerate}

\begin{figure}[H]
   \centering
   \includegraphics[scale=0.34]{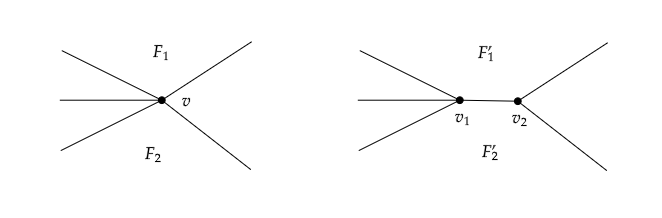}   \caption{Splitting a vertex}
   \label{Lemma4Lemmapic}
\end{figure}
\end{proof}

Let $G$ be a planar graph and $C$ in $G$ is a cycle. Then we will say that we have a path $P$ \textit{glued in $C$} when we connect an $n$-path such that the endpoints are glued to two different vertices in $C$, and the path does not intersect $C$ anywhere else.

\begin{lem}\label{closed cell - majors}
Suppose $H$ is a graph with all its projective planar embeddings are closed cell embeddings. Suppose graph $G$ is a connected and projective planar graph that is created by splitting vertices and adding paths glued in the cycles $C$ of $H$. Then all projective planar embeddings of $G$ are closed cell embeddings.
\end{lem}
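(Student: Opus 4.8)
The plan is to induct on the number of elementary operations used to obtain $G$ from $H$, handling vertex splittings with Lemma~\ref{lemma4lemma} and path additions with a short topological argument. Write $H = G_0, G_1, \dots, G_k = G$, where each $G_{i+1}$ is obtained from $G_i$ by either splitting a single vertex (into $v_1,v_2$ joined by a new edge) or adding a single path $P$ with distinct endpoints lying on a cycle of $G_i$ and otherwise internally disjoint from $G_i$. I would prove by induction on $i$ that every projective planar drawing of $G_i$ is closed cell; the base case $i=0$ is precisely the hypothesis on $H$. For the inductive step, fix an arbitrary projective planar drawing $D$ of $G_{i+1}$ and show it is closed cell, assuming every projective planar drawing of $G_i$ is.

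If $G_{i+1}$ comes from $G_i$ by splitting a vertex, contract the new edge of $D$ to obtain a projective planar drawing $D'$ of $G_i$, which is closed cell by the inductive hypothesis. Since contracting an edge and splitting the resulting vertex are mutually inverse moves on drawings, $D$ is obtained from the closed cell drawing $D'$ by a vertex split that is (by assumption) projective planar, so Lemma~\ref{lemma4lemma} yields that $D$ is closed cell.

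If $G_{i+1} = G_i \cup P$, delete the interior vertices and edges of $P$ from $D$ to obtain a projective planar drawing $D'$ of $G_i$, closed cell by the inductive hypothesis. The set $P$ minus its two endpoints is connected and avoids the $1$-skeleton of $D'$, hence lies in a single face $F$ of $D'$; since $D'$ is closed cell, $F$ is an open disk whose boundary is a simple, $0$-homologous cycle of $G_i$, and it contains both endpoints of $P$. The arc $P$ cuts the closed disk $\overline F$ into two closed sub-disks, and in $D$ the face $F$ is replaced by their (open-disk) interiors; each is bounded by the union of $P$ with one of the two endpoint-to-endpoint arcs of $\partial F$, which is a simple cycle of $G_{i+1}$ that bounds a disk, hence is $0$-homologous. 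Every other face of $D$ coincides with a face of $D'$ and is unchanged, hence still bounded by a $0$-homologous cycle. Thus $D$ is closed cell, completing the induction.

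The routine points are the bookkeeping that the prefixes $G_i$ remain connected and remain of the prescribed form, and the verification that contraction and deletion genuinely invert the construction moves at the level of drawings (this is essentially the definition of vertex splitting recalled in the excerpt). The one place that needs real care is the path case: one must know that the face $F$ receiving $P$ is an honest disk, so that cutting it produces two disks rather than a region with nontrivial topology — and this is exactly where the full strength of ``closed cell'' (not merely ``cellular'') enters, propagated from $H$ through the inductive hypothesis. I do not anticipate any obstacle beyond making these two points precise.
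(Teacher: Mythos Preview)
Your proposal is correct and follows essentially the same approach as the paper: induct on the elementary operations, invoke Lemma~\ref{lemma4lemma} for vertex splits, and for a path addition observe that the new arc sits inside a single disk face and cuts it into two disk faces with $0$-homologous boundaries. Your write-up is in fact more careful than the paper's, which treats only a single operation and leaves the induction implicit; your explicit contraction/deletion step to recover a drawing of $G_i$ from one of $G_{i+1}$ is exactly the right way to make the inductive hypothesis apply.
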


\begin{proof}
Suppose $H$ is a graph with all its projective planar embeddings are closed cell embeddings. For case one, suppose we add a path glued in cycle $C$ of $G$, creating $G_1$. Arbitrarily choose a projective planar drawing $D_1$ of $G_1$. Note that $D_1$ is a drawing of $H$ with a path added. Since $H$ has only closed cell embeddings, no matter what face the path is in, it will divide a face that is bounded by a 0-homologous cycle. This results in two faces, which are each bounded by two 0-homologous cycles. Thus, $D_1$ is a closed cell embedding, which means every projective planar drawing of $G_1$ is a closed cell embedding. For case two, suppose we split a vertex in $H$ to create $G_1$. By Lemma \ref{lemma4lemma}, every projective planar drawing of $G_2$ is a closed cell embedding. Therefore, for every $G$ created by gluing paths in cycles of $H$, or splitting vertices in $H$, all the projective planar embeddings of $G$ are closed cell embeddings.
\end{proof}

\begin{col}\label{closed cell}
Suppose $G$ is a nonplanar graph obtained by either splitting vertices or gluing paths in the cycles of $K_{3,3}$ or $K_5$. All projective planar embeddings of $G$ are closed cell embeddings.
\end{col}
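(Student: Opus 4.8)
The plan is to deduce Corollary~\ref{closed cell} directly from Lemma~\ref{k33k5} together with Lemma~\ref{closed cell - majors}, with only a short bookkeeping step to handle an iterated construction. First I would take $H$ to be $K_5$ or $K_{3,3}$. By Lemma~\ref{k33k5}, every projective planar embedding of $H$ is a closed cell embedding, so $H$ satisfies the standing hypothesis of Lemma~\ref{closed cell - majors}. This is the ``base case'' of the argument: the two Kuratowski graphs are the only graphs we need to start from.

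Next I would verify that the graph $G$ in the statement meets the remaining hypotheses of Lemma~\ref{closed cell - majors}. By assumption, $G$ is built from $K_5$ or $K_{3,3}$ by a sequence of vertex splittings and gluings of paths into cycles, which is exactly the construction named in that lemma; $G$ is projective planar by hypothesis; and $G$ is connected, since $K_5$ and $K_{3,3}$ are connected and neither operation can disconnect a connected graph (a vertex splitting leaves the two new vertices joined by an edge, and a glued-in path has both endpoints on an already-present cycle). The nonplanarity assumption is not actually used — it merely records that these are the genuinely new graphs of interest, and it is automatically consistent because $G$ has $K_5$ or $K_{3,3}$ as a minor. Applying Lemma~\ref{closed cell - majors} then gives that every projective planar embedding of $G$ is a closed cell embedding.

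The one place to be slightly careful is that the defining sequence of operations for $G$ may involve many steps, whereas Lemma~\ref{closed cell - majors} is phrased for a single stage of construction. I would therefore frame the conclusion as an induction on the number of operations: the base case is Lemma~\ref{k33k5}, and at each stage the intermediate graph is again connected and projective planar (it is a minor-related intermediate of $G$, obtained by the same permitted moves), so one more application of Lemma~\ref{closed cell - majors} propagates the closed-cell property one step further. I do not anticipate any real obstacle here — the corollary is essentially a repackaging of the two preceding lemmas, and the only content beyond citing them is the routine observation that connectedness and projective planarity are preserved along the construction.
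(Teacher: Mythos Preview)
Your proposal is correct and follows essentially the same approach as the paper: take $H=K_5$ or $K_{3,3}$, invoke Lemma~\ref{k33k5} for the base case, and then apply Lemma~\ref{closed cell - majors} to pass to $G$. Your treatment is in fact slightly more careful than the paper's, which simply cites the two lemmas without explicitly spelling out the induction on the number of operations or the preservation of connectedness.
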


\begin{proof}
Suppose a nonplanar graph $G$ has a projective planar drawing $D$. Since $G$ is nonplanar, then it contains $K_{3,3}$ or $K_5$ as a minor. By Lemma \ref{k33k5}, all the projective planar embeddings of $K_{3,3}$ and $K_5$ are closed cell embeddings. By Lemma \ref{closed cell - majors}, all the projective planar embeddings of $G$ are also closed cell embeddings. We can conclude all projective planar embeddings of a nonplanar graph $G$ are closed cell embeddings.
\end{proof}

\begin{proposition}\label{nonplanar}
Let $G$ and $H$ be projective planar. Suppose $G$ is nonplanar and nonouter-projective-planar, and all its planar minors are not II3L. Also, suppose $G$ is obtained by either splitting vertices or gluing paths in the cycles of $K_{3,3}$ or $K_5$. Also suppose $H$ is planar and nonouterplanar. Then $G \dot\cup H$ is IPPI3L.
\end{proposition}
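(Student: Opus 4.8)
The plan is to follow the closed-cell case of the proof of Proposition~\ref{closed-nonseparating}, exploiting that here the embedding of the nonplanar component is \emph{always} closed cell. I would fix an arbitrary projective planar drawing $D$ of $G\dot\cup H$ and let $D_G$ be the sub-drawing obtained by deleting $H$. Since $G$ is nonplanar and connected (it is built from $K_{3,3}$ or $K_5$), no face of $D_G$ can be a M\"obius band: a M\"obius-band face would force $G$ into the complementary closed disk, making $G$ planar. Hence every face of $D_G$ is an open disk, and by Corollary~\ref{closed cell} the embedding $D_G$ is closed cell, so the boundary of each face is a $0$-homologous cycle. Thus for each face $F$ of $D_G$ the closure $\overline F$ is a closed disk bounded by a $0$-homologous cycle $C_F$, and $\mathbb{R}P^2\setminus\overline F$ is an open M\"obius band that contains every vertex of $G$ not lying on $C_F$.

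Next I would extract the three pieces of a type I $3$-link. On the $H$ side: since $H$ is nonouterplanar it has a $2$-connected nonouterplanar block $B$; being connected and disjoint from $G$, $B$ lies inside a single face $F$ of $D_G$, which is an open disk, so $B$ is effectively drawn in a disk. The ``outer'' face of this planar drawing of $B$ is bounded by a cycle $C_1$ of $B$ because $B$ is $2$-connected, and since $B$ is nonouterplanar $C_1$ omits some vertex $w$ of $B$, which therefore lies strictly inside $C_1$; write $D_1$ for the open disk bounded by $C_1$, so that $w\in D_1\subseteq F$. On the $G$ side: since $G$ is nonouter-projective-planar, not every vertex of $G$ lies on $C_F$, so I can pick a vertex $u$ of $G$ with $u\notin C_F$, hence $u\in\mathbb{R}P^2\setminus\overline F$. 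Then $\Lambda:=\{\,C_1,\ C_F,\ \{w,u\}\,\}$ is a projective planar type I $3$-link: $C_1$ and $w$ lie in $H$ while $C_F$ and $u$ lie in $G$, so the three pieces are pairwise disjoint, and $w\notin C_1$, $u\notin C_F$.

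The remaining step, which I expect to be the main obstacle, is to verify that $\Lambda$ is nonsplit. The pieces form a strict nesting: $w\in D_1\subseteq F$, $C_1=\partial D_1$, $C_F=\partial F$, and $u$ lies in the M\"obius band $\mathbb{R}P^2\setminus\overline F$. Suppose for contradiction that some embedded circle $\gamma$, disjoint from $\Lambda$, witnesses a split, with two pieces of $\Lambda$ on one side and the third on the other. If $\gamma$ is $1$-homologous then $\mathbb{R}P^2\setminus\gamma$ is a single disk containing all of $\Lambda$, so no piece is ``on the other side''; hence $\gamma$ is $0$-homologous, bounding a disk $D_\gamma$ and a M\"obius band $M_\gamma$, and a split forces exactly two pieces into $D_\gamma$ and one into $M_\gamma$. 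Since $\gamma$ is disjoint from the $0$-homologous cycles $C_1$ and $C_F$, it lies entirely on one side of each, so the disk $D_\gamma$ is nested with $D_1$ and with $\overline F$; checking the three possible $2{+}1$ partitions then contradicts the strict nesting. For instance, if $C_1,C_F\subseteq D_\gamma$ while $\{w,u\}\subseteq M_\gamma$, then $\gamma\cap C_1=\varnothing$ together with $C_1\subseteq D_\gamma$ forces $D_1\subseteq D_\gamma$, so $w\in D_\gamma$, contradicting $w\in M_\gamma$; the other two partitions are handled the same way, invoking $C_1\subseteq F$ and $u\notin\overline F$. Therefore $\Lambda$ is a nonsplit type I $3$-link contained in the arbitrary drawing $D$, so $G\dot\cup H$ is IPPI3L. (The hypothesis that every planar minor of $G$ fails to be II3L plays no role in this argument; it is needed only for the minor-minimality statement that follows.)
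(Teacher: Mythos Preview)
Your proof is correct and follows essentially the same route as the paper's: invoke Corollary~\ref{closed cell} to get a closed-cell embedding of $G$, take the face $F$ containing the nonouterplanar part of $H$, and assemble a type~Ib $3$-link from $C_F$, a cycle of $H$ with an interior vertex, and a vertex of $G$ outside $\overline F$. If anything your version is more careful---you use a $2$-connected block of $H$ where the paper appeals to a $K_4$ or $K_{2,3}$ subdivision, you explicitly verify nonsplitness (which the paper omits), and you correctly note that the II3L hypothesis on planar minors of $G$ plays no role here.
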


\begin{proof}
Suppose graph $G$ is nonplanar projective planar and nonouter-projective-planar, and $H$ is nonouterplanar. First, embed $G$ into the projective plane. Call the drawing $D$. Then, embed $H$ in any face $F$ of $D$. By Corollary \ref{closed cell}, $D$ must be a closed cell embedding, which means $F$ is bounded by a 0-homologous cycle, $C_1$. Since $G$ is nonouter-projective-planar, then there is a vertex that is not in the closure of $F$. Label this vertex $v_1$. Because $H$ is nonouterplanar, it has $K_4$ or $K_{3,2}$ as a minor. Without loss of generality, suppose it has a $K_4$ subdivision. The vertex that is within the outer cycle of the $K_4$ subdivision and $v_1$ constitute the $S^0$ piece. The cycle $C_1$ and the cycle that bounds $K_4$ are the two $S^1$'s. This means this graph is IPPI3L.

\end{proof}

An example of a graph that would be IPPI3L by Proposition \ref{nonplanar} would be $K_6 \dot\cup K_{3,2}$, because $K_6$ is nonplanar and nonouter-projective-planar, and $K_{3,2}$ is planar and nonouterplanar.

We will define the term \textit{separating cycles} to be 0-homologous cycles in a drawing of a graph that contain at least one vertex in its interior and one vertex on its exterior. The next proof is about graphs that do not have two disjoint separating cycles in every drawing of the graph.

\begin{proposition}\label{minorminIPPI3L}
Let $G$ and $H$ be projective planar. Suppose $G$ is minor-minimal nonplanar and nonouter-projective-planar, and all its planar minors are not II3L. Also, suppose $G$ is obtained by either splitting vertices or gluing paths in the cycles of $K_{3,3}$ or $K_5$. Also suppose $H$ is minor-minimal nonouterplanar. This implies that $G \dot\cup H$ is minor-minimal IPPI3L.
\end{proposition}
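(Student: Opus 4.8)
The plan is to combine Proposition \ref{nonplanar} with the method of Propositions \ref{minorminimalclosed} and \ref{minorminimalzetaglued}. By Proposition \ref{nonplanar}, $G\dot\cup H$ is IPPI3L, so it remains only to prove minor-minimality. A proper minor of $G\dot\cup H$ has the form $G'\dot\cup H'$ with $G'$ a minor of $G$, $H'$ a minor of $H$, and not both equal to the whole. Since admitting a type I 3-linkless projective embedding is minor-closed, and since $G'\dot\cup H'$ is a minor of $G\dot\cup H'$ and of $G'\dot\cup H$, it is enough to build a type I 3-linkless projective embedding of $G\dot\cup H'$ whenever $H'$ is a proper minor of $H$ (case (a)), and of $G'\dot\cup H$ whenever $G'$ is a proper minor of $G$ (case (b)); every proper minor is either a minor of such a $G\dot\cup H'$ or is already of the form $G'\dot\cup H$. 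In both cases I use three simple facts: an outerplanar graph has, in a plane drawing, no cycle enclosing a vertex; since $H\in\{K_4,K_{3,2}\}$, neither $H$ nor any of its minors has two vertex-disjoint cycles, so in a nonsplit type I 3-link at least one of the two circles comes from the other component; and any circle of the link that is $1$-homologous can be split off, because the complement of a $1$-homologous cycle is a disk containing the other two pieces.

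In case (a), $H'$ is outerplanar because $H$ is minor-minimal nonouterplanar. Fix a type I 3-linkless projective embedding of $G$ (existence is discussed below) and place $H'$ affinely in a small disk inside one of its faces, chosen so that no $0$-homologous cycle of $G$ bounds a disk containing it. If the resulting drawing had a nonsplit type I 3-link, its two circles could not both lie in $H'$, nor both in $G$ (by the choice of embedding), so exactly one circle $C$ would come from $H'$; but then $C$ is $0$-homologous, encloses no vertex, and is neither enclosed by nor encloses any cycle of $G$. A nonsplit type Ib link would need its inner circle to enclose an $S^0$-point, impossible here; a nonsplit type Ia link would need each of its two circles to enclose an $S^0$-point (or to be $1$-homologous), again impossible for $C$. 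As a nonsplit type I 3-link is type Ia or type Ib, the drawing is type I 3-linkless.

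In case (b), $G'$ fails the property ``nonplanar and nonouter-projective-planar'', so $G'$ is planar or outer-projective-planar. Suppose first $G'$ is outer-projective-planar; embed it with all of its vertices on one face $F$ and place $H$ affinely in a small disk $\Delta\subseteq F$. If a $0$-homologous cycle $C$ of $G'$ bounded a disk $D_C$ containing a vertex of $G'$ or containing $\Delta$, then $F$, being a connected face disjoint from $C$ yet meeting $\interior D_C$, would satisfy $F\subseteq D_C$, forcing $G'\subseteq D_C$ and hence $G'$ outerplanar. So either $G'$ is outerplanar — in which case use instead a standard affine embedding with $H$ in the unbounded region — or no cycle of $G'$ encloses a vertex or $\Delta$; in either case the only circle enclosing a vertex is the single circle of $H$ about its middle vertex, which by exactly the analysis of case (a) cannot occur in a nonsplit type I 3-link. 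Suppose instead $G'$ is planar but not outer-projective-planar. Then $G'$ is a planar minor of $G$, hence by hypothesis not II3L, so it has an affine embedding $D$ with no nonsplit type I 3-link — and therefore with no two disjoint $0$-homologous cycles each enclosing a vertex. Embed $G'$ by $D$ inside an affine disk $\Delta_0$ and embed $H$ with one of its cycles $1$-homologous, the rest of $H$ lying in the complementary disk $\Delta_1$ (which we arrange to contain $\Delta_0$). A nonsplit type I 3-link using the $1$-homologous cycle of $H$ is split; one avoiding it lies entirely in $\Delta_1$, and then the absence of a nonsplit type I 3-link in $D$, together with the fact that the $0$-homologous cycles of a $1$-homologously-embedded $K_4$ or $K_{3,2}$ enclose no vertex, eliminates the remaining nested and side-by-side configurations.

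The main obstacle, appearing in both guises above, is securing the 3-linkless embeddings used as starting points: a type I 3-linkless projective embedding of $G$ in case (a), and an affine embedding of each planar non-outer-projective-planar minor $G'$ of $G$ without two disjoint vertex-enclosing $0$-homologous cycles in case (b). For $G$ this should follow by combining Corollary \ref{closed cell} — every embedding of the nonplanar $G$ is closed cell, so all face boundaries are $0$-homologous — with the hypotheses that $G$ arises from $K_5$ or $K_{3,3}$ by vertex splittings and path-gluings and that $G$ has no II3L planar minor, yielding an embedding with no two disjoint $0$-homologous cycles enclosing two distinct vertices; since there are only finitely many candidates for $G$, this can alternatively be verified graph by graph with explicit drawings, as is done elsewhere in the paper. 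Isolating ``two side-by-side vertex-enclosing $0$-homologous cycles'' as the sole obstruction to being type I 3-linkless — a projective-plane counterpart of the Dehkordi--Farr description of separating planar graphs — and checking it is avoidable, is where the real work lies; what remains is bookkeeping over the two configuration types of a nonsplit type I 3-link.
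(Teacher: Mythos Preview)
Your overall strategy coincides with the paper's: use Proposition~\ref{nonplanar} for IPPI3L, then for a proper minor of $G$ (your case (b)) split into the outer-projective-planar and planar-but-not-outer-projective-planar subcases exactly as the paper does, and for a proper minor of $H$ (your case (a)) embed $G$ first and insert the outerplanar $H'$ into a face. You are also right that the existence of a suitable embedding of $G$ in case~(a) is the crux; the paper's own proof is terse here and simply asserts that in some drawing ``$G$ can have at most one'' cycle bounding a vertex, without further justification.

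One step in your case~(a) fails as written. You ask for a face of $G$ ``chosen so that no $0$-homologous cycle of $G$ bounds a disk containing it,'' but $G$ is nonplanar, so by Corollary~\ref{closed cell} every projective embedding of $G$ is closed cell: each face is precisely the open disk bounded by its $0$-homologous boundary cycle, and no such face exists. Hence your claim that a cycle $C\subseteq H'$ ``is neither enclosed by nor encloses any cycle of $G$'' is false (it is enclosed by the face boundary $C_F$), and the sub-case ``both circles lie in $G$'' is not disposed of either, since a $3$-linkless embedding of $G$ alone does not rule out a nonsplit link whose $S^0$ borrows a vertex from $H'\subset F$ while both $S^1$'s sit in $G$. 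What is really needed --- and what the paper tacitly uses --- is an embedding of $G$ with at most one cycle enclosing a vertex, together with a choice of $F$ inside that cycle's disk, so that any cycle of $G$ enclosing $F$ is nested with (rather than side-by-side with) the unique separating cycle. That is precisely the unresolved obstacle you flag in your final paragraph; your diagnosis there is correct, but the face-selection condition you wrote cannot be satisfied and should be replaced by this one.
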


\begin{proof}
Suppose $G$ is projective planar, minor-minimal nonplanar, and nonouter-projective-planar. Since we have assumed all of $G$'s planar minors are not II3L, it does not have two disjoint separating cycles. Suppose $H$ is projective planar and minor-minimal nonouterplanar.
In Proposition \ref{nonplanar}, we proved that $G\dot{\cup} H$ is IPPI3L. Now, we will prove that $G\dot{\cup} H$ is minor-minimal under this property.

Suppose we have a minor of $G$, $L$. Since $L$ is a minor, it must be planar or outer-projective-planar. First, suppose $L$ is outer-projective-planar. Then embed $L$ in the projective plane, with an outer projective drawing $D_1$. Then embed $H$ in $D_1$ inside the face that contains all the vertices in its closure. That is 3-linkless. Second, suppose $L$ is planar. Affine embed $L$ in the projective plane. Embed $H$ with a 1-homologous cycle. We know $H$ will not have a cycle bounding a vertex, because $H$ is either $K_4$ or $K_{3,2}$. Also, $L$ can have at most one cycle bounding a vertex, because it is not II3L. Thus, the minor is not IPPI3L.

Next, suppose $J$ is a minor of $H$ that is outerplanar. Embed $G$ in the projective plane. Call the drawing $D_2$. Now embed $J$ in $D_2$. The embedding of $J$ will not have a cycle bounding a vertex, and $G$ can have at most one. Thus, the minor is not IPPI3L. We can conclude that $G\dot{\cup} K$ is minor-minimal IPPI3L.
\end{proof}

Now, let us examine the set of graphs that are minor-minimal nonplanar and nonouter-projective-planar, that do not have two disjoint separating cycles. The graphs $\epsilon_4$, $\epsilon_6$, and $\kappa_1$ have these properties. Now let us consider $K_6$. It is nonplanar and nonouter-projective-planar, but it is not minor-minimal.

\begin{proposition}\label{5.13}
The graph $K_6$ minus two edges, where the edges may or may not be adjacent, is minor-minimal nonplanar and nonouter-projective-planar.
\end{proposition}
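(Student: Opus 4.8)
\emph{Proof idea.} The statement has two halves: that $G:=K_6-2e$ is nonplanar and nonouter-projective-planar for either choice of the two deleted edges, and that $G$ is minor-minimal for this joint property. For the first half I would argue with edge counts only, which makes the adjacent and non-adjacent cases identical. The graph $G$ has $6$ vertices and $13$ edges; since $13>3\cdot 6-6$, $G$ is nonplanar. For nonouter-projective-planarity I would invoke the bound that an outer-projective-planar graph $H$ on $n$ vertices has at most $2n$ edges: adding a vertex joined to all of $H$ and placed inside a face of a witnessing embedding that carries every vertex of $H$ on its boundary yields a simple projective planar graph on $n+1$ vertices, which by Euler's formula for $\mathbb{R}P^2$ has at most $3(n+1)-3=3n$ edges, hence $|E(H)|+n\le 3n$. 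Since $13>2\cdot 6$, $G$ is nonouter-projective-planar. (Alternatively one could exhibit one of the $32$ graphs of \cite{Archdeacon} as a minor of $G$, but the counting argument is cleaner and uniform.)

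For minor-minimality the key point is that ``planar or outer-projective-planar'' is a minor-closed property, being the union of two minor-closed properties, so it suffices to show that every one-step minor of $G$ --- a single edge deletion, edge contraction, or vertex deletion --- has this property. Contracting an edge of $G$ or deleting a vertex of $G$ produces a graph on five vertices, hence a subgraph of $K_5$; and $K_5$ is outer-projective-planar, since deleting one vertex from the triangulation of $\mathbb{R}P^2$ by $K_6$ (the antipodal quotient of the icosahedron) leaves $K_5$ embedded with all five vertices on the resulting pentagonal face. So those minors are planar or outer-projective-planar, and everything reduces to the edge deletions, which give $K_6$ minus three edges. Up to isomorphism the deleted $3$-edge subgraph is one of $3K_2$, $P_4$, $P_3\cup K_2$, $K_{1,3}$, or $K_3$, and every edge-deletion minor of either version of $G$ is one of these five graphs; it remains to handle each.

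Two of them are planar: $K_6-3K_2$ is the octahedron $K_{2,2,2}$, and $K_6-P_4$ is planar as well (it has two vertices adjacent to all others and the remaining four inducing a path, so it is easily drawn in the plane; equivalently no bipartition of its six vertices into two triples avoids the deleted path, so it has no $K_{3,3}$ subgraph, and it has no $K_5$ minor). The remaining three, $K_6-(P_3\cup K_2)$, $K_6-K_{1,3}$, and $K_6-K_3=K_{1,1,1,3}$, are nonplanar --- they contain $K_{3,3}$, $K_5$, and $K_{3,3}$ respectively as subgraphs --- so for each I would exhibit an explicit outer-projective-planar embedding, read off from the $K_6$-triangulation of $\mathbb{R}P^2$. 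Deleting the three edges of a facial triangle merges four triangular faces into one hexagonal face carrying all six vertices, handling $K_6-K_3$; deleting three edges at a degree-$5$ vertex so as to leave the remaining two consecutive in its link does the same, handling $K_6-K_{1,3}$; and the case $K_6-(P_3\cup K_2)$ is handled by a short combination of such moves (deleting one edge of the triangulation, inserting an auxiliary vertex of degree $4$ in the resulting quadrilateral, then deleting a degree-$6$ vertex), equivalently by checking that its cone $K_7-(P_3\cup K_2)$ triangulates $\mathbb{R}P^2$. In every case the embedding has all six vertices on a single face, so the graph is outer-projective-planar. This exhausts the one-step minors, so $G$ is minor-minimal.

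The main obstacle is this last step: producing the outer-projective-planar drawings of the three nonplanar members of the ``$K_6$ minus three edges'' list, i.e. choosing which faces of the $K_6$-triangulation to merge (and, in one case, which extra vertex to splice in) so that a single face collects all six vertices, together with the routine but necessary bookkeeping that these five $3$-edge types really do exhaust the edge-deletion minors of both versions of $G$. A secondary point requiring care is a clean statement of the bound $e\le 2n$ for outer-projective-planar graphs --- the reduction to the cone and the estimate $e\le 3v-3$ for simple graphs in $\mathbb{R}P^2$ --- if one wishes to avoid appealing to \cite{Archdeacon}.
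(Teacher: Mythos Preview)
Your argument is correct and follows the same skeleton as the paper's: both reduce minor-minimality to single edge deletions, edge contractions, and vertex deletions; both dispose of the latter two via the observation that any graph on at most five vertices is a subgraph of $K_5$ and hence outer-projective-planar; and both split $K_6-3e$ into the same five isomorphism types ($3K_2$, $P_4$, $P_3\cup K_2$, $K_{1,3}$, $K_3$) with the same verdicts (planar for the first two, outer-projective-planar for the last three). Where you differ is in the supporting arguments. The paper simply asserts each verdict; you actually construct the outer-projective-planar embeddings by merging faces of the $K_6$-triangulation of $\mathbb{R}P^2$, which is a genuine addition and makes the proof self-contained. Likewise, for the forward direction the paper exhibits $K_5$ and $\gamma_1$ as minors of $K_6-2e$, whereas you use the edge bounds $e\le 3n-6$ (planar) and $e\le 2n$ (outer-projective-planar, via the cone construction and Euler's formula on $\mathbb{R}P^2$), which handles both the adjacent and non-adjacent cases uniformly and avoids any appeal to the Archdeacon list. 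One small caution: your parenthetical that $K_6-P_4$ is planar because it ``has no $K_{3,3}$ subgraph and no $K_5$ minor'' is not by itself a planarity proof---you would need to exclude $K_{3,3}$ as a minor, not just as a subgraph---but since you also indicate an explicit planar drawing this is harmless.
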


\begin{proof}
The two graphs - $K_6 -2e$ where the edges are adjacent or nonadjacent - are both nonouter-projective-planar because they have $\gamma_1$ as a minor. Also, they both are nonplanar because they contain $K_{5}$ as a minor.

Now, consider deleting a third edge. If we delete a three edges that connect to the same vertex, the resulting graph is outer-projective-planar. If we delete two edges that connect to the same vertex and a third that is disjoint, the resulting graph is outer-projective-planar. If we delete three edges that form a path, the resulting graph is planar. If we delete three edges that form a 3-cycle, the resulting graph is outer-projective-planar. If we delete three disjoint edges, the resulting graph is planar. Thus, we cannot subtract a third edge and keep the desired properties.

Because we cannot subtract a third edge, this means we also cannot delete a vertex. We also cannot contract an edge, as this would result in a graph that has only five vertices, $K_5$ has an outer-projective-planar embedding, as shown in Figure \ref{fig:my_label}. Thus, $K_6 -2e$ is minor-minimal nonplanar and nonouter-projective-planar.
\end{proof}
\begin{figure}[H]
\centering
\includegraphics[scale=0.25]{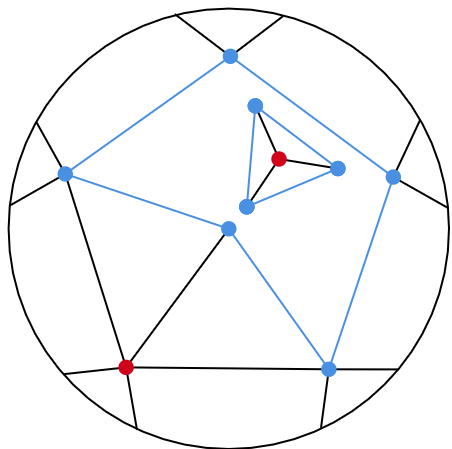}
\caption{This graph is $(K_6-2e) \dot\cup K_4$. The blue cycles are the two $S^1$'s, and the red vertices are the two pieces of the $S^0$.}
\end{figure}

Next, consider the other graphs in the Petersen family. The Petersen family is the set of seven graphs that can be obtained from the Petersen graph by repeated $\Delta - Y$ and $Y- \Delta$ exchanges. The family also includes $K_6$. All other members of the family have $\kappa_1$ as a minor. Therefore, they would not be minor-minimal nonplanar and nonouter-projective-planar.

The graph $\kappa_1$ has $K_{3,3}$ as a minor. Exploring graphs that have $K_5$ as a minor may yield more graphs in the set. 

\begin{figure}[H]
    \centering
    \includegraphics[scale=0.45]{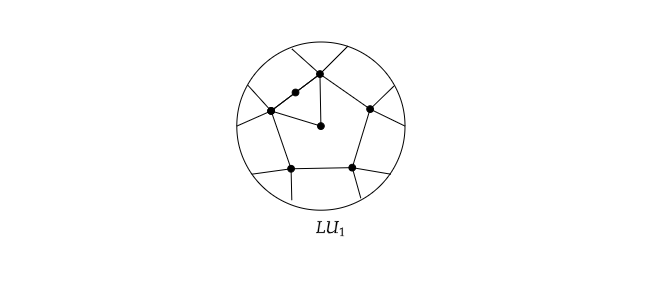}
       \vskip -.4in
    \caption{This graph has both $K_5$ and $\gamma_3$ as minors, so it is minor-minimal nonplanar and nonouter-projective-planar.}
    \label{fig:LU1}
 
\end{figure}

\begin{proposition}\label{5.14}
The graph $LU_1$, pictured in Figure \ref{fig:LU1}, is minor-minimal nonplanar and nonouter-projective-planar.
\end{proposition}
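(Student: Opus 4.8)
The plan is to mirror the proof of Proposition~\ref{5.13}. First I would read off the two certifying minors directly from Figure~\ref{fig:LU1}: a $K_5$-model (five branch vertices joined in pairs by internally disjoint paths), which shows $LU_1$ is nonplanar since it contains $K_5$ as a minor, and a $\gamma_3$-model, which shows $LU_1$ is nonouter-projective-planar since $\gamma_3$ is one of the $32$ minor-minimal nonouter-projective-planar graphs \cite{Archdeacon}. This establishes that $LU_1$ has the stated property, so only minor-minimality remains.

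For minor-minimality, the key observation is that both ``planar'' and ``outer-projective-planar'' are minor-closed, so $LU_1$ is minor-minimal for the conjunction exactly when every proper minor is planar \emph{or} outer-projective-planar; and it suffices to verify this for the one-step minors $LU_1-e$, $LU_1/e$, and $LU_1-v$, because any proper minor is a minor of such a one-step minor, and minor-closedness of planarity and of outer-projective-planarity then passes the conclusion down. I would then use the automorphism group of $LU_1$ to split the edge set into a small number of orbits and the vertex set into a small number of orbits, and for one representative of each orbit exhibit an explicit drawing: an affine planar embedding when the reduction destroys all $K_5$- and $K_{3,3}$-minors, and an outer-projective-planar embedding (all vertices on a single face of $\mathbb{R}P^2$) otherwise.

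The main obstacle is the case analysis, in particular the ``stubborn'' one-step minors in which one of the two obstructions survives. If a reduction leaves a $K_5$- or $K_{3,3}$-minor intact (so the minor is still nonplanar), I must show the residual graph is outer-projective-planar by exhibiting a drawing with all vertices on one face; if a reduction leaves one of the $32$ forbidden minors intact (so the minor is still nonouter-projective-planar), I must show it has become planar by exhibiting an affine embedding; and I must check that no single reduction leaves \emph{both} obstructions intact. Edge contractions deserve extra care, since contracting an edge can create new adjacencies and hence new $K_5$- or $K_{3,3}$-minors, so I would treat the contraction orbits last and handle each by drawing $LU_1/e$ explicitly. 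Finally, as in the proof of Proposition~\ref{5.13}, deleting a vertex removes all edges incident to it, so the vertex-deletion cases are subsumed once the corresponding multi-edge deletions have been shown to be planar or outer-projective-planar; running through every orbit representative then completes the proof that $LU_1$ is minor-minimal nonplanar and nonouter-projective-planar.
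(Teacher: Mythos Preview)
Your proposal is correct and follows essentially the same approach as the paper: establish the property via the $K_5$- and $\gamma_3$-minors read off from Figure~\ref{fig:LU1}, then verify minor-minimality by reducing to one-step minors and checking each edge orbit for deletion and contraction, with vertex deletions subsumed by edge deletions. The only cosmetic difference is that the paper partitions the edges by the degrees of their endpoints (three types: degree~$2$--degree~$5$, degree~$4$--degree~$4$, degree~$4$--degree~$5$) rather than explicitly invoking the automorphism group, but this amounts to the same orbit decomposition you describe.
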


\begin{proof}

Now we want to prove this graph is minor-minimal with regards to being nonplanar and nonouter-projective-planar. There are three types of edges, given different colors in the drawing below. 

\begin{figure}[H]
    \centering
    \includegraphics[scale=0.3]{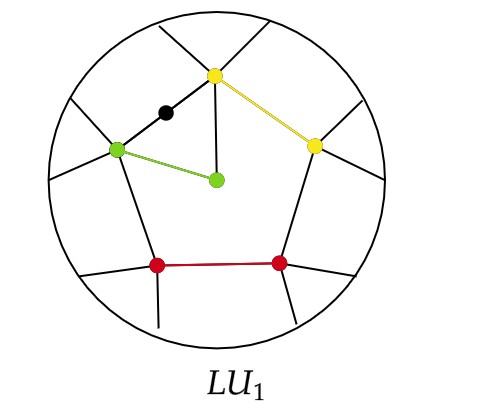}
    \caption{Different types of edges of $LU_1$}
    \label{fig:LU1_color}
\end{figure}

The green edge shows an edge that connects a degree 2 and degree 5 vertex. If we delete the green edge, the graph is outer-projective-planar. The red edge shows an edge that connects two degree 4 vertices. If we delete the red edge, the resulting minor is planar. The yellow edge connects a degree 4 and degree 5 vertex. If we delete the yellow edge, the resulting minor is planar. This means that we also cannot delete vertices, because that would delete edges. Now consider edge contractions. If we contract the green edge, the resulting minor will be outer-projective-planar. If we contract the yellow or red edges, the resulting minor will be planar. Thus we can conclude that this graph is minor-minimal with regards to being nonplanar and nonouter-projective-planar.
\end{proof}

\begin{figure}[H]
    \centering
    \includegraphics[scale=0.32]{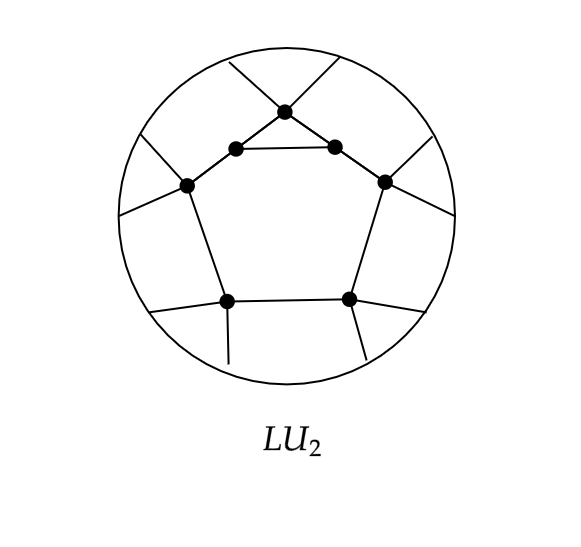}
   \vskip -.4in
   \caption{This graph has both $K_5$ and $\epsilon_1$ as minors, so it is minor-minimal nonplanar and nonouter-projective-planar.}
    \label{fig:LU2}
\end{figure}

\begin{proposition}\label{5.15}
The graph $LU_2$, pictured in Figure \ref{fig:LU2}, is minor-minimal nonplanar and nonouter-projective-planar.
\end{proposition}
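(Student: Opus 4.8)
The plan is to follow the template of the proof of Proposition \ref{5.14} for $LU_1$. First I would record the two positive containments asserted by the figure: $LU_2$ contains $K_5$ as a minor, hence is nonplanar, and it contains $\epsilon_1$ as a minor, hence is nonouter-projective-planar; both are verified directly from Figure \ref{fig:LU2}. Since ``having a planar embedding or an outer-projective-planar embedding'' is a minor-closed property (by the proposition immediately preceding the finiteness corollary in this section), it then suffices to check that every minor obtained from $LU_2$ by a single reduction --- deleting one edge, contracting one edge, or deleting one vertex --- is planar or outer-projective-planar, equivalently is no longer simultaneously nonplanar and nonouter-projective-planar.

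Next I would use the symmetry of $LU_2$ to cut down the casework, grouping its edges into a handful of types according to the degrees of their endpoints, exactly as the colored edges in Figure \ref{fig:LU1_color} do for $LU_1$. For a representative edge $e$ of each type I would exhibit an explicit planar drawing, or an explicit outer-projective-planar drawing, of $LU_2 - e$; a companion figure analogous to Figure \ref{fig:LU1_color} would display the edge types together with these witness embeddings. Because deleting a vertex removes at least one incident edge, and the target property is monotone under edge deletion, handling all single edge deletions automatically disposes of all single vertex deletions. Finally, for a representative of each edge type I would produce a planar or outer-projective-planar drawing of the contraction $LU_2 / e$; as in Proposition \ref{5.13}, one may also observe that a contraction lands on a strictly smaller graph and appeal to the easily exhibited embeddings of those small graphs.

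I expect the only real work to be bookkeeping: correctly enumerating the edge types of $LU_2$ so that no orbit is overlooked, and then confirming, ideally with a figure, that each exhibited drawing of $LU_2 - e$ or $LU_2 / e$ genuinely is planar, respectively places every vertex on the boundary of a single common face. This is the same routine but care-demanding verification already carried out for $K_6 - 2e$ and $LU_1$, and no new ideas are required.
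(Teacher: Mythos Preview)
Your proposal is correct and matches the paper's approach almost exactly: the paper also verifies the two positive containments from Figure~\ref{fig:LU2}, partitions the edges of $LU_2$ into symmetry classes via a colored companion figure, and then checks that each single edge deletion and each single edge contraction yields a planar or outer-projective-planar graph, with vertex deletion subsumed by edge deletion. One small heads-up for the bookkeeping you anticipate: degree-pair alone does not separate all the orbits---the paper needs two distinct classes of $(3,4)$-edges, distinguished by whether the degree-$4$ endpoint is adjacent to the other degree-$3$ vertex---so be sure your symmetry classification uses the full automorphism group of $LU_2$ rather than just endpoint degrees.
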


\begin{proof}

Now we want to prove this graph is minor-minimal with regards to being nonplanar and nonouter-projective-planar. There are three types of edges, given different colors in the drawing below. 
\begin{figure}[H]
    \centering
    \includegraphics[scale=0.32]{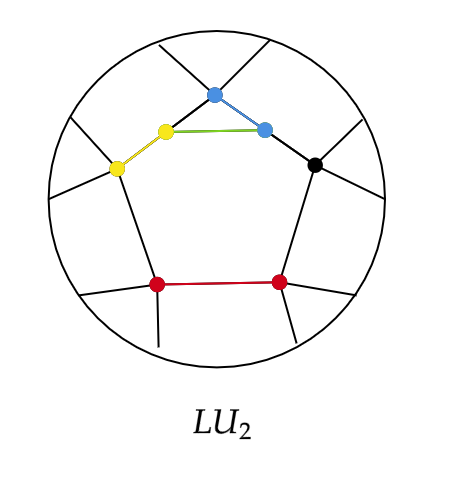}
    \vskip -.25in
    \caption{Different types of edges of $LU_2$.}
    \label{fig:LU2colored}
\end{figure}
The green edge connects two degree 3 vertices. If we delete the green edge, the resulting minor will be outer-projective-planar. The red edge connects two degree 4 vertices. If we delete the red edge, the resulting minor will be planar. The yellow edge connects a degree 3 and a degree 4 vertex. If we delete the yellow edge, the resulting minor will be outer-projective-planar. The blue edge also connects a degree 3 and a degree 4 vertex, where the degree 4 vertex is adjacent to the other degree 3 vertex. If we delete the blue edge, the resulting minor is outer-projective-planar. This means we also cannot delete a vertex, as this will delete an edge. If we contract the green edge, the resulting graph will be outer-projective-planar. If we contract the red edge, the resulting minor is planar. If we contract the yellow edge, the resulting minor is outer-projective-planar. If we contract the blue edge, the resulting minor is outer-projective-planar. Thus, we can conclude this graph is minor-minimal nonplanar and nonouter-projective-planar.
\end{proof}

\begin{figure}[H]
    \centering
    \includegraphics[scale=0.63]{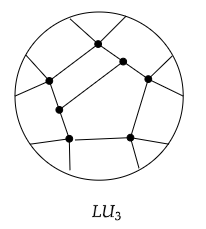}
    \caption{A graph that contains both $K_5$ and $\gamma_1$ as minors, so it is nonplanar and nonouter-projective-planar.}
    \label{fig:LU3}
\end{figure}

\begin{proposition}\label{5.16}
The graph $LU_3$, pictured in Figure \ref{fig:LU3}, is minor-minimally nonplanar and nonouter-projective-planar.

\end{proposition}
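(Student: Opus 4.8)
The plan is to follow verbatim the template of Propositions~\ref{5.13}, \ref{5.14}, and \ref{5.15}. First I would record the two witnessing minors indicated in Figure~\ref{fig:LU3}: the graph $LU_3$ has $K_5$ as a minor, so it is nonplanar, and it has $\gamma_1$ as a minor, so it is nonouter-projective-planar. Thus $LU_3$ has the stated property, and only minor-minimality remains to be established.

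For minor-minimality, the key structural observation is that ``planar'' and ``outer-projective-planar'' are both minor-closed, hence so is their disjunction: every minor of a graph that is planar-or-outer-projective-planar is again planar-or-outer-projective-planar. Consequently it suffices to check the one-step minors — single edge deletions and single edge contractions (vertex deletions need no separate treatment, since deleting a vertex deletes its incident edges, so $LU_3-v$ is already a minor of some $LU_3-e$) — and to show that each of them is planar or outer-projective-planar. To keep this casework finite and small I would partition $E(LU_3)$ into orbits under $\mathrm{Aut}(LU_3)$, exhibited by a colored drawing analogous to the ones used in the proofs of Propositions~\ref{5.14} and \ref{5.15}. For one representative edge $e$ of each orbit I would then exhibit either a planar embedding of $LU_3-e$ (resp.\ $LU_3/e$) or an embedding of it in $\mathbb{R}P^2$ with all vertices on a single face. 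Because the disjunction is minor-closed, once every representative deletion and contraction is shown to lose nonplanarity or to lose nonouter-projective-planarity, minor-minimality follows immediately.

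The only real work, and hence the main obstacle, is producing these concrete reduced embeddings: for each edge class one must either draw $LU_3-e$ (resp.\ $LU_3/e$) in the plane, or draw it in $\mathbb{R}P^2$ with every vertex bounding a common face — equivalently, verify that it contains none of the $32$ Archdeacon obstructions \cite{Archdeacon} as a minor. Since $LU_3$ is built from a $K_5$ minor together with the attachment structure of the $\gamma_1$ minor, and is therefore only modestly larger than those two pieces, I expect each reduced graph to be comfortably planar or outer-projective-planar, exactly as happened for $LU_1$ and $LU_2$. The care required is simply to enumerate the edge orbits correctly so that no case is skipped, and to double-check that deleting or contracting an edge near the $K_5$ part genuinely destroys the $K_5$/$K_{3,3}$ minor, while deleting or contracting an edge near the outer attachment destroys the nonouter-projective-planar obstruction.
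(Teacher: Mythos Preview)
Your plan is correct and follows essentially the same approach as the paper: the paper partitions $E(LU_3)$ into three automorphism classes (green, red, blue), asserts for each that the single edge deletion and the single edge contraction yield a planar or outer-projective-planar graph, and dismisses vertex deletions by the same ``deleting a vertex deletes an edge'' remark you make. The only difference is that the paper simply states the outcome for each of the six cases rather than exhibiting the witnessing embeddings you propose to draw.
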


\begin{proof}
Now we want to prove this graph is minor-minimal with regards to being nonplanar and nonouter-projective-planar. There are three types of edges, given different colors in the drawing below. 
\begin{figure}[H]
    \centering
    \includegraphics[scale=0.63]{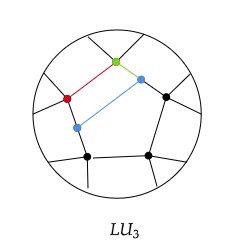}
    \caption{Different types of edges of $LU_3$.}
    \label{fig:LU3colored}
\end{figure}
The green edge connects a degree 3 vertex and a degree 4 vertex. If we delete the green edge, the resulting minor will be planar. The red edge connects two degree 4 vertices. If we delete the red edge, the resulting minor will be outer-projective-planar. The blue edge connects twp degree 3 vertices. If we delete the blue edge, the resulting minor will be outer-projective-planar. This means we also cannot delete a vertex, as this will delete an edge.

If we contract the green edge, the resulting graph will be outer-projective-planar. If we contract the red edge, the resulting minor is planar. If we contract the blue edge, the resulting minor is planar. Thus, we can conclude this graph is minor-minimal nonplanar and nonouter-projective-planar.
\end{proof}

\begin{proposition} Any graph of the form $G \dot\cup H$, where $G$ is minor-minimal nonouter-projective-planar and nonplanar, $G$ is not II3L, and $H$ is minor-minimal nonouterplanar, is minor-minimal IPPI3L. That is, $H$ is either $K_4$ or $K_{3,2}$, and $G$ is either $K_6-2e$, $\epsilon_4$, $\epsilon_6$, $\kappa_1$, $LU_1$, $LU_2$, $LU_3$, or some other minor-minimal nonouter-projective-planar and nonplanar graph that is not II3L.

\end{proposition}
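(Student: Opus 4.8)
The plan is to obtain this as a direct corollary of Proposition~\ref{minorminIPPI3L} together with the classification results already established. Concretely, the task reduces to checking, for each admissible pair $(G,H)$, the three hypotheses of Proposition~\ref{minorminIPPI3L}: (i) $H$ is minor-minimal nonouterplanar; (ii) $G$ is minor-minimal nonplanar and nonouter-projective-planar with no II3L planar minor; and (iii) $G$ is obtained from $K_{3,3}$ or $K_5$ by a sequence of vertex splittings and path-gluings in cycles. Once (i)--(iii) are in place, Proposition~\ref{minorminIPPI3L} immediately gives that $G\dot\cup H$ is minor-minimal IPPI3L, which is the assertion.

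The $H$-side is immediate: by Theorem~\ref{halin}, a graph is minor-minimal nonouterplanar if and only if it is $K_4$ or $K_{3,2}$, so these are exactly the admissible choices of $H$, and both are planar, as required by Propositions~\ref{nonplanar} and~\ref{minorminIPPI3L}. For the $G$-side, the first part of (ii) --- minor-minimality of $G$ with respect to being nonplanar and nonouter-projective-planar --- is exactly Proposition~\ref{5.13} for $K_6-2e$ (both the adjacent and nonadjacent cases) and Propositions~\ref{5.14}, \ref{5.15}, \ref{5.16} for $LU_1, LU_2, LU_3$; for $\epsilon_4$, $\epsilon_6$, $\kappa_1$ it follows from their being members of the $\epsilon$ and $\kappa$ families (hence minor-minimal nonouter-projective-planar) together with the remarks in Section~5.5 that these three are nonplanar. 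The second part of (ii) --- that no planar minor of $G$ is II3L --- can be checked graph by graph: by Proposition~\ref{burkhart} the known minor-minimal II3L graphs are disconnected and have at least eight vertices, and since each $G$ on the list is connected with a small vertex count, no proper minor can realize $K_4\dot\cup K_4$, $K_4\dot\cup K_{3,2}$, or $K_{3,2}\dot\cup K_{3,2}$; this is precisely what the hypothesis ``$G$ is not II3L'' should be read to mean here.

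The main obstacle is condition (iii): exhibiting each listed $G$ explicitly as built from $K_{3,3}$ or $K_5$ by vertex splittings and path-gluings, which is what triggers Corollary~\ref{closed cell} (every projective planar embedding of $G$ is closed cell) and hence Proposition~\ref{nonplanar}. I would do this case by case. For $K_6-2e$: split one vertex of $K_5$ into two adjacent vertices, distributing the four former neighbours so as to produce the desired degree sequence, and then glue back the one or two still-missing edges as length-one paths inside suitable $4$-cycles; doing the splitting symmetrically gives the nonadjacent case and asymmetrically the adjacent case. For $LU_1, LU_2, LU_3$, each of which is drawn in Figures~\ref{fig:LU1}--\ref{fig:LU3} with a $K_5$-subdivision visible inside it, one reads the required sequence of splittings and glued paths directly off the drawing. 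For $\kappa_1$, which the text records as having $K_{3,3}$ as a minor, and for $\epsilon_4, \epsilon_6$ --- all small --- one likewise writes down an explicit construction from $K_{3,3}$ or a once-split $K_5$ by inspection. This is bookkeeping rather than a conceptual difficulty, but it is the only part of the argument that genuinely has to be carried out by hand.

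Finally, the open-ended clause ``or some other minor-minimal nonouter-projective-planar and nonplanar graph that is not II3L'' is accounted for automatically: for any connected such $G$ that is additionally obtained from $K_{3,3}$ or $K_5$ by vertex splittings and path-gluings, hypotheses (i)--(iii) hold and Proposition~\ref{minorminIPPI3L} applies verbatim. Thus the proposition should be read as asserting the conclusion for every pair $(G,H)$ meeting these conditions, and unconditionally for the seven named graphs $G$ paired with $H \in \{K_4, K_{3,2}\}$.
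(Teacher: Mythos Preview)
Your proposal is correct and takes essentially the same approach as the paper: the paper's own proof is the single line ``This directly follows from Propositions~\ref{minorminIPPI3L}, \ref{5.13}, \ref{5.14}, \ref{5.15}, and \ref{5.16}.'' You are in fact more careful than the paper in flagging the construction hypothesis (iii)---that each listed $G$ be built from $K_{3,3}$ or $K_5$ by vertex splittings and glued paths---as something requiring verification, which the paper leaves implicit.
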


\begin{proof}
This directly follows from Propositions \ref{minorminIPPI3L}, \ref{5.13}, \ref{5.14}, \ref{5.15}, and \ref{5.16}.
\end{proof}

\subsection{Summary and Open Questions}

\begin{proposition}
The following graphs are minor-minimal IPPI3L: 
\begin{itemize}
    \item $K_4\dot\cup K_4 \dot\cup K_4$, $K_4\dot\cup K_{4} \dot\cup K_{3,2}$, $K_4 \dot\cup K_{3,2} \dot\cup K_{3,2}$ or $K_{3,2} \dot\cup K_{3,2} \dot\cup K_{3,2}$ (Proposition \ref{threecomponent}).
    \item $G \dot\cup H$, where $G$ and $H$ are minor-minimal nonouter-projective-planar, not II3L, and closed nonseparating. (Proposition \ref{minorminimalclosed}).
    \item $G$ glued at any vertex to $\zeta_3$, where $G$ is a minor-minimal nonouter-projective-planar, separating projective planar, and planar (Proposition \ref{minorminimalzetaglued}).
    \item $G\dot{\cup} H$, where $G$ is minor-minimal nonplanar and nonouter-projective-planar, and not II3L, and $H$ is minor-minimal nonouterplanar. This includes $(K_6-2e) \dot\cup H$, $\epsilon_4 \dot\cup H$, $\epsilon_6 \dot\cup H$, $\kappa_1 \dot\cup H$, $LU_1 \dot\cup H$, $LU_2 \dot\cup H$, and $LU_3 \dot\cup H$, where $H$ is either $K_4$ or $K_{3,2}$ (Proposition 5.17).
    
\end{itemize}
\end{proposition}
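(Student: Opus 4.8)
The plan is to treat this as a compilation proposition: each of the four listed families has already been established to be minor-minimal IPPI3L in an earlier result, so the proof amounts to pairing each bullet with the proposition that proves it and checking that the hypotheses quoted in the bullet are exactly those the cited proposition requires. No new topological argument is needed; the work is entirely bookkeeping of which earlier statement covers which case.

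Concretely, I would proceed bullet by bullet. For the first bullet, the four three-component graphs $K_4\dot\cup K_4\dot\cup K_4$, $K_4\dot\cup K_4\dot\cup K_{3,2}$, $K_4\dot\cup K_{3,2}\dot\cup K_{3,2}$, and $K_{3,2}\dot\cup K_{3,2}\dot\cup K_{3,2}$ are literally the graphs named in Proposition~\ref{threecomponent}, which asserts they are the four minor-minimal IPPI3L graphs on three components; I would invoke that proposition directly. For the second bullet, I would cite Proposition~\ref{minorminimalclosed}, which shows that if $G$ and $H$ are minor-minimal nonouter-projective-planar, not II3L, and closed nonseparating, then $G\dot\cup H$ is minor-minimal IPPI3L. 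For the third bullet, I would cite Proposition~\ref{minorminimalzetaglued}, which establishes that a connected, planar, separating projective planar, minor-minimal nonouter-projective-planar graph $G$ glued at any vertex to $\zeta_3$ is minor-minimal IPPI3L. For the fourth bullet, I would cite the immediately preceding proposition on unions $G\dot\cup H$ with $G$ minor-minimal nonplanar and nonouter-projective-planar, not II3L, and $H$ minor-minimal nonouterplanar; that result both lists the instances $(K_6-2e)\dot\cup H$, $\epsilon_4\dot\cup H$, $\epsilon_6\dot\cup H$, $\kappa_1\dot\cup H$, $LU_1\dot\cup H$, $LU_2\dot\cup H$, $LU_3\dot\cup H$ and certifies their minor-minimality. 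In each case the cited proposition already proves both halves of the claim—that the graph is IPPI3L and that it is minor-minimal—so the bullet follows verbatim.

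The only point requiring care—and thus the nearest thing to an obstacle—is confirming that each bullet's hypotheses are subsumed by those of its source proposition. In the second bullet I would explicitly note that Proposition~\ref{minorminimalclosed} additionally requires $G$ and $H$ to be planar, so that planarity must be read as an implicit standing assumption of the bullet; likewise in the third bullet the hypotheses of connectedness, planarity, and being separating projective planar are exactly those of Proposition~\ref{minorminimalzetaglued} and should be carried over without weakening. Having matched hypotheses in all four cases, I would conclude that every graph in the four listed families is minor-minimal IPPI3L, completing the proof.
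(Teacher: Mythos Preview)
Your proposal is correct and matches the paper's approach exactly: the proposition is a summary whose ``proof'' consists entirely of the parenthetical citations to Propositions~\ref{threecomponent}, \ref{minorminimalclosed}, \ref{minorminimalzetaglued}, and 5.17, with no additional argument. Your observation that Proposition~\ref{minorminimalclosed} also requires $G$ and $H$ to be planar (a hypothesis silently dropped in the bullet) is a genuine and useful catch.
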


There are open questions that remain unexplored. First, what is the complete set of minor-minimal IPPI3L graphs? We have discovered many graphs, but the set has not been completed. One potential direction would be to examine minor-minimal separating graphs that are glued at an edge rather than a vertex. Secondly, an open question would be what characterizes IPPII3L graphs, especially minor-minimal ones? Also, what is the complete set of graphs that are minor-minimal nonplanar and nonouter-projective-planar? Finally, what is the complete set of graphs that are closed nonseparating?

\section{Acknowledgements}
This material is based upon work obtained by the research group at the 2021 Research Experience for Undergraduates Program at SUNY Potsdam and Clarkson University, advised by Joel Foisy and supported by the National Science Foundation under Grant No. H98230-21-1-0336; St. Catharine's College, University of Cambridge; and Universidad Autónoma del Estado de Hidalgo.

\newpage






\begin{thebibliography}{9}
\bibitem{Archdeacon}
Archdeacon, D.; Hartsfield, N.; Little, C.H.H.; Mohar, B.,  \textit{Obstruction sets for outer-projective-planar graphs}. Ars Combinitoria. Vol 49 (1998), 113-127.

\bibitem{Burkhart}
Burkhart, M.; Castillo, A.; Doane, J.; Foisy, J.; Negron, C., \textit{Intrinsically spherical 3-linked graphs}, arXiv, 2107.08953, math.CO, 2021.

\bibitem{CH} Chartrand, G.; Harary, F., \textit{Planar permutation graphs.}  Ann. Inst. H. Poincar\'e Sect. B (N.S.) 3 (1967), 433-438.

\bibitem{Dehkordi}
Dehkordi, H.; Farr, G., \textit{Nonseparating planar graphs}. The Electronic Journal of Combinatorics. Vol 28, No. 1. Jan 2021.

\bibitem{Diestel}
Diestel R, \textit{Graph theory}. Springer-Verlag, GTM 173, 5th edition 2016/17.

\bibitem{REU2021IPL}
Foisy, J., Knowles, E.; Nolasco Hernandez, U; Shen, Y; Topete Galván, L; Wickham, L., \textit{Intrinsically projectively linked graphs}, preprint.

\bibitem{Glover}
Glover, H.; Huneke, J.; San Wang, C., \textit{103 Graphs that are irreducible for the projective plane}. Journal of Combinatorial Theory, Series B 27 (1979), 332-370.

\bibitem{Halin}
Halin, R., Über einen graphentheoretischen Basisbegriff und seine Anwendung auf Färbensprobleme, Doctoral thesis, Köln, 1962.



\bibitem{Maharry}
Maharry, J.; Robertson, N.; Sivaraman, V.; Slilaty, D., \textit{Flexibility of projective-planar embeddings}. Journal of Combinatorial Theory, Series B 122 (2017), 241-300.

\bibitem{robertson}
Robertson, N., Seymour, P.D., \textit {Graph minors. XX. Wagner's conjecture}, J. Combin. Theory Ser. B 92 (2004), no. 2, 325–357.

\bibitem{Whitney}
Whitney, H.,  \textit{Congruent graphs and the connectivity of graphs}. American Journal of Mathematics, 54 (1932), 61-79.
\end{thebibliography}
\end{document}